\newtheoremstyle{slthm}
{9pt}
{5pt}
{\slshape}
{}
{\bfseries}
{.}
{.5em}
{\thmname{#1}\thmnumber{ #2}\thmnote{ (#3)}}
\newtheoremstyle{prcl}
{9pt}
{5pt}
{\slshape}
{}
{\bfseries}
{.}
{.5em}
{\thmname{#3}\thmnumber{ #2}}
\newtheoremstyle{prblm}
{9pt}
{5pt}
{\rm}
{}
{\bfseries}
{.}
{.5em}
{\thmname{#3}\thmnumber{ #2}}
\theoremstyle{slthm}
\newtheorem{thm}{Theorem}[section]
\newtheorem{lemma}[thm]{Lemma}
\newtheorem{prop}[thm]{Proposition}
\newtheorem{cor}[thm]{Corollary}
\newtheorem{fact}[thm]{Fact}
\theoremstyle{definition}
\newtheorem{df}[thm]{Definition}
\newtheorem{nrmk}[thm]{Remark}
\newtheorem{nrmks}[thm]{Remarks}
\newtheorem{expl}[thm]{Example}
\newtheorem{expls}[thm]{Examples}
\theoremstyle{remark}
\newtheorem*{rmk}{Remark}
\newtheorem*{rmks}{Remarks}
\theoremstyle{prcl}
\newtheorem*{prclaim}{Proclaim}
\theoremstyle{prblm}
\newenvironment{renumerate}
{
	\begin{enumerate}}
	{\end{enumerate}}
\newcounter{flexnummark}
\DeclareMathOperator{\eh}{eh}
\DeclareMathOperator{\lm}{lm}
\DeclareMathOperator{\level}{level}
\DeclareMathOperator{\pp}{\mathfrak{p}}
\DeclareMathOperator{\mm}{\mathfrak{m}}
\DeclareMathOperator{\nn}{\mathfrak{n}}
\DeclareMathOperator{\im}{Im}
\DeclareMathOperator{\re}{Re}
\DeclareMathOperator{\supp}{supp}
\DeclareMathOperator{\ord}{ord}
\DeclareMathOperator{\sign}{sgn}
\DeclareMathOperator{\bexp}{\textbf{exp}}
\DeclareMathOperator{\blog}{\textbf{log}}
\DeclareMathOperator{\compclass}{cc}
\newcommand{\rest}[1]{\!\!\upharpoonright_{#1}}
\newcommand{\into}{\longrightarrow}
\renewcommand{\tilde}{\widetilde}
\renewcommand{\bar}{\overline}
\def\Ind#1#2{#1\setbox0=\hbox{$#1x$}\kern\wd0\hbox to 0pt{\hss$#1\mid$\hss}
	\lower.9\ht0\hbox to 0pt{\hss$#1\smile$\hss}\kern\wd0}
\def\Notind#1#2{#1\setbox0=\hbox{$#1x$}\kern\wd0\hbox to 0pt{\mathchardef
		\nn=12854\hss$#1\nn$\kern1.4\wd0\hss}\hbox to
	0pt{\hss$#1\mid$\hss}\lower.9\ht0 \hbox to
	0pt{\hss$#1\smile$\hss}\kern\wd0}
\newcommand{\set}[1]{\left\{#1\right\}}
\newcommand{\NN}{\mathbb{N}}
\newcommand{\ZZ}{\mathbb{Z}}
\newcommand{\RR}{\mathbb{R}}
\newcommand{\CC}{\mathbb{C}}
\newcommand{\curly}[1]{\mathcal{#1}}
\newcommand{\A}{\curly{A}}
\newcommand{\C}{\curly{C}}
\newcommand{\D}{\curly{D}}
\newcommand{\E}{\curly{E}}
\newcommand{\F}{\curly{F}}
\renewcommand{\H}{\curly{H}}
\newcommand{\I}{\curly{I}}
\newcommand{\K}{\curly{K}}
\newcommand{\M}{\curly{M}}
\renewcommand{\P}{\curly{P}}
\renewcommand{\S}{\curly{S}}
\newcommand{\U}{\curly{U}}
\renewcommand{\aa}{\mathbf{a}}
\newcommand{\ff}{\mathbf{f}}
\renewcommand{\gg}{\mathbf{g}}
\newcommand{\hh}{\mathbf h}
\renewcommand{\mm}{\mathbf{m}}
\renewcommand{\nn}{\mathbf{n}}
\renewcommand{\pp}{\mathbf{p}}
\newcommand{\la}{\curly{L}}
\DeclareMathOperator{\Ranexp}{\RR_{an,exp}}
\DeclareMathOperator{\Hanexp}{\H_{an,exp}}
\DeclareMathOperator{\Lanexp}{\la_{an,exp}}
\newcommand{\hplus}{\H^{>0}}
\newcommand{\Ps}[2]{\mathbb{#1}\left[\!\left[#2\right]\!\right]}
\newcommand{\As}[2]{#1\left[\!\left[#2\right]\!\right]}
\newcommand{\Gs}[2]{#1\left(\!\left(#2\right)\!\right)}
\numberwithin{equation}{section}
\title {Ilyashenko algebras based on transserial asymptotic expansions}
\author {Zeinab Galal, Tobias Kaiser and Patrick Speissegger}
\address {Laboratoire IRIF \\
	Universit\'e Paris-Diderot\ --\ Paris 7 \\
	Case 7014 \\
	75205 PARIS Cedex 13, France}
\email {zgalal@irif.fr}
\address{Universit\"at Passau \\
Fakult\"at f\"ur Informatik und Mathematik \\
Innstr. 33 \\
94032 Passau \\
Germany}
\email{tobias.kaiser@uni-passau.de}
\address {Department of Mathematics and Statistics, McMaster University, 1280
Main Street West, Hamilton, Ontario L8S 4K1, Canada}
\email {speisseg@math.mcmaster.ca}
\date{\today\ at \currenttime}
\subjclass {Primary 26A12, 41A60, 30E15; Secondary 37E35, 03C99}
\keywords {Transserial asymptotic expansions, quasianalyticity, Hardy fields, analysable functions}
\thanks{Supported by NSERC of Canada grant RGPIN 261961 and the Zukunftskolleg of the University of Konstanz.}
\begin{document}

\begin{abstract}
	We construct a Hardy field that contains Ilyashenko's class of germs at $+\infty$ of almost regular functions found in \cite{Ilyashenko:1991fk} as well as all $\log$-$\exp$-analytic germs.  This implies non-oscillatory behavior of almost regular germs with respect to all $\log$-$\exp$-analytic germs.  In addition, each germ in this Hardy field is uniquely characterized by an asymptotic expansion that is an LE-series as defined by van den Dries et al. \cite{MR1848569}.  As these series generally have support of order type larger than $\omega$, the notion of asymptotic expansion itself needs to be generalized.  
\end{abstract}

\maketitle
\markboth{Zeinab Galal, Tobias Kaiser and Patrick Speissegger}{Ilyashenko algebras}

\section*{Introduction}

The purpose of this paper is to extend Ilyashenko's construction in \cite{Ilyashenko:1991fk} of the class of germs at $+\infty$ of almost regular functions to obtain a Hardy field containing them.  In addition, each germ in this Hardy field is uniquely characterized by an asymptotic expansion that is an LE-series as defined by van den Dries et al. \cite{MR1848569} and a transseries as defined by van der Hoeven \cite{Hoeven:2006qr}.  As these series generally have support of order type larger than $\omega$, the notion of asymptotic expansion itself needs to be generalized.  This can be done naturally in the context of a quasianalytic algebra, leading to our definition of \textit{quasianalytic asymptotic algebra}, or \textit{qaa algebra} for short.  Any qaa algebra constructed by generalizing Ilyashenko's construction will be called an \textbf{Ilyashenko algebra}.  

The Hardy field $\H = \Hanexp$ of all unary germs at $+\infty$ of unary functions definable in the o-minimal structure $\Ranexp$ is an example of an Ilyashenko field; see van den Dries and Miller \cite{Dries:1994eq} and van den Dries et al. \cite{Dries:1994tw}.  The third author's paper \cite{MR3744892} contains a first attempt at constructing an Ilyashenko field $\F$ containing Ilyashenko's almost regular germs.  The implied non-oscillatory properties of its germs were used in Belotto et al.'s recent solution \cite{belotto2018} of the strong Sard conjecture.  However, this field $\F$ does not contain $\H$; the Ilyashenko field constructed here is a Hardy field that contains both $\F$ and $\H$, implying non-oscillatory behaviour with respect to all $\log$-$\exp$-analytic germs.  

Our main motivation for generalizing Ilyashenko's construction in this way is the conjecture that the class of almost regular germs generates an o-minimal structure over the field of real numbers.  This conjecture, in turn, might lead to locally uniform bounds on the number of limit cycles in subanalytic families of real analytic planar vector fields all of whose singularities are hyperbolic.  Establishing such uniform bounds for planar polynomial vector fields follows Roussarie's approach \cite{MR3134495} to Hilbert's 16th problem (part 2); see Ilyashenko \cite{Ilyashenko:2002fk} for an overview on the latter.  Our conjecture implies a generic instance of Roussarie's finite cyclicity conjecture \cite{Roussarie:1988fk}; see the third author's preprint \cite{Speissegger:2018} explaining this connection. In Kaiser et al. \cite{Kaiser:2009ud} we gave a positive answer to our conjecture in the special case where all singularities are, in addition, non-resonant.  (For a different approach to the general hyperbolic case, see Mourtada \cite{mourtada2009}.)  

The almost regular germs also play a role in the description of Riemann maps and solutions of Dirichlet's problem on semianalytic domains; see Kaiser \cite{MR2495077,MR2481954} for details.  Finally, in the spirit of the concluding remark of van den Dries et al. \cite{Dries:1997jl}, this paper provides a rigorous construction of a Hardy subfield of \'Ecalle's field of ``fonctions analysables'' \cite{MR1094378} that properly extends $\H$, and we do so without the use of ``acc\'el\'ero-sommation''; for more details on this, see the concluding remarks in Section \ref{final}.

We plan to eventually settle our o-minimality conjecture by adapting the procedure in \cite{Kaiser:2009ud}, which requires three main steps:
\begin{enumerate}
	\item extend the class of almost regular germs into an Ilyashenko field;
	\item construct corresponding algebras of germs of functions in several variables, such that the resulting system of algebras is stable under various operations (such as blowings-up, say);
	\item obtain o-minimality using a normalization procedure.
\end{enumerate}
While \cite{MR3744892} contains a first successful attempt at Step (1), Step (2) poses some challenges.  For instance, it is not immediately obvious what the nature of LE-series in several variables should be; they should at least be stable under all the operations required for Step (3).  They should also contain the series used in \cite{mourtada2009} to characterize parametric transition maps in the hyperbolic case, which use so-called \textit{\'Ecalle-Roussarie compensators} as monomials.

Our approach to this problem is to enlarge the set of monomials used in asymptotic expansions.  A first candidate for such a set of monomials is the set of all (germs of) functions definable in the \hbox{o-minimal} structure $\Ranexp$ (in any number of variables).  This set of germs is obviously closed under the required operations, because the latter are all definable, and it contains the \'Ecalle-Roussarie compensators.  However, it is too large to be meaningful for use as monomials in asymptotic expansions, as it is clearly not $\RR$-linearly independent (neither in the additive nor the multiplicative sense) and contains many germs that have ``similar asymptotic behavior'' such as, in the case of unary germs, belonging to the same archimedean class.  More suitable would be to find a minimal subclass $\la_n$ of all definable $n$-variable germs such that every definable $n$-variable germ is piecewise given by a convergent Laurent series (or, if necessary, a convergent \textit{generalized} Laurent series, see van den Dries and Speissegger \cite{Dries:1998xr}) in a finite tuple of germs in $\la_n$.

Thus, the purpose of this paper is to determine such a minimal set of monomials $\la = \la_1$ contained in the set $\H$ of all unary germs at $+\infty$ definable in $\Ranexp$, and to further adapt the construction in \cite{MR3744892} to corresponding generalized series in one variable.  Recalling that $\H$ is a Hardy field, we can summarize the results of this paper (Theorems \ref{Construction_thm} and \ref{Closure_thm} below) as follows:

\begin{prclaim}[Main Theorem]
	There is a multiplicative subgroup $\la$ of $\H$ such that the following hold:
	\begin{enumerate}
		\item no two germs in $\la$ belong to the same archimedean class;
		\item every germ in $\H$ is given by composing a convergent Laurent series with a tuple of germs in $\la$;
		\item the construction in \cite{MR3744892} generalizes, after replacing the finite iterates of $\,\log$ with germs in $\la$, to obtain a corresponding Ilyashenko field $\K$.
	\end{enumerate}
	The resulting Ilyashenko field $\K$ is a Hardy field extending $\H$ as well as the Ilyashenko field $\F$ constructed in \cite{MR3744892}.
\end{prclaim}

\begin{rmk}
	As mentioned earlier, the Ilyashenko field $\F$ constructed in \cite{MR3744892} does not extend $\H$.
\end{rmk}

We obtain this set $\la$ of monomials by giving an explicit description of the Hardy field $\H$ as the set of all \textit{convergent LE-series}, as suggested in \cite[Remark 6.31]{MR1848569}, with $\la$ being the corresponding set of \textit{convergent LE-monomials}; see Example 5.15 below.  The proof that the construction in \cite{MR3744892} generalizes to this set $\la$ relies heavily on our recent paper \cite{Kaiser:2017aa}; indeed, our construction here was the main motivation for \cite{Kaiser:2017aa}. In the next two sections, we give a more detailed overview of the definitions and results of this paper (Section \ref{results_section}) and their proof (Section \ref{outline_section}).

\section{Main definitions and results} 
\label{results_section}

We let $\C$ be the ring of all germs at $+\infty$ of continuous functions $f:\RR \into \RR$.  
A germ $f \in \C$ is \textbf{small} if $\lim_{x \to +\infty} f(x) = 0$ and \textbf{large} if $\lim_{x \to +\infty} |f(x)| = \infty$.  To compare elements of $\C$, we use the dominance relation $\prec$ as found in Aschenbrenner and van den Dries \cite[Section 1]{MR2130825}, defined by $f \prec g$ if and only if $g(x)$ is ultimately nonzero and $\lim_{x \to +\infty} f(x)/g(x) = 0$, or equivalently, if and only if $g(x)$ is ultimately nonzero and $f(x) = o(g(x))$ as $x \to +\infty$.  Thus, $f \preceq g$ if and only if $f(x) = O(g(x))$ as $x \to +\infty$, and we write $f \asymp g$ if and only if $f \preceq g$ and $g \preceq f$.  Note that the relation $\asymp$ is an equivalence relation on $\C$, and the corresponding equivalence classes are the \textbf{archimedean classes} of $\C$; we denote by $\Pi_\asymp:\C \into \C/_\asymp$ the corresponding projection map.

We denote by $\H \subseteq \C$ the Hardy field of all germs of unary
functions definable in $\Ranexp$.  
Below, we let $K$ be a commutative ring of characteristic 0 with unit $1$.

Recall from \cite{Dries:1998xr} that a \textbf{generalized power series} over $K$ is a power series $G = \sum_{\alpha \in [0,\infty)^n} a_\alpha X^\alpha$, where $X = (X_1, \dots, X_n)$, each $a_\alpha \in K$ and the \textbf{support} of $G$,
\[ \supp(G):= \set{\alpha \in [0,\infty)^n:\ a_\alpha \ne 0}, \] is contained in a cartesian product of well-ordered subsets of $\RR$.  Moreover, we call the support of $G$ \textbf{natural} (see Kaiser et al. \cite{Kaiser:2009ud}) if, for every $a > 0$, the intersection $[0,a) \cap \Pi_{X_i}(\supp(G))$ is finite, where $\Pi_{X_i}:\RR^n \into \RR$ denotes the projection on the $i$th coordinate.

Throughout this paper, we work with the following series: we fix a multiplicative $\RR$-vector subspace $M$ of $$\H^{>0}:= \set{h \in \H:\ h>0}.$$

\begin{df}\label{Ranexp_generalized_power_series}
	An \textbf{$M$-generalized Laurent series (over $K$)} is a series of the form $n \cdot G(m_0, \dots, m_k)$, where $k \in \NN$, $G(X_0, \dots, X_k)$ is a generalized power series with natural support, $m_0, \dots, m_k \in M$ are small and $n$ belongs to the $\RR$-multiplicative vector space $\langle m_0, \dots, m_k \rangle^\times$ generated by $m_0, \dots, m_k$.  In this situation, we say that $F$ has \textbf{generating monomials} $m_0, \dots, m_k$.
\end{df}

\begin{expl}\label{log_generalized_power_series}
	Every logarithmic generalized power series, as defined in \cite[Introduction]{MR3744892}, is an $L$-generalized Laurent series, where $$L:= \langle \exp, x, \log, \log_2, \dots \rangle^\times$$ is the multiplicative $\RR$-vector space generated by $\{\exp, x, \log, \log_2, \dots\}$ and $\log_i$ denotes the $i$-th compositional iterate of $\log$. 
\end{expl}

Every $M$-generalized Laurent series $F$ belongs to the ring $\Gs{K}{M}$ of \textit{generalized series}, as defined for instance in \cite{Dries:1997jl}.  
Correspondingly, we write $F = \sum_{m \in M} a_m m$, and the \textbf{support} of $F$ is the reverse-well ordered set $$\supp(F):= \set{m \in M:\ a_m \ne 0}.$$  We show in Section \ref{M-gps_section} that the set $\Gs{K}{M}^{\text{ls}}$ of all $M$-generalized Laurent series is a subring of $\Gs{K}{M}$ in general, and is a subfield if $K$ is a field.

\begin{nrmks}\label{order_type_expl}
	Let $F \in \Gs{K}{M}^{\text{ls}}$, and let $m_0, \dots, m_k \in M$ be small, $n \in \langle m_0, \dots, m_k \rangle^\times$ and a generalized power series $G$ with natural support be such that $F = n \cdot G(m_0, \dots, m_k)$.  
	\begin{enumerate}
		\item The support of $F$ is of reverse-order type at most $\omega^{k+1}$ (see Corollary \ref{cc_order-type} below).  
		For instance, the logarithmic generalized power series $$\sum_{m,n \in \NN} x^{-m} \exp^{-n}$$ has reverse-order type $\omega^2$.
		\item The latter is not a unique representation of $F$ as an $M$-generalized Laurent series: taking, say, $$H(X_0, \dots, X_k):= G\left(X_0^2, X_1, \dots, X_k\right),$$ we have $F = n \cdot H\left( \sqrt{m_0}, m_1, \dots, m_k\right)$ as well.
	\end{enumerate}
\end{nrmks}

To justify using $M$-generalized Laurent series as asymptotic expansions, we need some further notations.

\begin{df}
	\label{natural_df}
	\begin{enumerate}
		\item $M$ is an \textbf{asymptotic scale} if $m \asymp 1$ implies $m = 1$, for $m \in M$ (or, equivalently, if every archimedean class of $\H^{>0}$ has at most one representative in $M$).
		\item A set $S \subseteq M$ is called \mbox{\textbf{$M$-natural}} if, for all $a \in M$, the intersection $S \cap (a,+\infty)$ is finite.
	\end{enumerate}
\end{df}	

\begin{expls}
	\label{natural_expls}
	\begin{enumerate}
		\item For $k \in \NN$ we set $$L_k:= \langle \exp, x, \log, \dots, \log_{k-1} \rangle^\times \subset L.$$  It follows from basic calculus that $L$, and hence each $L_k$, is an asymptotic scale.  Every \textit{Dulac series} (see Ilyashenko and Yakovenko \cite[Section 24]{Ilyashenko:2008fk}) belongs to $\Gs{\RR}{L_1}$ and has $L_1$-natural support.
		\item Let $\la$ be the set of all \textit{principal monomials} of $\H$ as defined in \cite[Section 2]{Kaiser:2017aa}.  Since every archimedean class of $\H^{>0}$ has a unique representative in $\la$ \cite[Proposition 2.18(2)]{Kaiser:2017aa}, the latter is a maximal asymptotic scale.
		\item Two germs $g,h \in \C$ are \textbf{comparable} if there exist $r,s>0$ such that $|f|^{r} < |g| < |f|^{s}$ (see Rosenlicht \cite{Rosenlicht:1983sp}).  By Lemma \ref{pairwise_comparable_lemma} below, if $G(X_1, \dots, X_k)$ is a generalized power series with natural support and $m_1, \dots, m_k \in M$ are small and pairwise comparable and $n \in \langle m_1, \dots, m_k \rangle^\times$, the $M$-generalized Laurent series $nG(m_1, \dots, m_k)$ has $\langle m_1, \dots, m_k \rangle^\times$-natural support.
	\end{enumerate}
\end{expls}

We assume from now on that $M$ is an asymptotic scale.
	
\begin{df}\label{asymptotic_expansion}
	Let $f \in \C$ and $F = \sum a_m m \in \Gs{\RR}{M}$.  We say that $f$ has \textbf{asymptotic expansion} $F$ (at $+\infty$) if $\supp(F)$ is $M$-natural and
	\begin{equation*}\tag{$\ast$}
	f - \sum_{m \ge n} a_m m \prec n
	\end{equation*}
	for every $n \in M$.
\end{df}

\begin{expl}\label{asymptotic_expl}
	Every \textit{almost regular} $f \in \C$, in the sense defined in the introduction of \cite{MR3744892}, has an asymptotic expansion in $\Gs{\RR}{L_1}^{\text{ls}}$.
\end{expl}

We denote by $\C(M)$ the set of all $f \in \C$ that have an asymptotic expansion in $\Gs{\RR}{M}$.  By Lemmas \ref{asymptotic_ring_lemma} and \ref{unique_as_exp} below, $\C(M)$ is an $\RR$-algebra, every $f \in \C(M)$ has a unique asymptotic expansion in $T_M(f) \in \Gs{\RR}{M}$, and the map $T_M:\C(M) \into \Gs{\RR}{M}$ is an $\RR$-algebra homomorphism. 
In this paper, we are interested in the following kind of subalgebras of $\C(M)$:

\begin{df}
	\label{qa_df}
	We call a subalgebra $\K$ of $\C(M)$ \textbf{quasianalytic} if the restriction of $T_M$ to $\K$ is injective.
\end{df}

Note that, since $\Gs{\RR}{M}$ is a field, every quasianalytic subalgebra of $\C(M)$ is an integral domain.

We now want to extend the definition of asymptotic expansion to all series in $\Gs{\RR}{M}$, not just the ones with natural support.  However, for this generalization we cannot separate ``asymptotic expansion'' from ``quasianalyticity''; both need to be defined simultaneously in the context of a ring of germs, in the spirit of \cite[Definition 2]{MR3744892}.  

For $F = \sum a_m m \in \Gs{\RR}{M}$ and $n \in M$, we denote by $$F_n := \sum_{m \ge n} a_m m$$ the \textbf{truncation} of $F$ above $n$.  A subset $S \subseteq \Gs{\RR}{M}$ is \textbf{truncation closed} if, for every $F \in S$ and $n \in M$, the truncation $F_n$ belongs to $S$.  

\begin{expl}\label{truncation_closed_quotient}
	The set $T_M(\C(M))$ is truncation closed.
\end{expl}

\begin{df}\label{qaa_df}
	Let $\K \subseteq \C$ be an $\RR$-subalgebra and $T:\K \into \Gs{\RR}{M}$ be an $\RR$-algebra homomorphism.  The triple $(\K,M,T)$ is a \textbf{quasianalytic asymptotic algebra} (or \textbf{qaa algebra} for short) if
	\begin{renumerate}
		\item $T$ is injective;
		\item the image $T(\K)$ is truncation closed;
		\item for every $f \in \K$ and every $n \in M$, we have $$f - T^{-1}((Tf)_n) \prec n.$$
	\end{renumerate}
\end{df}

\begin{expl}
	\label{qaa_field_expl}
	Let $\la$ be the set of all principal monomials of $\H$ as defined in \cite[Section 2]{Kaiser:2017aa}.  We show in Corollary \ref{convergent_prop}(2) below that there is a field homomorphism $S_\la:\H \into \Gs{\RR}{\la}$ such that $(\H,\la,S_\la)$ is a qaa field.  The image of $S_\la$ is what we call the set of all convergent LE-series (or convergent transweries), see Section \ref{convergent_subsection}.
\end{expl}

Let $M' \subset \H^{>0}$ be another asymptotic scale, and let $(\K, M, T)$ and $(\K', M', T')$ be two qaa algebras.  We say that \textbf{$(\K, M, T)$ extends $(\K', M', T')$} if $\K'$ is a subalgebra of $\K$, $M'$ is a multiplicative $\RR$-vector subspace of $M$ and $T \rest{\K'} = T'$.

\begin{thm}[Construction]\label{Construction_thm}
Let $h$ be a finite tuple of small germs in $\la$.  
	\begin{enumerate}
		\item There exists a qaa field $\left(\K_h, \langle h \rangle^\times, T_h\right)$ such that $h \subseteq \K_h$.
		\item If $g$ is finite tuple of small germs in $\la$ and $h \subseteq g$, then the qaa field $(\K_g, \langle g \rangle^\times, T_g)$ extends $\left(\K_h, \langle h \rangle^\times, T_h\right)$.
	\end{enumerate}
\end{thm}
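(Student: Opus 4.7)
The plan is to imitate the construction of the Ilyashenko field $\F$ in \cite{MR3744892}, replacing the role of the iterated logarithms by the components of the finite tuple $h$. For (1), I proceed in stages. I start by taking as base algebra $\K_h^{(0)}$ the image under substitution of the field of convergent $\langle h \rangle^\times$-generalized Laurent series in $h$ (using the subfield result of Section \ref{M-gps_section}), with $T_h^{(0)}$ the corresponding formal-series map; standard convergence estimates, together with the observation that $\langle h\rangle^\times$ inherits the asymptotic-scale property from $\la$, give that $\left(\K_h^{(0)}, \langle h \rangle^\times, T_h^{(0)}\right)$ is a qaa field. I then enlarge $\K_h^{(0)}$ in stages by iteratively applying the quasianalytic extension operation of \cite{MR3744892}, which takes a germ $f$ with a sectorial holomorphic representative to a new germ obtained via a Laplace--Borel-style integral transform, together with the formal counterpart of this transform at the level of series. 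The field $\K_h$ is defined as the union of these iterated closures and $T_h$ as the common extension of the $T_h^{(n)}$.

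Verifying the three qaa axioms for $(\K_h, \langle h\rangle^\times, T_h)$ amounts to transferring the corresponding arguments in \cite{MR3744892}: injectivity of $T_h$, truncation closure of $T_h(\K_h)$, and the asymptotic condition (iii) of Definition \ref{qaa_df}. Each of these three properties is preserved under the closure operation, so it suffices to establish them at each stage; this is done exactly as in \cite{MR3744892}, once sectorial holomorphic representatives of each small $h_i \in \la$ are available with sufficiently good growth estimates on the relevant sectors. The support behaviour is controlled throughout because the formal counterpart of the extension operator sends a series with $\langle h\rangle^\times$-natural support to another such series.

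For (2), the construction is functorial in $h$. Each sectorial representative and each transform used in producing $\K_h$ is equally available in the construction of $\K_g$, and the resulting series lie in $\Gs{\RR}{\langle h\rangle^\times} \subseteq \Gs{\RR}{\langle g\rangle^\times}$. Hence $\K_h \subseteq \K_g$ and $T_g \rest{\K_h} = T_h$, which gives the required extension of qaa fields. The compatibility of the asymptotic scales and the naturality of the truncation operation ensure that no further compatibility needs to be checked.

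The main obstacle is establishing quasianalyticity after the iterated closure, i.e. condition (i) of Definition \ref{qaa_df} for the full $\K_h$. In \cite{MR3744892} this rests on Phragm\'en--Lindel\"of estimates on sectorial domains whose geometry is controlled by the explicit asymptotics of iterated logarithms. Replacing these by arbitrary small germs in $\la$ requires a uniform supply of holomorphic sectorial extensions of every germ in $\la$, with growth estimates compatible with the Laplace--Borel machinery of \cite{MR3744892}. Producing exactly such extensions is the main technical content of \cite{Kaiser:2017aa}, and importing them allows the quasianalyticity argument of \cite{MR3744892} to carry over with only notational modifications. The remaining qaa axioms (truncation closure and the asymptotic estimate) then follow routinely from the corresponding stage-wise statements.
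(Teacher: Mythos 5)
Your proposal misses the central structural difficulty that organizes the entire construction in the paper, and it also mischaracterizes the extension machinery of \cite{MR3744892}.

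First, the extension operation in \cite{MR3744892} is not a Laplace--Borel transform. As summarized in Section \ref{outline_section} of the paper, $\A_k$ is defined directly as the algebra of germs having a bounded holomorphic continuation on a standard quadratic (here: power) domain $U$ and a strong asymptotic expansion there with coefficients in the previously built field; quasianalyticity of this algebra is then extracted from the Phragm\'en--Lindel\"of Uniqueness Principle, and $\K_k$ is its fraction field. There is no integral transform producing new germs from old; the algebra is a quasianalytic class carved out of $\C$ by the domain and expansion requirements, not an image of a summation operator.

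Second, and more seriously, your plan rests on ``a uniform supply of holomorphic sectorial extensions of every germ in $\la$'' with good growth on standard power domains, citing \cite{Kaiser:2017aa}. But Example \ref{as_scale_spd}(2) of the present paper shows this is false: $\la$ is \emph{not} a strong asymptotic scale; the germ $e^{-x} \circ x^2 \in \la$ is small, yet its holomorphic continuation is unbounded on every standard power domain. The entire architecture of the proof exists to work around exactly this obstruction. Rather than continuing every $h_i \in \la$ directly, the paper writes $h = \exp \circ (-f)$ with $f = (f_0,\dots,f_k) \in \U^{k+1}$, replaces $f$ by an admissible basis $\tilde g$ of $\langle f\rangle$ (Lemma \ref{strong_basis_lemma}), and then builds $\K_{\tilde g}$ by an induction in which at the $i$-th step one right-shifts by $f_{k-i+1}\circ f_{k-i}^{-1}$. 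The Admissibility Theorem \ref{adm_lemma} (via Facts \ref{eh_application} and \ref{ccc} from \cite{Kaiser:2017aa}) shows precisely that these composite ``right-shift'' germs, not the $h_i$ themselves, have holomorphic continuations mapping standard power domains into standard power domains. Condition $(\dagger)$ in Definition \ref{admissible_def} is formulated for $f_j \circ f_i^{-1}$, not for $f_j$. Skipping this step means your quasianalyticity argument cannot get started for general $h$.

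Third, because the construction actually runs on an admissible tuple $\tilde g$ rather than on $h$ itself, one must prove the result is independent of the choice of $\tilde g$; this is Proposition \ref{constr_indep} and requires its own argument (a nontrivial induction using Lemmas \ref{arch_classes_lemma}--\ref{iterate_classes_cor}). Your ``functoriality in $h$'' remark does not address this, since different admissible bases of the same $\langle f\rangle$ are not related by inclusion. Similarly, part (2) of the theorem in the paper is not the routine inclusion you describe: one has to show the arbitrary subtuple $f_\iota$ threads correctly through the inductive right-shift construction (Proposition \ref{sub_tuple_prop}), not merely observe that $\Gs{\RR}{\langle h\rangle^\times}\subseteq \Gs{\RR}{\langle g\rangle^\times}$.

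Finally, your choice of base algebra as the convergent $\langle h\rangle^\times$-Laurent series is backwards relative to the paper: that convergent subfield (Corollary \ref{final_construction_cor}) is shown to be \emph{contained} in the constructed $\K_f$; it is not the seed of an iterated closure. The actual base case $\K_0$ is the full Ilyashenko field of germs with strong asymptotic expansions in $\As{\RR}{M_0}^{\text{ps}}$ and bounded continuations on standard power domains, i.e.\ the divergent series are already present at stage $0$.
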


\begin{rmk}
		For general $f \in \K_h$, the series $T_h(f)$ is not convergent. 
\end{rmk}

In view of the Construction Theorem, we consider the set consisting of all qaa fields $\left(\K_h, \langle h \rangle^\times, T_h\right)$, for finite tuple $h$ of small germs in $\la$, partially ordered by the subset ordering on the tuples $h$, and we let $(\K,\la, T)$ be the direct limit of this partially ordered set.

\begin{thm}[Closure]\label{Closure_thm}
	\begin{enumerate}
		\item $(\K,\la, T)$ is a qaa field extending each $\left(\K_h, \langle h \rangle^\times, T_h\right)$.
		\item $(\K,\la,T)$ extends the qaa field $(\F,L,T)$ constructed in \cite[Theorem 3]{MR3744892}.
		\item $(\K,\la,T)$ extends the qaa field $(\H,\la,S_\la)$ of Example \ref{qaa_field_expl} above.
		\item $\K$ is closed under differentiation; in particular, $\K$ is a Hardy field.
	\end{enumerate}
\end{thm}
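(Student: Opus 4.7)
The plan is to treat the four assertions in order, leaning on the Construction Theorem, the Main Theorem, and the compatibility maps supplied by Construction Theorem (2).

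First, I would verify (1) by routine direct-limit bookkeeping. The set $\K = \bigcup_h \K_h$ is an $\R$-subalgebra of $\C$ since any two elements lie in a common $\K_{h \cup h'}$, with inclusion supplied by Construction Theorem (2). The map $T$ is defined by $T(f) := T_h(f)$ whenever $f \in \K_h$, and it is well-defined, injective, and an $\R$-algebra homomorphism by the same compatibility. Every $m \in \la$ lies in some $\langle h \rangle^\times$ with $h$ a tuple of small germs: if $m$ is small take $h = (m)$, if $m$ is large take $h = (1/m)$, the case $m \asymp 1$ being excluded because $\la$ is an asymptotic scale. The only delicate point is truncation closure and the asymptotic inequality at a level $n \in \la$ not belonging to the current $\langle h \rangle^\times$: given $f \in \K_h$ and $n \in \la$, I would enlarge the tuple to a $g$ whose $\langle g \rangle^\times$ contains both $\langle h \rangle^\times$ and $n$, so that $f \in \K_g$ with $T_g(f) = T_h(f)$, and the required properties descend from those of the qaa field $(\K_g, \langle g \rangle^\times, T_g)$.

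For (2), I would specialize the Construction Theorem to tuples $h_k$ consisting of the reciprocals of $\exp$, $x$, and the first few iterated logarithms, so that $\langle h_k \rangle^\times = L_k$ as in Example \ref{natural_expls}(1); the resulting qaa field $(\K_{h_k}, L_k, T_{h_k})$ is (up to identification of monomial groups) the stage-$k$ construction of $\F$ in \cite{MR3744892}, and the direct limit over $k$ recovers $(\F, L, T)$, which therefore embeds into $(\K, \la, T)$. For (3), given $f \in \H$, the Main Theorem supplies a finite tuple $h \subset \la$ (which I may assume consists of small germs, since $\la$ is a multiplicative group) and a convergent Laurent series $G$ with natural support satisfying $f = G(h)$. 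Because the construction of $\K_h$ extends that of \cite{MR3744892}, it contains such compositions, so $f \in \K_h \subseteq \K$. Quasianalyticity of both $(\H, \la, S_\la)$ and $(\K, \la, T)$, together with their agreement on $h$ and on convergent Laurent series in $h$, then forces $T \rest{\H} = S_\la$.

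Part (4) is the main obstacle. The plan is to define a formal derivation $\partial$ on $\Gs{\R}{\la}$ by setting $\partial m := m \cdot S_\la(m'/m)$ for $m \in \la$ (noting that $m'/m \in \H$, so $S_\la(m'/m) \in \Gs{\R}{\la}$ is available by Example \ref{qaa_field_expl}), extending $\R$-linearly, and using the Leibniz rule on infinite sums. The technical heart of the argument is checking that this formal sum is summable in $\Gs{\R}{\la}$---that is, that the union of supports of the $a_m \, \partial m$ for $m \in \supp(T f)$ is reverse-well-ordered and hits each $\la$-monomial only finitely often. Granting summability, I would show that for $f \in \K_h$ the classical derivative $f'$ has asymptotic expansion $\partial T(f)$; enlarging $h$ to a tuple $g$ containing representatives of the finitely many new monomials arising from the logarithmic derivatives of the entries of $h$, one then concludes via quasianalyticity and the Construction Theorem that $f' \in \K_g \subseteq \K$. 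The Hardy-field property follows automatically: $\K$ is a field by (1), its germs are smooth by construction, and the leading monomial of $T(f)$ controls the eventual sign of $f$, making $\K$ an ordered differential field of germs.
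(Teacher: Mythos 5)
Your parts (1)--(3) are in the right spirit and essentially follow the paper's route. Part (1) is indeed direct-limit bookkeeping, and the truncation/asymptotic check by passing to a larger tuple $g \supseteq h$ is what the compatibility in Construction Theorem (2) is for. For (2) the paper simply observes that $\log_k \in \la$ for $k \in \ZZ$, which is the content of your specialization to the tuples $(1/\exp, 1/x, 1/\log, \dots)$. For (3) the paper also produces, from a given $\varphi \in \H$, a tuple $h$ with $\varphi \in \C(\RR,\langle h\rangle^\times)^{\text{conv}}$ and then invokes Lemma~\ref{strong_basis_lemma} and Corollary~\ref{final_construction_cor}; you should be careful, though, that it is not enough that the Laurent series $G$ exist with $\varphi = G(h)$ --- one needs the extension statement from Corollary~\ref{final_construction_cor} that $\left(\K_f,M_f,T_f\right)$ extends $\left(\C(\RR,M_f)^{\text{conv}}, M_f, S_{\RR,M_f}\right)$, which is precisely what makes convergent series land in $\K_f$ with the correct image under $T_f$.

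Part (4) has a genuine gap. Your formal derivation $\partial$ on $\Gs{\RR}{\la}$ is the right object (it agrees with the paper's $F' = \sum_i \partial_i F\cdot S_\la((\log m_i)')$), and ``enlarging $h$ so that the new monomials arising from $\log$-derivatives appear'' is roughly Proposition~\ref{opt_rep_prop}. But the step ``show $f'$ has asymptotic expansion $\partial T(f)$, then conclude $f' \in \K_g$ via quasianalyticity'' is not valid: quasianalyticity is the \emph{injectivity} of $T$ on $\K_g$, and it says nothing about whether a given germ with an asymptotic expansion lies in $\K_g$ in the first place. Membership in $\K_g$ is not characterised by possessing an asymptotic expansion in $\Gs{\RR}{\la}$ --- it is built layer by layer through \emph{strong} asymptotic expansions on standard power domains, via the intermediate algebras $\A_{g,i}$ whose coefficients and monomials do not even live in $\la$. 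This is where the substantial work lies in the paper: one must differentiate the strong asymptotic expansion at each stage (Lemmas~\ref{derivative_o} and \ref{derivative_of_strong_asymptotic_expansion}, which are holomorphic/L'H\^opital-type estimates on $U_C^\epsilon$), and one needs the precise statement of Proposition~\ref{opt_rep_prop} --- that the ratios $f_j'/f_i'$ are convergent LE-series over $M_{(g_{\iota(i)},\dots,g_l)}$, established through Lemmas~\ref{H-lemma_1}--\ref{H-lemma_3} controlling comparability classes of derivatives --- to keep each level of the induction inside the correct $\K_{g,\eta(i)}$. Finally, since the intermediate series $\tau_{f,i}h$ and $T_{f,i}h$ are generalized series in shifted monomial groups $M_{f^{\langle i\rangle}}$ rather than in $\la$, there is no off-the-shelf formal derivation on them; the paper introduces the $\dagger$-derivative and the $\delta_{f,i}$-maps to relate analytic differentiation to your $\partial$ on $\Gs{\RR}{\la}$ (Lemmas~\ref{inductive_dagger_lemma}, \ref{dagger_derivative_lemma}, \ref{delta_f_i_lemma}, and \ref{series_dagger_derivation}). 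Without this apparatus, your plan as stated cannot close.
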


\section{Outline of proof and the Extension Theorem}
\label{outline_section}

The proof of the Construction Theorem proceeds by adapting the construction in \cite{MR3744892} to the more general setting here.  The role of \textit{standard quadratic domain} there is taken on by the following domains here: for $a \in \RR$, we set $$H(a):= \set{z \in \CC:\ \re z > a}.$$

\begin{df}\label{spd_df}
	A \textbf{standard power domain} is a set 
	$$U_C^\epsilon:= \phi_C^\epsilon(H(0)),$$ where $C>0$, $\epsilon \in (0,1)$ and $\phi_C^\epsilon:H(0) \into U_C^\epsilon$ is the biholomorphic map defined by $$\phi_C^\epsilon(z):= z + C(1+z)^\epsilon,$$ where $(\cdot)^\epsilon$ denotes the standard branch of the power function on $H(0)$ (see Section \ref{spd} for details).
\end{df}

Note that $\epsilon = \frac12$ corresponds to the standard quadratic domains of \cite{MR3744892}.  We use the following consequence of the Phrag\-m\'en-Lindel\"of principle \cite[Theorem 24.36]{Ilyashenko:2008fk}: 

\subsection*{Uniqueness Principle}
	\textsl{Let $U \subseteq \CC$ be a standard power domain and $\phi:U \into \CC$ be holomorphic.  
	If $\phi$ is bounded and
	\begin{equation*}
	\phi\rest{\RR}\ \prec\ \exp^{-n} \quad\text{for each } n \in \NN,
	\end{equation*}
	then $\phi = 0$.}
\medskip 

The Uniqueness Principle follows from \cite[Lemma 24.37]{Ilyashenko:2008fk}, because $x < \phi_C^\epsilon(x)$ for $x > 0$.  The reason for working with standard power domains in place of standard quadratic domains is technical; see the remark following Lemma \ref{comparison_mapping_4} below for details. \medskip

Recall that the construction in \cite{MR3744892} is for the tuples $$\left(\frac1\exp, \frac1{x}, \dots, \frac1{\log_k}\right) = \exp \circ (-x, -\log, \dots, -\log_{k+1}),$$  and it proceeds by induction on $k$.  
To understand how we can generalize this construction to more general sequences $h \in \D^{k+1}$, where $$\D:= \set{h \in \H^{>0}:\ h \prec 1}$$ is the set of all positive small germs, we let $$\I:= \set{h \in \H^{>0}:\ h \succ 1}$$ be the set of all \textbf{infinitely increasing} (i.e., positive large) germs in $\H$ and write $$h = \exp \circ (-f) = \frac1{\exp} \circ f,$$ where $f = (f_0, \dots, f_k)$ with each $f_i \in \I$.  In this situation, we shall also write  $M_f$ for the multiplicative $\RR$-vector subspace $\langle h \rangle^\times$.  

We first recall how the induction on $k$ works in \cite{MR3744892}: assuming the qaa field $(\F_{k-1},L,T_{k-1})$ has been constructed such that every germ in $\F_{k-1}$ has a complex analytic continuation on some standard quadratic domain, we ``right shift'' by $\log$, that is, we
\begin{itemize}
	\item [(i)$_{\text{\cite{MR3744892}}}$] set $\F_{k}':= \F_{k-1} \circ \log$ and define $T_{k}':\F_{k}' \into L$ by $T_{k}'(h \circ \log):= (T_{k-1} h) \circ \log$.
\end{itemize}
Note that, since $\log$ has a complex analytic continuation on any standard quadratic domain with image contained in every standard quadratic domain \cite[Example 3.13(2)]{Kaiser:2017aa}, the tuple $(\F'_{k}, L, T_{k}')$ is also a qaa field as defined in \cite{MR3744892} such that every germ in $\F'_{k}$ has a complex analytic continuation on some standard quadratic domain.  So we 
\begin{itemize}
	\item [(ii)$_{\text{\cite{MR3744892}}}$] let $\A_{k}$ be the $\RR$-algebra of all germs $h \in \C$ that have a bounded, complex analytic continuation on some standard quadratic domain $U$ and an asymptotic expansion $F = \sum h_mm \in \Gs{\F_{k}'}{L_0}$ that holds not only in $\RR$, but in all of $U$, and we set $$T_kh:= \sum (T_k'h_m) m \in \Gs{\RR}{L_k}.$$
\end{itemize}  
(Note that, in general, $T_kh$ is an $L$-series over $\RR$, but not an $L$-genera\-lized Laurent series over $\RR$; this observation was not explicitely mentioned in \cite{MR3744892}.)
The corresponding generalization of asymptotic expansion $(\ast)$ to allowing coefficients in $\F_{k}'$ works, because each germ in $\F_{k}'$ is polynomially bounded, and the quasianalyticity follows from the Uniqueness Principle.  Finally, since $\Gs{\RR}{L_k}$ is a field, the ring $\A_k$ is an integral domain, and we 
\begin{itemize}
	\item [(iii)$_{\text{\cite{MR3744892}}}$] let $\F_k$ be the fraction field of $\A_k$ and extend $T_k$ accordingly.
\end{itemize}  
We represent this construction by the schematic in Figure 1.

\begin{figure}
$$
\begin{matrix}
\RR & \xrightarrow{\text{(UP)}} & \begin{bmatrix} \F_0 \\ e^{-x} \circ (x) \end{bmatrix} \\\\
& \swarrow \circ\log \swarrow &  \\\\
\begin{bmatrix} \F_1' \\ e^{-x} \circ (\log) \end{bmatrix} & \xrightarrow{\text{(UP)}} & \begin{bmatrix} \F_1 \\ e^{-x} \circ (x,\log) \end{bmatrix} \\\\
& \swarrow \circ\log \swarrow &  \\\\
& \vdots & \\
& \xrightarrow{\text{(UP)}} & \begin{bmatrix} \F_{k-1} \\ e^{-x} \circ (x,\log, \dots, \log_{k-1}) \end{bmatrix} \\\\
& \swarrow \circ\log \swarrow &  \\\\
\begin{bmatrix} \F_k' \\ e^{-x} \circ (\log, \dots, \log_{k}) \end{bmatrix} & \xrightarrow{\text{(UP)}} & \begin{bmatrix} \F_k \\ e^{-x} \circ (x,\log, \dots, \log_{k}) \end{bmatrix} \\
\end{matrix}
$$
\caption{Schematic of the construction in \cite{MR3744892}: going horizontally from left to right represents one use of the Uniqueness Principle (UP) and adds $e^{-x}$ to the generating monomials on the left; going from the right to the next lower left represents a right shift by $\log$.}
\end{figure}

Throughout this construction, the following property of $L$ is used: 

\begin{df}
	\label{asymptotic_scale_df}
	Let $M$ be a multiplicative $\RR$-vector subspace of $\H^{>0}$.  We call $M$ a \textbf{strong asymptotic scale} if 
	
	\begin{enumerate}
		\item there is a basis $\{m_0, \dots, m_k\}$ of $M$ consisting of pairwise incomparable small germs;
		\item every $m \in M$ has a complex analytic continuation $\mm:U \into \CC$ on every standard power domain $U$;
		\item for every standard power domain $U$ and every $m,n \in M$, we have $m \prec n$ if and only if $\mm(z) = o(\nn(z))$ as $|z| \to \infty$ in $U$.
	\end{enumerate}
\end{df}

\begin{nrmk}\label{strong_asymptotic_rmk}
	If $M$ is a strong asymptotic scale, then $M$ is an asymp\-totic scale: let $\{m_0, \dots, m_k\}$ be a basis of $M$ consisting of pairwise incomparable small germs such that $m_0 < \cdots < m_k$, and set $m_{k+1}:= 1$.  Let $m \in M$ be such that $m \asymp 1$, let $\alpha_0, \dots, \alpha_k \in \RR$ be such that $$m = m_0^{\alpha_0} \cdots m_k^{\alpha_k},$$ and set $\alpha_{k+1}:= 1$.  Since the $m_i$ are pairwise incomparable, $m$ is comparable to $m_j$, where $j:= \min\{i=0, \dots, k+1:\ \alpha_i \ne 0\}$; hence $m \asymp 1$ implies $\alpha_0 = \cdots = \alpha_k = 0$, that is, $m=1$.
\end{nrmk}

The use of strong aymptotic scales is to extend the notion of asymptotic expansion to standard power domains, see \textbf{strong asymptotic expansions} in Section \ref{asymptotic_section}.

For some of the examples below, we let $\U$ be the set of all \textit{purely infinite} germs in $\H$, as defined in \cite[Section 2]{Kaiser:2017aa}.  Recall that $\la = \exp \circ\ \U$; in particular, two germs $f,g \in \U$ belong to the same archimedean class if and only if the germs $\exp \circ f$ and $\exp \circ g$ are comparable.

\begin{expls}
	\label{as_scale_spd}
	\begin{enumerate}
		\item $L$ is a strong asymptotic scale by \cite[Lemma 8]{MR3744892}.
		\item $\la$ is not a strong asymptotic scale: the germ $e^{-x} \circ x^2$ belongs to $\la$ and is bounded, but its complex analytic continuation on any standard power domain is unbounded.
		\item Not every tuple from $\la$ is a basis consisting of pairwise incomparable small germs: consider the germs $f_0:= x$, $f_1= x-\log$ and $f_2:= \log + \log\log$ in $\U$.  While $\{f_0, f_1, f_2\}$ is additively linearly independent, we have $f_0 \asymp f_1$.  However, $M_f$ has the basis $$e^{-x} \circ (x, \log, \log_2)$$ consisting of pairwise incomparable small germs, because $x$, $\log$ and $\log_2$ belong to distinct archimedean classes.
		\item We show in Lemma \ref{strong_basis_lemma} below that, if each $f_i$ belongs to $\U$, then the additive $\RR$-vector space $\langle f \rangle$ generated by the $f_i$ has a basis consisting of infinitely increasing germs belonging to pairwise distinct archimedean classes; hence, $M_f$ has a basis consisting of pairwise incomparable small germs.
	\end{enumerate}
\end{expls}



The most straightforward generalization of the construction in \cite{MR3744892} is to any sequence $f$ of the form $$f =  \left(g^{\circ 0}, g^{\circ 1}, \dots, g^{\circ k}\right),$$ where $k \in \NN$, $g \in \I$ belongs to a strictly smaller archimedean class than $x$, $g^{\circ i}$ denotes the $i$-th compositional iterate of $g$ and $M_f$ is an asymp\-totic scale on standard power domains, and such that the following holds:
\begin{itemize}
 	\item [$(\dagger)_1$] for every standard power domain $V$, the germ $g$ has a complex analytic continuation $\gg$ on some standard power domain $U$ such that $\gg(U) \subseteq V$.
\end{itemize}
The additional assumption $(\dagger)_1$ means that we can compose on the right (``right shift'') with $g$ in place of $\log$, as in the construction in \cite{MR3744892}.

In general, we assume that $k>0$ and $f_0 > f_1 > \cdots > f_k$ belong to $\I$ and that $M_f$ is an asymptotic scale with basis $e^{-x} \circ f$ consisting of pairwise incomparable small germs; this implies, in particular, that $f_0 \succ \cdots \succ f_k$.  In this situation, we aim to adapt the construction in \cite{MR3744892} as represented by the schematic pictured in Figure 2.  The ``right shifts'' are now by germs of the form $f_{k-i+1} \circ f_{k-i}^{-1}$---which still belong to $\H$ since they are definable---and the monomials at the $i$-th step are $e^{-x} \circ f^{\langle i \rangle}$, where we set $$f^{\langle i \rangle} := \left(x, f_{k-i+1} \circ f_{k-i}^{-1}, \dots, f_k \circ f_{k-i}^{-1}\right)$$ and $$f^{\langle i \rangle'} := \left(f_{k-i+1} \circ f_{k-i}^{-1}, \dots, f_k \circ f_{k-i}^{-1}\right).$$  In particular, we have $f^{\langle 0 \rangle} = (x)$, so that the first step in the construction yielding $\K_0 = \K_{f,0}$ is the same as the first  step of the construction in Figure 1, that is, $\K_0 = \F_0$.

\begin{figure}
	$$
	\begin{matrix}
	\RR & \xrightarrow{\text{(UP)}} & \begin{bmatrix} \K_{f,0} \\ e^{-x} \circ f^{\langle 0 \rangle} \end{bmatrix} \\ \\
	& \swarrow \circ \left(f_k \circ f_{k-1}^{-1}\right) \swarrow &  \\\\
	\begin{bmatrix} \K_{f,1}' \\ e^{-x} \circ f^{\langle 1 \rangle'} \end{bmatrix} & \xrightarrow{\text{(UP)}} & \begin{bmatrix} \K_{f,1} \\ e^{-x} \circ f^{\langle 1 \rangle} \end{bmatrix} \\\\
	& \swarrow \circ\left(f_{k-1} \circ f_{k-2}^{-1}\right) \swarrow &  \\\\
	& \vdots & \\
	& \xrightarrow{\text{(UP)}} & \begin{bmatrix} \K_{f,i-1} \\ e^{-x} \circ f^{\langle i-1 \rangle} \end{bmatrix} \\\\
	& \swarrow \circ \left(f_{k-i+1} \circ f_{k-i}^{-1} \right) \swarrow &  \\\\
	\begin{bmatrix} \K_{f,i}' \\ e^{-x} \circ f^{\langle i \rangle'} \end{bmatrix} & \xrightarrow{\text{(UP)}} & \begin{bmatrix} \K_{f,i} \\ e^{-x} \circ f^{\langle i \rangle} \end{bmatrix} \\
	& \vdots & \\
	& \xrightarrow{\text{(UP)}} & \begin{bmatrix} \K_{f,k-1} \\ e^{-x} \circ f^{\langle k-1 \rangle} \end{bmatrix} \\\\
	& \swarrow \circ \left(f_1 \circ f_{0}^{-1} \right) \swarrow &  \\\\
	\begin{bmatrix} \K_{f,k}' \\ e^{-x} \circ f^{\langle k \rangle'} \end{bmatrix} & \xrightarrow{\text{(UP)}} & \begin{bmatrix} \K_{f,k} \\ e^{-x} \circ f^{\langle k \rangle} \end{bmatrix} \\\\
	& \swarrow \circ f_0 \swarrow &  \\\\
	\begin{bmatrix} \K_f \\ e^{-x} \circ f \end{bmatrix} &
	\end{matrix}
	$$
	\caption{Schematic of the generalized construction: going horizontally from left to right represents one use of the Uniqueness Principle (UP) and adds $e^{-x}$ to the generating monomials on the left; going from the right to the next lower left represents a ``right shift'' by $f_{k-i+1} \circ f_{k-i}^{-1}$.  One final right shift by $f_0$ yields the desired qaa field $\K_f$.}
\end{figure}

To determine what additional conditions $f$ has to satisfy in order for this adaptation to go through at the $i$-th step, we assume that $M_{f^{\langle i-1 \rangle}}$ is a strong asymptotic scale with basis $e^{-x} \circ f^{\langle i-1 \rangle}$ consisting of pairwise incomparable germs, and that we have constructed $\K_{i-1} = \K_{f,i-1}$ such that every $h \in \K_{i-1}$ has an analytic continuation $\hh$ on some standard power domain.  (We shall omit the subscript ``$f$'' in $\K_{f,i}$ if clear from context.)  Provided that
\begin{itemize}
	\item [$(\dagger)_2$] for every standard power domain $V$, the germ $f_{k-i+1} \circ f_{k-i}^{-1}$ has a complex analytic continuation $\ff_{i,i-1}$ on some standard power domain $U$ such that $\ff_{i,i-1}(U) \subseteq V$, 
\end{itemize}
the set $M_{f^{\langle i \rangle'}}$ is also a strong asymptotic scale (because $\ff_{i,i-1}^{-1}$ maps standard power domains \textit{into} standard power domains) with  basis $e^{-x} \circ f^{\langle i \rangle'}$ consisting of pairwise incomparable small germs.  Therefore, we right shift by $f_{k-i+1} \circ f_{k-i}^{-1}$, that is, we
\begin{itemize}
	\item [(i)] set $\K_i' = \K_{f,i}':= \K_{i-1} \circ \left( f_{k-i+1} \circ f_{k-i}^{-1} \right)$ and define $T_i' = T_{f,i'}:\K_i' \into \Gs{\RR}{M_{f^{\langle i \rangle'}}}$ by $$T_i'\left(h \circ \left( f_{k-i+1} \circ f_{k-i}^{-1} \right)\right):= (T_{i-1} h) \circ \left( f_{k-i+1} \circ f_{k-i}^{-1} \right).$$
\end{itemize} 
Again by assumption $(\dagger)_2$, the triple $\left(\K'_i, M_{f^{\langle i \rangle'}}, T_i'\right)$ is a qaa field such that every germ in $\K'_i$ has a complex analytic continuation on some standard power domain.  So we 
\begin{itemize}
	\item [(ii)] let $\A_i$ be the set of all germs $h \in \C$ that have a bounded, complex analytic continuation on some standard power domain $U$ and a \textbf{strong asymptotic expansion} $$H = \sum h_m m \in \Gs{\K_i'}{M_{x}}$$ in $U$ (that is, this asymptotic expansion holds as $|z| \to \infty$ in $U$, see Section \ref{asymptotic_section} for details), and we set $$T_i (h):= \sum T_i'(h_m) m \in \Gs{\RR}{M_{f^{\langle i \rangle}}}.$$
\end{itemize}
As in \cite{MR3744892}, the corresponding generalization of asymptotic expansion $(\ast)$ in Definition \ref{asymptotic_expansion} to allowing coefficients in $\K_{i}'$ works, because each germ in $\K_{i}'$ has comparability class strictly smaller than that of $e^x$ and strictly larger than that of $e^{-x}$, and the quasianalyticity follows from the Uniqueness Principle.  
Finally, we 
\begin{itemize}
	\item [(iii)] let $\K_i$ be the fraction field of $\A_i$, and we extend $T_i$ accordingly.
\end{itemize}  
Iterating this construction leads to the schematic pictured in Figure 2.  The final step, a right shift by $f_0$, leads to the desired qaa field $(\K_f, M_f, T_f)$.  Note that, for this last step, we do not need any analytic continuation assumptions and, consequently, we do not expect analytic continuation of the germs in $\K_f$ on standard power domains.

The crucial additional assumption we need to make this work is $(\dagger)_2$ above, which we need for each $i$.  Requiring this condition to be inherited by all subtuples of $f$, we shall consider the following stronger assumption:
\begin{itemize}
	\item [$(\dagger)$] for $0 \le j < i \le k$ and every standard power domain $V$, the germ $f_i \circ f_j^{-1}$ has a complex analytic continuation on some standard power domain $U$ with image contained in $V$.
\end{itemize}

This leads us to the following condition on general tuples $f$: 

\begin{df}\label{admissible_def}
	We call the tuple $f$ \textbf{admissible} if $(\dagger)$ holds and $M_{f \circ f_0^{-1}}$ is a strong asymptotic scale with basis $e^{-x} \circ \left(f \circ f_0^{-1}\right)$ consisting of pairwise incomparable small germs.
\end{df}

Note that if $f$ is admissible and $g$ is a subtuple of $f$, then $g$ is admissible as well and, in this situation, the above construction shows  (see Proposition \ref{sub_tuple_prop} below) that $(\K_f, M_f, T_f)$ extends $(\K_g, M_g, T_g)$.

Since not every germ in $\I$ satisfies $(\dagger)_2$, not every tuple $f$ is admissible.  To figure out what tuples $f$ are admissible, recall from \cite{Kaiser:2017aa} that a germ $f \in \H$ is \textbf{simple} if $\eh(f) = \level(f)$, where $\eh(f)$ is the exponential height of $f$ as defined in \cite{Kaiser:2017aa} and $\level(f)$ is the level of $f$ as found in \cite{Marker:1997kn}.  In Section \ref{monomial_section}, we use Application 1.3 and Corollary 1.6 of \cite{Kaiser:2017aa} to establish the following: 

\begin{thm}[Admissibility]
	\label{adm_lemma}
	Assume the $f_i$ are simple and have pairwise distinct archimedean classes.  Then $f$ is admissible.
\end{thm}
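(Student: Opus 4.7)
The plan is to verify in turn the two clauses of admissibility from Definition \ref{admissible_def}, relying throughout on the analytic-continuation theory of simple germs developed in \cite{Kaiser:2017aa}---specifically Application 1.3 (which provides complex analytic continuations of simple germs on standard power domains) and Corollary 1.6 (which controls the behaviour of these continuations under composition and compositional inverse).

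To establish $(\dagger)$, fix $0 \le j < i \le k$ and a standard power domain $V$. Since $f_j > f_i$ lie in distinct archimedean classes, $f_i \prec f_j$ and hence $f_i \circ f_j^{-1} \prec x$. Because $f_j$ is simple and infinitely increasing, Application 1.3 of \cite{Kaiser:2017aa} provides an analytic continuation of $f_j^{-1}$ on every standard power domain; analogously $f_i$ extends analytically to standard power domains. I would then invoke Corollary 1.6 of \cite{Kaiser:2017aa} to choose a standard power domain $U$ on which the extension of $f_j^{-1}$ takes values inside a standard power domain $V' \subseteq V$ to which $f_i$ itself continues analytically; composing yields the required continuation $f_i \circ f_j^{-1}: U \to V$.

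For the strong-asymptotic-scale clause, write $g_i := f_i \circ f_0^{-1}$, so $g_0 = x$ and $g_i \in \I$ for $i \ge 1$. Because right-composition with $f_0^{-1}$ preserves $\asymp$-equivalence, the $g_i$ have pairwise distinct archimedean classes, from which a direct computation (using that $e^{-f_i} \prec e^{-cf_j}$ for every $c > 0$ whenever $f_i \succ f_j$ are infinitely increasing) shows that the small germs $e^{-x} \circ g_i$ are pairwise incomparable in the sense of Rosenlicht---this yields condition (1) of Definition \ref{asymptotic_scale_df}. Condition (2) follows from $(\dagger)$ applied with $j = 0$, together with the entireness of $\exp$. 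For condition (3), the image-containment in $(\dagger)$ allows the real-variable relation $e^{-x} \circ g_i \prec e^{-x} \circ g_l$ to be upgraded to $|\exp(-g_i(z))| = o(|\exp(-g_l(z))|)$ on a standard power domain, via $|e^w| = e^{\re w}$.

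The principal obstacle is the image-containment in $(\dagger)$: the existence of analytic continuations of $f_i$ and $f_j^{-1}$ is not alone sufficient to force the composite to land inside the prescribed $V$. One needs the quantitative control on the image of the continuation of $f_j^{-1}$ afforded by Corollary 1.6 of \cite{Kaiser:2017aa}, which is precisely where the simplicity hypothesis $\eh(f_i) = \level(f_i)$ is essential; without it, the asymptotic relation $f_i \circ f_j^{-1} \prec x$ on the real axis has no direct implication for the shape of the complex image on a standard power domain.
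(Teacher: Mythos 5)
There is a genuine gap in your argument, rooted in a misreading of the two cited results from \cite{Kaiser:2017aa}. Application 1.3 is not an analytic-continuation result: it is the purely algebraic bound $\eh(g \circ f^{-1}) \le \max\{\eh(g) + \eh(f) - 2\level(f),\ \eh(f) - \level(f)\}$ on the \emph{exponential height} of compositional quotients. Corollary 1.6, for its part, provides complex analytic continuations and their mapping properties only for germs with $\eh \le 0$. Your plan---continue $f_j^{-1}$ and $f_i$ separately on standard power domains, then compose---therefore fails at the first step: the germs $f_i$ can have arbitrarily large exponential height (e.g.\ $f_i = e^{e^x}$, which is simple with $\eh = \level = 2$), so Corollary 1.6 gives no analytic continuation of $f_i$ at all, and there is no tool in the paper that continues such an $f_i$ on standard power domains. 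This is not a presentational issue; the decomposition is the wrong approach.

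The correct route, which is what the paper does, is to control the composite $f_i \circ f_j^{-1}$ directly. Simplicity gives $\eh(f_i) = \level(f_i)$ and $\eh(f_j) = \level(f_j)$, and since $f_j > f_i$ one has $\level(f_j) \ge \level(f_i)$; plugging this into the bound from Application 1.3 (with $g = f_i$ and $f = f_j$) collapses both terms of the maximum to $\le 0$, so $\eh(f_i \circ f_j^{-1}) \le 0$. Together with $f_i \circ f_j^{-1} \prec x$ (from distinct archimedean classes) this places $f_i \circ f_j^{-1}$ in $\H_{\le 0}^{x}$, after which Corollary \ref{admissible_prop} (itself built on Corollary 1.6) gives $(\dagger)$ with the required image-containment. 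The strong-asymptotic-scale clause is then read off from Corollary \ref{strong_expl} (continuation of bounded monomials) and Proposition \ref{monomial_comparison} (preservation of dominance on standard power domains), rather than from the direct $|e^w| = e^{\re w}$ computation you sketch, which as written does not control the difference of exponents $g_l - g_i$.
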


Note that the Admissibility Theorem fails for non-simple germs in general:

\begin{expl}
	\label{adm_counterexpl}
  Consider the tuple $f = (f_0,f_1):=  \left(x,x+e^{-x^2}\right)$: while $f_0$ has a bounded complex analytic continuation on every standard power domain, the germ $f_1$ does not have a bounded complex analytic continuation on any standard power domain.  In fact, $M_f$ is not a strong asymptotic scale.
\end{expl}

Since every germ in $\U$ is simple \cite[Example 8.7]{Kaiser:2017aa}, we obtain the following from the Admissibility Theorem \ref{adm_lemma}:

\begin{cor}[Admissibility]
	\label{adm_princ_mon}
	If each $f_i$ belongs to $\U$, then every subtuple of $\langle f \rangle$ consisting of infinitely increasing germs belonging to pairwise disjoint archimedean classes is admissible. \qed
\end{cor}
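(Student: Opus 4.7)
The plan is to reduce the statement to a direct application of the Admissibility Theorem \ref{adm_lemma}. Let $g = (g_0,\dots,g_m)$ be any subtuple of $\langle f\rangle$ in which every $g_j$ is infinitely increasing and the $g_j$ lie in pairwise distinct archimedean classes. I would first observe that the set $\U$ of purely infinite germs is an additive $\RR$-vector subspace of $\H$: this is built into the present framework, since by the identification $\la = \exp\circ\,\U$ recalled immediately before Examples \ref{as_scale_spd}, and since $\la$ is a multiplicative $\RR$-vector subspace of $\H^{>0}$, the set $\U = \log\circ\,\la$ is an additive $\RR$-vector subspace of $\H$. Consequently, the hypothesis that each $f_i$ belongs to $\U$ gives $\langle f\rangle \subseteq \U$, and hence every coordinate $g_j$ of the subtuple $g$ lies in $\U$.

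Next, I would invoke the fact, quoted from \cite[Example 8.7]{Kaiser:2017aa} in the sentence preceding the corollary, that every germ in $\U$ is simple. Thus each $g_j$ is simple and, by hypothesis, the $g_j$ have pairwise distinct archimedean classes. These are exactly the hypotheses of the Admissibility Theorem \ref{adm_lemma}, applied to the tuple $g$ in place of $f$. That theorem then yields that $g$ is admissible, which is the content of the corollary.

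There is essentially no obstacle here: the corollary is a bookkeeping consequence of Theorem \ref{adm_lemma} once one notes that $\U$ is closed under $\RR$-linear combinations and that purely infinite germs are simple. The substantive content—controlling the complex analytic continuations of the ratios $g_i\circ g_j^{-1}$ on standard power domains and verifying that $M_{g\circ g_0^{-1}}$ is a strong asymptotic scale with the expected basis of pairwise incomparable small germs—has already been absorbed into Theorem \ref{adm_lemma}, which is why the author marks the corollary with \qed.
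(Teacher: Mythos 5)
Your proof is correct and matches the paper's intent: the corollary is marked \qed precisely because, once one notes that $\U$ is closed under $\RR$-linear combinations (so every coordinate of the subtuple stays in $\U$ and is therefore simple by \cite[Example 8.7]{Kaiser:2017aa}), the statement is an immediate instance of the Admissibility Theorem~\ref{adm_lemma}. Your derivation of the closure of $\U$ under addition and $\RR$-scaling from $\la = \exp\circ\U$ being a multiplicative $\RR$-vector subspace is exactly the bookkeeping the paper leaves implicit.
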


Therefore, if each $f_i \in \U$, we obtain the qaa field $(\K_f,M_f,T_f)$ as follows:  by Example  \ref{as_scale_spd}(4), $\langle f \rangle$ has a basis $g$ consisting of infinitely increasing germs belonging to pairwise disjoint archimedean classes.  By the Admissibility Corollary \ref{adm_princ_mon}, our construction then produces the qaa field $\left(\K_{g}, M_{g}, T_{g}\right)$, and we set $$\K_f := \K_g \quad\text{and}\quad T_f:= T_g.$$  The resulting $\K_f$ is independent of the chosen basis $g$, see Proposition \ref{constr_indep}.

Finally, we show in Section \ref{closure_section} that the  direct limit $(\K,\la,T)$ is maximal in the following sense: if each $f_i$ belongs to $\U$, the qaa field $\left(\K_f, M_f, T_f\right)$ constructed here is extended by $(\K,\la,T)$; this implies, in particular, parts (1) and (2) of the Closure Theorem.  For part (3) of the latter, it suffices to verify that every germ given by a convergent $\la$-generalized power series in $\Gs{\RR}{M_f}$ belongs to $\K_f$.  The proof of part (4) of the Closure Theorem is adapted from the proof of \cite[Theorem 3(2)]{MR3744892}.

\section{Standard power domains}\label{spd}

This section summarizes some elementary properties of standard power domains and makes some related conventions.  Given two germs $f,g \in \C$, we set $f \sim g$ if $g(x) \ne 0$ for sufficiently large $x>0$ and $f(x)/g(x) \to 1$ as $x \to +\infty$.

\begin{lemma}
\label{sqd_lemma}
Let $C>0$ and $\epsilon \in (0,1)$.
\begin{enumerate}
\item The map $\phi_C^\epsilon$ is biholomorphic onto its image.
\item We have $\re\phi_C^\epsilon(ir) \sim C\cos\left(\epsilon \frac\pi2\right) r^\epsilon$ and $\im\phi_C^\epsilon(ir) \sim r$, as $r \to +\infty$ in $\RR$.
\item There exists a continuous $f_C^\epsilon:[C,+\infty) \into (0,+\infty)$ such that $$\im\phi_C^\epsilon(ir) = f_C^\epsilon(\re\phi_C^\epsilon(ir)) \quad\text{ for } r>0$$ and $f_C^\epsilon(r) \sim K_C^\epsilon r^{1/\epsilon}$ as $r \to +\infty$ in $\RR$, where $K_C^\epsilon$ is the constant $\left(C\cos\left(\epsilon\frac\pi2\right)\right)^{-1/\epsilon}$.
\end{enumerate}
\end{lemma}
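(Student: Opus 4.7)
The plan is to work directly from the definition $\phi_C^\epsilon(z) = z + C(1+z)^\epsilon$, where the branch of $(\cdot)^\epsilon$ is real and positive on $(0,+\infty)$. None of the three parts should be hard; the key computation feeding the asymptotics in (2) and (3) is the polar form $(1+ir)^\epsilon = (1+r^2)^{\epsilon/2} e^{i\epsilon \arctan r}$, and I will use it repeatedly.

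For part (1), $\phi_C^\epsilon$ is plainly holomorphic on $H(0)$, so what I need is injectivity. I plan to show the stronger fact that $\re (\phi_C^\epsilon)'(z) > 0$ on $H(0)$. Indeed, $(\phi_C^\epsilon)'(z) = 1 + C\epsilon(1+z)^{\epsilon-1}$, and for $z \in H(0)$ the point $1+z$ lies in the half-plane $\re > 1$, so $\arg(1+z) \in (-\pi/2,\pi/2)$, hence $\arg((1+z)^{\epsilon-1}) = (\epsilon-1)\arg(1+z) \in (-(1-\epsilon)\pi/2,(1-\epsilon)\pi/2) \subset (-\pi/2,\pi/2)$. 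This gives $\re(1+z)^{\epsilon-1} > 0$, and therefore $\re (\phi_C^\epsilon)'(z) > 1$. Injectivity now follows by the standard line-integral argument: if $\phi_C^\epsilon(z) = \phi_C^\epsilon(w)$ for $z \neq w$, then $\int_0^1 (\phi_C^\epsilon)'((1-t)z + tw)\,dt = 0$, contradicting $\re((\phi_C^\epsilon)'(\cdot)) > 0$ along the segment, which lies in the convex domain $H(0)$.

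For part (2), I simply insert $z = ir$ into the definition. From the polar form, $C(1+ir)^\epsilon = C(1+r^2)^{\epsilon/2}\bigl(\cos(\epsilon \arctan r) + i \sin(\epsilon \arctan r)\bigr)$. As $r \to +\infty$, $\arctan r \to \pi/2$ and $(1+r^2)^{\epsilon/2} \sim r^\epsilon$, so $\re\phi_C^\epsilon(ir) = C(1+r^2)^{\epsilon/2}\cos(\epsilon \arctan r) \sim C\cos(\epsilon\pi/2) r^\epsilon$, and $\im\phi_C^\epsilon(ir) = r + C(1+r^2)^{\epsilon/2}\sin(\epsilon \arctan r) = r + O(r^\epsilon) \sim r$ because $\epsilon < 1$.

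For part (3), I plan to show that $r \mapsto \re\phi_C^\epsilon(ir)$ is a continuous, strictly increasing bijection from $[0,+\infty)$ onto $[C,+\infty)$, so that it has a continuous inverse $r = r(s)$, and then set $f_C^\epsilon(s) := \im\phi_C^\epsilon(ir(s))$. To prove monotonicity I differentiate: $\tfrac{d}{dr}\re\phi_C^\epsilon(ir) = -\im(\phi_C^\epsilon)'(ir) = -C\epsilon \im(1+ir)^{\epsilon-1}$, and since $(\epsilon-1)\arctan r \in (-\pi/2,0)$ for $r > 0$, the sine is negative, so the derivative is strictly positive. The endpoint values $C$ at $r=0$ and $+\infty$ at $r = +\infty$ (the latter from (2)) give the bijection. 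The asymptotic is then obtained by composing the two equivalences of (2): inverting $s \sim C\cos(\epsilon\pi/2)\, r(s)^\epsilon$ yields $r(s) \sim (C\cos(\epsilon\pi/2))^{-1/\epsilon} s^{1/\epsilon} = K_C^\epsilon s^{1/\epsilon}$, and combining with $f_C^\epsilon(s) = \im\phi_C^\epsilon(ir(s)) \sim r(s)$ produces the stated $f_C^\epsilon(s) \sim K_C^\epsilon s^{1/\epsilon}$. The only mild care needed is that the asymptotic equivalences in (2) compose correctly under the inverse function, which is immediate because $r \mapsto r^\epsilon$ is a homeomorphism of $(0,+\infty)$; no step here looks like a real obstacle.
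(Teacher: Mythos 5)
Your proof is correct, but it routes around the paper's own argument in each part, and the detour is worth noting. In part (1), you apply the Noshiro--Warschawski principle directly: $\re(\phi_C^\epsilon)'(z) = 1 + C\epsilon\,\re\bigl((1+z)^{\epsilon-1}\bigr) > 1$ on the convex domain $H(0)$, so a single line integral gives injectivity. The paper instead reduces (via $z\mapsto 1+z$) to $\psi(z)=z+Cz^\epsilon$, splits $H(0)$ into quadrants, and uses the two inequalities $\re\psi'>1$ and $\im\psi'<0$ in the first quadrant (with a conjugate statement in the fourth), analyzing the real and imaginary parts of the integral by cases according to the sign of $\im(w-z)$; your argument subsumes this and avoids the case split entirely. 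In part (2), you read off the asymptotics from the polar form $(1+ir)^\epsilon=(1+r^2)^{\epsilon/2}e^{i\epsilon\arctan r}$, whereas the paper expands $(ir)^\epsilon\sum_k\binom{\epsilon}{k}(ir)^{-k}$ by the generalized binomial theorem and sorts the series into real and imaginary parts --- same content, but yours is the more direct route. In part (3), your monotonicity step (sign of $\im(\phi_C^\epsilon)'(ir)$) agrees with the paper's, but for the growth estimate you simply invert the two equivalences from (2), using that $r\mapsto r^{1/\epsilon}$ respects $\sim$ when the argument tends to $+\infty$; the paper instead writes $p_{-1}\circ h\circ p_{-1}$ as a convergent generalized power series $H\in\Ps{R}{X^*}$ and invokes the compositional-inverse machinery of \cite[Lemma 9.9]{Dries:1998xr}. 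Your elementary derivation is shorter and self-contained; the paper's version has the advantage of staying within the generalized-series framework used throughout. One small remark, not a gap in your argument: as in the paper's own proof, $f_C^\epsilon(C)=0$, so the natural codomain is $[0,+\infty)$ rather than the $(0,+\infty)$ stated in the lemma; since the displayed identity is only asserted for $r>0$, this is a cosmetic discrepancy in the statement that both you and the paper handle the same way.
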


\begin{proof}
	(1) Note that $$z + C(1+z)^\epsilon - (w + C(1+w)^\epsilon) = (1+z) + C(1+z)^\epsilon - ((1+w) + C(1+w)^\epsilon);$$ so it suffices to show that the map $\psi = \psi_C^\epsilon : H(0) \into \CC$ defined by $\psi(z):= z+C z^\epsilon$ is injective.  Note also that $\psi$ maps the first quadrant $H(0)^+$ into itself, the real line into the real line and the fourth quadrant $H(0)^-$ into itself.  So we let $z,w \in H(0)^+$ be distinct and show that $\psi(z) \ne \psi(w)$; the other cases are similar and left to the reader.  
	
	We may assume that $\re w \ge \re z$, and we let $\Gamma \subseteq H(0)^+$ be the straight segment connecting $z$ to $w$ and parametrized by the corresponding affine linear curve $\gamma:[0,1] \into H(0)^+$; note that $\gamma' = a + ib$ with $a,b \in \RR$ such that $a \ge 0$.
	
	For $\xi \in H(0)^+$, we have $$\psi'(\xi) = 1 + \frac{C\epsilon}{\xi^{1-\epsilon}},$$ and since $\xi^{1-\epsilon} \in S\left((1-\epsilon)\frac\pi2\right) \cap H(0)^+$, it follows that $$\re \psi'(\xi) > 1 \quad\text{and}\quad \im\psi'(\xi) < 0.$$  By the Fundamental Theorem of Calculus for holomorphic functions we have 
	\begin{align*} 
	\psi(w) - \psi(z) &= \int_S \psi'(\xi) d\xi \\
	&= \int_0^1 \left(a\re\psi'(\gamma(t)) - b\im\psi'(\gamma(t))\right) dt \\ &\qquad + i\int_0^1 \left(b\re\psi'(\gamma(t)) + a\im\psi'(\gamma(t))\right) dt.
	\end{align*} 
	Thus, if $b \ge 0$ then, since not both $a$ and $b$ are zero, it follows that $\re(\psi(w)-\psi(z)) > 0$. Arguing similarly if $b \le 0$, we  obtain that $\im(\psi(w)-\psi(z)) < 0$ in this case, and part (1) is proved.
	
	(2) By the generalized binomial theorem we have, for $r>1$, that
	\begin{equation*}
	\phi_C^\epsilon(ir) = ir + C(ir)^\epsilon \sum_{k=0}^\infty \begin{pmatrix}
	\epsilon \\ k
	\end{pmatrix} (ir)^{-k}.
	\end{equation*}
	Taking real and imaginary parts gives
	\begin{equation*}
	\re\phi_C^\epsilon(ir) = Cr^\epsilon \left(c_\epsilon K_\epsilon\left(\frac1r\right) - s_\epsilon L_\epsilon\left(\frac1r\right)\right),
	\end{equation*}
	where $c_\epsilon:= \cos\left(\epsilon\frac\pi2\right)$, $s_\epsilon:= \sin\left(\epsilon\frac\pi2\right)$, $$K_\epsilon(X):= \sum_{k=0}^\infty \begin{pmatrix}
	\epsilon \\ 2k
	\end{pmatrix} (-1)^k X^{2k}$$ and $$L_\epsilon(r):= \sum_{k=0}^\infty \begin{pmatrix}
	\epsilon \\ 2k-1
	\end{pmatrix} (-1)^k X^{2k+1},$$ and
	\begin{equation*}
	\im\phi_C^\epsilon(ir) = r + Cr^\epsilon \left(c_\epsilon L_\epsilon\left(\frac1r\right) + s_\epsilon K_\epsilon\left(\frac1r\right)\right).
	\end{equation*}
	Since the series $K_\epsilon$ and $L_\epsilon$ converge, part (2) follows.
	
	(3) Arguing as in part (1), we get $\re \left(\phi_C^\epsilon\right)'(\xi) > 0$ for $\xi \in H(-1)$ with $\im\xi > 0$.  Thus, the map $h:(0,+\infty) \into (C,+\infty)$ defined by $h(r):= \re\phi_C^\epsilon(ir)$ is injective and has a compositional inverse $g:(C,+\infty) \into (0,+\infty)$, and we define $f_C^\epsilon:[C,+\infty) \into [0,+\infty)$ by $$f_C^\epsilon(t):= \begin{cases} \im\phi_C^\epsilon(ig(t)) &\text{if } t > C, \\ 0 &\text{if } t = C. \end{cases}$$
	
	Now note that $p_{-1} \circ h \circ p_{-1}(t) = H(t)$, where $H \in \Ps{R}{X^*}$ is a convergent generalized power series as in \cite{Dries:1998xr} with leading monomial $\lm(H) = X^\epsilon/Cc_\epsilon$.  By \cite[Lemma 9.9]{Dries:1998xr}, the series $H$ has a compositional inverse $G \in \Ps{R}{X^*}$; it follows that $g = p_{-1} \circ G \circ p_{-1}$.  Since $\lm(H \circ G) = \lm(H) \circ \lm(G)$, and since $\im\phi_C^\epsilon(ir) = p_{-1} \circ I \circ p_{-1}(r)$ for some convergent $I \in \Ps{R}{X^*}$ with leading monomial $X$, it follows that $f_C^\epsilon(r) \sim  K_C^\epsilon r^{1/\epsilon}$ as $r \to +\infty$.
\end{proof}

From now on, we denote by $\phi_C^\epsilon$ the restriction of $\phi_C^\epsilon$ to the closed right half-plane $\bar{H(0)}$.

\subsection*{Convention.}  Given a standard power domain $U$ and a function $g:\RR \into \RR$ that has a complex analytic continuation on $U$, we shall denote this extension by the corresponding boldface letter $\gg$.  \medskip

For $A \subseteq \CC$ and $\epsilon > 0$, let $$T(A,\epsilon):= \set{z \in \CC:\ d(z,A) < \epsilon}$$ be the \textbf{$\epsilon$-neighbourhood} of $A$.

\begin{lemma}
\label{sqd_facts}
Let $C>0$ and $\epsilon \in (0,1)$.  The following inclusions hold as germs at $\infty$ in $H(0)$:
\begin{enumerate}
\item for $D>0$, $\epsilon' \in (\epsilon,1)$ and $\delta > 0$, we have $T\left(U_D^{\epsilon'},\delta\right) \subseteq U_C^\epsilon$;
\item for $D > C$ and $\delta > 0$, we have $T\left(U_D^\epsilon,\delta\right) \subseteq U_C^\epsilon$;
\item for $\nu>0$, we have $$\nu \cdot U_C^\epsilon \subseteq \begin{cases} U_{\nu C}^\epsilon &\text{if } \nu \le 1, \\ U_C^\epsilon &\text{if } \nu \ge 1; \end{cases}$$ 
\item we have $U_C^\epsilon + U_C^\epsilon \subseteq U_{C/2}^\epsilon$;  
\item for any standard power domain $U$,  there exists $a>0$ such that  $\blog\left(U_C^\epsilon\right) \cap H(a) \subseteq U \cap H(a)$.
\end{enumerate}
\end{lemma}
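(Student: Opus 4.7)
The plan is to reduce every one of the five containments to an elementary comparison of the boundary functions supplied by Lemma \ref{sqd_lemma}(3). Concretely, the boundary of $U_C^\epsilon$ in the upper half-plane is the graph of $f_C^\epsilon$, so a point $u+iv$ with $v > 0$ lies in $U_C^\epsilon$ if and only if $u > C$ and $v < f_C^\epsilon(u)$ (symmetrically in the lower half-plane), and $f_C^\epsilon(t) \sim K_C^\epsilon\, t^{1/\epsilon}$ with $K_C^\epsilon = (C\cos(\epsilon\pi/2))^{-1/\epsilon}$. The comparisons in (1)--(4) then reduce to comparing real powers of a large real variable, and (5) reduces to observing that $\blog(U_C^\epsilon)$ lies in a horizontal strip.

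For (1), since $\epsilon'>\epsilon$ we have $1/\epsilon' < 1/\epsilon$, hence $f_D^{\epsilon'}(t)+\delta = o\bigl(f_C^\epsilon(t-\delta)\bigr)$ as $t\to+\infty$; every point within distance $\delta$ of the graph of $f_D^{\epsilon'}$ therefore sits below the graph of $f_C^\epsilon$, giving $T(U_D^{\epsilon'},\delta)\subseteq U_C^\epsilon$ at infinity. For (2), the exponents agree but $D>C$ forces $K_D^\epsilon < K_C^\epsilon$, so again $f_D^\epsilon(t)+\delta < f_C^\epsilon(t-\delta)$ for all sufficiently large $t$. For (3), a point $u+iv\in U_C^\epsilon$ scales to $\nu u + i\nu v$, and the required bound is $\nu v < f_{\nu C}^\epsilon(\nu u)$ (respectively $< f_C^\epsilon(\nu u)$); since $K_{\nu C}^\epsilon = \nu^{-1/\epsilon}K_C^\epsilon$, the leading coefficient of $f_{\nu C}^\epsilon(\nu u)$ is exactly $K_C^\epsilon$, and the inequality $\nu K_C^\epsilon u^{1/\epsilon} < K_C^\epsilon u^{1/\epsilon}$ holds for $\nu<1$; for $\nu\ge1$ one instead uses $\nu \le \nu^{1/\epsilon}$, which is the substance of the case distinction. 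For (4), we use convexity of $t\mapsto t^{1/\epsilon}$ (since $1/\epsilon>1$): $u_1^{1/\epsilon}+u_2^{1/\epsilon} \le (u_1+u_2)^{1/\epsilon}$, so for $z_j = u_j+iv_j\in U_C^\epsilon$ we get $|v_1+v_2| \le f_C^\epsilon(u_1)+f_C^\epsilon(u_2) \sim K_C^\epsilon\bigl(u_1^{1/\epsilon}+u_2^{1/\epsilon}\bigr) \le K_C^\epsilon(u_1+u_2)^{1/\epsilon} < 2^{1/\epsilon}K_C^\epsilon(u_1+u_2)^{1/\epsilon} \sim f_{C/2}^\epsilon(u_1+u_2)$, and $u_1+u_2 > 2C > C/2$.

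For (5), the principal branch of $\blog$ sends $U_C^\epsilon \subset H(0)$ into the horizontal strip $\{|\im w| < \pi/2\}$. Writing the target $U = U_{C'}^{\epsilon''}$, the asymptotic $f_{C'}^{\epsilon''}(u) \sim K_{C'}^{\epsilon''} u^{1/\epsilon''} \to +\infty$ produces $a>0$ with $f_{C'}^{\epsilon''}(u) > \pi/2$ whenever $u>a$; hence the entire half-strip $\{u+iv : u > a, |v|<\pi/2\}$ is contained in $U$, which proves the inclusion.

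The only genuine subtlety is that the boundary descriptions provided by Lemma \ref{sqd_lemma}(3) are asymptotic, not exact equalities, so each estimate must absorb a bounded error (an $O(1)$ term for (1)--(2) and a lower-order correction for (3)--(4)); this is automatic since we compare leading coefficients of distinct power behaviours, and all five inclusions are stated only as germs at infinity. The single point that warrants care is part (3) at $\nu = 1$, where the comparison is only asymptotic and not strict, but in that case the inclusion is literal equality, so no estimate is needed.
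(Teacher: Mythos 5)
Your proof is correct and follows essentially the same approach as the paper's: each inclusion is reduced to a comparison of the boundary functions $f_C^\epsilon$ supplied by Lemma~\ref{sqd_lemma}(3), using the explicit asymptotic $f_C^\epsilon(r) \sim K_C^\epsilon r^{1/\epsilon}$ with $K_C^\epsilon = (C\cos(\epsilon\pi/2))^{-1/\epsilon}$. Parts (1), (2), (3) and (5) match the paper's argument almost line for line; you are simply more explicit about the $O(1)$ error absorption, which the paper leaves to the reader.

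The one place your argument diverges from the paper's is part (4). The paper fixes one summand $a$ with $\re a \ge 0$ and observes that the boundary of the translate $a + U_C^\epsilon$ is a shifted copy of the graph of $f_C^\epsilon$, hence asymptotic to $K_C^\epsilon x^{1/\epsilon} \prec f_{C/2}^\epsilon(x)$. You instead bound $|v_1+v_2|$ directly using the superadditivity $u_1^{1/\epsilon} + u_2^{1/\epsilon} \le (u_1+u_2)^{1/\epsilon}$, which treats both summands symmetrically and gives a more visibly uniform estimate. (Minor point: you attribute this inequality to convexity of $t \mapsto t^{1/\epsilon}$; what you actually need is superadditivity, which follows from convexity together with the vanishing of the function at the origin, so the attribution is defensible but slightly imprecise.) Both routes compare the same leading coefficients, $K_C^\epsilon$ against $2^{1/\epsilon}K_C^\epsilon$, so the approaches are equivalent in substance.
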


\begin{proof}
	(1) and (2) follow from Lemma \ref{sqd_lemma}(3).
	
	(3) follows from Lemma \ref{sqd_lemma}(3) and the equality $$\nu\cdot\left(x, \left(\frac x{C\cos\left(\epsilon\frac\pi2\right)}\right)^{1/\epsilon}\right) = \left(\nu x, \left(\frac{\nu x}{\nu^{1-\epsilon} C\cos\left(\epsilon\frac\pi2\right)}\right)^{1/\epsilon}\right)$$ in $\RR^2$.
	
	(4) Note that, for $a \in \CC$ with $\re a \ge 0$, the boundary of $a + U_C^\epsilon$ in $\set{z \in \CC:\ \im z \ge \im a}$, viewed as a subset of $\RR^2$, is the graph of a function $f_{a,C}^\epsilon:[C + \re a,+\infty) \into [\im a,+\infty)$ such that $$f_{a,C}^\epsilon(x) \sim \im a + K_C^\epsilon\left(x- \re a\right)^{1/\epsilon} \prec f_{C/2}^\epsilon(x),$$ which proves the claim.
	
	(5) Note that $\blog(\{\re z > 1\}) = \set{z \in H(0):\ |\arg z| < \pi/2}$.
\end{proof}

The following is the main reason for working with standard power domains.

\begin{lemma}
\label{exp_on_sqd}
Let $C>0$, $\epsilon \in (0,1)$ and set $K:= \frac{C\cos\left(\epsilon\frac\pi2\right)}{3^{\epsilon/2}}$.  There exists $k \in (0,1)$ depending on $C$ and $\epsilon$ such that $$k\exp\left(K|z|^\epsilon\right) \le \left|\bexp(z)\right| \le \exp(|z|)$$ for $z \in U_C^\epsilon$.
\end{lemma}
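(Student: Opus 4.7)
The plan is to treat the two inequalities separately.  The upper bound is immediate from $|\bexp(z)| = \exp(\re z) \le \exp(|z|)$, since $\re z \le |z|$.  For the lower bound I will parametrize $z = \phi_C^\epsilon(w) = w + C(1+w)^\epsilon$ with $w \in H(0)$, so that $\re w \ge 0$.  This forces $\arg(1+w) \in [-\pi/2,\pi/2]$, hence $\arg\bigl((1+w)^\epsilon\bigr) \in [-\epsilon\pi/2,\epsilon\pi/2]$, yielding the fundamental estimate
\begin{equation*}
\re z \;=\; \re w + C\,\re\!\bigl((1+w)^\epsilon\bigr) \;\ge\; C\cos\!\left(\epsilon\tfrac{\pi}{2}\right)|1+w|^\epsilon,
\end{equation*}
and in particular $\re z \ge 0$ throughout $U_C^\epsilon$.

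The heart of the argument is a comparison between $|z|$ and $|1+w|$.  From $|1+w|^2 - |w|^2 = 1 + 2\re w \ge 1$ I have $|w| \le |1+w|$, so
\begin{equation*}
|z| \;\le\; |w| + C|1+w|^\epsilon \;\le\; |1+w|\bigl(1 + C|1+w|^{\epsilon-1}\bigr).
\end{equation*}
I will then split $U_C^\epsilon$ into an outer region $\{|1+w| \ge R_0\}$, with $R_0 := \bigl(C/(\sqrt 3 - 1)\bigr)^{1/(1-\epsilon)}$ chosen so that $C|1+w|^{\epsilon-1} \le \sqrt{3}-1$ there, and a complementary bounded inner region.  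On the outer region the last display gives $|z| \le \sqrt{3}\,|1+w|$, hence $|1+w|^\epsilon \ge |z|^\epsilon/3^{\epsilon/2}$, and the fundamental estimate becomes $\re z \ge K|z|^\epsilon$; consequently $|\bexp(z)| = \exp(\re z) \ge \exp(K|z|^\epsilon)$, i.e.\ the required lower bound with $k = 1$.  The threshold $\sqrt 3$ is dictated precisely by the factor $3^{\epsilon/2}$ built into the definition of $K$.

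On the inner region $|1+w| \le R_0$ everything is bounded: $|z| \le R := R_0 + CR_0^\epsilon$, so $\exp(K|z|^\epsilon) \le \exp(KR^\epsilon)$, whereas $\re z \ge 0$ gives $|\bexp(z)| \ge 1$.  Taking $k := \exp(-KR^\epsilon) \in (0,1)$ secures the lower bound on the inner region, and since $k \le 1$ the same $k$ continues to work on the outer region, depending only on $C$ and $\epsilon$ as required.

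I do not anticipate any serious obstacle beyond bookkeeping; the only point requiring care is verifying that the split at threshold $\sqrt{3}$ produces exactly the constant $K = C\cos(\epsilon\pi/2)/3^{\epsilon/2}$ in the statement.  Any strictly larger threshold would enlarge the bounded region and change $K$, but not the structure of the argument.
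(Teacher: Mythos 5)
Your proof is correct and takes a genuinely different route from the paper's. The paper fixes a radius $r$, notes that the infimum of $\re z$ on $U_C^\epsilon \cap C_r$ is approached on $\partial U_C^\epsilon$, and then invokes the asymptotic formula $f_C^\epsilon(t) \sim K_C^\epsilon t^{1/\epsilon}$ from Lemma~\ref{sqd_lemma}(3) (whose proof passes through generalized power series) to conclude $x(r)\ge K r^\epsilon$ for large $r$, tacitly absorbing small $r$ into the constant $k$. You instead work directly through the conformal parametrization $z = \phi_C^\epsilon(w)$ and give a pointwise, non-asymptotic estimate: the argument of $(1+w)^\epsilon$ is confined to $[-\epsilon\pi/2,\epsilon\pi/2]$, which gives $\re z \ge C\cos(\epsilon\pi/2)|1+w|^\epsilon$, and the algebraic comparison $|z|\le|1+w|(1+C|1+w|^{\epsilon-1})$ together with the explicit threshold $R_0$ turns $|1+w|^\epsilon$ into $|z|^\epsilon/3^{\epsilon/2}$ on the outer region. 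What the paper's approach buys is brevity by reusing Lemma~\ref{sqd_lemma}(3); what yours buys is independence from that lemma, a fully explicit constant $k = \exp(-KR^\epsilon)$, and a transparent explanation of where the factor $3^{\epsilon/2}$ comes from. Both are valid; yours is arguably cleaner as a standalone argument.
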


\begin{proof}
	For $r>0$, denote by $C_r$ the circle with center 0 and radius $r$.  Since $|\bexp(x+iy)| = \exp x$, the point in $U \cap C_r$ where $|\bexp z|$ is maximal is $z = r$.  On the other hand, the point $z(r)  = x(r)+iy(r)$ in $U_C^\epsilon \cap C_r$ where $|\bexp z|$ is smallest lies on the boundary of $U_C^\epsilon$, so that $y(r) = f_C^\epsilon(x(r))$.  It follows from Lemma \ref{sqd_lemma}(3) that $$r = \sqrt{x(r)^2 + f_C^\epsilon(x(r))^2} \sim x(r)^{1/\epsilon} \sqrt{x(r)^{2-2/\epsilon} + (K_C^\epsilon)^2}.$$  Hence $x(r) \ge Kr^\epsilon$ for all sufficiently large $r \in \RR$, as required.
\end{proof}

\section{Monomials on standard power domains}
\label{monomial_section}

We now use Corollary 1.6 and Application 1.3 of \cite{Kaiser:2017aa} to find out which tuples $f$ of germs in $\I$ are admissible.  

\begin{expl}
	\label{extension_expl}
	The restriction of $1/\bexp$ to any right half-plane $H(a)$ with $a \in \RR$ is bounded.  Hence $\exp \circ (-x)$ has a bounded complex analytic continuation on every standard power domain.
\end{expl}

Below, we denote by $\arg$ the standard argument on $\CC \setminus (-\infty,0]$.  Recall that, for $\alpha \in (0,\pi]$, we set $$S(\alpha):= \set{z \in \CC:\ |\arg z| < \alpha)}.$$
We will need to work, for $f \in \H$, with both the \textit{exponential height} $\eh(f)$ and the \textit{level} $\level(f)$.  The former measures the logarithmic-exponential complexity of $f$; roughly speaking, if $f$ is unbounded, then $\eh(\exp \circ f) = \eh(f)+1$, while if $f$ is bounded, then $\eh(\exp \circ f) = \eh(f)$ (see \cite[Section 2]{Kaiser:2017aa} for details).  The latter measures the exponential order of growth of the germ $f$; we refer the reader to Marker and Miller \cite{Marker:1997kn} for details.  The level extends to all $\log$-$\exp$-analytic germs in an obvious manner, see \cite[Section 3]{Kaiser:2017aa}.  

\begin{rmks}
	\begin{enumerate}
		\item The two quantities are not equal in general: we have $\level(x+e^{-x}) = 0 \ne 1 = \eh(x+e^{-x})$.  
		\item The map $\level:(\H, \circ) \into (\ZZ,+)$ is a group homomorphism; in particular, for $f \in \H$ and $g \in \I$, we have $\level(f \circ g^{-1}) = \level(f) - \level(g)$.  In contrast, the map $\eh:\H \into \ZZ$ is not a group homomorphism, and the definition of $\eh$ gives no bounds on $\eh(f \circ g^{-1})$ in terms of $\eh(f)$ and  $\eh(g)$.
	\end{enumerate}
\end{rmks}

As in \cite[Section 2]{Kaiser:2017aa}, we set $$\H_{\le 0} := \set{ f \in \H:\ \eh(f) \le 0}.$$

\begin{fact}[Corollary 2.16 in \cite{Kaiser:2017aa}]
	\label{0-eh_cor}
	\begin{enumerate}
		\item Let $f \in \H$.  Then $\eh(f \circ \exp) = \eh(f) + 1$ and $\eh(f \circ \log) = \eh(f) -1$.
		\item Let $f \in \H$ be infinitely increasing.  Then $$\level(f) \le \eh(f).$$
		\item The set $\H_{\le 0}$ is a differential subfield of $\H$.
	\end{enumerate}
\end{fact}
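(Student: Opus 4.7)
The plan is to prove each assertion by induction using the definition of exponential height from Section 2 of \cite{Kaiser:2017aa}: $\eh(f)$ is the minimum, over all LE-expressions representing $f$, of the maximum nesting depth of $\exp$ after all logarithms have been pushed to the leaves.

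For part (1), the inequality $\eh(f \circ \exp) \le \eh(f) + 1$ is straightforward: any LE-expression $E(y)$ for $f$ in a formal variable $y$ yields the expression $E(\exp x)$ for $f \circ \exp$, which adds at most one exponential layer at the variable slot. The reverse inequality $\eh(f \circ \exp) \ge \eh(f) + 1$ is the subtle half, since a priori cancellations could reduce the nesting depth. I would establish it using the LE-normal form developed in Section 2 of \cite{Kaiser:2017aa}: given any LE-expression for $f \circ \exp$ of minimal height $n$, substituting $x \mapsto \log x$ and contracting the leading $\log \circ \exp$ pairs yields an LE-expression for $f$ of height at most $n - 1$. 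The dual identity $\eh(f \circ \log) = \eh(f) - 1$ then follows by applying the first to $g := f \circ \log$ and using $g \circ \exp = f$.

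For part (2), I would argue by induction on a complexity measure compatible with $\eh$, using the following three ingredients: for $f = \exp \circ g$ with $g \in \I$, both $\eh$ and $\level$ shift by $+1$; dually for $f = \log \circ g$; and for polynomial or Laurent combinations, both invariants are bounded by the maxima over the inputs. Iterating these observations through the LE-expression tree of $f$ preserves the inequality $\level \le \eh$ at every node. For part (3), closure of $\H_{\le 0}$ under the ring operations follows from the subadditivity bounds $\eh(f \pm g), \eh(fg), \eh(f/g) \le \max\{\eh(f), \eh(g)\}$, which are immediate from the LE-expression definition. Closure under differentiation reduces to showing $\eh(f') \le \eh(f)$; a Leibniz-style induction on expression depth handles sums, products and quotients, while the critical case $f = \exp(g)$ yields $f' = g' \cdot \exp(g)$ with $\eh(f') \le \max\{\eh(g'), \eh(g) + 1\} = \eh(g) + 1 = \eh(f)$ by the inductive hypothesis $\eh(g') \le \eh(g)$.

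The main obstacle is the reverse inequality in part (1): showing that composition with $\exp$ on the right strictly increases exponential height by exactly one and cannot be cancelled by any reorganization of the LE-expression. This is precisely where a canonical LE-normal form for germs in $\H$ is essential, and once it is in hand the remaining assertions follow as routine structural consequences of how $\eh$ and $\level$ interact with the basic operations.
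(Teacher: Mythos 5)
This statement is labeled \emph{Fact} and explicitly cites Corollary~2.16 of \cite{Kaiser:2017aa}; the paper offers no proof of its own and simply imports the result, so there is no in-paper argument to compare against. Your reconstruction therefore goes beyond the paper, and it has a structural flaw at its base: you work from a guessed definition of $\eh$ (``minimum over all LE-expressions of the maximum $\exp$-nesting depth''), but the paper's own informal gloss --- ``$\eh(\exp\circ f) = \eh(f)+1$ if $f$ is unbounded, while $\eh(\exp\circ f) = \eh(f)$ if $f$ is bounded'' --- is incompatible with that definition. Under a pure nesting-depth count an optimal expression for $\exp\circ f$ would always have height $\eh(f)+1$, regardless of whether $f$ is bounded, so the actual definition in \cite{Kaiser:2017aa} must be the more delicate recursive one built on a normal form for $\Ranexp$-germs that treats bounded arguments to $\exp$ differently. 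Without that precise definition none of the inequalities you invoke are established.

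Concretely: in part~(3) your ``critical case'' uses $\eh(\exp\circ g) = \eh(g)+1$ unconditionally, which by the paper's own description of $\eh$ fails when $g$ is bounded. And in part~(1) the step ``substituting $x\mapsto\log x$ and contracting the leading $\log\circ\exp$ pairs yields an LE-expression for $f$ of height at most $n-1$'' is the content of the reverse inequality restated in different words; proving it requires precisely the normal-form control from \cite{Kaiser:2017aa} that you flag at the end as ``essential'' but do not supply. As a sketch of plausible ideas your outline is reasonable, but it is not a verification of the Fact, and it is not something the present paper attempts --- the intended proof lives in the cited reference.
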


The second fact we need gives an upper bound for exponential height in the situation of the second remark above:

\begin{fact}[Application 1.3 in \cite{Kaiser:2017aa}] 
	\label{eh_application}
	Let $f \in \I$ and $g \in \H$.  Then $$\eh\left(g \circ f^{-1}\right) \le \max\{\eh(g) + \eh(f) - 2\level(f), \eh(f) - \level(f)\}.$$
\end{fact}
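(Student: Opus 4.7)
The plan is to reduce the claim to the case $\level(f) = 0$ and then invoke the structure theory for level-zero germs developed in \cite{Kaiser:2017aa}.

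\medskip

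\noindent\textbf{Reduction to level zero.} Set $k := \level(f) \in \ZZ$ (which may be negative, e.g.\ for $f = \log$). Using that $\level$ is a composition-homomorphism and iterating Fact \ref{0-eh_cor}(1) (which, read in both directions, gives $\eh(h \circ \log) = \eh(h)-1$ and $\eh(h \circ \exp) = \eh(h)+1$), define
\begin{equation*}
F := f \circ \log_k, \qquad G := g \circ \log_k,
\end{equation*}
with the convention $\log_k = \exp_{-k}$ when $k < 0$. Then $\level(F) = 0$, $F \in \I$, and
\begin{equation*}
\eh(F) = \eh(f) - k, \qquad \eh(G) = \eh(g) - k.
\end{equation*}
From $F = f \circ \log_k$ one reads $F^{-1} = \exp_k \circ f^{-1}$, hence $f^{-1} = \log_k \circ F^{-1}$ and
\begin{equation*}
g \circ f^{-1} \;=\; G \circ F^{-1}.
\end{equation*}
Substituting $k = \level(f)$, the desired inequality is therefore equivalent to the level-zero statement
\begin{equation*}
\eh(G \circ F^{-1}) \;\le\; \max\bigl\{\eh(G) + \eh(F),\; \eh(F)\bigr\}. \tag{$\ast$}
\end{equation*}

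\medskip

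\noindent\textbf{The level-zero case.} To prove $(\ast)$, I would appeal to the structure theory of \cite{Kaiser:2017aa} (in particular Corollary 1.6 cited above). For $F \in \I$ of level zero, the expected key input is: the compositional inverse $F^{-1}$ is again level zero and satisfies $\eh(F^{-1}) \le \eh(F)$. Granting this, $(\ast)$ splits into two sub-cases. If $\eh(G) \ge 0$, one shows $\eh(G \circ F^{-1}) \le \eh(G) + \eh(F^{-1}) \le \eh(G) + \eh(F)$ by induction on $\eh(G)$: write $G$ in the normal form of \cite[Section~2]{Kaiser:2017aa} as a rational combination of finitely many $\exp$'s applied to level-zero germs of strictly smaller $\eh$, substitute $F^{-1}$ on the right, and track how many surviving exp-iterations the substitution produces. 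If $\eh(G) < 0$, then $G$ itself is, up to $\H_{\le 0}$-operations, a finite $\log$-iterate applied to a level-zero germ of $\eh \le 0$, and substitution of $F^{-1}$ cannot raise the exponential height beyond $\eh(F^{-1}) \le \eh(F)$, giving $\eh(G \circ F^{-1}) \le \eh(F)$.

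\medskip

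\noindent\textbf{Main obstacle.} The delicate point is the first sub-case of the level-zero step. Exponential height is \emph{not} additive under left composition: for instance $\exp \circ \log = x$ has $\eh = 0$ despite $\eh(\exp) + \eh(\log) = 1 + (-1) = 0$, and more dramatic cancellations occur for $\exp_k \circ \log_k$. So the additive bound $\eh(G \circ F^{-1}) \le \eh(G) + \eh(F^{-1})$ cannot be obtained by any naive combinatorics on the number of nested $\exp$'s and $\log$'s — it must be justified by showing that, for $F$ of level zero, substitution of $F^{-1}$ cannot trigger such exp-log cancellations inside $G$. This is precisely where the fine-grained analysis of Corollary 1.6 of \cite{Kaiser:2017aa}, which controls exactly which outer $\exp$'s of $G$ can be absorbed by the asymptotic behavior of $F^{-1}$, is indispensable; without it, one obtains only the trivial (and useless) bound $\eh(G \circ F^{-1}) \le \eh(G) + \eh(F^{-1}) + C$ for some $C$ depending on the number of exp-iterations in $G$.
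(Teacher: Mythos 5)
This statement is recorded in the paper as a \emph{Fact}, i.e.\ a citation of Application~1.3 of \cite{Kaiser:2017aa}; the paper gives no proof of its own, so there is nothing to compare against. Judging your attempt on its own terms: the reduction to the level-zero case is correct and clean. Writing $k=\level(f)$, $F=f\circ\log_k$, $G=g\circ\log_k$, one has $\level(F)=0$, $\eh(F)=\eh(f)-k$, $\eh(G)=\eh(g)-k$, and $G\circ F^{-1}=g\circ f^{-1}$, so the target inequality becomes $\eh(G\circ F^{-1})\le\max\{\eh(G)+\eh(F),\,\eh(F)\}$. That much is fine.

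The rest, however, is not a proof. First, the ``key input'' $\eh(F^{-1})\le\eh(F)$ for level-zero $F$ is itself the special case $g=x$ of the statement you are trying to prove; unless you establish it independently (you do not), the argument is circular. Second, the inductive step for $\eh(G)\ge 0$ --- ``substitute $F^{-1}$ into the normal form of $G$ and track how many surviving exp-iterations the substitution produces'' --- is precisely the content that needs proving, and you yourself flag it as the ``delicate point'' and concede that without the fine-grained analysis of \cite{Kaiser:2017aa} one only gets a useless bound. Third, invoking Fact~\ref{ccc} (Corollary~1.6 of \cite{Kaiser:2017aa}) as the missing ingredient is misdirected: that result concerns half-bounded holomorphic continuations to right half-planes, not algebraic bounds on exponential height under composition; the relevant machinery would be the normal-form/decomposition structure used to prove Application~1.3 in the source paper, which your sketch gestures at but does not carry out. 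So what you have is an honest reduction plus an acknowledgment that the hard step is elsewhere, not a proof.
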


The third fact summarizes analytic continuation properties of germs in $\H$ of low enough exponential height and level.  Given a domain $\Omega \subseteq \CC$ and a map $f:\Omega \into \CC$, we call $f$ \textbf{half-bounded} if $f$ or $1/f$ is bounded.  We denote by $\arg(z)$ the standard argument of $z \in \CC \setminus (-\infty,0]$.

\begin{fact}[Corollary 1.6 in \cite{Kaiser:2017aa}]
	\label{ccc}
	Let $f \in \H$ be such that $\eh(f) \le 0$.
	\begin{enumerate}
		\item There are $a \ge 0$ and a half-bounded complex analytic continuation $\ff:H(a) \into \CC$ of $f$.   
		\item Assume in addition that $f \in \I$.  Then 
		\begin{enumerate}
			\item $|\ff(z)| \to \infty$ as $|z| \to \infty$, for $z \in H(a)$;
			\item if $f \prec x^2$, then $\ff(H(a)) \subseteq \CC \setminus (-\infty,0]$, $\ff:H(a) \into \ff(H(a))$ is biholomorphic and we have $$\sign(\arg\ff(z)) = \sign(\arg z) = \sign(\im z) =
			\sign(\im\ff(z))$$ for $z \in H(a)$;
			\item if $\eh(f) < 0$ then, for every $\alpha > 0$, there exists $b \ge a$ such that $\ff(H(a)) \cap H(b) \subseteq S(\alpha)$. \qed
		\end{enumerate}
	\end{enumerate}
\end{fact}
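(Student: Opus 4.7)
The plan is to proceed by induction on the exponential height $\eh(f)$, using the structural properties of $\H_{\le 0}$ established in \cite{Kaiser:2017aa}. Since $\H_{\le 0}$ is a differential subfield of $\H$ by Fact \ref{0-eh_cor}(3), and since applying $\exp$ to an unbounded germ increases exponential height by $1$, every $f \in \H_{\le 0}$ should admit a presentation built from iterated logarithms and bounded exponentials via algebraic and analytic operations, with the outermost applications of $\exp$ occurring only on bounded subexpressions.

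For part (1), I would first check that the basic building blocks extend to half-bounded complex analytic maps on some right half-plane $H(a)$. The principal branch of $\log$ extends to $H(0)$, so each iterated $\log_k$ extends to $H(a)$ for sufficiently large $a$, producing a large (hence half-bounded) extension; bounded exponentials $\exp \circ g$ extend to $\bexp \circ \gg$, which is bounded by $\exp(|\gg|)$. Then closure under field operations and analytic composition, via convergent generalized Laurent series as in \cite{Dries:1998xr}, will preserve both analyticity on a right half-plane and half-boundedness, completing the induction.

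Next I would turn to part (2). When $f \in \I$, the half-boundedness of $\ff$ means $1/\ff$ is bounded on $H(a)$ with $1/\ff \to 0$ along $\RR$; applying the Phragm\'en--Lindel\"of principle to $1/\ff$ should yield (2a). For (2b), the growth bound $f \prec x^2$ combined with Cauchy's integral formula in disks of radius comparable to $|z|$ inside $H(a)$ will provide good control on $\ff'$, and comparing with the real monotonicity of $f$ should force $\re \ff'(z) > 0$ on $H(a)$; injectivity then follows, and the Schwarz reflection principle together with $f(\RR) \subseteq \RR$ will deliver the sign identities between $\arg \ff(z)$, $\arg z$ and the imaginary parts. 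For (2c), Fact \ref{0-eh_cor}(2) combined with $\eh(f) < 0$ gives $\level(f) < 0$, so $f \preceq \log_k$ for some $k \ge 1$; then $|\ff(z)|$ will grow at most like an iterated $|\log z|$ while $\arg \ff(z)$ stays bounded by a fixed multiple of $\pi/2$, and choosing $b$ large enough will force $\ff(H(a)) \cap H(b) \subseteq S(\alpha)$ as required.

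The hard part will be the structural step underlying the induction --- establishing that every germ of non-positive exponential height really does decompose into the basic pieces in a way that the half-plane $H(a)$ can be chosen uniformly across all intermediate extensions, and that the half-boundedness dichotomy propagates through composition. This is precisely the technical content of \cite{Kaiser:2017aa}, where the interplay between exponential height, level, and convergent generalized Laurent representations carries out the required bookkeeping; I expect the analytic arguments supporting parts (2a)--(2c) to be relatively routine once that normal form is in hand.
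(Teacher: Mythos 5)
This Fact is not proved in the present paper: it is imported verbatim from Corollary 1.6 of \cite{Kaiser:2017aa}, and the terminal qed symbol at the end of the statement signals that no argument is supplied here. So there is no in-paper proof to compare your sketch against; the entire content of the statement lives in the cited paper.

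Your high-level plan does reflect roughly what \cite{Kaiser:2017aa} does: germs of non-positive exponential height are built from iterated logarithms, bounded exponentials and convergent generalized Laurent series, and the analytic continuation to a right half-plane is obtained by inducting over that structure. You are also right that the real technical work is in establishing such a normal form and propagating analytic continuation and half-boundedness through it. But the specific analytic arguments you sketch for part (2) would not close the gaps even granted the normal form. For (2a), Phragm\'en--Lindel\"of plus ``$1/\ff$ bounded on $H(a)$ and $\to 0$ along $\RR$'' does not yield $1/\ff \to 0$ uniformly in $H(a)$: the function $e^{-z}$ is bounded on $H(a)$ and decays on the real axis, yet $|e^{-z}| = e^{-\re z}$ stays bounded away from $0$ along vertical rays, so $|e^{z}|$ does not tend to $\infty$ in $H(a)$. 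What rules this example out is that $e^x$ has exponential height $1$, and the proof has to exploit that structural fact rather than boundedness plus real decay. For (2b), Cauchy estimates control $|\ff'|$ but say nothing about $\sign(\re\ff')$, and ``$\ff$ real and increasing on $\RR$ with $\ff \prec z^2$'' does not force $\re\ff' > 0$ on a half-plane for a general holomorphic $\ff$; again the argument must lean on the log-exp-analytic structure of $f$ rather than soft complex analysis. For (2c) you correctly obtain $\level(f) < 0$ from Fact \ref{0-eh_cor}(2), but what is needed is that $\arg\ff(z) \to 0$ as $|z| \to \infty$ (so the image lands in $S(\alpha)$ for every $\alpha > 0$), not merely that $\arg\ff(z)$ is bounded by a fixed multiple of $\pi/2$; as written, your sketch does not explain why the argument actually tends to zero.
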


From Fact \ref{ccc}, we immediately get the following:

\begin{cor}
	\label{eh_minus_one}
	Let $f \in \I$ be such that $\eh(f) \le -1$.  Then there exists $a \ge 0$ and a complex analytic continuation $\ff:H(a) \into \CC$ of $f$ such that, for all standard power domains $U,V \subseteq H(0)$, there exists $b \ge a$ with $\ff(U \cap H(b)) \subseteq V$.  In particular, $\bexp \circ (-\ff)$ is a bounded on every standard power domain.
\end{cor}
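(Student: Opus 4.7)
The plan is to derive the corollary from Fact~\ref{ccc}, applied both to $f$ and, crucially, to the auxiliary germ $F := \exp \circ f$. Since $f \in \I$, also $F \in \I$; and since $f$ is unbounded, the description of $\eh$ recalled in Section~\ref{monomial_section} gives $\eh(F) = \eh(f) + 1 \le 0$. Applying Fact~\ref{ccc}(1) to both germs and taking $a \ge 0$ large enough, I obtain a complex analytic continuation $\ff:H(a) \to \CC$ of $f$; by uniqueness of analytic continuation, the continuation of $F$ on $H(a)$ coincides with $\bexp \circ \ff$.

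The crucial observation is that $\re\ff(z) \to \infty$ as $|z| \to \infty$ in $H(a)$: Fact~\ref{ccc}(2)(a) applied to $F$ gives $|\bexp(\ff(z))| = e^{\re\ff(z)} \to \infty$, from which this follows immediately. Note that Fact~\ref{ccc}(2)(a) applied directly to $f$ only yields $|\ff(z)| \to \infty$, which does not rule out $\re\ff(z)$ staying bounded while $|\im\ff(z)|$ explodes; passing through $\exp\circ f$ is what converts modulus growth into real-part growth.

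Next, for an arbitrary $V = U_C^\epsilon$, I would show that $V$ eventually contains any sector strictly inside the right half-plane: the upper boundary of $V$ is the graph of $f_C^\epsilon(x) \sim K_C^\epsilon x^{1/\epsilon}$ with $1/\epsilon > 1$ by Lemma~\ref{sqd_lemma}(3), whereas points in $S(\alpha)$ satisfy $|\im w| < (\tan\alpha)\re w$, so $f_C^\epsilon(r)/r \to \infty$ produces $R_\alpha$ with $S(\alpha) \cap H(R_\alpha) \subseteq V$. Combined with Fact~\ref{ccc}(2)(c) applied to $f$ (valid since $\eh(f) \le -1 < 0$), for any fixed $\alpha \in (0,\pi/2)$ there is $b_\alpha \ge a$ with $\ff(H(a)) \cap H(b_\alpha) \subseteq S(\alpha)$, whence $\ff(H(a)) \cap H(R) \subseteq V$ for $R := \max(b_\alpha, R_\alpha)$.

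To conclude the main claim, the crucial observation lets me choose $b \ge a$ such that $|z| > b$ in $H(a)$ forces $\re\ff(z) > R$; for $z \in U \cap H(b)$, $|z| \ge \re z > b$ then yields $\ff(z) \in \ff(H(a)) \cap H(R) \subseteq V$. For the ``in particular'' clause, a direct estimate on $\phi_C^\epsilon$ gives $V \subseteq H(C\cos(\epsilon\pi/2))$, so $|\bexp(-\ff(z))| = e^{-\re\ff(z)}$ is bounded on $U \cap H(b)$ by a fixed exponential; on the bounded set $U \cap \{\re z \le b\}$, boundedness is automatic by continuity of the analytic continuation. The main difficulty I anticipate is establishing the crucial observation, since Fact~\ref{ccc} does not directly yield growth of $\re\ff$, and the device of applying Fact~\ref{ccc}(2)(a) to $\exp\circ f$ rather than to $f$ appears to be essential.
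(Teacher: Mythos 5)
Your proof is correct but takes a genuinely different route for the key step. The paper deduces $\level(f)\le\eh(f)\le -1$ from Fact~\ref{0-eh_cor}(2), hence $f\prec x^2$, invokes Fact~\ref{ccc}(2b) for a biholomorphic continuation $\ff$, uses (2c) with $\alpha=\pi/4$ to confine $\ff(H(a))\cap H(d)$ into a sector sitting inside $V$, and then cites Fact~\ref{ccc}(2a) for the inclusion $\ff(H(b))\subseteq H(d)$. As you correctly point out, (2a) applied to $f$ only yields $|\ff(z)|\to\infty$, not $\re\ff(z)\to\infty$; the paper's last step is thus implicitly combining (2a)--(2c) with a topological/properness argument to rule out the modulus escaping through the imaginary part while the real part stays bounded. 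Your device---observing that $\exp\circ f\in\I$ with $\eh(\exp\circ f)=\eh(f)+1\le 0$ (since $f$ is unbounded), then applying (2a) to $\exp\circ f$ so that $|\bexp(\ff(z))|=e^{\re\ff(z)}\to\infty$---converts modulus growth directly into real-part growth, sidesteps the issue, and dispenses with Fact~\ref{ccc}(2b) and the level bound $f\prec x^2$ altogether. The remaining ingredients (sector confinement via (2c), and that $S(\alpha)\cap H(R_\alpha)\subseteq U_C^\epsilon$ for $\alpha<\pi/2$ because $f_C^\epsilon(t)/t\to\infty$ by Lemma~\ref{sqd_lemma}(3)) are the same as in the paper in substance, the paper simply asserting the sector inclusion without spelling it out. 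One small caveat: the identity $\eh(\exp\circ f)=\eh(f)+1$ for unbounded $f$ appears in Section~\ref{monomial_section} only as an informal gloss citing \cite{Kaiser:2017aa}, not as a numbered Fact here; you need only the inequality $\eh(\exp\circ f)\le\eh(f)+1$, so the reliance is mild, but worth noting.
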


\begin{proof}
	By Fact \ref{0-eh_cor}(2), we have $\level(f) \le \eh(f) \le -1$, while $\level(x^2) = 0$; hence $f \prec x^2$.  So by Fact \ref{ccc}(2b), there are $a \ge 0$ and a complex analytic continuation $\ff:H(a) \into \CC$ such that $\ff(H(a)) \subseteq \CC \setminus (-\infty,0]$ and $\ff:H(a) \into \ff(H(a))$ is biholomorphic.  Let $V$ be a standard power domain, and let $c \ge 0$ be such that $S(\pi/4) \cap H(c) \subseteq V$.  By Facts \ref{ccc}(2ac), there is $d \ge c$ such that $\ff(H(a)) \cap H(d) \subseteq S(\pi/4) \cap H(c)$.  By Fact \ref{ccc}(2a) again, there is a $b \ge a$ such that $\ff(H(b)) \subseteq \ff(H(a)) \cap H(d)$.  Hence $\ff(U \cap H(b)) \subseteq V$ for any standard power domain $U$, as claimed.
\end{proof}

However, for $f \in \I$ with $\eh(f) = 0$, things are still not clear: while $f = x$ works by Example \ref{extension_expl}, the germ $f = x^2$ does not: if $U' \subseteq H(0)$ is a standard power domain, the set of squares of elements of $U'$ is not contained in any right half-plane $H(a)$ with $a \in \RR$, so the complex analytic continuation $\bexp \circ (-z^2)$ on $U'$ is unbounded.  Arguing similarly, we see that the germ $\exp \circ (-x^r)$ has a bounded complex analytic continuation on some standard power domain if and only if $r \le 1$ (and in this case, it has a bounded complex analytic continuation on all standard power domains).

What about the general $f \in \I$ with $\eh(f) = 0$?  While we do not fully characterize all such $f$ for which $\exp\circ (-f)$ has a bounded complex analytic continuation to some standard power domain, we do give a sufficient condition in Corollary \ref{admissible_prop} below that suffices for our purposes.  

%

To determine which of these germs satisfy $(\dagger)_1$, we also need a notion for studying asymptotic behavior on standard power domains: given  $U \subseteq H(0)$ and $\phi,\psi:U \into \CC$, we write $$\phi \preceq_U \psi \quad\text{iff}\quad \left|\frac{\phi(z)}{\psi(z)}\right| \text{ is bounded in } U$$ and $$\phi \prec_U \psi \quad\text{iff}\quad \lim_{|z| \to \infty, z \in U} \frac{\phi(z)}{\psi(z)} = 0.$$  Correspondingly, we write $\phi \asymp_U \psi$ if both $\phi \preceq_U \psi$ and $\psi \preceq_U \phi$.

We start with an easy case where dominance is preserved on the right half-plane: 

\begin{lemma}
\label{comparison_mapping_2}
Let $f,g \in \H$ be such that $\eh(f), \eh(g) \le 0$, and let $\ff$ and $\gg$ be corresponding complex analytic continuations obtained  from Fact \ref{ccc}.  If $f \prec g$, then there exists $a \ge 0$ such that $\ff \prec_{H(a)} \gg$.  
\end{lemma}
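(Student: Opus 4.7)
The plan is to reduce this two-germ comparison to a single-germ application of Fact \ref{ccc} by studying the quotient $h := f/g$. Since $f \prec g$ forces $g$ to be ultimately nonzero, $h$ is well-defined, and by Fact \ref{0-eh_cor}(3) the set $\H_{\le 0}$ is a differential subfield of $\H$, so $h \in \H_{\le 0}$ and, whenever $h \not\equiv 0$, also $1/h \in \H_{\le 0}$. The case $f \equiv 0$ is trivial, so I assume $h$ is ultimately nonzero and, after replacing $f$ with $-f$ if necessary, that $h > 0$ ultimately.

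The key move is not to apply Fact \ref{ccc}(1) to $h$ directly (half-boundedness alone would leave open whether the continuation of $h$ decays along rays into the upper or lower half-plane), but rather to apply Fact \ref{ccc}(2a) to $k := 1/h$. Since $h \prec 1$ and $h > 0$ give $k \in \I \cap \H_{\le 0}$, this yields $a \ge 0$ and a complex analytic continuation $\kk : H(a) \to \CC$ with $|\kk(z)| \to \infty$ as $|z| \to \infty$ in $H(a)$; in particular $\kk$ is nowhere zero (after enlarging $a$), so its reciprocal $\hh := 1/\kk$ is a complex analytic continuation of $h$ on $H(a)$ satisfying $\hh(z) \to 0$ as $|z| \to \infty$. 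To identify $\hh$ with $\ff/\gg$ on a common right half-plane, I apply Fact \ref{ccc}(1) to $1/g \in \H_{\le 0}$, obtaining a half-plane $H(b)$ on which the continuation of $1/g$ is holomorphic; by the identity theorem this forces $\gg$ to be nowhere zero there, and then $\ff/\gg$ is a complex analytic continuation of $h$ on $H(\max(a,b))$ that coincides with $\hh$ on the real axis, hence equals $\hh$ on the whole half-plane by the identity theorem. This gives $\ff(z)/\gg(z) \to 0$ as $|z| \to \infty$ in $H(\max(a,b))$, which is the desired $\ff \prec_{H(\max(a,b))} \gg$.

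The main obstacle is precisely the temptation to apply Fact \ref{ccc}(1) directly to $h$: half-boundedness together with smallness on the reals does not force smallness as $|z| \to \infty$ across a whole right half-plane (witness $e^{-z}$ on $H(0)$, which is bounded and real-small but has modulus $1$ along the imaginary axis). One really must invert and invoke the stronger Fact \ref{ccc}(2a), which encodes the genuine complex-analytic tameness of infinitely increasing germs of exponential height $\le 0$, namely that their continuations blow up in the full complex sense as $|z| \to \infty$ inside any right half-plane.
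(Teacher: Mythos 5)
Your proof is correct and uses the same key idea as the paper's: reduce to the single quotient $g/f$ and apply Fact \ref{ccc}(2a) to conclude that the modulus of its continuation tends to $\infty$ on a right half-plane. The paper does exactly this, writing $h := g/f$ directly rather than going through $k := 1/(f/g)$, and is terser about the identity-theorem bookkeeping that identifies $\ff/\gg$ with $1/\kk$ on a common half-plane; your more explicit handling of the nonvanishing of $\gg$ is a harmless elaboration. One small caveat on your closing aside: $e^{-x} = \exp\circ(-x)$ has exponential height $1$, not $\le 0$, so it is not itself a germ to which Fact \ref{ccc}(1) applies; still, the broader point you are making -- that half-boundedness on $H(a)$ together with decay on $\RR$ need not force the modulus to vanish throughout $H(a)$, which is why one must invert and invoke Fact \ref{ccc}(2a) -- is sound.
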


\begin{proof}
	By Fact \ref{0-eh_cor}(3), the germ $h:= \frac gf$ satisfies $\eh\left(h\right) \le 0$; let $\hh$ be a complex analytic continuation of $h$ obtained  from Fact \ref{ccc}.  Now choose $a \ge 0$ such that $\ff$, $\gg$ and $\hh$ are defined on $H(a)$.  By assumption, either $h$ or $-h$ belongs to $\I$; hence $|\hh(z)| \to \infty$ as $|z| \to \infty$ in $H(a)$ by Fact \ref{ccc}(2a).  Since $\ff/\gg = 1/\hh$ in $H(a)$ by the holomorphic identity theorem, the lemma is proved.
\end{proof}

\begin{cor}
	\label{unit_mapping}
	Let $f \in \H$ be such that $\eh(f) \le 0$, and let $\ff$ be a complex analytic continuation of $f$ obtained  from Fact \ref{ccc}.  If we have $\,\lim_{x \to +\infty} f(x) = c \in \RR$, then there exists $a \ge 0$ such that  $\lim_{|z| \to \infty} \ff(z) = c$ in $H(a)$.
\end{cor}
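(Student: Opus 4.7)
The plan is to reduce to Lemma \ref{comparison_mapping_2} by subtracting the limit. First I would set $g := f - c$; since $\H_{\le 0}$ is a subfield of $\H$ by Fact \ref{0-eh_cor}(3) and $c \in \RR \subseteq \H_{\le 0}$, it follows that $\eh(g) \le 0$. The degenerate case $g = 0$ is handled by the identity theorem: then $f$ agrees with the constant $c$ on a real ray, so $\ff \equiv c$ on the connected component of its domain meeting that ray, and the conclusion is immediate.

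Otherwise, because $\H$ is a Hardy field, $g$ is eventually of constant sign and nowhere zero; combined with the hypothesis $\lim_{x \to +\infty} g(x) = 0$, this gives $g \prec 1$. Next I would apply Lemma \ref{comparison_mapping_2}, with $g$ playing the role of ``$f$'' and the constant germ $1$ playing the role of ``$g$'' (both of exponential height $\le 0$, with $g \prec 1$). This produces some $a' \ge 0$ and a complex analytic continuation $\gg$ of $g$ furnished by Fact \ref{ccc} such that $\gg \prec_{H(a')} 1$, equivalently $\gg(z) \to 0$ as $|z| \to \infty$ in $H(a')$.

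Finally I would choose $a \ge a'$ large enough that $\ff$ is also defined on $H(a)$. Both $\ff - c$ and $\gg$ are holomorphic on the connected set $H(a)$ and agree on the real ray $(a,+\infty)$, so by the identity theorem they coincide throughout $H(a)$. Hence $\lim_{|z|\to\infty,\, z\in H(a)} \ff(z) = c$, as required. I do not expect a serious obstacle here: the only subtleties are treating $g \equiv 0$ separately and verifying $\eh(f-c) \le 0$, both of which are routine once Fact \ref{0-eh_cor}(3) and the Hardy field property are invoked.
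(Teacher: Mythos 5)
Your proof takes essentially the same route as the paper: set $g := f - c$, observe via Fact \ref{0-eh_cor}(3) that $\eh(g) \le 0$ and $g \prec 1$, then invoke Lemma \ref{comparison_mapping_2} to get $\gg \prec_{H(a)} 1$. The only difference is that you separately dispatch the degenerate case $g \equiv 0$ and explicitly invoke the identity theorem to reconcile $\ff$ with $\gg + c$; both of these are small points the paper glosses over (and your caution about $g \equiv 0$ is in fact warranted, since Lemma \ref{comparison_mapping_2}'s proof forms the quotient $1/g$, which would be undefined there).
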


\begin{proof}
	Set $g:= f-c$;  then $\eh(g) \le 0$ by Fact \ref{0-eh_cor}(3) and $g \prec 1$, so that $\gg \prec 1$ in $H(a)$, for some $a \ge 0$, by Lemma \ref{comparison_mapping_2}.
\end{proof}

Recall that, for $\alpha \in [0,\pi]$ we denote by $$S(\alpha) = \set{z \in \CC:\ |\arg z| < \alpha}$$ the sector of opening $\alpha$ and bisecting line $(0,+\infty)$.

\begin{lemma}
\label{comparison_mapping_3}
Let $f \in \I$ be such that $f>0$ and $\eh(f) \le 0$, and let $\ff$ be a complex analytic continuation of $f$ obtained  from Fact \ref{ccc}.  Assume there exists $\epsilon \in (0,1)$ such that and $f \prec x^\epsilon$.  Then $$\ff(H(a)) \subseteq S(\epsilon\cdot\pi/2)$$ for some $a \ge 0$.
\end{lemma}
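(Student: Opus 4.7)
The plan is to exploit multiplicativity of the argument via principal logarithms, comparing $\ff(z)$ to $z^\epsilon$ on a suitable right half-plane.

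First I would introduce the auxiliary germ $g := x^\epsilon/f$. Since $f \prec x^\epsilon$, we have $g \in \I$ and $g > 0$; since $\H_{\le 0}$ is a differential field by Fact \ref{0-eh_cor}(3) and both $x^\epsilon$ and $f$ lie in it, also $\eh(g) \le 0$. Moreover $g \le x^\epsilon \prec x^2$ (because $f \ge 1$ eventually), so both $f$ and $g$ satisfy the hypotheses of Fact \ref{ccc}(2b). I may therefore choose $a \ge 0$ on which both continuations $\ff$ and $\gg$ exist, map $H(a)$ into $\CC \setminus (-\infty, 0]$, and satisfy $\sign(\arg \ff(z)) = \sign(\arg \gg(z)) = \sign(\arg z)$. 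The principal power $z^\epsilon$ in turn maps $H(0)$ into $\CC \setminus (-\infty, 0]$ with $\arg z^\epsilon = \epsilon \arg z$.

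Next I would establish the exact identity
\[
\arg \gg(z) = \epsilon \arg z - \arg \ff(z) \quad \text{for } z \in H(a).
\]
This follows from the identity theorem: the holomorphic function $h(z) := \log \gg(z) - \epsilon \log z + \log \ff(z)$ (all principal branches) satisfies $\exp(h(z)) = \gg(z) \ff(z)/z^\epsilon = 1$ by the very definition of $g$, hence takes values in $2\pi i \ZZ$; since it vanishes on the positive real axis and $H(a)$ is connected, it vanishes throughout. Taking imaginary parts yields the displayed identity.

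Combining this identity with the sign data, when $\arg z > 0$ the positivity $\arg \gg(z) > 0$ forces $\arg \ff(z) < \epsilon \arg z$, while $\arg \ff(z) > 0$ gives $0 < \arg \ff(z) < \epsilon \arg z$; the case $\arg z < 0$ is symmetric, and $\arg z = 0$ gives $\arg \ff(z) = 0$. Hence $|\arg \ff(z)| \le \epsilon |\arg z|$ on $H(a)$, and since $|\arg z| < \pi/2$ for every $z \in H(a) \subseteq H(0)$, we obtain $\ff(H(a)) \subseteq S(\epsilon \pi/2)$ as required. The main technical obstacle is the identity-theorem step that pins down the equation $\arg \gg = \epsilon \arg z - \arg \ff$ on the nose (and not merely modulo $2\pi$); this is precisely what rules out a phase jump and forces the sectorial containment.
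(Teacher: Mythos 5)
Your argument is correct and is essentially the paper's proof: both introduce the auxiliary germ $x^\epsilon/f$, apply Fact~\ref{ccc}(2b) to it (and to $f$), and use the holomorphic identity theorem to pin down $\arg(z^\epsilon/\ff(z)) = \epsilon\arg z - \arg\ff(z)$, from which the sectorial bound follows. The only difference is that you spell out the identity-theorem step (via the discrete-valued function $\log\gg - \epsilon\log z + \log\ff$) and explicitly invoke the sign condition on $\arg\ff$ for the lower bound, both of which the paper leaves implicit.
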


\begin{proof}
	The assumptions imply that $h:= \frac{x^\epsilon}f \le x^\epsilon \prec x^2$ belongs to $\I$, and $\eh(h) \le 0$ by Fact \ref{0-eh_cor}(3).  So by Fact \ref{ccc}(2b) and the holomorphic identity theorem, for some $a \ge 0$ and all $z \in H(a)$ with $\arg z > 0$, we have
	\begin{equation*}
	0 < \arg \frac{z^\epsilon}{\ff(z)} = \epsilon \arg z - \arg \ff(z),
	\end{equation*}
	so that $\arg\ff(z) < \epsilon \frac{\pi}2$, as claimed.  We argue similarly if $\arg z < 0$.
\end{proof}

\begin{cor}
	\label{comparison_mapping_cor}
	Let $f \in \I$ be such that $f>0$ and $\eh(f) \le 0$, and let $\ff$ be a complex analytic continuation of $f$ obtained from Fact \ref{ccc}.  Assume there exists $\epsilon \in (0,1)$ such that and $f \prec x^\epsilon$.  Then there exists $a \ge 0$ such that
	\begin{enumerate}
		\item for $z \in H(a)$ we have $|\ff(z)| \le |z|^\epsilon$ and $$\re\ff(z) \ge \cos\left(\epsilon \cdot \frac\pi2\right) |\ff(z)|;$$
		\item if $f \in \I$ and $U,V$ are standard power domains, there exists $b \ge a$ such that $\ff(U \cap H(b)) \subseteq V$.
	\end{enumerate}
\end{cor}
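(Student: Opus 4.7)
The plan is to assemble the corollary directly from Lemma \ref{comparison_mapping_2}, Lemma \ref{comparison_mapping_3}, Fact \ref{ccc}, and the geometric description of standard power domains in Lemma \ref{sqd_lemma}(3).

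For part (1), I would first apply Lemma \ref{comparison_mapping_2} to the pair $(f, x^\epsilon)$; since $\eh(x^\epsilon) = 0$ and $f \prec x^\epsilon$ by hypothesis, this yields $\ff \prec_{H(a_1)} z^\epsilon$ for some $a_1 \ge 0$. In particular $|\ff(z)|/|z|^\epsilon \to 0$ as $|z| \to \infty$, so after possibly enlarging $a_1$ I obtain $|\ff(z)| \le |z|^\epsilon$ throughout $H(a_1)$. Lemma \ref{comparison_mapping_3} applied to $f$ supplies $a_2 \ge 0$ with $\ff(H(a_2)) \subseteq S(\epsilon \pi/2)$, so $|\arg \ff(z)| < \epsilon \pi/2$ there. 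Setting $a := \max(a_1, a_2)$, the identity $\re w = |w|\cos(\arg w)$, combined with the fact that $\cos$ is even and decreasing on $[0, \pi/2]$, yields $\re \ff(z) \ge \cos(\epsilon \pi/2) |\ff(z)|$ on $H(a)$.

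For part (2), I would first establish the auxiliary geometric fact that for every standard power domain $V$ there exists $d \ge 0$ with $S(\epsilon \pi/2) \cap H(d) \subseteq V$. Writing $V = U_{C'}^{\delta'}$, Lemma \ref{sqd_lemma}(3) shows the boundary of $V$ in the upper half plane is the graph of $f_{C'}^{\delta'}$, with $f_{C'}^{\delta'}(x) \sim K_{C'}^{\delta'} x^{1/\delta'}$; since $1/\delta' > 1$, this grows faster than the linear function $x\tan(\epsilon\pi/2)$ bounding the sector, so the sector is contained in $V$ once the real part is large enough (the symmetric argument handles the lower half plane). Now part (1) gives $\ff(H(a)) \subseteq S(\epsilon\pi/2)$ together with $\re \ff(z) \ge \cos(\epsilon\pi/2)|\ff(z)|$, while Fact \ref{ccc}(2a) tells us $|\ff(z)| \to \infty$ as $|z| \to \infty$ in $H(a)$. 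Hence $\re \ff(z) \to \infty$ as well, and I can choose $b \ge a$ so that $\re \ff(z) > d$ for all $z \in H(b)$. Then $\ff(H(b)) \subseteq S(\epsilon\pi/2) \cap H(d) \subseteq V$, and since $U \cap H(b) \subseteq H(b)$ for any standard power domain $U$, we are done.

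The only step that goes beyond routine bookkeeping is the geometric inclusion $S(\alpha) \cap H(d) \subseteq V$ for $\alpha < \pi/2$: it reflects the fact that any standard power domain is much ``wider'' than a proper subsector far to the right, because its bounding curve grows strictly faster than linearly. Everything else is a direct combination of earlier results.
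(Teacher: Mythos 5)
Your proof is correct and follows essentially the same route as the paper's: part (1) combines Lemma \ref{comparison_mapping_2} (with $g = x^\epsilon$) and Lemma \ref{comparison_mapping_3} exactly as the paper does, and part (2) is the paper's argument with the auxiliary sector-inclusion fact made explicit (the paper phrases it as choosing $c$ with $w \in V$ for all $w \in S(\epsilon\pi/2)$ with $|w|>c$, which is equivalent to your $\re w > d$ version on that sector).
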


\begin{proof}
	From Lemma \ref{comparison_mapping_2} with $g = x^\epsilon$ we get $\ff \prec_{H(a)} \pp_\epsilon$, for some $a \ge 0$, and from Lemma \ref{comparison_mapping_3} we get $\re\ff(z)) \ge \cos\left(\epsilon \cdot \frac\pi2\right) \cdot |\ff(z)|$, for some $a \ge 0$ and all $z \in H(a)$; this proves part (1).  Part (2) follows from Lemma \ref{comparison_mapping_3} and Fact \ref{ccc}(2a), first choosing $c \ge 0$ such that $w \in V$ for all $w \in S(\epsilon \cdot \pi/2)$ with $|w| > c$, then choosing $b \ge a$ such that $|\ff(z)| \ge c$ for $z \in H(b)$.
\end{proof}

\begin{lemma}
\label{comparison_mapping_4}
Let $f \in \I$ be such that $f>0$ and $\eh(f) \le 0$, and let $\ff$ be a complex analytic continuation of $f$ obtained from Fact \ref{ccc}.  Assume that $f \prec x$ but $f \succ x^{\epsilon}$ for all $\epsilon \in (0,1)$.  Then for every standard power domain $V$, there exist $a\ge 0$ and standard power domains $U_1$ and $U_2$ such that $\ff(U_1 \cap H(a)) \subseteq V$ and $\ff(V \cap H(a)) \subseteq U_2$.
\end{lemma}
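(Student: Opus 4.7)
The plan is to extract three asymptotic estimates on $\ff$ on a right half-plane and then perform a direct geometric calculation using the explicit boundary description of standard power domains from Lemma \ref{sqd_lemma}. First, $f \prec x$ and Lemma \ref{comparison_mapping_2} (applied with $g = x$) yield $a_0 \ge 0$ and $c_0 > 0$ with $|\ff(z)| \le c_0 |z|$ on $H(a_0)$. Second, for each $\epsilon \in (0,1)$, the hypothesis $x^\epsilon \prec f$ together with the same lemma gives $a_\epsilon \ge 0$ and $c_\epsilon > 0$ with $|\ff(z)| \ge |z|^\epsilon/c_\epsilon$ on $H(a_\epsilon)$. Third, set $h := x/f$; then $h \in \I$ (since $f \prec x$), $\eh(h) \le 0$ by Fact \ref{0-eh_cor}(3), and $h \prec x^2$, so Fact \ref{ccc}(2b) applied to $h$---whose complex analytic continuation agrees with $z/\ff(z)$ by the identity theorem---gives $\sign(\arg(z/\ff(z))) = \sign(\arg z)$ on $H(a_1)$ for some $a_1 \ge 0$, which amounts to $|\arg\ff(z)| \le |\arg z|$.

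Fix $V = U_D^\delta$, choose $\gamma \in (\delta, 1)$ and $\gamma' \in (0,\delta)$, and set $U_1 := U_C^\gamma$ and $U_2 := U_C^{\gamma'}$ for a constant $C > 0$ to be adjusted at the end. Writing $z = x + iy$ and $\ff(z) = u + iv$, and taking $y \ge 0$ by the obvious reflection symmetry, the three estimates combine via elementary trigonometry to
\[ |v|/u \le y/x \qquad\text{and}\qquad u \ge x|z|^{\epsilon-1}/c_\epsilon \]
for $z \in H(a)$ with $a := \max\{a_0,a_\epsilon,a_1\}$. By Lemma \ref{sqd_lemma}(3), on $\partial U_C^\gamma$ we have $y \sim K_C^\gamma x^{1/\gamma}$ and hence also $|z| \sim K_C^\gamma x^{1/\gamma}$. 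Plugging these relations into the defining inequality $|v| \le f_D^\delta(u) \sim K_D^\delta u^{1/\delta}$ for $V$ near infinity reduces the inclusion $\ff(U_1 \cap H(a)) \subseteq V$, asymptotically as $x \to \infty$, to the exponent comparison
\[ \tfrac{1}{\gamma} - 1 \le \left(1 - \tfrac{1-\epsilon}{\gamma}\right)\left(\tfrac{1}{\delta} - 1\right), \]
which rearranges to $\gamma \ge 1 - \epsilon(1-\delta)$ and is satisfied, given $\gamma > \delta$, for any $\epsilon$ sufficiently close to $1$.

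The symmetric inclusion $\ff(V \cap H(a)) \subseteq U_2$ is verified analogously, starting now from $y \le K_D^\delta x^{1/\delta}$ on $V$ and the requirement $|v| \le K_C^{\gamma'} u^{1/\gamma'}$; this reduces to the condition $\gamma' \le 1 - (1-\delta)/\epsilon$, which, since $\gamma' < \delta$, holds for any $\epsilon < 1$ close enough to $1$. Choosing $C$ large enough to absorb the implicit constants in the asymptotic equivalences closes the argument. The main technical obstacle is this three-way bookkeeping among $\gamma$, $\delta$, and $\epsilon$: because the lower bound $|\ff(z)| \ge |z|^\epsilon/c_\epsilon$ is only available with $\epsilon < 1$, a small loss is introduced that forces the strict gaps $\gamma > \delta$ and $\gamma' < \delta$. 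It is precisely this phenomenon that makes the full family $\{U_C^\gamma : \gamma \in (0,1)\}$ of standard power domains necessary, rather than the fixed exponent $\gamma = 1/2$ of standard quadratic domains.
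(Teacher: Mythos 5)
Your proof takes a genuinely different route from the paper's, and the calculations check out. The paper factors $f = x \cdot g$ with $g := f/x$, applies Corollary \ref{comparison_mapping_cor}(1) to $1/g = x/f$ with the fixed parameter $\epsilon = \delta - \delta'$ to get $|\gg(z)| \ge |z|^{\delta'-\delta}$ and $\re\gg(z) \ge \cos\left((\delta-\delta')\frac{\pi}{2}\right)|\gg(z)|$, and exploits $\im\gg(z) < 0$ (for $\im z > 0$) to read off the clean bounds $\re\ff(z) > (\re z)(\re\gg(z))$ and $|\im\ff(z)| \le |z|$; it then derives the single claim $\ff\left(U_C^\delta \cap H(a)\right) \subseteq U_D^{\delta'}$ for any $\delta' < \delta$, from which both inclusions follow. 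You instead pull the two raw estimates $|\ff(z)| \ge |z|^\epsilon/c_\epsilon$ (from Lemma \ref{comparison_mapping_2}) and $|\arg\ff(z)| \le |\arg z|$ (from Fact \ref{ccc}(2b) applied to $x/f$ and to $f$ itself) and combine them directly. Your argument estimate carries exactly the same information as the paper's $\im\gg(z) < 0$, since $\arg\gg(z) = \arg\ff(z) - \arg z$. I verified the trigonometric reductions $|v|/u \le y/x$ and $u \ge x|z|^{\epsilon-1}/c_\epsilon$ as well as the two exponent conditions $\gamma \ge 1 - \epsilon(1-\delta)$ and $\gamma' \le 1 - (1-\delta)/\epsilon$, both of which are satisfied strictly for $\epsilon$ close enough to $1$. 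Your first estimate $|\ff(z)| \le c_0|z|$ is never used. The only inaccuracy is the concluding sentence: enlarging $C$ shrinks both $U_C^\gamma$ and $U_C^{\gamma'}$, which helps for the first inclusion but works \emph{against} the second, and the two domains need not share a constant anyway. The correct remedy is to observe that the strict exponent gaps (once $\epsilon$ is fixed close enough to $1$) dominate any multiplicative constants, so the constants should be absorbed by taking $a$, not $C$, sufficiently large.
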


\begin{rmk}
	We do not know if this lemma remains true with ``quadratic'' in place of ``power''; this is the technical reason for working with standard power domains instead of standard quadratic domains.
\end{rmk}

\begin{proof}
	Let $U = U_C^\delta \subseteq \CC$ be a standard power domain, with $\delta \in (0,1)$ and $C>0$, and let $\delta' \in (0,\delta)$; we claim that there exists $a \ge 0$ such that $\ff(U \cap H(a)) \subseteq U_D^{\delta'}$, where $\left(D \cos\left(\delta'\frac\pi2\right)\right)^{\delta'} = \frac{\cos\left((\delta-\delta') \frac\pi2\right) }{C\cos\left(\delta\frac\pi2\right)}$, which then proves the lemma.  
	
	Set $g:= \frac f{x}$; by assumption and Fact \ref{0-eh_cor}(3), $1/g \in \I$, $\eh(1/g) \le 0$ and $1/g \prec x^\epsilon$ for all $\epsilon \in (0,1)$.  By Fact \ref{ccc}(2a,b), the germ $g$ has a complex analytic continuation $\gg:H(a) \into \CC$, for some $a\ge 0$, such that $\im\gg(z) < 0$ for $z \in U$ with $\im z > 0$.  By Lemma \ref{sqd_lemma}(2) and after shrinking $C$ a bit if necessary, we have $\re z \ge C\cos\left(\delta \frac\pi2\right)|z|^{\delta}$ for sufficiently large $z \in U$.  By Corollary \ref{comparison_mapping_cor}(1) with $\epsilon = \delta - \delta'$, we also have for sufficiently large $z \in U$ with $\im z > 0$, that $|\gg(z)| \ge |z|^{\delta'-\delta}$ and, because $\re \frac1\gg = \frac1{\gg^2}\re \gg$, that $\re \gg(z) \ge \cos\left((\delta-\delta') \frac\pi2\right)  |\gg(z)|$. 
	Since $f = x g$ and $\im\gg(z) < 0$, it follows that
	\begin{align*}
	\re\ff(z) &= (\re z)(\re\gg(z)) - (\im z)(\im\gg(z)) \\
	&> (\re z)(\re\gg(z)) \\
	&\ge (\re z)\cdot \cos\left((\delta-\delta') \frac\pi2\right)  |\gg(z)| \\
	&\ge C\cos\left(\delta \frac\pi2\right)\cos\left((\delta-\delta') \frac\pi2\right)  |z|^{\delta'} .
	\end{align*}
	On the other hand, since $\gg(z) \to 0$ as $|z| \to \infty$ in $U$, we have for sufficiently large $z \in U$ with $\im z > 0$ that
	\begin{multline*}
	0 < \im\ff(z) = (\re z)(\im\gg(z)) + (\im z)(\re\gg(z)) \\ \le (\im z)(\re\gg(z)) \le |z|;
	\end{multline*}
	arguing similarly for $\im z < 0$, we get $|\im\ff(z)| \le |z|$ for all sufficiently large $z \in U$.
	Hence $|\im\ff(z)| < \left(\frac{\re\ff(z)}{C\cos\left(\delta \frac\pi2\right)\cos\left((\delta-\delta') \frac\pi2\right)}\right)^{1/\delta'}$ for such $z \in U$, and the claim is proved by Lemma \ref{sqd_lemma}(2).
\end{proof}

\begin{lemma}
\label{comparison_mapping}
Let $f,g \in \I$ be such that $\eh(f),\eh(g) \le 0$, and let $\ff$ and $\gg$ be corresponding complex analytic continuations obtained  from Fact \ref{ccc}.  Assume that $f \asymp g \prec x^2$.  Let also $U, V \subseteq \CC$ be standard power domains and set $s:= f_{U,\im}$ and $t:= g_{V,\im}$ so that $U = U_s$ and $V = U_t$, and assume that $\gg(U) \subseteq V$.  Then there exist $a,b,c>0$ such that $\ff(U \cap H(c)) \subseteq U_{m_a \circ t \circ m_b}$.
\end{lemma}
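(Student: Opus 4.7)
The strategy is to use the asymptotic identity $f \sim \lambda g$ at $+\infty$---where $\lambda := \lim_{x \to +\infty} f(x)/g(x) \in (0,+\infty)$ because $f,g > 0$ satisfy $f \asymp g$ in the Hardy field $\H$---to factor $\ff$ through $\gg$ via a function that is asymptotically constant on a right half-plane, and then to transfer the hypothesis $\gg(U) \subseteq V$ to a containment for $\ff$ into a scaled version of $V$.

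First, I would observe that the ratio $h := f/g$ lies in $\H$ with $\eh(h) \le 0$ (by Fact \ref{0-eh_cor}(3)) and tends to $\lambda \in (0,+\infty)$ by the Hardy-field property. Corollary \ref{unit_mapping} then yields a complex analytic continuation $\hh$ of $h$ with $\hh(z) \to \lambda$ as $|z| \to \infty$ in some $H(a_0)$, and the holomorphic identity theorem upgrades the identity $f = h g$ to $\ff = \hh \cdot \gg$ on a common $H(a_1) \supseteq H(a_0)$. By Lemma \ref{sqd_facts}(3), the scaled power domain $\lambda V$ is itself standard, with imaginary boundary function $m_\lambda \circ t \circ m_{1/\lambda}$. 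Writing $\hh(z) = \lambda(1 + \zeta(z))$ with $\zeta(z) \to 0$ and setting $w := \gg(z) \in V$, one has $\ff(z) = \lambda w + \lambda \zeta(z) w$, so $\ff(z)$ is a small multiplicative perturbation of a point in the power domain $\lambda V$.

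To absorb this perturbation, I would use Lemma \ref{sqd_lemma}(3), namely $t(x) \sim K x^{1/\delta}$ for some $\delta \in (0,1)$, so that $|w| \le \sqrt{2}\, t(\re w)$ for $\re w$ large, and note that $\re w \to \infty$ as $|z| \to \infty$ in $U$ (since $|\gg(z)| \to \infty$ by Fact \ref{ccc}(2a) while $V$ is bounded on the left by a curve whose real part tends to infinity with its imaginary part). Expanding $\re\ff(z)$ and $\im\ff(z)$ directly and choosing $c$ large enough that $|\zeta(z)|$ is sufficiently small for $z \in U \cap H(c)$---which is possible thanks to the definable decay of $h - \lambda \in \H$---I would obtain
\[
\re\ff(z) \ge (\lambda/2)\re w, \qquad |\im\ff(z)| \le 2\lambda\, t(\re w).
\]
Combined with $\re w \le (2/\lambda)\re\ff(z)$ and the monotonicity of $t$, these yield $|\im\ff(z)| \le (m_{2\lambda} \circ t \circ m_{2/\lambda})(\re\ff(z))$, so the conclusion holds with $a := 2\lambda$, $b := 2/\lambda$ and $c$ as chosen.

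The delicate point is the lower bound $\re\ff(z) \ge (\lambda/2)\re w$: since $|\im w|$ may be of the same order as $t(\re w)$, which in turn dominates $\re w$, a crude estimate of the cross term $|\zeta(z)|\,|\im w|$ appearing in $\re\ff(z)$ is not good enough. To close this gap one must exploit the sign structure of $\hh - \lambda$ as a Hardy-field germ on right half-planes---the tame analytic continuation of $h \in \H = \Hanexp$ and estimates of the type encoded in Fact \ref{ccc}---to show that the potentially dominant contribution actually has the same sign as $\lambda \re w$ rather than opposing it, so that it can be absorbed (after choosing $c$ large) into a constant multiple of $t(\re w)$.
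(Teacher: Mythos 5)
Your multiplicative factorization $\ff = \hh\cdot\gg$ with $\hh = f/g \to \lambda$ is essentially the same decomposition the paper uses in one of its two cases (it writes $\ff = c\gg(1-\delta)$ with $\delta = (cg-f)/(cg)$ and $c = \lambda$, so $\zeta = -\delta$), and your diagnosis that the entire difficulty is the cross term $|\zeta(z)|\,|\im w|$ in $\re\ff$, which can dominate $\re w$, is exactly right. But you then say only that "one must exploit the sign structure of $\hh-\lambda$" without proving anything about it; that promissory note is the whole content of the lemma, so as written the proposal has a genuine gap rather than a complete argument. You also overlook that the lemma splits into two cases: when $cg-f$ is bounded, it tends to some $d\in\RR$, so $\epsilon := cg-d-f$ is small and its continuation tends to $0$ on $U$ (by Fact \ref{0-eh_cor}(3) and Fact \ref{ccc}(2a)); here $\ff = c\gg - d - \boldsymbol{\epsilon}$ is an additive perturbation and no sign analysis is needed. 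This case does not fit cleanly into your multiplicative $\zeta$-framework.

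In the remaining case ($cg-f$ unbounded, WLOG $cg-f>0$), the paper makes two distinct uses of Fact \ref{ccc}(2b) that your sketch omits. For the imaginary part it does \emph{not} use the multiplicative decomposition at all: since $f$ and $cg-f$ both lie in $\I$ with exponential height $\le 0$ and $\prec x^2$, Fact \ref{ccc}(2b) gives $\sign\im\ff(z) = \sign\im(c\gg(z)-\ff(z)) = \sign\im z$, and summing yields $0 < \im\ff(z) < c\im\gg(z)$ for $\im z > 0$, i.e.\ $|\im\ff| \le c|\im\gg|$ directly. This is cleaner and sharper than your route through $|w| \le \sqrt{2}\,t(\re w)$. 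For the real part it \emph{does} use the multiplicative decomposition, applying Fact \ref{ccc}(2b) to $1/\delta \in \I$ (with $1/\delta \le cg \prec x^2$) to determine the sign of $\im\delta(z)$, and then expands $\re\ff = \re\bigl(c\gg(1-\delta)\bigr)$. This is the step you gesture at but do not carry out; you should actually compute the sign of $\im\delta$ from Fact \ref{ccc}(2b), write out the expansion of $\re\ff$ in terms of $\re\gg$, $\im\gg$, $\re\delta$, $\im\delta$, and check carefully which way the cross term $\im\gg\,\im\delta$ enters and how it is absorbed, since this sign bookkeeping is exactly the delicate point and is where an error is easy to make.
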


\begin{proof}
	Let $c>0$ be such that $f \sim cg$.  If $f = cg$, then the conclusion follows directly from Lemmas \ref{sqd_facts}(3) and \ref{sqd_lemma}(3), so we assume $f \ne cg$.  We distinguish two cases:
	
	\subsection*{Case 1: $cg -f$ is bounded.}  Then there exists $d \in \RR$ such that $\epsilon:= cg-d-f \in \D$ or $-\epsilon \in \D$.  Hence either $1/\epsilon$ or $-1/\epsilon$ is infinitely increasing; by Fact \ref{0-eh_cor}(3) both are of exponential height at most 0.  By Fact \ref{ccc}(2a), it follows that $|\epsilon(z)| \to 0$ as $|z| \to \infty$ in $U_s$.
	
	Since $f = cg - d - \epsilon$, it follows that, for large enough $z \in U_s$ with $\im z > 0$, we have
	$$
	\im \ff(z) = c \im \gg(z) - \im \epsilon(z) \le c \im \gg(z) + 1
	$$
	and
	$$
	\re \ff(z) = c \re \gg(z) - d - \re \epsilon(z) \ge  c \re \gg(z) - (d + 1).
	$$
	Since $|\gg(z)| \to \infty$ as $|z| \to \infty$ by Fact \ref{ccc}(2a) and since, for $z \in U_t$, we have $\re z \to +\infty$ as $|z| \to \infty$, it follows that $\re \ff(z) \ge \frac c2 \re\gg(z) \to +\infty$ as $|z| \to \infty$ in $U_s$.
	A similar inequality as the first one above holds if $\im z < 0$, so that $$|\im \ff(z)| \le c|\im \gg(z)| + 1$$ for sufficiently large $z \in U_s$.
	By assumption $|\im \gg(z)| < t(\re\gg(z))$ for $z \in U_s$.  Since $t$ is infinitely increasing, we get for sufficiently large $z \in U_s$ that 
	$$|\im \ff(z)| \le c|\im \gg(z)| + 1 \le 2c\cdot t(\re\gg(z)) \le 2c\cdot t\left(\frac2c\re\ff(z)\right),$$ so we can take $a:= 2c$ and $b:= \frac2c$.
	
	\subsection*{Case 2: $cg - f$ is unbounded.} We assume here that $cg-f > 0$; the case $f-cg > 0$ is handled similarly and left to the reader.  By Fact \ref{ccc}(2b), since $\eh(cg-f) \le 0$ by fact \ref{0-eh_cor}(3) and $cg-f \prec f \prec x^2$, we have for large enough $z \in U_s$ with $\im z > 0$ that $$\im (\ff(z)) > 0 \quad\text{and}\quad \im(c\gg(z)-\ff(z)) > 0.$$  Since $\im (c\gg(z)) = \im\ff(z) + \im(c\gg(z)-\ff(z))$, it follows that $$\im\ff(z) < c \im \gg(z)$$ for such $z$.  Arguing similarly for $z \in U_s$ with $\im z < 0$, we conclude that $$|\im\ff(z)| \le c|\im \gg(z)|$$ for sufficiently large $z \in U_s$ (the inequality not being strict to account for the real $z$).  
	
	On the other hand, the germ $1/\delta$ belongs to $\I$ as well, where $\delta:= \frac{cg-f}{cg}$ has exponential height at most 0 by Fact \ref{0-eh_cor}(3).  Since $1/\delta \le cg \prec x^2$, by Fact \ref{ccc}(2b) again, we have $\im\delta(z) < 0$ for sufficiently large $z \in U_s$ with $\im z > 0$.  Since $f = cg(1-\delta)$ we have, for sufficiently large $z \in U_s$ with $\im z > 0$, 
	$$
	\re\ff(z) = c\re \gg(z)(1-\re\delta(z)) - c\im \gg(z) \im\delta(z) > \frac c2 \re \gg(z).
	$$
	We now conclude as in Case 1 with $a:=c$ and $b:= \frac2c$.
\end{proof}

\begin{cor}
	\label{admissible_prop}
	Let $f \in \I$ be such that $\eh(f) \le 0$ and $f \preceq x$, and let $\ff$ be a complex analytic continuation of $f$ obtained from Fact \ref{ccc}.  Then for every standard power domain $V$, there exist $a \ge 0$ and standard power domains $U_1, U_2$ such that $\ff(U_1 \cap H(a)) \subseteq V$ and $\ff(V \cap H(a)) \subseteq U_2$.  In particular, the restriction of $\exp \circ (-\ff)$ to any standard power domain is bounded.
\end{cor}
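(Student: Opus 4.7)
Since $\H$ is a Hardy field and $f \preceq x$, the germ $f/x$ tends to some $\gamma \in [0,\infty)$, so exactly one of the following holds: (i) $f \prec x^\epsilon$ for some $\epsilon \in (0,1)$; (ii) $f \prec x$ but $f \succ x^\epsilon$ for every $\epsilon \in (0,1)$; (iii) $f \asymp x$ (so $\gamma > 0$).

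In case (i), both inclusions follow from Corollary \ref{comparison_mapping_cor}(2), applied once with the prescribed $V$ as target (producing $U_1$) and once with $V$ as source (producing $U_2$). In case (ii), the two inclusions are exactly the statement of Lemma \ref{comparison_mapping_4}.

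In case (iii) we invoke Lemma \ref{comparison_mapping} with $g := x$, so that $\gg = \id$ and the hypotheses $f \asymp g \prec x^2$ and $\eh(f), \eh(g) \le 0$ are immediate. A key observation, visible on inspecting the proof of Lemma \ref{comparison_mapping}, is that the scaling constants $a, b$ appearing in its conclusion depend only on the ratio $\gamma$ linking $f \sim \gamma g$, not on the chosen standard power domains. For the $U_2$-inclusion, apply the lemma with the prescribed $V$ playing both roles; the condition $\gg(V) \subseteq V$ is trivial, and the conclusion places $\ff(V \cap H(c_0))$ inside $U_{m_a \circ t \circ m_b}$ for some $c_0 > 0$, which by Lemma \ref{sqd_lemma}(3) is eventually contained in a standard power domain $U_2$. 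For the $U_1$-inclusion, introduce an auxiliary standard power domain $V'$ of the same exponent $\delta$ as $V$, apply the lemma with both $U$ and $V$ taken to be $V'$, and note that the target domain $U_{m_a \circ t' \circ m_b}$, where $t'$ is the boundary function of $V'$, is again governed by a power function of exponent $1/\delta$. By Lemma \ref{sqd_facts}(2), taking the defining constant of $V'$ sufficiently large forces this target to lie eventually inside $V$; we then set $U_1 := V'$, obtaining $\ff(U_1 \cap H(c_0)) \subseteq V$.

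For the final assertion, every standard power domain $U_C^\delta$ is contained in the right half-plane $H(C\cos(\delta\pi/2))$: for $z \in H(0)$, the inequality $\arg(1+z) \in (-\pi/2,\pi/2)$ yields $\re((1+z)^\delta) \ge \cos(\delta\pi/2)$, so $\re\phi_C^\delta(z) = \re z + C\re((1+z)^\delta) \ge C\cos(\delta\pi/2)$. Applied to the $U_2$ just obtained, this gives $\re\ff(z) > 0$ on $V \cap H(a)$, and hence $|\bexp(-\ff(z))| = e^{-\re\ff(z)} \le 1$. The main technical point is the constant-tracking in case (iii), which hinges on the distortion constants of Lemma \ref{comparison_mapping} being intrinsic to $f$ rather than to the chosen domains.
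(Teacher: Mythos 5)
Your proof follows the paper's approach exactly: the same trichotomy on how $f$ compares with $x$, dispatched by Corollary~\ref{comparison_mapping_cor}(2), Lemma~\ref{comparison_mapping_4}, and Lemma~\ref{comparison_mapping} (with $g=x$) respectively, together with the shape estimates for power domains from Lemma~\ref{sqd_lemma}. In the case $f\asymp x$, where the paper is terse, you correctly supply the missing but essential observation that the constants $a,b$ in Lemma~\ref{comparison_mapping}'s conclusion depend only on the proportionality constant $c$ with $f\sim cg$ (not on $U$ or $V$), which is what lets you choose the auxiliary domain $V'$ after the fact.
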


\begin{proof}
	If $f \asymp x$ the conclusion follows from Lemmas \ref{comparison_mapping} and \ref{sqd_lemma}(2), so we assume that $f \prec x$.  If $f \prec x^\epsilon$ for some $\epsilon \in (0,1)$, then the conclusion follows from Corollary \ref{comparison_mapping_cor}.  The remaining case is covered by Lemma \ref{comparison_mapping_4}.
\end{proof}

Next, dominance is preserved by all pairs of germs covered by the previous corollary:

\begin{prop}
	\label{monomial_comparison}
	Let $f,g \in \H$ be such that $\eh(f), \eh(g) \le 0$ and $f, g \preceq x$, and let $a \ge 0$ and $\ff,\gg:H(a) \into \CC$ be corresponding complex analytic continuations obtained  from Fact \ref{ccc}.  Then, for any standard power domain $U \subseteq H(a)$, we have $$\exp \circ (-f) \preceq \exp \circ (-g) \quad\text{if and only if}\quad \bexp \circ (-\ff) \preceq_{U} \bexp \circ (-\gg).$$
\end{prop}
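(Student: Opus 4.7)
The plan is to reduce to a single germ. Set $h := g - f \in \H$ and $\hh := \gg - \ff$. Then $\exp \circ (-f) \preceq \exp \circ (-g)$ holds iff $\exp \circ h$ is ultimately bounded, iff $h$ is ultimately bounded above on $\RR$. Similarly, since $|\bexp(-\ff)/\bexp(-\gg)| = e^{\re \hh}$, the complex condition $\bexp \circ (-\ff) \preceq_U \bexp \circ (-\gg)$ is equivalent to $\re \hh$ being bounded above on $U$. By Fact \ref{0-eh_cor}(3), $\eh(h) \le 0$; and since $f, g \preceq x$, the triangle inequality gives $h \preceq x$. Thus the task reduces to proving: for any $h \in \H$ with $\eh(h) \le 0$ and $h \preceq x$, and for any standard power domain $U \subseteq H(a)$ on which the Fact \ref{ccc} continuation $\hh$ is defined, $h$ is bounded above on $\RR$ iff $\re \hh$ is bounded above on $U$.

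Because $h$ lies in a Hardy field, it has a limit in $[-\infty,+\infty]$, so I would split into three cases.

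\textbf{Case $\lim h = +\infty$.} The real condition fails. Moreover, any standard power domain $U$ contains a tail of the positive real axis on which $\re \hh$ restricts to $h$, which tends to $+\infty$; so the complex condition also fails.

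\textbf{Case $\lim h = c \in \RR$.} Then the real condition holds. Apply Corollary \ref{unit_mapping} to $h$ to obtain $b \ge a$ with $\hh(z) \to c$ as $|z| \to \infty$ in $H(b)$; hence $\re \hh$ is bounded on $U \cap H(b)$, and therefore bounded above on $U$.

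\textbf{Case $\lim h = -\infty$.} Then the real condition holds, and $-h \in \I$ satisfies $\eh(-h) \le 0$ and $-h \preceq x$. Applying Corollary \ref{admissible_prop} to $-h$ with the choice $V := U$, one gets $b \ge a$ and a standard power domain $U_2$ such that $(-\hh)(U \cap H(b)) \subseteq U_2$. Since $\re z \to +\infty$ as $|z| \to \infty$ in any standard power domain, we conclude $\re \hh(z) \to -\infty$ as $|z| \to \infty$ in $U$, so $\re \hh$ is bounded above on $U$.

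In each of the three cases the real and complex conditions agree, yielding the equivalence. The main conceptual work is already done in Corollaries \ref{unit_mapping} and \ref{admissible_prop}, so there is no serious obstacle here; the only thing to watch is that the hypotheses on $h$ are preserved under subtraction (guaranteed by Fact \ref{0-eh_cor}(3)) and that the trichotomy of limits is matched by the three analytic-continuation regimes above.
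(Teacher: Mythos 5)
Your proof is correct and takes essentially the same approach as the paper: reduce to the single germ $g-f$ (the paper works with $f-g$), use Corollary \ref{unit_mapping} when that germ has a finite limit and Corollary \ref{admissible_prop} when it is infinitely increasing. The only cosmetic difference is that you split explicitly on the three possible limits of $g-f$ and give a direct argument when $g-f\to+\infty$ (restricting to the real tail of $U$), whereas the paper treats the $\asymp$ and $\prec$ cases and leaves $\succ$ to symmetry.
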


\begin{proof}
	By hypothesis and Fact \ref{0-eh_cor}(3), we have $\eh(f-g) \le 0$.  If $\exp \circ (-f) \asymp \exp \circ (-g)$, then $\lim_{x \to +\infty} (f-g)(x) = c \in \RR$ so, by Corollary \ref{unit_mapping}, we have $\lim_{|z| \to \infty} (\ff-\gg)(z) = c$ in $H(a)$, so that $\bexp \circ \ff \asymp_{H(a)} \bexp \circ \gg$.  So we assume from now on that $\exp\circ(-f) \prec \exp\circ(-g)$; then $f-g \in \I$.  
	
	Fix a standard power domain $U \subseteq H(a)$.  Since  $f-g \preceq x$ by assumption, it follows from Corollary \ref{admissible_prop} that $(\ff-\gg)(U) \subseteq V$ for some standard power domain $V$.  Since $V$ is a standard power domain, we have for $w \in V$ that $\re w \to +\infty$ as $|w| \to \infty$.  Since $\ff-\gg:U \into (\ff-\gg)(U)$ is a biholomorphism, we have for $z \in U$ that $|(\ff-\gg)(z)| \to \infty$ as $|z| \to \infty$.  Hence $|\bexp \circ (\ff-\gg)(z)| = \exp(\re(\ff-\gg)(z)) \to \infty$ as $|z| \to \infty$ in $U$, which shows that $\bexp \circ (-\ff) \prec_U \bexp \circ (-\gg)$.
\end{proof}

We set $$\H_{\le 0}^{x}:= \set{ f \in \H_{\le 0}:\ f \preceq x}$$ and $$\M_0^{x}:= \set{\exp \circ (-f):\ f \in \H_{\le 0}^{x}}.$$  Recall also that, for $h \in \left(\H^{>0}\right)^k$, we denote by $\langle h \rangle^\times$ the multiplicative $\RR$-vector space generated by $h$.

\begin{cor}
	\label{strong_expl}
	The set $\M_0^{x}$ is a multiplicative $\RR$-vector subspace of $\H^{>0}$, and if $m \in \M_0^{x}$ is bounded, then $m$ has a complex analytic continuation $\mm:H(a) \into \CC$, for some $a>0$, such that $\mm$ is bounded on every standard power domain $U \subseteq H(a)$.
\end{cor}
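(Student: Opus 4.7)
The first claim reduces to showing that $\H_{\le 0}^x$ is an $\RR$-vector subspace of $\H$. Fact \ref{0-eh_cor}(3) says $\H_{\le 0}$ is a differential subfield of $\H$, hence contains $\RR$ and is closed under addition and scalar multiplication, and the condition ``$\preceq x$'' is plainly preserved under both operations. The bijection $f \mapsto \exp \circ (-f)$ transfers this to $\M_0^x$, turning addition into multiplication and scaling by $\lambda \in \RR$ into taking the $\lambda$-th power; so $\M_0^x$ is a multiplicative $\RR$-vector subspace of $\H^{>0}$.

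For the second claim, let $m = \exp \circ (-f) \in \M_0^x$ be bounded, and let $\ff:H(a) \into \CC$ be the complex analytic continuation supplied by Fact \ref{ccc}(1); set $\mm := \bexp \circ (-\ff)$, which is a complex analytic continuation of $m$ on $H(a)$. Boundedness of $m$ forces $f$ to be bounded below, and since $\H$ is a Hardy field, the limit $\lim_{x \to +\infty} f(x)$ exists in $\RR \cup \{+\infty\}$. I would split into the two resulting cases.

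If $\lim f = c \in \RR$, then Corollary \ref{unit_mapping} (after enlarging $a$ if necessary) gives $\ff(z) \to c$ as $|z| \to \infty$ in $H(a)$, so $\mm(z) \to \exp(-c)$ on $H(a)$; hence $\mm$ is bounded on all of $H(a)$, and in particular on every standard power domain $U \subseteq H(a)$. If instead $\lim f = +\infty$, i.e.\ $f \in \I$, then $f$ satisfies exactly the hypotheses of Corollary \ref{admissible_prop}: for every standard power domain $V$, there exist $a' \ge a$ and a standard power domain $U_2$ with $\ff(V \cap H(a')) \subseteq U_2$; since $U_2 \subseteq H(0)$, we have $\re \ff(z) \ge 0$ on $V \cap H(a')$, whence $|\mm(z)| = \exp(-\re \ff(z)) \le 1$ there, which is the desired boundedness of $\mm$ as a germ on $V$.

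No real obstacle is expected: the analytic content is already packaged into Corollaries \ref{unit_mapping} and \ref{admissible_prop}, and the only structural step is the Hardy-field dichotomy between convergence to a finite limit and divergence to $+\infty$, which is forced by the standing assumption that $m$ be bounded.
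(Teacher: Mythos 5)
Your proof is correct and follows essentially the same route as the paper: the paper cites Proposition \ref{monomial_comparison} with $g = 0$, whose own proof contains exactly your case split between $\lim f \in \RR$ (handled via Corollary \ref{unit_mapping}) and $f \in \I$ (handled via Corollary \ref{admissible_prop}); you have simply unfolded that proposition inline. The only difference worth noting is that in your Case 2 you re-derive the boundedness of $\exp \circ (-\ff)$ from $\re \ff \ge 0$ on the image, whereas this conclusion is already stated verbatim as the ``In particular'' clause of Corollary \ref{admissible_prop} and could be cited directly.
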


\begin{proof}
	That $\M_0^{x}$ is a multiplicative $\RR$-vector subspace of $\H^{>0}$ follows from the observation that $\H_{\le 0}^{x}$ is an additive $\RR$-vector subspace of $\H_{\le 0}$.  Let $m \in \M_0^{x}$ be bounded and $f \in \H_{\le 0}^{x}$ be such that $m = \exp\circ(-f)$.  Let also $a>0$ and $\ff:H(a) \into \CC$ be a complex analytic continuation of $f$ obtained from Fact \ref{ccc}, and set $\mm:= \bexp \circ (-\ff)$.  Since $0 \in \H_{\le 0}^{x}$ and $m \preceq 1 = \exp\circ(-1)$, it follows from Proposition \ref{monomial_comparison} that $\mm \preceq_U \mathbf{1}$, as required.
\end{proof}

\begin{proof}[Proof of the Admissibility Theorem]
	Let $k \in \NN$ and $f_0, \dots, f_k \in \I$ be simple such that $f_0 > \cdots > f_k$, and assume that the $f_i$ have pairwise distinct archimedean classes.  Fix $0 \le i < j \le k$.  Since $f_i > f_j$ and both are simple, we have $\lambda_i:= \level(f_i) \ge \lambda_j:= \level(f_j) = \eh(f_j)$, so that $\lambda_j - \lambda_i \le 0$.  So by Fact \ref{eh_application} with $f_j$ and $f_i$ in place of $f$ and $g$ there, we get $f_j \circ f_i^{-1} \in \H_{\le 0}^{x}$.  It follows from Corollary \ref{admissible_prop} that $(\dagger)$ holds for $f:= (f_0, \dots, f_k)$.  Moreover, by Corollary \ref{strong_expl}, every bounded $m \in M_f$ has a bounded complex analytic continuation on $U \cap H(a)$, for some $a \ge 0$ and every standard power domain $U$.  Since the $f_i$ have pairwise distinct archimedean classes, if follows that $f$ is admissible.
\end{proof}

\section{$M$-generalized power series}
\label{M-gps_section}

We work in the setting of Section \ref{results_section}; in particular, we let $M$ be a multiplicative $\RR$-vector subspace of $\hplus$ and $K$ be a commutative ring of characteristic 0 with unit $1$.  

An \textbf{$M$-generalized power series over $K$} is a series of the form $F = G(m_0, \dots, m_k)$, where $G \in \As{K}{X_0^*, \dots, X_k^*}$ has natural support and $m_0, \dots, m_k \in M$ are small; in this situation, we refer to the $m_i$ as the \textbf{generating monomials} of $F$.  

\begin{rmk}
	In general, in a representation of an $M$-generalized power series $F$ of the form $G(m_0, \dots, m_k)$ as above, the generalized series $G$ is not uniquely determined by $m_0, \dots, m_k$.  However, if $M = \la$ and we choose the latter carefully (see Lemma \ref{strong_basis_lemma} below), $G$ is indeed uniquely determined by $m_0, \dots, m_k$.
\end{rmk}

\begin{lemma}
	\label{gps_well_lemma}
	Let $G \in \As{K}{X_0^*, \dots, X_k^*}$, $m_0, \dots, m_k \in M$ be small and $n \in M$.  Then the set $$S_n^G:= \set{\alpha \in \supp(G):\ m^\alpha = n}$$ is finite.
\end{lemma}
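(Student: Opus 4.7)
The plan is to reduce the problem to a linear-algebraic question on exponent vectors and then invoke the naturality of $\supp(G)$ to finish.

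First, I will pass to the additive setting by taking logarithms: since $M \subseteq \H^{>0}$ is a multiplicative $\RR$-vector subspace, the equation $m^\alpha = n$ in $M$ is equivalent to the germ identity
\[
\sum_{i=0}^k \alpha_i \log m_i \;=\; \log n
\]
in the additive Hardy field $\H$. Writing $\phi:\RR^{k+1} \to \H$ for the $\RR$-linear map $\phi(\alpha) := \sum_i \alpha_i \log m_i$, and recalling that $\supp(G) \subseteq [0,\infty)^{k+1}$ by definition of a generalized power series, this identifies
\[
S_n^G \;=\; \supp(G) \cap \phi^{-1}(\log n).
\]

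The main step is to show that $\phi^{-1}(\log n) \cap [0,\infty)^{k+1}$ is bounded in $\RR^{k+1}$. I will first check that $\ker\phi \cap [0,\infty)^{k+1} = \{0\}$: if $\gamma \in [0,\infty)^{k+1}$ satisfies $\sum_i \gamma_i \log m_i = 0$, then, because each $m_i$ is small, the germ $\log m_i$ is strictly negative (negative-valued for all sufficiently large $x$), so a non-negative linear combination can vanish only if every $\gamma_i = 0$. A standard compactness argument then yields boundedness: if the intersection were unbounded, one could extract a sequence $\alpha^{(j)}$ with $\|\alpha^{(j)}\| \to \infty$ and, after passing to a subsequence, assume that $\alpha^{(j)}/\|\alpha^{(j)}\|$ converges to a unit vector $\beta \in [0,\infty)^{k+1}$. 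Viewing $\phi$ as a linear map into the finite-dimensional real vector space $W := \Span_\RR(\log m_0, \dots, \log m_k)$ and using $\RR$-linearity, we get
\[
\phi\!\left(\frac{\alpha^{(j)}}{\|\alpha^{(j)}\|}\right) \;=\; \frac{\log n}{\|\alpha^{(j)}\|} \;\longrightarrow\; 0
\]
in $W$, hence $\phi(\beta) = 0$, contradicting $\ker\phi \cap [0,\infty)^{k+1} = \{0\}$.

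Once each coordinate of every $\alpha \in \phi^{-1}(\log n) \cap [0,\infty)^{k+1}$ is uniformly bounded by some constant $C>0$, the natural support hypothesis finishes the proof:
\[
S_n^G \;\subseteq\; \supp(G) \cap [0,C]^{k+1} \;\subseteq\; \prod_{i=0}^k \bigl(\Pi_{X_i}(\supp(G)) \cap [0,C]\bigr),
\]
and each factor on the right is finite by the definition of natural support, so $S_n^G$ is finite. The main obstacle is the boundedness step, and it rests essentially on the strict negativity of each $\log m_i$, i.e.\ on the smallness of each generating monomial; everything else is routine.
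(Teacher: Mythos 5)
There is a genuine gap in the final step. The lemma only requires $G\in\As{K}{X_0^*,\dots,X_k^*}$, i.e.\ $\supp(G)$ is contained in a cartesian product of well-ordered subsets of $[0,\infty)$; \emph{natural} support is an additional property that is not part of the hypothesis here. Your last paragraph appeals to ``the natural support hypothesis'' to conclude that $\supp(G)\cap[0,C]^{k+1}$ is finite, but that conclusion fails for merely well-ordered projections: a well-ordered subset of $[0,C]$ can be infinite (for instance $\{1-\tfrac1n:\ n\ge 1\}$). So the boundedness you establish via the logarithmic/compactness argument does not, by itself, give finiteness of $S_n^G$, and the proof as written only covers the special case where $G$ has natural support.

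Your logarithmic reformulation is correct and essentially isolates the same arithmetic fact as the paper: if $\gamma\ge 0$ and $\sum_i\gamma_i\log m_i=0$ then $\gamma=0$ (equivalently, if $\alpha,\beta\in S_n^G$ and $\beta-\alpha$ has no negative coordinate, then $\alpha=\beta$). The paper's proof turns this directly into a combinatorial contradiction with the well-ordering of the coordinate projections: assuming some $\Pi_{X_{i_0}}(S_n^G)$ is infinite, one extracts a strictly increasing sequence in that coordinate, and the trade-off forces some other coordinate to decrease strictly infinitely often, contradicting its well-ordering. That argument uses only well-ordering, so it proves the lemma as stated. To repair your proof you would need to replace the ``bounded box plus naturality'' finish with an argument of this well-ordering type; boundedness alone cannot close it.
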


\begin{proof}
	We show that $\Pi_{X_i}\left(S_n^G\right)$ is finite for each $i$, where $\Pi_{X_i}:\RR^{k+1} \into \RR$ denotes the projection on the $i$th coordinate.  Assume, for a contradiction, that $\Pi_{X_{i_0}}\left(S_n^G\right)$ is infinite, where $i_0 \in \{0, \dots, k\}$; without loss of generality, we may assume that $i_0 = k$.  
	
	Since $\Pi_{X_k}\left(S_n^G\right) \subseteq \Pi_{X_k}(\supp(G))$ and the latter is well ordered, there is a strictly increasing sequence $0 \le \alpha_k^0 < \alpha_k^1 < \cdots$ of elements of $\Pi_{X_k}\left(S_n^G\right)$.  For $l \in \NN$, choose $\alpha_0^l, \dots, \alpha_{k-1}^l \ge 0$ such that $$\alpha^l:= (\alpha_0^l, \dots, \alpha_k^l) \in S_n^G.$$  We claim that there exists $i(l) \in \{0, \dots, k-1\}$ such that $\alpha_{i(l)}^{l+1} < \alpha_{i(l)}^l$: otherwise, we have $$m^{\alpha^{l+1}} \le m_0^{\alpha_0^l} \cdots m_{k-1}^{\alpha_{k-1}^l} m_k^{\alpha_k^{l+1}} < m^{\alpha^l} = n,$$ a contradiction.  
	By the claim, there exist $i \in \{0, \dots, k-1\}$ and a sequence $l_p \in \NN$, for $p \in \NN$, such that $\alpha_i^{l_0} > \alpha_i^{l_1} > \cdots$, which contradicts that $\Pi_{X_i}\left(S_n^G\right)$ is well ordered.
\end{proof}

\begin{lemma}
	\label{M-gen_is_gen}
	Every $M$-generalized power series is a generalized series in $\Gs{K}{M}$.
\end{lemma}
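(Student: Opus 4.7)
The plan is to verify the two defining properties of a generalized series in $\Gs{K}{M}$: that the coefficient at each $n \in M$ is a well-defined element of $K$, and that the resulting support is reverse well-ordered by $\prec$. Observe at the outset that since $M$ is a multiplicative asymptotic scale, $\prec$ is in fact a total order on $M$: for $m, n \in M$ with $m \asymp n$ one has $m/n \in M$ with $m/n \asymp 1$, whence $m/n = 1$.

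Write $F = G(m_0, \dots, m_k)$ with $G = \sum_\alpha g_\alpha X^\alpha$, $g_\alpha \in K$. For each $n \in M$ I would set
\[
a_n := \sum_{\alpha \in S_n^G} g_\alpha,
\]
where $S_n^G$ is as in Lemma \ref{gps_well_lemma}; this sum is finite by that lemma, so $a_n \in K$, and the formal series $\sum_{n \in M} a_n\, n$ agrees with $F$ under the obvious rearrangement. It remains to show that the support
\[
T := \{\, n \in M : a_n \neq 0\,\} \subseteq \{\, m^\alpha : \alpha \in \supp(G)\,\}
\]
contains no infinite $\prec$-increasing sequence.

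Suppose, for contradiction, that $\alpha^0, \alpha^1, \dots \in \supp(G)$ produce $m^{\alpha^0} \prec m^{\alpha^1} \prec \cdots$. The combinatorial ingredient is that $\supp(G)$ sits inside a cartesian product of well-ordered subsets of $[0,\infty)$, and such a product is a well-partial-order under the coordinate-wise ordering; this follows from the Dickson-type fact that a finite product of well-partial-orders is a well-partial-order, since any well-ordered set is trivially well-partially-ordered. Consequently $(\alpha^i)$ has an infinite subsequence $\alpha^{i_0} \le \alpha^{i_1} \le \cdots$ that is weakly increasing coordinate-wise.

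For $j \ge 1$ each difference $\beta_j := \alpha^{i_j} - \alpha^{i_0}$ lies in $[0,\infty)^{k+1}$, and since every $m_l$ is small, $m^{\beta_j} \preceq 1$; hence $m^{\alpha^{i_j}} \preceq m^{\alpha^{i_0}}$, contradicting the strict increase $m^{\alpha^{i_0}} \prec m^{\alpha^{i_j}}$. The only step of substance is this generalisation of Dickson's lemma from $\NN^k$ to a product of arbitrary well-ordered subsets of $[0,\infty)$; neither the stronger "natural support" hypothesis nor any additional structure on $M$ is needed beyond what has already been assumed.
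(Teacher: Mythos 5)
Your proof takes essentially the same route as the paper's: both rest on the well-quasi-order property of a finite product of well-ordered subsets of $[0,\infty)$ (Dickson's lemma, in the generality you describe) together with the smallness of the generating monomials. The paper extracts finitely many minimal elements $\alpha^1,\dots,\alpha^l$ of the preimage set $S^G_{T'}$ and shows directly that $\max\{m^{\alpha^1},\dots,m^{\alpha^l}\}$ is the greatest element of any nonempty subset of the support; you rule out an infinite bad sequence by extracting a coordinate-wise weakly increasing subsequence from it. These are the two standard faces of the same WQO fact, so the proofs have the same mathematical content, just organized around different characterizations of reverse well-order.

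One small slip in the write-up: you open by invoking that $M$ is an asymptotic scale in order to make $\prec$ a total order on $M$, but that hypothesis is not in force here. Section~\ref{M-gps_section} re-imposes only the weaker standing assumption that $M$ is a multiplicative $\RR$-vector subspace of $\H^{>0}$, and Lemma~\ref{leading_monomial_lemma} explicitly re-adds the asymptotic-scale hypothesis, confirming that it is not a standing assumption for Lemma~\ref{M-gen_is_gen}. The fix is harmless: the relevant total order on $M \subseteq \H^{>0}$ underlying $\Gs{K}{M}$ is the Hardy field order $<$, which is total with no extra hypothesis on $M$, and your argument runs verbatim with $<$ in place of $\prec$, since smallness gives $m_l < 1$ and hence $m^{\beta_j} \le 1$ whenever $\beta_j \ge 0$ coordinate-wise, contradicting $m^{\alpha^{i_0}} < m^{\alpha^{i_j}}$. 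Drop the opening observation and use $<$ throughout.
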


\begin{proof}
	Let $F = G(m_0, \dots, m_k)$, where $G = \sum_{\alpha \in [0,+\infty)^{1+k}} a_\alpha X^\alpha \in \As{K}{X_0^*, \dots, X_k^*}$ has natural support and $m_0, \dots, m_k \in M$ are small.  By Lemma \ref{gps_well_lemma}, there are unique $F_n \in K$, for $n \in M$, such that $F = \sum_{n \in M} F_n n$; we need to show that the set $$T:= \set{n \in M:\ F_n \ne 0}$$ is anti-well ordered.  So let $T' \subseteq T$ be nonempty and define $$S^G_{T'}:= \bigcup_{n \in T'} S^G_n,$$ where $S^G_n$ is defined as in Lemma \ref{gps_well_lemma}.  Then $\Pi_{X_i}\left(S^G_{T'}\right)$ is well ordered  for each $i$, so there exist a nonzero $l \in \NN$ and $\alpha^1, \dots, \alpha^l \in S^G_{T'}$ such that, for every $\alpha \in S^G_{T'}$, one of the $X^{\alpha^j}$ divides $X^\alpha$.  Hence $\max\set{m^{\alpha^1}, \dots, m^{\alpha^l}}$ is the maximal element of $T'$, which proves the lemma.
\end{proof}

We denote by $\As{K}{M}^{\text{ps}}$ the subring of $\Gs{K}{M}$ of all $M$-generalized power series.  

\begin{lemma}
\label{leading_monomial_lemma}
	Assume that $M$ is an asymptotic scale, and let $F \in \As{K}{M}^{\text{ps}}$ be nonzero.  Then there are a nonzero $a \in K$ and an $E \in \As{K}{M}^{\text{ps}}$ such that $\lm(E)$ is small and $F = a \lm(F) (1-E).$
\end{lemma}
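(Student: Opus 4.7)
The plan is to apply Lemma \ref{M-gen_is_gen} to write $F = \sum_{n \in M} F_n n \in \Gs{K}{M}$ with reverse-well-ordered support, so that $\lm(F) := \max \supp(F) \in M$ is well-defined. I would then set $a := F_{\lm(F)} \in K\setminus\{0\}$ and
\[
E := 1 - \frac{F}{a\,\lm(F)} \in \Gs{K}{M};
\]
the identity $F = a\,\lm(F)(1-E)$ is then automatic, so the content of the lemma is the assertion that $\lm(E)$ is small and that $E$ actually lies in $\As{K}{M}^{\text{ps}}$.

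Verifying smallness of $\lm(E)$ reduces to a direct coefficient computation: $E_1 = 1 - F_{\lm(F)}/a = 0$, while $E_n = -F_{n\,\lm(F)}/a$ for $n \neq 1$. Thus $\supp(E) = \{n/\lm(F) : n \in \supp(F),\ n \neq \lm(F)\}$, and each such $n$ satisfies $n \prec \lm(F)$, so $n/\lm(F) \prec 1$; since $M$ is an asymptotic scale, this forces $n/\lm(F)$ to be small, and hence so is $\lm(E)$.

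The main step is showing $E \in \As{K}{M}^{\text{ps}}$. Here I would fix a representation $F = G(m_0, \ldots, m_k)$ with $G = \sum_\alpha a_\alpha X^\alpha$ of natural support, pick $\alpha^* \in \supp(G)$ realizing $m^{\alpha^*} = \lm(F)$, and form $\tilde G := G - aX^{\alpha^*}$, which still has natural support and satisfies $\tilde G(m) = F - a\,\lm(F)$. Then $E = -\tilde G(m)/(a\,\lm(F))$, and every monomial occurring in this quotient has the form $m^{\alpha - \alpha^*} \preceq 1$. However, the exponent vector $\alpha - \alpha^*$ generally has negative components, so one cannot keep the generators $m_0, \ldots, m_k$ intact.

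The hard part will be to enlarge the generating family to a finite set of small monomials $n_0, \ldots, n_l \in M$ such that every $m^{\alpha - \alpha^*}$ (for $\alpha \in \supp(\tilde G)$) can be rewritten as $n^{\rho(\alpha)}$ with $\rho(\alpha) \in [0,\infty)^{l+1}$, and such that the resulting series $H := \sum_\alpha (-a_\alpha/a)\,Y^{\rho(\alpha)}$ retains natural support inherited from $\tilde G$. Producing this adapted family and verifying naturality of support after the change of variables is the technical core of the argument; it rests on the reverse-well-ordering of $\supp(E) \subseteq M$ established in Lemma \ref{M-gen_is_gen}, combined with the natural support of $G$. Once this is done, $E = H(n_0, \ldots, n_l) \in \As{K}{M}^{\text{ps}}$, completing the proof.
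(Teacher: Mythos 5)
Your first two steps match the paper: set $a := F_{\lm(F)}$ and $E := 1 - F/(a\lm(F))$, then check that $\supp(E)$ consists of small monomials. (Small quibble: you should argue $n < \lm(F) \Rightarrow n \preceq \lm(F)$, then invoke the asymptotic scale hypothesis to upgrade $\preceq$ to $\prec$; the way you've written it asserts $n \prec \lm(F)$ before invoking the hypothesis.)

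The genuine gap is exactly the step you flag as ``the technical core'': you do not actually produce the adapted family $n_0,\dots,n_l$ nor verify naturality of the resulting $H$, and the ingredients you say this ``rests on'' are not the right ones. The reverse-well-ordering of $\supp(E) \subseteq M$ plays no role here; what drives the argument is Dickson's lemma applied to $\supp(G)$. The paper's device is: (i) normalize $G$, using Lemma~\ref{gps_well_lemma}, so that the leading monomial $p=\lm(F)$ is realized by a \emph{unique minimal} exponent $\beta \in \supp(G)$ — without this normalization, some $\alpha \in \supp(G)\setminus\{\alpha^*\}$ may have $m^{\alpha-\alpha^*}=1$, which is not a small monomial and so cannot serve as a generating monomial; (ii) take the finite antichain $S$ of minimal elements of $\supp(G)$ (finite by \cite[Lemma 4.2(1)]{Dries:1998xr}) and factor $G = \sum_{\alpha\in S} a_\alpha X^\alpha U_\alpha$ with each $U_\alpha(0)=1$ and $U_\alpha$ of natural support (\cite[Concluding Remark 2]{Dries:1998xr}); (iii) introduce new indeterminates $Y_\alpha$, one for each $\alpha \in S\setminus\{\beta\}$, standing for the finitely many small monomials $m^\alpha/p$. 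Because the $Y_\alpha$ enter only linearly and the $U_\alpha$ already have natural support, the resulting $H(X,Y)=1-U(X,Y)$ has natural support and $\ord(H)>0$ by inspection — whereas your proposed re-indexing map $\alpha \mapsto \rho(\alpha)$ is neither canonical nor obviously compatible with naturality. You identified the right obstruction, but the way you propose to surmount it is not the mechanism that actually works, and the mechanism that does work (the finite minimal antichain plus the $U_\alpha$-factorization) is missing from your argument.
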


\begin{proof}
	Let $G \in \As{K}{X^*}$ have natural support and $m_0, \dots, m_k \in M$ be small such that $F = G(m)$.  Set $p:= \lm(G(m))$; changing $G$ if necessary, by Lemma \ref{gps_well_lemma}, we may assume that there is a unique minimal element $\beta \in \supp(G)$ such that $p = m^\beta$.  Since $M$ is an asymptotic scale this implies, in particular, that if $\alpha \in \supp(G)$ is minimal and $\alpha \ne \beta$, we have that $m^\alpha/p$ is small.
	
	Let now $S$ be the finite set of minimal elements of $\supp(G)$ (see \cite[Lemma 4.2(1)]{Dries:1998xr}), so that $\beta \in S$.  Then there are $U_\alpha \in \As{K}{X^*}$ with natural support (see \cite[Concluding Remark 2]{Dries:1998xr}) and nonzero $a_\alpha \in K$, for $\alpha \in S$, such that $U_\alpha(0) = 1$ for each $\alpha$ and $$G = \sum_{\alpha \in S} a_\alpha X^\alpha U_\alpha = a_\beta X^\beta U_\beta + \sum_{\beta \ne \alpha \in S} a_\alpha X^\alpha U_\alpha.$$  Let $Y = (Y_\alpha:\ \beta \ne \alpha \in S)$ be a tuple of new indeterminates and set $$U(X,Y):= U_\beta + \sum_{\beta \ne \alpha \in S} \frac{a_\alpha}{a_\beta} Y_\alpha U_\alpha,$$ a generalized power series with natural support satisfying $U(0) = 1$.  Hence $$H(X,Y):= 1 - U(X,Y)$$ is a generalized power series with natural support satisfying $\ord(H) > 0$.  On the other hand, since $m^\alpha/p$ is small for $\beta \ne \alpha \in S$, the series $E:= H(m, m')$ is an $M$-generalized power series, where $m' := (m^\alpha/p:\ \beta \ne \alpha \in S)$, and we have $$G(m) = a_\beta p H(m,m') = a_\beta p (1 - E).$$  Since $\ord(H) > 0$, the leading monomial of $E$ is small, so the lemma is proved. 
\end{proof}

\subsection{Order type of support} \label{order-type_subsection}
Recall that $m,n \in M$ are \textbf{comparable} if there exist $a,b > 0$ such that $n^{a} < m < n^{b}$.  It is straightforward to see that the comparability relation is an equivalence relation on $M$.  If $C \subseteq M$ consists of pairwise comparable germs, we say that $n \in M$ is \textbf{comparable to $C$} if $n$ is comparable to any germ in $C$.

\begin{lemma}
	\label{pairwise_comparable_lemma}
	Let $F \in \As{K}{M}^{\text{ps}}$ have generating monomials $m_0, \dots,$ $m_k$, and assume that the $m_i$ are pairwise comparable.  Then $F$ has $\langle m_0, \dots, m_k \rangle^\times$-natural support.
\end{lemma}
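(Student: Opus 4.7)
The plan is to pass to logarithms, where pairwise comparability of the small germs $m_i$ becomes archimedean equivalence of the positive germs $\mu_i := -\log m_i \in \I$. Unwinding the definition of comparability recalled in Examples~\ref{natural_expls}(3), the hypothesis that $m_i$ and $m_j$ are comparable small germs is equivalent to $\mu_i \asymp \mu_j$; hence all the $\mu_i$ are archimedean equivalent to $\mu_0$, and one can fix constants $c, d > 0$ with
\[
c\,\mu_0 \ \le\ \mu_i \ \le\ d\,\mu_0 \qquad\text{eventually, for every } i.
\]

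Now fix $a \in \langle m_0, \dots, m_k\rangle^\times$ and write $a = \prod_i m_i^{\beta_i}$ with $\beta \in \RR^{k+1}$; set $B := \sum_i |\beta_i|$. In view of the definition of $F$, we have $\supp(F) \subseteq \set{m^\alpha :\ \alpha \in \supp(G)}$, so it suffices to show that
\[
A := \set{\alpha \in \supp(G) :\ m^\alpha > a}
\]
is finite. Taking logarithms, the condition $m^\alpha > a$ becomes $\sum_i \alpha_i \mu_i < \sum_i \beta_i \mu_i$ eventually. The upper bound $\mu_i \le d\mu_0$ gives
\[
\sum_i \beta_i \mu_i \ \le\ \sum_i |\beta_i|\,\mu_i \ \le\ Bd\,\mu_0
\]
eventually, while the non-negativity of the $\alpha_i$ and the lower bound $\mu_i \ge c\mu_0$ give
\[
\sum_i \alpha_i \mu_i \ \ge\ (\max_j \alpha_j)\,c\,\mu_0.
\]
Comparing the two yields $\max_j \alpha_j < Bd/c =: N$ for every $\alpha \in A$, so $A \subseteq [0,N)^{k+1}$.

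Finally, the naturalness of $\supp(G)$ ensures that $[0,N) \cap \Pi_{X_i}(\supp(G))$ is finite for each $i$, so $A$ sits inside a product of finitely many finite sets and is itself finite; consequently $\supp(F) \cap (a,+\infty) \subseteq \set{m^\alpha :\ \alpha \in A}$ is finite as well, as required. I do not anticipate a substantive obstacle: the only mildly delicate point is the translation between comparability of small positive germs and archimedean equivalence of their negative logarithms, which is immediate from the definitions; everything else is the bookkeeping above combined with the naturalness hypothesis on $\supp(G)$.
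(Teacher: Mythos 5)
Your proof is correct, and it follows the same overall strategy as the paper's: reduce the claim to showing that the set of exponent tuples $\alpha \in \supp(G)$ with $m^\alpha > a$ is contained in a bounded box, then invoke naturalness of $\supp(G)$ to conclude finiteness. Where you differ is in how the box is produced. The paper stays multiplicative: pairwise comparability makes each cyclic group $\langle m_i \rangle^\times$ coinitial in $\langle m_0,\dots,m_k\rangle^\times$, so there is $r_i$ with $m_i^{r} < a$ for $r > r_i$; then, since all the $m_j$ are small, $m^\alpha \ge a$ forces $m_i^{\alpha_i} \ge m^\alpha \ge a$, hence $\alpha_i \le r_i$, landing $\alpha$ in $[0,r_0]\times\cdots\times[0,r_k]$. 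You instead pass to the logarithmic side, using archimedean equivalence of the $\mu_i = -\log m_i$ to obtain the single uniform bound $\max_j \alpha_j < Bd/c$, giving the cube $[0,N)^{k+1}$. Both are valid and comparable in length; the paper's version is a touch more economical (no constants $c,d$ to extract, no translation between comparability and archimedean equivalence), while yours is more quantitative, displaying an explicit bound on the exponents in terms of the archimedean constants and the representation of $a$.
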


\begin{proof}
	Let $G \in \As{K}{X_0^*, \dots, X_k^*}$ with natural support be such that $F= G(m_0, \dots, m_k)$.  Let $p \in \langle m_0, \dots, m_k \rangle^\times$; we claim that $n \le p$ for all but finitely many $n \in \supp(F)$.  Since the $m_i$ are pairwise comparable, each $\langle m_i \rangle^\times$ is coinitial in $\langle m_0, \dots, m_k \rangle^\times$; in particular, there exist $r_1, \dots, r_k \in \RR$ such that $m_i^r < p$ for each $r > r_i$, for $i=0, \dots, k$.  Since the box $B:= [0,r_0] \times \cdots \times [0,r_k]$ is compact, the set $B \cap \supp(G)$ is finite.  But by definition of $B$, if $n \in \supp(F)$ satisfies $n \ge p$, then $n = m^\alpha$ for some $\alpha \in B$, which proves the claim.
\end{proof}

\begin{lemma}
\label{cc_order-type}
	Let $F \in \As{K}{M}^{\text{ps}}$ have generating monomials $m_0 < \dots < m_k$.  Let $l \in \NN$ and assume that $\{m_0, \dots, m_k\}$ has $l+1$ distinct comparability classes $C_0 < C_1 < \cdots < C_l$.  Then $F$ has support of reverse-order type at most $\omega^{l+1}$.
\end{lemma}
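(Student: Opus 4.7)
The plan is to induct on $l$, the number of distinct comparability classes minus one. The base case $l = 0$ is immediate: all $m_i$ are pairwise comparable, so Lemma \ref{pairwise_comparable_lemma} gives that $\supp(F)$ is $\langle m_0, \ldots, m_k\rangle^\times$-natural, and combined with anti-well-orderedness (Lemma \ref{M-gen_is_gen}), this forces reverse-order type at most $\omega$.

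For the inductive step, let $l \geq 1$ and partition $\{0,\ldots,k\}$ into $I := \{i : m_i \in C_0\}$ (the smallest class) and $J := \{0,\ldots,k\} \setminus I$. The tuple $(m_i)_{i \in I}$ is pairwise comparable, while $(m_j)_{j \in J}$ has exactly $l$ comparability classes. For $\alpha \in \supp(G)$, write $\alpha = (\alpha^I, \alpha^J)$ and $m^\alpha = p(\alpha) q(\alpha)$ with $p(\alpha) := \prod_{i \in I} m_i^{\alpha_i}$ and $q(\alpha) := \prod_{j \in J} m_j^{\alpha_j}$; both factors lie in $(0,1]$. Define $P := \{p(\alpha) : \alpha \in \supp(G)\}$ and, for $p \in P$, $Q_p := \{q(\alpha) : \alpha \in \supp(G),\, p(\alpha) = p\}$, so that $\supp(F) \subseteq \bigcup_{p \in P} p \cdot Q_p$.

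The crucial point, and the main technical hurdle, is the \emph{separation property}: if $p > p'$ in $P$ and $q \in Q_p$, $q' \in Q_{p'}$, then $pq > p'q'$. From $pq/(p'q') = (p/p')(q/q')$, the factor $p/p' > 1$ lies in $\langle C_0 \rangle^\times$, and $q/q'$ lies in $\langle C_1,\ldots,C_l \rangle^\times$. The only nontrivial case is $q < q'$, in which $q'/q > 1$ belongs to some comparability class $C_j$ with $j \geq 1$. Unwinding the definition, the inequality $C_0 < C_j$ for small germs becomes, via inversion, the statement that every large nontrivial element of $\langle C_0\rangle^\times$ dominates every power of every large element of $C_j$; hence $p/p' > q'/q$ and so $pq > p'q'$.

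To bound the two pieces: $P$ equals the support of the $M$-generalized power series $\tilde G_I(m_I)$ with $\tilde G_I := \sum_{\beta \in \Pi_I(\supp G)} X_I^\beta$, since the coefficients are positive integers in a characteristic-zero ring and no cancellation occurs; Lemma \ref{pairwise_comparable_lemma} applied to this series (whose generators all lie in $C_0$) then gives $P$ is $\langle C_0\rangle^\times$-natural and hence has reverse-order type at most $\omega$. Similarly each $Q_p$ is contained in the support of $\tilde G_J(m_J) := \sum_{\beta \in \Pi_J(\supp G)} m_J^\beta$, whose generating tuple has exactly $l$ comparability classes, so by the inductive hypothesis this support has reverse-order type at most $\omega^l$, and hence so does $Q_p$. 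The separation property now identifies the reversed order on $\supp(F)$ with the ordinal sum, over $P$ in its reverse enumeration, of the reversed blocks $(p \cdot Q_p)^*$, each of type at most $\omega^l$; with at most $\omega$ such blocks, the total ordinal type is at most $\omega^l \cdot \omega = \omega^{l+1}$.
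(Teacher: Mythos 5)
Your overall strategy --- induct on the number of comparability classes, splitting off the fastest class $C_0$ --- is sound and in the same spirit as the paper's, but the ``separation property'' you identify as the main technical hurdle, namely that $p > p'$ in $P$, $q \in Q_p$, $q' \in Q_{p'}$ imply $pq > p'q'$, is false, and the justification you give for it contains the error. You argue that $p/p'$, being a large nontrivial element of $\langle C_0 \rangle^\times$, must dominate every large element of $C_j$ for $j \geq 1$. That is correct for large elements \emph{of} $C_0$, but $\langle C_0\rangle^\times$ is a whole group, and the ratio of two germs in the class $C_0$ can lie in a much slower comparability class. Concretely, take $m_0 = \exp(-x - x^{1/4})$ and $m_1 = \exp(-x)$, both in $C_0 = \compclass(\exp(-x))$, together with $m_2 = \exp(-\sqrt{x}) \in C_1$ and $G = X_0 + X_1 X_2$. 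Then $p := m_1 > m_0 =: p'$, yet $p/p' = \exp(x^{1/4})$ is dominated by $q'/q = 1/m_2 = \exp(\sqrt{x})$, so $pq = m_1 m_2 = \exp(-x-\sqrt x) < \exp(-x - x^{1/4}) = p'q'$. The reverse order on $\supp(F)$ is therefore not the $P$-indexed ordinal sum you describe, and the concluding ordinal arithmetic does not follow.

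The lemma is still correct and the decomposition can be repaired, but the invariant that actually separates is not $p$ itself but the leading coefficient $\lambda_0(\alpha^I) := \sum_{i\in I} c_i\alpha_i$, where $c_i > 0$ is the constant with $\log(1/m_i) \sim c_i \log(1/m_{i_0})$ for a fixed $i_0 \in I$ (these exist because $\H$ is a Hardy field and the $m_i$ with $i \in I$ are pairwise comparable). One checks that $\lambda_0(\alpha^I) < \lambda_0(\beta^I)$ does force $m^\alpha > m^\beta$, so the $\lambda_0$-levels genuinely stack; that the image of $\supp(G)$ under $\alpha \mapsto \lambda_0(\alpha^I)$ is a natural subset of $[0,\infty)$ and hence has order type at most $\omega$; and --- this is the ingredient your argument is missing --- that for a fixed value $s$ of $\lambda_0$, nonnegativity of exponents confines $\alpha^I$ to the box $\prod_{i\in I}[0,s/c_i]$, which by naturality meets the $I$-projection of $\supp(G)$ in a \emph{finite} set. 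Each $\lambda_0$-level of $\supp(F)$ is then a finite union of translates of subsets of $\supp(\tilde G_J(m_J))$, hence of reverse-order type strictly below $\omega^{l+1}$, and the ordinal sum over the at most $\omega$ many levels is $\le\omega^{l+1}$. (The paper's proof splits in the opposite direction --- the $C_0$-variables become coefficients and the slower classes the outer series --- and dispatches the corresponding ordinal estimate with a terse ``it follows''.)
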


\begin{proof}
	Let $G(X) \in \Ps{R}{X^*}$ with natural support be such that $F= G(m_0, \dots, m_k)$.  
	If $l = 0$, the corollary follows from Lemma \ref{pairwise_comparable_lemma}, so we assume $l>0$ and the corollary holds for lower values of $l$.  
	
	We let $j \in \{0, \dots, k\}$ be such that $m_i$ is comparable to $C_0$ if and only if $i \le j$, and we set $M':= \langle m_{j+1}, \dots, m_k\rangle^\times$ and $M'':= \langle m_0, \dots, m_j\rangle^\times$.  Identifying $\Gs{K}{M''M'}$ with a subring of $\Gs{\Gs{K}{M''}}{M'}$, we write $$F = \tilde G(m_{j+1}, \dots, m_k),$$ where $\tilde G \in \As{\As{K}{M''}^{\text{ps}}}{(X_{j+1}^*, \dots, X_k^*)}$ has natural support.  Applying the inductive hypothesis with $\As{K}{M''}^{\text{ps}}$ in place of $K$, we obtain that the support of $\tilde G(m_{j+1}, \dots, m_k)$ has reverse-order type at most $\omega^{l}$.  On the other hand, by Lemma \ref{pairwise_comparable_lemma}, every coefficient of $\tilde G$ has $M''$-natural support, that is, support of reverse-order type at most $\omega$; it follows that the support of $F$ has reverse-order type at most $\omega^{l+1}$.
\end{proof}

\subsection{Infinite sums} \label{inf_sum_subsection}
Let $F_\nu \in \Gs{K}{M}$ for $\nu \in \NN$.  Recall that, if the sequence $(\lm(F_\nu):\ \nu \in \NN)$ is decreasing and coinitial in $M$, then the infinite sum $\sum_\nu F_\nu$ defines a series in $\Gs{K}{M}$.  
In this general context, $M$-naturality is preserved:

\begin{lemma}
\label{sum_of_natural_support}
Assume that each $F_\nu$ has $M$-natural support and the sequence $(\lm(F_\nu):\ \nu \in \NN)$ is decreasing and coinitial in $M$.  Then $\sum_\nu F_\nu$ has $M$-natural support.
\end{lemma}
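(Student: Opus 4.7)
The plan is straightforward: fix $a \in M$ and show that $\supp\left(\sum_\nu F_\nu\right) \cap (a,+\infty)$ is finite by confining the contribution to $(a,+\infty)$ to finitely many indices $\nu$, and then using the $M$-naturality of each individual $F_\nu$.

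First I would exploit coinitiality together with the decreasingness of $(\lm(F_\nu))$: since the sequence is coinitial in $M$, there exists $\nu_0 \in \NN$ with $\lm(F_{\nu_0}) < a$, and then decreasingness gives $\lm(F_\nu) \le \lm(F_{\nu_0}) < a$ for every $\nu \ge \nu_0$. By definition of the leading monomial, this means $\supp(F_\nu) \cap (a,+\infty) = \emptyset$ for all $\nu \ge \nu_0$. (This same observation also certifies that the infinite sum $\sum_\nu F_\nu$ defines an element of $\Gs{K}{M}$, since for each fixed $m \in M$ only the finitely many $F_\nu$ with $\nu < \nu_m$ can have $m$ in their support.)

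Next, using the inclusion $\supp\left(\sum_\nu F_\nu\right) \subseteq \bigcup_\nu \supp(F_\nu)$, I would write
\[
\supp\!\left(\sum_\nu F_\nu\right) \cap (a,+\infty) \;\subseteq\; \bigcup_{\nu<\nu_0} \bigl(\supp(F_\nu) \cap (a,+\infty)\bigr).
\]
By the $M$-naturality hypothesis on each $F_\nu$, every set on the right-hand side is finite; hence so is the finite union, which proves $M$-naturality of $\sum_\nu F_\nu$.

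There is essentially no obstacle here beyond unpacking the definitions correctly; the only subtle point is making sure that ``coinitial and decreasing'' is used to pass from ``eventually $\lm(F_\nu) < a$'' to ``eventually $\supp(F_\nu) \cap (a,+\infty)$ is empty,'' which is immediate from the definition of $\lm$ together with the ordering on $M \subseteq \H^{>0}$.
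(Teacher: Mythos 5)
Your proof is correct and follows essentially the same route as the paper: use coinitiality and decreasingness of $(\lm(F_\nu))$ to find a tail of indices whose supports miss $(a,+\infty)$, then reduce to a finite union of finite sets via the $M$-naturality of the individual $F_\nu$. The only cosmetic difference is that the paper passes through ``$\lm(F_\nu) \to 0$ in $M$'' rather than invoking coinitiality directly, but the content is identical.
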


\begin{proof}
	Let $n \in M$; it suffices to show that the set $$\left(\bigcup_\nu \supp(F_\nu)\right) \cap (n,+\infty)$$ is finite.  By hypothesis, we have $\lm(F_\nu) \to 0$ in $M$ as $\nu \to \infty$.  So we choose $N \in \NN$ such that $\lm(F_\nu) \le n$ for $\nu > N$; then $$\left(\bigcup_\nu \supp(F_\nu)\right) \cap (n,+\infty) = \left(\bigcup_{\nu=0}^N \supp(F_\nu)\right) \cap (n,+\infty),$$ and the latter is finite since each $F_\nu$ has natural support.	
\end{proof}

Assume now that each $F_\nu$ is an $M$-generalized power series of the form $G_\nu(m_0, \dots, m_k)$ with generating monomials $m_0 < \dots < m_k$ and $G_\nu \in \As{K}{X^*}$ of natural support.  In this situation, we want a general criterion for when $\sum_\nu F_\nu$ defines a series in $\As{K}{M}^{\text{ps}}$.  The fact that $M$ may contain more than one comparability class plays a role here: let $l \in \NN$ be such that $\{m_0, \dots, m_k\}$ has $l+1$ comparability classes $C_0 < \cdots < C_l$, and let $i_0 := 0 < i_1 < \cdots < i_l \le k < i_{l+1} := k+1$ be such that $$C_j = \{m_{i_j}, \dots, m_{i_{j+1}-1}\}$$ for $0 \le j \le l$, and set $$M_j:= \langle C_j \rangle^\times.$$
We denote by $\Pi_j:\RR^{k+1} \into \RR^{i_{j+1}-i_j}$ the projection on the coordinates $(x_{i_j}, \dots, x_{i_{j+1}-1})$, and  we let $\Pi_{M_j}:\langle m_0, \dots, m_k \rangle^\times \into M_j$ be the map defined by $$\Pi_{M_j}(m^\alpha) := (m_{i_j}, \dots, m_{i_{j+1}-1})^{\Pi_j(\alpha)}.$$  Note that, since each $G_\nu$ has natural support, each set $\Pi_{M_j}(\supp(F_\nu))$ is an anti-well ordered subset of $M_j$; so we set $$m_{\nu,j}:= \max\Pi_{M_j}(\supp(F_\nu)).$$

\begin{lemma}
	\label{inf_sum_lemma}
	Assume that $\bigcup_\nu \supp(G_\nu)$ is natural and there exists $j \in \{0, \dots, l\}$ such that the sequence $\left(m_{\nu,j}:\ \nu \in \NN\right)$ is coinitial in $M_j$.  Then
	$\sum_\nu F_\nu$ is an $M$-generalized power series with generating monomials $m_0, \dots, m_k$.
\end{lemma}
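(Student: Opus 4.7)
The plan is to exhibit $\sum_\nu F_\nu$ as $G(m_0, \dots, m_k)$ for a single generalized power series $G \in \As{K}{X_0^*, \dots, X_k^*}$ with natural support. The natural candidate is the coefficient-wise sum $G := \sum_\nu G_\nu$, and the proof splits into three parts: (i) show this formal sum is well-defined in $\As{K}{X_0^*, \dots, X_k^*}$; (ii) show $\supp(G)$ is natural; and (iii) identify $G(m_0, \dots, m_k)$ with $\sum_\nu F_\nu$ in $\Gs{K}{M}$.

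For (i), I would fix $\alpha \in [0,\infty)^{k+1}$ and bound the index set $N_\alpha := \{\nu : \alpha \in \supp(G_\nu)\}$. Setting $p_\alpha := (m_{i_j}, \dots, m_{i_{j+1}-1})^{\Pi_j(\alpha)} \in M_j$, I observe that if $\nu \in N_\alpha$ then $m^\alpha$ is produced by evaluating $G_\nu$ at $(m_0, \dots, m_k)$, so -- after accounting via Lemma \ref{gps_well_lemma} for the finitely many cancellations among $\beta \in \supp(G_\nu)$ with $m^\beta = m^\alpha$ -- the element $p_\alpha$ appears in $\Pi_{M_j}(\supp(F_\nu))$, yielding $m_{\nu,j} \ge p_\alpha$. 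The coinitiality of $(m_{\nu,j})_\nu$ in $M_j$ forces $m_{\nu,j} < p_\alpha$ for all sufficiently large $\nu$, so $N_\alpha$ is finite; hence $G_\alpha := \sum_\nu (G_\nu)_\alpha$ is a finite $K$-sum and $G$ is unambiguously defined.

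For (ii), the inclusion $\supp(G) \subseteq \bigcup_\nu \supp(G_\nu)$ is immediate, and naturality of support passes to subsets, so the hypothesis delivers the claim. For (iii), I first check that the family $(F_\nu)_\nu$ is summable in $\Gs{K}{M}$ in the sense recalled before the statement: the pooled support $\bigcup_\nu \supp(F_\nu)$ embeds into $\{m^\alpha : \alpha \in \bigcup_\nu \supp(G_\nu)\}$, which is anti-well-ordered in $M$ since the $m_i$ are small and the exponent set is natural, and by (i) each monomial $n \in M$ belongs to $\supp(F_\nu)$ for only finitely many $\nu$. A coefficient comparison at an arbitrary $n \in \langle m_0, \dots, m_k \rangle^\times$, using Lemma \ref{gps_well_lemma} to collect the finitely many $\alpha$ with $m^\alpha = n$, then identifies $G(m_0, \dots, m_k)$ with $\sum_\nu F_\nu$.

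The main obstacle is step (i): transferring coinitiality of $(m_{\nu,j})_\nu$ -- a statement about $\Pi_{M_j}(\supp(F_\nu)) \subseteq M_j$ -- into a finiteness statement at the exponent level in $[0,\infty)^{k+1}$. When the $m_i$ are multiplicatively independent no cancellations occur and the transfer is immediate; in the general case the finiteness of the fibers of $\alpha \mapsto m^\alpha$ provided by Lemma \ref{gps_well_lemma} allows one to absorb the discrepancy between $\{m^\beta : \beta \in \supp(G_\nu)\}$ and the possibly smaller $\supp(F_\nu)$, and the argument goes through without modification.
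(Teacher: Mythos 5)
Your argument is correct and its core is the same as the paper's: both transfer the coinitiality of $(m_{\nu,j})$ into a finiteness statement on the supports of the $G_\nu$ via the projection $\Pi_{M_j}$. The organization differs in a small but real way. The paper picks, for each $\nu$, an exponent $\beta^\nu \in \supp(G_\nu)$ of minimal total degree, deduces $\Pi_{M_j}(m^{\beta^\nu}) \le m_{\nu,j} \to 0$ and hence $\ord(G_\nu)\to\infty$, and then cites the summability criterion of \cite[4.6]{Dries:1998xr} to obtain $G=\sum_\nu G_\nu\in\As{K}{X^*}$. You instead fix an exponent $\alpha$ and show it lies in $\supp(G_\nu)$ for only finitely many $\nu$, then verify summability (both of $\sum_\nu G_\nu$ and of $\sum_\nu F_\nu$) directly. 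Under the naturality hypothesis on $\bigcup_\nu\supp(G_\nu)$ the two finiteness statements are equivalent: naturality bounds $\bigcup_\nu\supp(G_\nu)\cap[0,B]^{k+1}$ to a finite set for every $B$, so your per-exponent finiteness forces $\ord(G_\nu)\to\infty$ by pigeonhole, and conversely $\ord(G_\nu)\to\infty$ trivially gives per-exponent finiteness. So your more elementary, self-contained route is a valid substitute for the external reference.

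The one place you should be more careful is the cancellation caveat at the end, which you raise but do not actually close. The inequality $m_{\nu,j}\ge p_\alpha$ requires $m^\alpha\in\supp(F_\nu)$. If the $m_i$ are multiplicatively dependent, the coefficients $(G_\nu)_\beta$ over the (finite, by Lemma \ref{gps_well_lemma}) fiber $\{\beta:\ m^\beta=m^\alpha\}$ may sum to zero, in which case $m^\alpha\notin\supp(F_\nu)$ and there is no reason for $m_{\nu,j}\ge p_\alpha$; the finiteness of the fiber by itself does not ``absorb the discrepancy,'' so the claim that the argument ``goes through without modification'' is not justified. Be aware that the paper's own step $\Pi_{M_j}(m^{\beta^\nu})\le m_{\nu,j}$ has exactly the same hidden assumption, so this is not a defect particular to your write-up; the standard fix is either to take the $m_i$ to be a multiplicative basis (as in all the paper's applications of this lemma), making $\alpha\mapsto m^\alpha$ injective, or to first replace each $G_\nu$ by a reduced series supported only on exponents that survive in $\supp(F_\nu)$ (which shrinks supports and so preserves naturality). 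With that clarification, your proof is sound.
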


\begin{rmk}
	Note that the assumption that $\bigcup_\nu \supp(G_\nu)$ be natural is necessary: if $M = \langle \frac1{\exp}, \frac1x \rangle^\times$ and $F_\nu = \exp^{-\nu} x^{-1/\nu}$, then $M_0 = \langle \frac1{\exp} \rangle^\times$ and $m_{\nu,0} = \exp^{-\nu} \to 0$ in $M_0$ as $\nu \to \infty$, while $G_\nu = X_1^\nu X_2^{1/\nu}$ implies that $\bigcup_\nu \supp(G_\nu)$ is not natural.
\end{rmk}

\begin{proof}
	We claim that $\ord(G_\nu) \to \infty$ as $\nu \to \infty$.  Assuming this claim, it follows from \cite[Paragraph 4.6]{Dries:1998xr} that $G:= \sum_\nu G_\nu$ belongs to $\As{K}{X^*}$.  Since $G$ has natural support by hypothesis, it follows that $\sum_\nu F_\nu = F:= G(m_0, \dots, m_k) \in \As{K}{M}^{\text{ps}}$, as required.
	
	To see the claim, let $j$ be as in the hypothesis of the lemma.  For each $\nu$, choose $\beta^\nu \in \supp(G_\nu)$ such that $\ord(G_\nu) = |\beta^\nu|$.  Since $$(m_{i_j}, \dots, m_{i_{j+1}-1})^{\Pi_{j}(\beta^\nu)} = \Pi_{M_j}\left(m^{\beta^\nu}\right) \le m_{\nu,j} \to 0$$ in $M_j$ as $\nu \to \infty$, we must have $\left|\Pi_{j}(\beta^\nu)\right| \to \infty$ as $\nu \to \infty$.  Since $\beta^\nu$ only has nonnegative coordinates, it follows that $\ord(G_\nu) = |\beta^\nu| \to \infty$ as $\nu \to \infty$, as claimed.
\end{proof}

\subsection{Composition with power series} \label{composition_subsection}
For $A \subseteq [0,\infty)$ and $\nu \in \NN$, we set $$+^\nu\ A := \set{a_1 + \cdots + a_\nu:\ a_i \in A}$$ and $$B(A):= \bigcup_{\nu \ge 1} \left(+^\nu\ A\right).$$

\begin{lemma}
\label{natural_set_lemma}
	Let $A \subseteq [0,\infty)$ be natural.  Then $B(A)$ is natural.
\end{lemma}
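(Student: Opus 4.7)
The plan is to reduce to the case where $0 \notin A$, extract a positive minimum of $A$ from naturality, and then bound how many summands $\nu$ can contribute elements of $B(A)$ below a fixed threshold $a > 0$.

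First I would dispose of a possible zero in $A$. If $0 \in A$, adjoining or removing zero summands does not change the value of a sum, so $B(A) = \{0\} \cup B(A \setminus \{0\})$; since $\{0\}$ is natural and the union of two natural sets is natural, it suffices to establish the lemma under the assumption $A \subseteq (0, \infty)$. Assuming this (the case $A = \emptyset$ being trivial since then $B(A) = \emptyset$), the naturality hypothesis furnishes a minimum $c := \min A > 0$: picking any $a_0 \in A$, the set $A \cap [0, a_0 + 1)$ is finite and nonempty, and its minimum is therefore the minimum of all of $A$.

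Now I would fix $a > 0$ and observe that every element of $+^\nu A$ is at least $\nu c$, so $+^\nu A \cap [0,a) = \emptyset$ whenever $\nu c \ge a$. Hence
\[ B(A) \cap [0,a) \;=\; \bigcup_{\nu = 1}^{N} \bigl(+^\nu A \cap [0,a)\bigr) \]
for any integer $N$ with $Nc \ge a$. For each such $\nu$, any element of $+^\nu A \cap [0,a)$ is a sum of $\nu$ elements of $A \cap [0,a)$, and this latter set is finite by naturality of $A$; so each $+^\nu A \cap [0,a)$ is the image of a finite cartesian product under addition, and hence is itself finite. Therefore the displayed union is a finite union of finite sets, so finite, proving that $B(A)$ is natural.

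I do not anticipate any substantive obstacle: the argument is a short elementary counting once the minimum $c$ is isolated, and extracting this minimum is the only step where the full naturality hypothesis of $A$ is used.
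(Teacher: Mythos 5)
Your proof is correct and follows essentially the same strategy as the paper's: reduce to $0 \notin A$, use $\min A > 0$ to bound the number of relevant $\nu$, and observe each $+^\nu A$ contributes only finitely many points below a given threshold. Your treatment of the case $0 \in A$ is slightly more direct (dropping zero summands to get $B(A) = \{0\} \cup B(A\setminus\{0\})$ immediately), whereas the paper establishes the analogous inclusion by an induction on $\nu$, but this is a cosmetic difference.
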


\begin{proof}
	Assume first that $0 \notin A$, and set $a:= \min A > 0$.  Then $\min\left(+^\nu\ A\right) = \nu a$ for all $\nu \ge 1$.  So, given $b > 0$, choose $N \ge \frac ba$; then $[0,b] \cap B(A) = [0,b] \cap \bigcup_{\nu=1}^{N-1} \left(+^\nu\ A\right)$, which is finite.  This proves the lemma in this case.
		
	In general, we have $A \subseteq \{0\} \cup A'$, where $A' \subseteq (0,\infty)$ is natural.  By the previous case, it now suffices to show that $B(A) = \{0\} \cup B(A')$, which follows if we  show that 
	\begin{itemize} 
		\item[$(\ast)_\nu$]  $\ \ +^\nu\ A \subseteq \{0\} \cup B(A')$
	\end{itemize}  
	 for each $\nu \ge 1$.
	We show $(\ast)_\nu$ by induction on $\nu$: $(\ast)_1$ follows by choice of $A'$. So assume $\nu > 1$ and $(\ast)_\eta$ holds for $\eta < \nu$.  Then 
	\begin{align*}
		+^\nu\ A &= A + \left(+^{\nu-1} A\right) \\
		&\subseteq \left(\{0\} \cup B(A')\right) + \left(\{0\} \cup B(A')\right) \\
		&= \{0\} \cup B(A') \cup B(A') \cup (B(A') + B(A')) \\
		& \subseteq \{0\} \cup B(A'),
	\end{align*}
	where the second line follows from the inductive hypothesis and the fourth line follows from the observation that $B(A') + B(A') \subseteq B(A')$.  This proves $(\ast)_\nu$ and hence the lemma.	
\end{proof}

Let $F \in \As{K}{M}^{\text{ps}}$ with generating monomials $m_0,$ $\dots, m_k$ be such that $\lm(F)$ is small, and let $P = \sum_{\nu \in \NN} a_\nu T^\nu \in \As{K}{T}$ be a power series in the single indeterminate $T$.  Let also $G \in \As{K}{X^*}$ have natural support such that $F = G(m)$.  Since $\lm(F)$ is small, we have $\ord(G) > 0$, so by \cite[Paragraph 4.6]{Dries:1998xr}, the sum $P \circ G:= \sum_\nu a_\nu G_\nu$ belongs to $\As{K}{X^*}$.  We therefore define $$P \circ F := (P \circ G)\left(m\right).$$
This composition does not depend on the particular series $G$ chosen (see for instance \cite{MR1848569}), and it is associative in the following sense: if $P \in \As{K}{T}$ has positive order and $Q \in \As{K}{T}$, then $Q \circ (P \circ G) = (Q \circ P) \circ G$.  We will therefore simply write $Q \circ P \circ G$ for these compositions.

\begin{prop}
	\label{gps_comp_with_convergent}
	Let $F \in \As{K}{M}^{\text{ps}}$ with generating monomials $m_0,$ $\dots, m_k$ be such that $\lm(F)$ is small, and let $P \in \As{K}{T}$.  
	\begin{enumerate}
		\item $P \circ F$  is an $M$-generalized power series with generating monomials $m_0, \dots, m_k$.
		\item Assume in addition that $\supp(F)$ is $M$-natural and the sequence $(\lm(F)^\nu:\ \nu \in \NN)$ is coinitial in $M$.  Then $P \circ F$ has $M$-natural support.
	\end{enumerate}
\end{prop}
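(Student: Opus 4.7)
The plan is to reduce (1) to Lemma \ref{natural_set_lemma} by projecting onto coordinates, and to prove (2) by combining the coinitiality hypothesis with a Minkowski-type control of $\supp(F^\nu)$ by $\supp(F)$, with the leading monomial serving as a pivot.

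For part (1), I would fix $G \in \As{K}{X^*}$ with natural support such that $F = G(m_0, \dots, m_k)$ and $\ord(G) > 0$. By the definition preceding the proposition, $P \circ F = (P \circ G)(m_0, \dots, m_k)$ with $P \circ G = \sum_\nu a_\nu G^\nu \in \As{K}{X^*}$, so it suffices to show that $P \circ G$ has natural support. Since products of generalized power series satisfy $\supp(G^\nu) \subseteq +^\nu\ \supp(G)$ coordinatewise, projecting onto the $i$th variable gives
$$\Pi_{X_i}(\supp(P \circ G)) \;\subseteq\; \bigcup_\nu \Pi_{X_i}(\supp(G^\nu)) \;\subseteq\; \bigcup_\nu \left(+^\nu\ \Pi_{X_i}(\supp(G))\right) = B(\Pi_{X_i}(\supp(G))).$$
By Lemma \ref{natural_set_lemma}, the right-hand side is natural, and any subset of a natural subset of $[0,\infty)$ is natural; hence $P \circ G$ has natural support, as required.

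For part (2), I would enumerate $\supp(F) = \{n_0 > n_1 > n_2 > \cdots\}$ with $n_0 = \lm(F)$ (the enumeration is possibly finite, in which case the argument simplifies). The key observation is the trivial inequality
$$n_{i_1} n_{i_2} \cdots n_{i_\nu} \;\le\; n_0^{\nu-1}\, n_{i_j}$$
for any tuple $(i_1, \dots, i_\nu)$ and any index $j \in \{1, \dots, \nu\}$, because each factor $n_{i_k} \le n_0$. Given $a \in M$, if some product $n_{i_1} \cdots n_{i_\nu}$ exceeds $a$, then $n_{i_j} > a \cdot n_0^{-(\nu-1)} \in M$ for every $j$; by $M$-naturalness of $\supp(F)$ there are only finitely many indices $i$ with $n_i > a \cdot n_0^{-(\nu-1)}$, so only finitely many such $\nu$-tuples exist. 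Since $\supp(F^\nu) \subseteq \{n_{i_1} \cdots n_{i_\nu} : i_k \in \NN\}$, this shows $(a, +\infty) \cap \supp(F^\nu)$ is finite for each $\nu$.

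To combine with the coinitiality hypothesis, given $a \in M$ choose $N \in \NN$ so that $n_0^\nu \le a$ for all $\nu > N$; then every element of $\supp(F^\nu)$ is bounded by $n_0^\nu \le a$, so $(a, +\infty) \cap \supp(F^\nu) = \emptyset$ for $\nu > N$. Therefore
$$(a, +\infty) \cap \supp(P \circ F) \;\subseteq\; \bigcup_{\nu=0}^{N} \left((a, +\infty) \cap \supp(F^\nu)\right)$$
is a finite union of finite sets, proving that $\supp(P \circ F)$ is $M$-natural. The main obstacle is the interlocking argument in part (2): one must simultaneously control how large $\nu$ can be (via coinitiality of the powers of $\lm(F)$) and, for fixed $\nu$, control how many tuples in $\supp(F)^\nu$ produce products exceeding $a$ (via $M$-naturalness of $\supp(F)$), and the inequality $n_{i_1} \cdots n_{i_\nu} \le n_0^{\nu-1} n_{i_j}$ is precisely the mechanism that transfers the latter from $\supp(F)$ to $\supp(F^\nu)$.
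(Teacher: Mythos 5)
Your proof is correct and takes essentially the same approach as the paper: part (1) reduces to Lemma~\ref{natural_set_lemma} via coordinate projections, and part (2) establishes $M$-naturality of each $\supp(F^\nu)$ and then truncates the union using coinitiality of $(\lm(F)^\nu)$. The paper's version of part (2) leaves the $M$-naturality of $F^\nu$ to the reader and invokes Lemma~\ref{sum_of_natural_support} rather than unfolding it, but the substance is identical; your inequality $n_{i_1}\cdots n_{i_\nu} \le n_0^{\nu-1}n_{i_j}$ is exactly the multiplicative analogue of the additive pivot used in the proof of Lemma~\ref{natural_set_lemma}.
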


\begin{proof}
	(1) Let $G \in \As{K}{X^*}$ with natural support and small $m_0, \dots, m_k \in M$ be such that $F = G(m)$; we need to show that $P \circ G$ has natural support.  Since $\Pi_{X_i}(\supp(P \circ G)) \subseteq \bigcup_\nu \Pi_{X_i}(\supp(G^\nu))$, and since 
	\begin{equation*}
	\Pi_{X_i}(\supp(G^\nu)) = +^\nu\ \Pi_{X_i}(\supp(G)),
	\end{equation*}
	Lemma \ref{natural_set_lemma} implies that $\supp(P \circ G)$ is natural, as required.
	
	(2) Arguing along the lines of Lemma \ref{natural_set_lemma}, we see that each $F^\nu$ has $M$-natural support (we leave the details to the reader).  Since $\lm(F^\nu) = \lm(F)^\nu$, part (2) follows from part (1) and Lemma \ref{sum_of_natural_support}.
\end{proof}

\subsection{$M$-generalized Laurent series} \label{Laurent_subsection}
An \textbf{$M$-generalized Laurent series} is a series of the form $n F$, where $F$ is an $M$-generalized power series with generating monomials $m_0, \dots, m_k$ and $n$ is a (possibly large) element of $\langle m_0, \dots, m_k \rangle^\times$.  We denote by $\Gs{K}{M}^{\text{ls}}$ the subset of $\Gs{K}{M}$ of all $M$-generalized Laurent series.

\begin{lemma}
\label{Laurent_ring}
	$\Gs{K}{M}^{\text{ls}}$ is subring of $\Gs{K}{M}$.
\end{lemma}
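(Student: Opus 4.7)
The plan is to verify closure of $\Gs{K}{M}^{\text{ls}}$ under addition and multiplication; that $0, 1 \in \Gs{K}{M}^{\text{ls}}$ is immediate (take $F = 0$ or $F = 1$ with $n = 1$). The main preparatory step is to reduce any two $M$-generalized Laurent series $A_i = n_i F_i$ ($i = 1,2$), with $F_i = G_i(m^{(i)})$ having small generating monomials $m^{(i)}$, to a common tuple of small generating monomials. I would form the concatenation $m = (m_0, \dots, m_k)$ of the tuples $m^{(1)}, m^{(2)}$, and lift each $G_i$ to a power series $\tilde{G}_i \in \As{K}{X_0^*, \dots, X_k^*}$ by padding with unused variables; this preserves natural support since the projections onto the new axes are the trivially natural set $\{0\}$. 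Then $F_i = \tilde{G}_i(m)$ and $n_i \in \langle m \rangle^\times$.

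For multiplication, a direct computation gives
\[
A_1 A_2 \ =\ (n_1 n_2)\,\bigl(\tilde{G}_1 \tilde{G}_2\bigr)(m),
\]
and it suffices to check that $\tilde{G}_1 \tilde{G}_2$ has natural support. This follows since $\supp(\tilde{G}_1 \tilde{G}_2) \subseteq \supp(\tilde{G}_1) + \supp(\tilde{G}_2)$, and the $X_j$-projection of this Minkowski sum is a sum of two natural subsets of $[0, \infty)$, which is again natural (as $(A + B) \cap [0, c) \subseteq (A \cap [0,c)) + (B \cap [0,c))$ is finite). Since $n_1 n_2 \in \langle m \rangle^\times$, the product lies in $\Gs{K}{M}^{\text{ls}}$.

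For addition, I would pick representations $n_i = m^{\alpha^{(i)}}$ with $\alpha^{(i)} \in \RR^{k+1}$, set $\alpha_j := \min\bigl(\alpha^{(1)}_j, \alpha^{(2)}_j\bigr)$, and $\beta^{(i)} := \alpha^{(i)} - \alpha \in [0, \infty)^{k+1}$. This yields
\[
A_1 + A_2 \ =\ m^\alpha \cdot H(m), \quad\text{where}\quad H \ :=\ X^{\beta^{(1)}} \tilde{G}_1 + X^{\beta^{(2)}} \tilde{G}_2 \in \As{K}{X^*}.
\]
The series $H$ has natural support, since shifting a natural set by a nonnegative vector preserves naturality projection-wise (a shift by $b \ge 0$ sends $A \cap [0,a)$ to a finite subset of $[b,a)$), and finite unions of natural sets are natural. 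As $m^\alpha \in \langle m \rangle^\times$, this exhibits $A_1 + A_2$ as an $M$-generalized Laurent series.

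The only mild subtlety I anticipate is that the representation $n_i = m^{\alpha^{(i)}}$ need not be unique when the $m_j$ are multiplicatively dependent over $\RR$; however, any single choice suffices, since the lemma only asserts existence of a Laurent representation for the sum, and the actual series is well-defined as an element of $\Gs{K}{M}$ independently of that choice. Otherwise the proof is routine bookkeeping with natural supports.
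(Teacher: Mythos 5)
Your proposal is correct and follows essentially the same route as the paper: reduce to a common tuple of generating monomials, verify multiplication via a Minkowski-sum argument on supports, and handle addition by factoring out $m^\alpha$ with $\alpha$ the coordinate-wise minimum of exponent vectors. You simply make explicit two steps the paper leaves to the reader — the padding/concatenation reduction to a shared tuple, and the naturality check for the product — and correctly note the harmless non-uniqueness of the exponent representations.
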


\begin{proof}
	It is easy to see that the set $\Gs{K}{M}^{\text{ls}}$ is closed under multiplication.  As to closure under addition, let $G_1, G_2 \in \As{K}{X^*}$ have natural support with $X = (X_0, \dots, X_k)$, let $m_0, \dots, m_k \in M$ be small and $n_1, n_2 \in \langle m_0, \dots, m_k\rangle^\times$.  Let $\alpha^1, \alpha^2 \in \RR^{k+1}$ be such that $n_i = m^{\alpha_i}$ for $i=1,2$, where $m:= (m_0, \dots, m_k)$, and define $$\beta = (\beta_0, \dots, \beta_k) := \left(\min\set{\alpha_0^1, \alpha_0^2}, \dots, \min\set{\alpha_k^1,\alpha_k^2}\right)$$ and $n:= m^\beta$.  Note that $$H_i := X^{\alpha^i-\beta} G_i(X)$$ belongs to $\As{K}{X^*}$ and has natural support, for $i=1,2$.  Then $$n_1G_1(m) + n_2G_2(m) = n(H_1(m) + H_2(m)),$$ which shows that $\Gs{K}{M}^{\text{ls}}$ is closed under addition.
\end{proof}

\begin{prop}
\label{Laurent_field_prop}
	Assume that $K$ is a field and $M$ is an asymptotic scale.
	\begin{enumerate}
		\item Let $F \in \Gs{K}{M}^{\text{ls}}$ be nonzero.  Then there are a nonzero $a \in K$ and an $E \in \As{K}{M}^{\text{ls}}$ such that $\lm(E)$ is small and $F = a \lm(F) (1-E).$
		\item $\Gs{K}{M}^{\text{ls}}$ is the fraction field of $\As{K}{M}^{\text{ps}}$ in $\Gs{K}{M}$.
	\end{enumerate}
\end{prop}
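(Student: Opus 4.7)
For part (1), my approach is to reduce directly to Lemma \ref{leading_monomial_lemma}. Given nonzero $F \in \Gs{K}{M}^{\text{ls}}$, I write $F = nG(m)$ with $m = (m_0,\dots,m_k)$ small monomials in $M$, $n \in \langle m \rangle^\times$, and $G \in \As{K}{X^*}$ of natural support, so that $G(m)$ lies in $\As{K}{M}^{\text{ps}}$. Applying Lemma \ref{leading_monomial_lemma} to $G(m)$ furnishes a nonzero $a \in K$ and an $E \in \As{K}{M}^{\text{ps}}$ with $\lm(E)$ small such that $G(m) = a\,\lm(G(m))\,(1-E)$. Since $n$ is a single monomial we have $\lm(F) = n\,\lm(G(m))$, and multiplying through by $n$ yields $F = a\,\lm(F)(1-E)$ as required.

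For part (2), the plan is to first establish that $\Gs{K}{M}^{\text{ls}}$ is a field and then to identify it with the fraction field of $\As{K}{M}^{\text{ps}}$ inside $\Gs{K}{M}$. To invert a nonzero $F$, I apply (1) to write $F = a\,\lm(F)(1-E)$ with $\lm(E)$ small, and set $P(T) := \sum_{\nu \ge 0} T^\nu \in \As{K}{T}$. By Proposition \ref{gps_comp_with_convergent}(1), $P \circ E$ lies in $\As{K}{M}^{\text{ps}}$ and serves as the formal multiplicative inverse of $1-E$. Using that $K$ is a field to form $a^{-1}$, I obtain $F^{-1} = a^{-1}\,\lm(F)^{-1}(P \circ E)$, which is the product of a single monomial in $\langle m \rangle^\times$ and an element of $\As{K}{M}^{\text{ps}}$, hence belongs to $\Gs{K}{M}^{\text{ls}}$ by Lemma \ref{Laurent_ring}.

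The inclusion $\fracf(\As{K}{M}^{\text{ps}}) \subseteq \Gs{K}{M}^{\text{ls}}$ then follows immediately from $\As{K}{M}^{\text{ps}} \subseteq \Gs{K}{M}^{\text{ls}}$ together with the fact that $\Gs{K}{M}^{\text{ls}}$ has now been shown to be a field. For the reverse inclusion, given $F = nG(m) \in \Gs{K}{M}^{\text{ls}}$ with $n = m^\alpha$, I split $\alpha = \alpha^+ - \alpha^-$ into its nonnegative and nonpositive parts, so that $m^{\alpha^+}$ and $m^{\alpha^-}$ are each either $1$ or small monomials and thereby lie in $\As{K}{M}^{\text{ps}}$; this presents $F = (m^{\alpha^+}G(m))/m^{\alpha^-}$ as a ratio of elements in $\As{K}{M}^{\text{ps}}$. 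The one subtlety I need to track is that the $E$ produced in the proof of Lemma \ref{leading_monomial_lemma} involves auxiliary generating monomials of the form $m^\alpha/p$ beyond the original $m$; these all lie in $\langle m \rangle^\times \subseteq M$ and are small by construction, so $P \circ E$ remains a legitimate $M$-generalized power series and the computation above stays within $\Gs{K}{M}^{\text{ls}}$.
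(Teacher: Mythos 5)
Your proof is correct and follows essentially the same route as the paper: part (1) reduces to Lemma \ref{leading_monomial_lemma} via $\lm(F) = n\,\lm(G(m))$, and part (2) inverts $1-E$ by composing with the geometric series and invoking Proposition \ref{gps_comp_with_convergent}(1). Your additional remarks — splitting $n = m^{\alpha^+}/m^{\alpha^-}$ to place $\Gs{K}{M}^{\text{ls}}$ inside $\fracf\bigl(\As{K}{M}^{\text{ps}}\bigr)$, and noting that the auxiliary monomials $m^\alpha/p$ arising in $E$ lie in $\langle m \rangle^\times$ and are small — spell out details the paper leaves implicit, but they are not a different argument.
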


\begin{proof}
	(1) Let $G \in \As{K}{X^*}$ have natural support, $m_0, \dots, m_k \in M$ be small and $n \in \langle m_0, \dots, m_k \rangle^\times$ be such that $F = nG(m)$.  Part(1) now follows from Lemma \ref{leading_monomial_lemma}, since $\lm(F) = n\lm(G(m))$.  
	
	(2) Let $F \in \Gs{K}{M}^{\text{ls}}$ be nonzero, and let $a$ and $E$ be for $F$ as in part (1).  We get from Proposition \ref{gps_comp_with_convergent}(1) that $P \circ E$ is an $M$-generalized power series with generating monomials $m$ and $m'$, where $P \in \As{K}{T}$ is the geometric series $P(T) = \sum_\nu T^\nu$.  It follows from the binomial formula that $$\frac1{F} = \frac1{a \lm(F)} (P \circ E),$$ which is an $M$-generalized Laurent series.
\end{proof}

\subsection{$M$-series}
\label{M-series}
Finally, the series we eventually obtain in the Construction Theorem all belong to $\Gs{\RR}{\la}$, but they are more general than $\la$-generalized Laurent series.  Once again, this has to do with the possibility that $M$ may contain more than one comparability class:  let $l \in \NN$ be such that $\{m_0, \dots, m_k\}$ has $l+1$ comparability classes $C_0 < \cdots < C_l$, and let $i_0 := 0 < i_1 < \cdots < i_l \le k < i_{l+1} := k+1$ be such that $$C_j = \{m_{i_j}, \dots, m_{i_{j+1}-1}\}$$ for $0 \le j \le l$, and set $$M_1:= \langle m_0, \dots, m_{i_1-1} \rangle^{\times} \quad\text{and}\quad M^1:= \langle m_{i_1}, \dots, m_{k} \rangle^\times.$$  Note that $$m_{j} < M^1 < \frac1{m_{j}}$$ for $j=0, \dots, i_1-1$.

\begin{df}
\label{M-series_df}
	The set $\Gs{K}{M}^{\text{s}}$ of all \textbf{$M$-series over $K$} is defined by induction on $l$:
	\begin{equation*}
		\Gs{K}{M}^{\text{s}}:= \begin{cases}
		\Gs{K}{M}^{\text{ls}} &\text{if } l=0, \\
		\Gs{\Gs{K}{M^1}^{\text{s}}}{M_1}^{\text{ls}} &\text{if } l>0.
		\end{cases}
	\end{equation*}
\end{df}

\begin{expl}
\label{M-series_expl}
	The series $\sum_{\nu \in \NN} x^\nu \exp^{-\nu}$ is an $\langle x^{-1},\exp^{-1} \rangle^{\times}$-series over $\RR$ that is not an $\langle x^{-1},\exp^{-1} \rangle^{\times}$-generalized Laurent series over $\RR$.
\end{expl}

\begin{lemma}
\label{M-series_lemma}
	If $K$ is a field and $M$ is an asymptotic scale, then $\Gs{K}{M}^{\text{s}}$ is a subfield of $\Gs{K}{M}$ containing $\Gs{K}{M}^{\text{ls}}$.
\end{lemma}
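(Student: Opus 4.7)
The plan is to induct on the number of comparability classes, i.e., on the integer $l$ appearing in Definition~\ref{M-series_df}.

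The base case $l=0$ is immediate: by definition $\Gs{K}{M}^{\text{s}} = \Gs{K}{M}^{\text{ls}}$, and this is a subfield of $\Gs{K}{M}$ by Proposition~\ref{Laurent_field_prop}(2). Trivially it contains itself.

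For the inductive step, suppose $l>0$ and the lemma holds for all smaller values. Both $M_1$ and $M^1$ are multiplicative $\RR$-vector subspaces of $M$, hence asymptotic scales, and the distinguished basis of $M^1$ has exactly $l$ comparability classes $C_1 < \cdots < C_l$. By the inductive hypothesis, $\Gs{K}{M^1}^{\text{s}}$ is a subfield of $\Gs{K}{M^1}$ containing $\Gs{K}{M^1}^{\text{ls}}$; in particular it is a commutative field of characteristic $0$ with unit. Applying Proposition~\ref{Laurent_field_prop}(2) to the field $\Gs{K}{M^1}^{\text{s}}$ and the asymptotic scale $M_1$ then shows that $\Gs{\Gs{K}{M^1}^{\text{s}}}{M_1}^{\text{ls}} = \Gs{K}{M}^{\text{s}}$ is itself a field. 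Since $M = M_1 \cdot M^1$ with $M_1 \cap M^1 = \{1\}$ (because the generating monomials of $M_1$ together with those of $M^1$ form a basis of $M$ as a multiplicative $\RR$-vector space), we have the natural embedding $\Gs{\Gs{K}{M^1}}{M_1} \hookrightarrow \Gs{K}{M}$ already used in the proof of Lemma~\ref{cc_order-type}. Under this embedding, $\Gs{K}{M}^{\text{s}}$ is realized as a subfield of $\Gs{K}{M}$.

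It remains to show $\Gs{K}{M}^{\text{ls}} \subseteq \Gs{K}{M}^{\text{s}}$. Let $F = nG(m) \in \Gs{K}{M}^{\text{ls}}$ with $G = \sum_\alpha a_\alpha X^\alpha \in \As{K}{X^*}$ of natural support, $m = (m_0, \dots, m_k)$ small, and $n \in \langle m \rangle^\times$. Write $n = n_1 n^1$ with $n_1 \in M_1$ and $n^1 \in M^1$, set $m_1 := (m_0, \dots, m_{i_1-1})$ and $m^1 := (m_{i_1}, \dots, m_k)$, and regroup:
\[
G(m) = \sum_{\beta} b_\beta(m^1)\, m_1^{\beta}, \qquad b_\beta(Y) := \sum_{\gamma} a_{(\beta,\gamma)} Y^\gamma.
\]
Since $\supp(b_\beta)$ projects coordinatewise into $\supp(G)$, each $b_\beta \in \As{K}{Y^*}$ has natural support, so each coefficient $n^1 b_\beta(m^1)$ is an $M^1$-generalized Laurent series and lies in $\Gs{K}{M^1}^{\text{s}}$ by the inductive hypothesis. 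By the same projection argument, the series $H(Z) := \sum_\beta \bigl(n^1 b_\beta(m^1)\bigr) Z^\beta$, viewed as a formal power series in $Z$ over the field $\Gs{K}{M^1}^{\text{s}}$, has natural support. Therefore $F = n_1 H(m_1)$ realizes $F$ as an $M_1$-generalized Laurent series over $\Gs{K}{M^1}^{\text{s}}$, i.e.\ as an element of $\Gs{K}{M}^{\text{s}}$.

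The only genuinely non-formal point is keeping the natural support condition alive through the decomposition in the last step, which reduces to the straightforward observation that projecting $\supp(G)$ onto either coordinate block preserves naturality. Everything else is bookkeeping combined with two invocations of Proposition~\ref{Laurent_field_prop}(2).
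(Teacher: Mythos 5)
The induction on $l$ and the argument that $\Gs{K}{M}^{\text{s}}$ is a field (via Proposition~\ref{Laurent_field_prop}(2) applied over the coefficient field $\Gs{K}{M^1}^{\text{s}}$) match the paper. Your explicit regrouping argument for $\Gs{K}{M}^{\text{ls}} \subseteq \Gs{K}{M}^{\text{s}}$ is fine and is actually more detailed than the paper's terse assertion there.

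The gap is in the step $\Gs{K}{M}^{\text{s}} \subseteq \Gs{K}{M}$. You appeal to ``the natural embedding $\Gs{\Gs{K}{M^1}}{M_1} \hookrightarrow \Gs{K}{M}$ already used in the proof of Lemma~\ref{cc_order-type}.'' But what Lemma~\ref{cc_order-type} uses is the identification of $\Gs{K}{M''M'}$ with a \emph{subring} of $\Gs{\Gs{K}{M''}}{M'}$ --- i.e.\ the inclusion goes $\Gs{K}{M} \hookrightarrow \Gs{\Gs{K}{M_1}}{M^1}$, in the opposite direction, and with the opposite nesting (there the fast part $M_1$ is the \emph{inner} coefficient ring and the slow part $M^1$ is the \emph{outer} monomial group; here you need the fast part as the outer group). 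Moreover, the map you want is not a natural inclusion between the full Hahn series rings: a general element $\sum_{n\in M_1} F_n\, n$ of $\Gs{\Gs{K}{M^1}}{M_1}$ has anti-well-ordered support only in $M_1$, and that alone does not force the pooled support $\{n m' : n \in M_1,\ m' \in \supp(F_n)\}$ to be anti-well-ordered in $M$. This is exactly where the paper does real work: it first invokes Lemma~\ref{pairwise_comparable_lemma} to obtain that $\supp_{M_1}(F)$ is not merely anti-well-ordered but \emph{$M_1$-natural}, then partitions $\supp_{M_1}(F)$ into a finite piece $S_0$ (contributing a finite sum) and a piece $S_1$ consisting of monomials of strictly faster comparability class than all of $M^1$, for which the lexicographic comparison with $M^1$ is clean. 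Without the $M_1$-naturality coming from the Laurent structure, the conclusion does not follow, so your one-line citation conceals the essential content of this containment.
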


\begin{proof}
	By induction on $l$; the case $l=0$ follows from Proposition \ref{Laurent_field_prop}(3), so we assume $l>0$ and the lemma holds for lower values of $l$.  But then $\Gs{K}{M^1}^{\text{s}}$ is a subfield of $\Gs{K}{M^1}$ containing $\Gs{K}{M^1}^{\text{ls}}$, by the inductive hypothesis, so $\Gs{K}{M}^{\text{s}}$ is a subfield of $\Gs{\Gs{K}{M^1}^{\text{s}}}{M_1}$ containing $\Gs{\Gs{K}{M^1}^{\text{ls}}}{M_1}^{\text{ls}}$, and the latter contains $\Gs{K}{M}^{\text{ls}}$.  Therefore, it remains to show that $\Gs{K}{M}^{\text{s}}$ is contained in $\Gs{K}{M}$.
	
	To see this, let $F \in \Gs{K}{M}^{\text{s}}$; as an element of $\Gs{\Gs{K}{M^1}^{\text{s}}}{M_1}$, we can write $F = \sum_{m \in M_1} F_m m$, with $F_m \in \Gs{K}{M^1}^{\text{s}}$, and by Lemma \ref{pairwise_comparable_lemma}, the set $$\supp_{M_1}(F):= \set{m \in M_1:\ F_m \ne 0}$$ is $M_1$-natural.  Set $$S_1:= \set{m \in \supp_{M_1}(F):\ m < M^1 < \frac1m}$$ and $S_0:= \supp_{M_1}(F) \setminus S_1$; then $S_0$ is finite, so the series $F_0:= \sum_{m \in S_0} F_m m$ belongs to $\Gs{K}{M}$.  On the other hand, since every $m' \in M^1$ has strictly slower comparability class than every $m \in S_1$, the series $F_1:= \sum_{m \in S_1} F_m m$ also belongs to $\Gs{K}{M}$, so the claim follows.
\end{proof}

\subsection{Convergent $M$-generalized Laurent series} \label{convergent_subsection}
In this subsection, we assume that $K \subseteq \C$.  Recall \cite[Section 5]{Dries:1998xr} that if $\alpha$ is a countable ordinal and $r_\beta \ge 0$ for $\beta < \alpha$, then the sum $\sum_{\beta < \alpha} r_\beta$ (respectively, the product $\prod_{\beta < \alpha} r_\beta$) \textbf{converges to} $a \in \RR$ if, for every $\epsilon > 0$, there exists a finite set $I_\epsilon \subseteq \alpha$ such that $\big|\sum_{\beta \in I} r_\beta - a\big| < \epsilon$  (respectively, $\big|\prod_{\beta \in I} r_\beta - a\big| < \epsilon$) for every finite set $I \subseteq \alpha$ containing $I_\epsilon$.  It follows from the continuity and the morphism property of $\exp$ that $\sum_{\beta < \alpha} r_\beta$ converges to $a$ if and only if $\prod_{\beta < \alpha} \exp(r_\beta)$ converges to $\exp(a)$.
Correspondingly, a generalized power series $$G = \sum_{\alpha \in [0,+\infty)^{k+1}} a_\alpha X^\alpha \in \As{K}{X^*}$$ \textbf{converges (absolutely)} if $\sum |a_\alpha(x)||x^\alpha|$ converges uniformly for all sufficiently small $x \in [0,\infty)^{k+1}$.  In this situation, there exist $\epsilon > 0$ and a unique continuous function $g:[0,\epsilon)^{k+1} \into \RR$ such that $G(x)$ converges to $g(x)$ for $x \in [0,\epsilon)^{k+1}$.  Therefore, if $\supp(G)$ is natural, $m_0, \dots, m_k \in M$ are small and $n \in \langle m_0, \dots, m_k \rangle^\times$, then the $M$-generalized Laurent series $F := n \cdot G(m_0, \dots, m_k)$ converges to the germ $$\S_{K,M}(F):= n \cdot g(m_0, \dots, m_k) \in \C.$$
We denote by $\Gs{K}{M}^{\text{conv}}$ the set of all convergent $M$-generalized Laurent series; arguing as in the proof of Proposition \ref{Laurent_field_prop}(1), we see that $\Gs{K}{M}^{\text{conv}}$ is an $\RR$-subalgebra of $\Gs{K}{M}$.  Moreover, the map $\S_{K,M}:\Gs{K}{M}^{\text{conv}} \into \C$ is an $\RR$-algebra homomorphism; it follows that the set $$\C(K,M)^{\text{conv}}:= \set{\S_{K,M}(F):\ F \in \Gs{K}{M}^{\text{conv}}}$$ is an $\RR$-subalgebra of $\C$.

The next example is central to this paper, and it provides a way to make precise the notion of ``convergent LE-series'' hinted at in \cite[Remark 6.31]{MR1848569}.  Recall from \cite[Section 2]{Kaiser:2017aa} that $\E$ is the set of all germs in $\H$ defined by $\Lanexp$-terms (that is, without the use of $\log$) and $\M$ is the set of all germs in $\la$ defined by $\Lanexp$-terms.  We then have, in particular, that $$\H = \bigcup_{k \in \NN} \E \circ \log_k \quad\text{and}\quad \la = \bigcup_{k \in \NN} \M \circ \log_k.$$

\begin{expl}
	\label{H-expl}
	We have $\H \subseteq \C(\RR,\la)^{\text{conv}}$: to see this, let $h \in \H$, and let $k \in \NN$ and $f \in \E$ be such that $h = f \circ \log_k$.  By definition of $\E$, we have $f = \S_{\RR,\M}(F)$ for some $F \in \Gs{\RR}{\M}^{\text{conv}}$, so $h = \S_{\RR,\M\circ\log_k}(F \circ \log_k)$.  Since $\M \circ \log_k \subseteq \la$, we have $F \circ \log_k \in \Gs{\RR}{\la}^{\text{conv}}$, so that $\H \subseteq \C(\RR,\la)^{\text{conv}}$.  
	
	Note in fact that $\H$ consists, by \cite[Section 2]{Kaiser:2017aa}, of all germs $\S_{\RR,\la}(F)$ such that $F = G(m_0, \dots, m_k)$ for some small $m_0, \dots, m_k \in \la$ and a convergent Laurent series $G$ with support contained in $\ZZ^{k+1}$.  In particular, we have $\C(\RR,\la)^{\text{conv}} \nsubseteq \H$.
\end{expl}

\begin{df}\label{convergent_LE-series}
	In view of the previous example, we call a series $F \in \Gs{\RR}{\la}^{\text{conv}}$ a \textbf{convergent LE-series} (or \textbf{convergent transseries}) if $F = G(m_0, \dots, m_k)$ for some small $m_0, \dots, m_k \in \la$ and a convergent Laurent series $G$ with support contained in $\ZZ^{k+1}$.  Correspondingly, we refer to the germs in $\la$ as the \textbf{convergent LE-monomials} (or \textbf{convergent transmonomials}).  In accordance with \cite{MR1848569}, we denote by $\Gs{\RR}{x^{-1}}^{\text{LE, conv}}$ the set of all convergent LE-series.
\end{df}

\begin{lemma}
\label{subseries}
Let $F = \sum a_m m \in \Gs{K}{M}^{\text{conv}}$, let $A \subseteq \supp(F)$ and set $F_A:= \sum_{m \in A} a_m m$.  Then $F_A \in \Gs{K}{M}^{\text{conv}}$.  In particular, the set $\C(K,M)^{\text{conv}}$ is truncation closed.
\end{lemma}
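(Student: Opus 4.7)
The plan is to reduce the statement about $F_A$ to an analogous statement about a sub-series of the underlying generalized power series, then exploit that both naturality of support and absolute convergence are preserved under passing to subsets of the index set.

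First, fix a representation $F = n \cdot G(m_0, \dots, m_k)$, where $G = \sum_\alpha a_\alpha X^\alpha \in \As{K}{X^*}$ is a convergent generalized power series with natural support, the $m_i \in M$ are small, and $n \in \langle m_0, \dots, m_k \rangle^\times$. By Lemma \ref{gps_well_lemma}, for each $p \in M$ the set $S_p := \set{\alpha \in \supp(G):\ nm^\alpha = p}$ is finite, and writing $c_p := \sum_{\alpha \in S_p} a_\alpha$, we have $F = \sum_{p \in M} c_p\, p$. Set
$$B := \set{\alpha \in \supp(G):\ nm^\alpha \in A} = \bigsqcup_{p \in A} S_p$$
and define $G_A := \sum_{\alpha \in B} a_\alpha X^\alpha \in \As{K}{X^*}$. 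A direct reorganization of the sum then yields
$$n \cdot G_A(m_0, \dots, m_k) = \sum_{p \in A} \sum_{\alpha \in S_p} a_\alpha\, p = \sum_{p \in A} c_p\, p = F_A.$$

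Next, since $\supp(G_A) \subseteq \supp(G)$, the support of $G_A$ is again natural: its projection onto each $X_i$-axis is a subset of the corresponding projection of $\supp(G)$, so the finiteness of $[0,a) \cap \Pi_{X_i}(\supp(G_A))$ for every $a>0$ inherits immediately from the same property of $G$. Similarly, if $\sum_\alpha |a_\alpha|\,|x^\alpha|$ converges uniformly on a small polydisk $[0,\epsilon)^{k+1}$, so does its termwise subsum $\sum_{\alpha \in B} |a_\alpha|\,|x^\alpha|$, because all terms are non-negative. Hence $G_A$ is a convergent generalized power series with natural support, which gives $F_A = n \cdot G_A(m_0, \dots, m_k) \in \Gs{K}{M}^{\text{conv}}$.

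For the ``in particular'' clause, given $f = \S_{K,M}(F) \in \C(K,M)^{\text{conv}}$ and $n \in M$, apply the preceding with $A := \set{m \in \supp(F):\ m \ge n}$ to obtain $F_n = F_A \in \Gs{K}{M}^{\text{conv}}$, so that $\S_{K,M}(F_n) \in \C(K,M)^{\text{conv}}$. No real obstacle arises here; the only mild subtlety is that the factorization $F = n\cdot G(m_0, \dots, m_k)$ is not unique (see Remarks \ref{order_type_expl}(2)), but since the output $F_A$ is determined by $F$ and $A$ alone as an element of $\Gs{K}{M}$, it suffices to fix any one such representation and carry out the construction relative to it.
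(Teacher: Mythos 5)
Your proof is correct and is essentially the same as the paper's: both fix a representation $F = n\cdot G(m)$, take the subsum $G_A$ of $G$ over the indices $\alpha$ with $nm^\alpha \in A$, and observe that naturality of support and absolute convergence both pass to subsums, so $F_A = n\cdot G_A(m) \in \Gs{K}{M}^{\text{conv}}$. Your version just spells out in more detail why $G_A$ inherits natural support and convergence, and explicitly notes that the (non-unique) choice of representation does not matter since $F_A$ is determined by $F$ and $A$ alone.
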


\begin{proof}
	Let $m_0, \dots, m_k \in M$ be small, $n \in \langle m_0, \dots, m_k \rangle^\times$ and a convergent $G(X_0, \dots, X_k) = \sum_{\alpha \in [0,\infty)^{k+1}} b_\alpha X^\alpha \in \As{K}{X^*}$ with natural support be such that $F = n G(m_0, \dots, m_k)$.  Note that $a_p = \sum_{\alpha \in S^G_{p/n}} b_\alpha$, for $p \in M$, since the set $S^G_{p/n}$ is finite by Lemma \ref{gps_well_lemma}.  Setting $S_A := \bigcup_{p \in A} S^G_{p/n}$, the convergence of $G$ implies that $G_A:= \sum_{\alpha \in S_A} b_\alpha X^\alpha$ is also convergent; but $F_A = n G_A(m_0, \dots, m_k),$ as required.
\end{proof}

\begin{df}\label{str_slower_cc}
	Given nonzero $f,g \in \C$, we say that $f$ has \textbf{strictly slower} comparability class than $g$ if $f$ and $g$ are incomparable and either $g < f < 1/g$ or $1/g < f < g$.
\end{df}

We let $\C_M$ be the field of all $a \in \C$ such that $a$ is either 0 or $1/m < a < m$, for every large $m \in M$.  Since $M$ is a multiplicative $\RR$-vector space, every nonzero $a \in \C_M$ has strictly slower comparability class than every $m \in M$ satisfying $m \ne 1$; in particular, for every $a \in \C_M$ and $m \in M$ satisfying $m \ne 1$, the product $am$ is comparable to $m$.  

\begin{rmk}
	We always have $\RR \subseteq \C_M$.
\end{rmk}

From now on, we write $\S_M$ in place of $\S_{\C_M,M}$ and $\C(M)^{\text{conv}}$ in place of $\C(\C_M,M)^{\text{conv}}$.

\begin{prop}
\label{convergent_lemma}
\begin{enumerate}
\item For nonzero $F \in \Gs{\C_M}{M}^{\text{conv}}$, there is a nonzero $a \in \C_M$ such that  $\S_{M}(F) \asymp a \lm(F)$.
\item The set $\Gs{\C_M}{M}^{\text{conv}}$ is a field.
\item The map $\S_{M}:\Gs{\C_M}{M}^{\text{conv}} \into \C(M)^{\text{conv}}$ is a field isomorphism.
\end{enumerate}
\end{prop}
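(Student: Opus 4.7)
The plan is to obtain a Weierstrass-style factorization of a nonzero convergent series via Proposition \ref{Laurent_field_prop}(1) and then track convergence through this factorization and through a geometric-series inversion; this will simultaneously yield the asymptotic description of $\S_M(F)$ in part (1), closure under inversion in part (2), and (by part (1)) injectivity of $\S_M$ in part (3).

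For part (1), given nonzero $F \in \Gs{\C_M}{M}^{\text{conv}}$, Proposition \ref{Laurent_field_prop}(1) applied with $K = \C_M$ produces a nonzero $a \in \C_M$ and an $E \in \As{\C_M}{M}^{\text{ps}}$ with $\lm(E)$ small such that $F = a\lm(F)(1-E)$. Inspecting the proof of Lemma \ref{leading_monomial_lemma}, we have $E = H(m, m')$ for some $H \in \As{\C_M}{X^*}$ of positive order and natural support, built from subseries and scalar rescalings of the generalized power series representing $F$, so $H$ is still convergent. Since $H$ then sums to a continuous function $h$ on a neighborhood of $0$ with $h(0) = 0$, and the tuples $m, m'$ are small, we obtain $\S_M(E) = h(m,m') \to 0$ as $x \to +\infty$; hence $1 - \S_M(E) \asymp 1$ and $\S_M(F) = a\lm(F)(1-\S_M(E)) \asymp a\lm(F)$.

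For part (2), $\Gs{\C_M}{M}^{\text{conv}}$ is already known to be an $\RR$-subalgebra of $\Gs{\C_M}{M}$, so only the existence of inverses remains. Using the factorization from part (1), we set $1/F := a^{-1}\lm(F)^{-1} (P \circ E)$ where $P(T) = \sum_\nu T^\nu$; by Proposition \ref{gps_comp_with_convergent}(1) this is an $M$-generalized power series with generating monomials $(m,m')$. For its convergence we appeal to the standard closure of convergent generalized power series under composition with a convergent one-variable series of positive order (see \cite[Section 5]{Dries:1998xr}): from $H$ convergent with $\ord(H) > 0$ we deduce that $P \circ H$ is a convergent generalized power series, hence $P \circ E = (P \circ H)(m,m')$ is convergent, and therefore $1/F \in \Gs{\C_M}{M}^{\text{conv}}$.

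For part (3), the map $\S_M$ is an $\RR$-algebra homomorphism (as recorded just before the statement) and is surjective by the very definition of $\C(M)^{\text{conv}}$. Injectivity follows at once from part (1), since any nonzero $F$ satisfies $\S_M(F) \asymp a\lm(F) \neq 0$. Combined with part (2), this makes $\S_M$ a field isomorphism onto $\C(M)^{\text{conv}}$. The principal technical obstacle throughout is verifying that convergence propagates through the two algebraic manipulations---the factorization underlying Lemma \ref{leading_monomial_lemma} and the geometric-series inversion of $1-E$---but in both cases this reduces to checking that extracting subseries, respectively composing with a one-variable convergent series of positive order, preserves convergence of generalized power series, which is already contained in the framework of \cite{Dries:1998xr}.
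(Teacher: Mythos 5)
Your proposal follows essentially the same route as the paper: both prove part (1) by applying the Weierstrass-style factorization $F = a\lm(F)(1-E)$ from Proposition \ref{Laurent_field_prop}(1) and then showing $\S_M(E) \prec 1$; both prove part (2) by composing $E$ with the geometric series via Proposition \ref{gps_comp_with_convergent}; and both deduce injectivity for part (3) directly from the asymptotic estimate of part (1). The only substantive difference lies in how $\S_M(E) \prec 1$ (for $\lm(E)$ small) is established. The paper proves a standalone claim: for convergent $F$ with small $\lm(F)$, and for any $\epsilon > 0$, one can pick a finite truncation $F_I$ with $|\S_M(F) - \S_M(F_I)| < \epsilon$ eventually (using uniform convergence), and $\S_M(F_I)$, being a finite $\C_M$-linear combination of small monomials, is $\prec 1$; hence $|\S_M(F)| < 2\epsilon$ eventually. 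Your argument instead appeals to the continuity of the summed function $h$ at the origin with $h(0) = 0$ and lets $m, m' \to 0$. The idea is correct but as stated is a bit imprecise, since the coefficients of $H$ lie in $\C_M$, i.e.\ they are germs rather than constants: the ``sum to a continuous function near $0$'' picture really requires a uniformity-in-the-asymptotic-variable statement, which is exactly what the paper's finite-truncation $\epsilon$-argument supplies cleanly. On the other hand, you are more careful than the paper in two places: you explicitly trace through the proof of Lemma \ref{leading_monomial_lemma} to justify that $E$ is convergent (the paper simply asserts ``Then $E$ is convergent''), and you explicitly invoke closure of convergent generalized power series under composition with a convergent one-variable series of positive order, where the paper defers to the analogy with Proposition \ref{Laurent_field_prop}(2) and the remark that $\C_M$ is a field.
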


\begin{proof}
	Let $F \in \Gs{\C_M}{M}^{\text{conv}}$, and assume first that $\lm(F)$ is small.  We claim that in this case, $\S_{M}(F) \prec 1$: to see this, let $\epsilon > 0$ and choose a finite $I \subseteq \supp(F)$ such that $|\S_{M}(F)(x) - \S_{M}(F_I)(x)| < \epsilon$ for sufficiently large $x>0$.  Since $\lm(F)$ is small, the germ $\S_{M}(F_I)$ is a $\C_M$-linear combination of small monomials of $M$, so $|\S_{M}(F_I)| \prec 1$ by definition of $\C_M$.  Hence $|\S_{M}(F)(x)| \le 2\epsilon$ for all sufficiently large $x>0$, and since $\epsilon > 0$ was arbitrary, the claim is proved.
	
	Now assume that $F$ is nonzero.  By Proposition \ref{Laurent_field_prop}(1), there are a nonzero $a \in \C_M$ and an $E \in \As{\C_M}{M}^{\text{ls}}$ such that $\lm(E)$ is small and $$F = a \lm(F) (1-E).$$  Then $E$ is convergent, and since $\lm(E)$ is small, we have from the claim that $\S_{M}(E) \prec 1$.  This proves part (1); part (2) follows along the lines of the proof of Proposition \ref{Laurent_field_prop}(2), because $\C_M$ is a field.
	
	(3) It suffices to show that $\S_M$ is injective: let $F \in \Gs{\C_M}{M}^{\text{conv}}$ be nonzero.  Then $\lm(F) \ne 0$ and, by part (1), there is a nonzero $a \in \C_M$ such that $\S_M(F) \asymp a\lm(F)$; in particular, $\S_M(F) \ne 0$.
\end{proof}

In view of the previous proposition, we denote by $$S_M:\C(M)^{\text{conv}} \into \Gs{\C_M}{M}^{\text{conv}}$$ the compositional inverse of the summation map $\S_{M}$.

\begin{cor}
	\label{convergent_prop}
	\begin{enumerate}
		\item The triple $\left( \C(M)^{\text{conv}}, M, S_{M}\right)$ is a qaa field.
		\item $(\H,\la,S_\la)$ is a qaa field with $S_\la(\H) = \Gs{\RR}{x^{-1}}^{\text{LE, conv}}$.
	\end{enumerate}
\end{cor}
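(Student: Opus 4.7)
The plan is to verify, for each triple, the three conditions of Definition \ref{qaa_df}. For part (1), injectivity and the $\RR$-algebra homomorphism property of $S_M$ follow immediately from Proposition \ref{convergent_lemma}(3), which identifies $S_M$ as the inverse of the field isomorphism $\S_M$. Condition (ii), that $S_M(\C(M)^{\text{conv}}) = \Gs{\C_M}{M}^{\text{conv}}$ is truncation closed, is exactly the content of Lemma \ref{subseries}.

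The core step is the asymptotic estimate (iii). Given $f \in \C(M)^{\text{conv}}$ and $n \in M$, I would set $F := S_M(f)$ and use additivity of $\S_M$ to rewrite $f - \S_M(F_n) = \S_M(F - F_n)$. Assuming $F \ne F_n$, Proposition \ref{convergent_lemma}(1) yields $\S_M(F - F_n) \asymp a \cdot p$ for some nonzero $a \in \C_M$ and $p := \lm(F - F_n) < n$. The task is then to convert the comparability inequality $p < n$ in $M$ into the asymptotic dominance $ap \prec n$, and for this I will exploit that $M$ is closed under $r$-th powers: setting $q := p/n \in M$, the germ $\sqrt{q}$ is small, so $1/\sqrt{q}$ is large in $M$, and the defining property of $\C_M$ gives $|a| < 1/\sqrt{q}$ eventually, whence $|ap/n| < \sqrt{q} \to 0$.

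For part (2), I will use Example \ref{H-expl}, which both places $\H$ inside $\C(\la)^{\text{conv}}$ and identifies $S_\la(\H)$ with the set $\Gs{\RR}{x^{-1}}^{\text{LE, conv}}$ of convergent LE-series, immediately giving the second claim of part (2). Injectivity of $S_\la\rest{\H}$ and the asymptotic condition are then inherited from part (1). What remains is truncation closedness of $\Gs{\RR}{x^{-1}}^{\text{LE, conv}}$: given $F = G(m_0, \dots, m_k)$ with $G$ a convergent Laurent series of integer support, I will factor $G = X^{-N} H$ with $H$ a convergent power series of support in $\NN^{k+1}$, and observe that for $n \in \la$ the truncation $F_n$ corresponds to $X^{-N} \tilde H$, where $\tilde H$ is the subseries of $H$ obtained by zeroing out coefficients $a_\alpha$ with $m^{\alpha-N} < n$; this $\tilde H$ converges absolutely as a subseries of $H$, so $F_n$ is again a convergent LE-series.

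The main obstacle is the dominance estimate in part (1), that is, the passage from the order-theoretic inequality $p < n$ in $M$ to the asymptotic dominance $ap \prec n$ in $\C$. This step depends essentially on both the closure of $M$ under $r$-th powers and the tight control on $\C_M$ built into its definition; every other item in the verification is either a direct invocation of Proposition \ref{convergent_lemma}, Lemma \ref{subseries}, or Example \ref{H-expl}.
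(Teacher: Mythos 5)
Your proposal is correct and follows essentially the same route as the paper: both rely on Lemma~\ref{subseries} for truncation closedness, Proposition~\ref{convergent_lemma} for the leading-monomial estimate, and Example~\ref{H-expl} for part~(2). The two places you elaborate --- the $\sqrt{q}$ argument converting $p<n$ and $a\in\C_M$ into $ap\prec n$, and the explicit subseries argument for truncation closedness of $\Gs{\RR}{x^{-1}}^{\text{LE, conv}}$ --- are precisely the justifications the paper leaves implicit in writing ``$\asymp a_m\lm(F-F_m)\prec m$'' and in asserting that $\Gs{\RR}{x^{-1}}^{\text{LE, conv}}$ is truncation closed.
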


\begin{proof}
	(1) By Lemma \ref{subseries}, the field $\Gs{\C_M}{M}^{\text{conv}}$ is truncation closed.  Moreover, if $F \in \Gs{\C_M}{M}^{\text{conv}}$ and $m \in M$ then, by convergence and Proposition \ref{convergent_lemma}(1), there is a nonzero $a_m \in \C_M$ such that $$\S_{M}(F) - \S_{M}(F_m) = \S_M(F - F_m) \asymp a_m\lm(F-F_m) \prec m,$$ where $F_m$ is the truncation of $F$ at $m$.  
	
	(2) In the notation of Example \ref{H-expl}, we have $\S_{\RR,\la} = \S_\la \rest{\Gs{\RR}{\la}^{\text{conv}}}$.  Since $\Gs{\RR}{x^{-1}}^{\text{LE, conv}}$ is a truncation closed subset of $\Gs{\C_\la}{\la}^{\text{conv}}$, the claim follows from part (1).
\end{proof}

\subsection{Strong convergent $M$-generalized Laurent series}
Let  $U$ be a standard power domain, and assume that $M$ is a strong asymptotic scale.  Given $\phi,\psi:H(0) \into \CC$, recall that $$\phi \preceq_U \psi$$ if $|\phi(z)/\psi(z)|$ is bounded in $U$.  Correspondingly, we write $\phi \asymp_U \psi$ if both $\phi \preceq_U \psi$ and $\psi \preceq_U \phi$, and we write $\phi \prec_U \psi$ if $\phi \preceq_U \psi$ but $\psi \not\preceq_U \phi$.

\begin{df}\label{strong_convergence_def}
	We let $\C_M^U$ be the set of all $a \in \C_M$ that have a complex analytic continuation $\aa$ on $U$ such that, if $a \ne 0$, then $1/\mm \prec_U \aa \prec_U \mm$, for every large $m \in M$.  Correspondingly, we let $$\Gs{\C_M^U}{M}^{\text{conv}}_U$$ be the set of all $F = \sum a_m m \in \Gs{\C_M^U}{M}^{\text{conv}}$ such that the series $\sum |\aa_m(z)||\mm(z)|$ converges uniformly for all sufficiently large $z \in U$.  For $F \in \Gs{\C_M^U}{M}^{\text{conv}}_U$, we write $$\S_M^U(F):U \into \CC$$ for the complex analytic continuation of $\S_M(F)$ on $U$.
\end{df}

\begin{rmk}
	If $F \in \Gs{\C_M^U}{M}^{\text{conv}}_U$ and $A \subseteq \supp(F)$, then $F_A \in \Gs{\C_M^U}{M}^{\text{conv}}_U$.
\end{rmk}

\begin{expl}
	\label{real_strong_convergent}
	We have $\Gs{\RR}{M}^{\text{conv}} \subseteq \Gs{\C_M^U}{M}^{\text{conv}}_U$: to see this, let $F \in \Gs{\RR}{M}^{\text{conv}}$, and let $m_0, \dots, m_k \in M$ be small, a convergent $G \in \As{\RR}{X_0^*, \dots, X_k^*}$ with natural support and an $n \in \langle m_0, \dots, m_k\rangle^\times$ be such that $F = n G(m_0, \dots, m_k)$.  Then there are $\epsilon > 0$ and a holomorphic $g:D(\epsilon)^{k+1} \into \CC$ such that $G(z_0, \dots, z_k)$ converges uniformly to $g(z_0, \dots, z_k)$ for $(z_0, \dots, z_k) \in D(\epsilon)^{k+1}$, where $D(\epsilon):= \set{z \in \CC:\ |z| < \epsilon}$.  On the other hand, since $\mm_i \prec_U 1$ for each $i$, we have $|\mm_i(z)| < \epsilon$ for all sufficiently large $z \in U$ and each $i$, so that $G(\mm_0(z), \dots, \mm_k(z))$ converges uniformly to $g(\mm_0(z), \dots, \mm_k(z))$ for sufficiently large $z \in U$.  
\end{expl}

\begin{prop}
	\label{strong_convergent_lemma}
	\begin{enumerate}
		\item For nonzero $F \in \Gs{\C_M^U}{M}^{\text{conv}}_U$, there is a nonzero $a \in \C_M^U$ such that  $\S_{M}^U(F) \asymp_U \aa\mm$, where $m:= \lm(F)$.
		\item The set $\Gs{\C_M^U}{M}^{\text{conv}}_U$ is a field.
	\end{enumerate}
\end{prop}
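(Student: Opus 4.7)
The plan is to adapt the proof of Proposition \ref{convergent_lemma} throughout, replacing real dominance by the uniform notion $\prec_U$ and pointwise convergence at $+\infty$ by uniform convergence for large $|z|$ in $U$. The crucial input is the domain analogue of the first claim in that proof: \emph{if $F = \sum a_m m \in \Gs{\C_M^U}{M}^{\text{conv}}_U$ has $\lm(F)$ small, then $\S_M^U(F) \prec_U 1$}. The key observation here is that for nonzero $b \in \C_M^U$ and small $n \in M$ we have $\bb\,\nn \prec_U 1$: since $1/n$ is large, the chain defining $\C_M^U$ gives $\bb \prec_U 1/\nn$, hence $\bb\,\nn \prec_U 1$. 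Given $\epsilon > 0$, the uniform convergence of $\sum |\aa_m(z)||\mm(z)|$ for large $|z|$ in $U$ lets us pick a finite $I \subseteq \supp(F)$ for which $\sum_{m \notin I} |\aa_m(z)||\mm(z)| < \epsilon$ uniformly there, so $|\S_M^U(F)(z) - \S_M^U(F_I)(z)| < \epsilon$ as well. Since $\S_M^U(F_I)$ is a finite $\C_M^U$-linear combination of continuations of small monomials, the observation above gives $|\S_M^U(F_I)| \prec_U 1$, and the claim follows.

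Part (1) is then immediate: by Proposition \ref{Laurent_field_prop}(1), applied with $K := \C_M^U$, write $F = a \lm(F)(1-E)$ with nonzero $a \in \C_M^U$ and $\lm(E)$ small. Strong convergence on $U$ is inherited by $E$, so the preceding claim yields $\S_M^U(E) \prec_U 1$, whence $\S_M^U(F) = \aa\,\mm\,(1-\S_M^U(E)) \asymp_U \aa\,\mm$ with $m = \lm(F)$.

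For part (2), one first checks that $\C_M^U$ is a field by the same dominance arithmetic that makes $\C_M$ a field in Proposition \ref{convergent_lemma}; the additional point is that for nonzero $a \in \C_M^U$ the condition $|\aa\mm| \to \infty$ (which follows from $1/\mm \prec_U \aa$ for any chosen large $m$) forces $\aa$ to be zero-free for sufficiently large $|z|$ in $U$, so $1/\aa$ is analytic there and its defining chain follows by inverting the two sides of that of $a$. Closure of $\Gs{\C_M^U}{M}^{\text{conv}}_U$ under sum and product then follows by combining uniform majorants. For inverses, given nonzero $F$, decompose $F = a\lm(F)(1-E)$ as above and set $P(T) := \sum_\nu T^\nu$; by Proposition \ref{gps_comp_with_convergent}(1), $P \circ E$ is an $M$-generalized power series sharing the generating monomials of $E$, and formally $1/F = a^{-1}\lm(F)^{-1}(P \circ E)$.

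The main obstacle, hence the last technical step, is verifying strong convergence of $P \circ E$ on $U$. Since $\S_M^U(E) \prec_U 1$, the continuous majorant $S(z) := \sum_m |\ee_m(z)||\mm(z)|$ converges uniformly to a bound $\le 1/2$ for sufficiently large $|z|$ in $U$. Expanding each $E^\nu$ by distributivity and grouping by monomial $n \in M$, the triangle inequality gives
\[
\sum_n |(P \circ E)_n(z)|\,|\nn(z)| \;\le\; \sum_{\nu \ge 0} S(z)^\nu \;\le\; 2
\]
uniformly there, which establishes $P \circ E \in \Gs{\C_M^U}{M}^{\text{conv}}_U$ and hence $1/F \in \Gs{\C_M^U}{M}^{\text{conv}}_U$. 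Everything else is routine bookkeeping parallel to Proposition \ref{convergent_lemma}.
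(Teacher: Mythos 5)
Your proof is correct and fills in precisely the details the paper elides; the paper's own proof is just the one-line remark that the argument is ``similar to the proof of Proposition \ref{convergent_lemma}, using the previous remark and the assumption that $M$ is a strong asymptotic scale,'' and your adaptation (replacing $\prec$ by $\prec_U$, pointwise by uniform convergence, and invoking the strong asymptotic scale to translate ``small'' into ``$\prec_U 1$'' for the monomials) is exactly the intended route, including the decomposition $F = a\lm(F)(1-E)$ via Proposition \ref{Laurent_field_prop}(1), the field property of $\C_M^U$, and the geometric-series inversion with the majorant estimate for $P\circ E$.
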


\begin{proof}
	The proof is similar to the proof of Proposition \ref{convergent_lemma}, using the previous remark and the assumption that $M$ is a strong asymptotic scale; we leave the details to the reader.
\end{proof}

\section{Asymptotic and strong asymptotic expansions}
\label{asymptotic_section}

Below, we fix an arbitrary multiplicative $\RR$-vector subspace $M$ of $\,\H^{>0}$, and we assume that $M$ is an asymptotic scale.  

\begin{df}\label{asymptotic_expansion_2}
	Let $f \in \C$ and $F = \sum a_m m \in \Gs{\C_M}{M}$.  We say that $f$ has \textbf{asymptotic expansion} $F$ (at $+\infty$) if $\supp(F)$ is $M$-natural and
	\begin{equation*}\tag{$\ast$}
	f - \sum_{m \ge n} a_m m \prec n
	\end{equation*}
	for every $n \in M$.
\end{df}

\begin{expl}
	\label{convergent_is_asymptotic}
	Let $F \in \Gs{\C_M}{M}^{\text{conv}}$ have $M$-natural support.  Then the proof of Corollary \ref{convergent_prop}(1) shows, in particular, that $\S_{M}F$ has asymptotic expansion $F$.
\end{expl}

\begin{lemma}
	\label{asymptotic_ring_lemma}
	Let $f,g \in \C$ have asymptotic expansions $\sum a_m m$ and $\sum b_m m$ in $\Gs{\C_M}{M}$, respectively.  Then
	\begin{enumerate}
		\item for $n \in M$, the germ $fn$ has asymptotic expansion $\sum a_m (mn)$;
		\item $f+g$ has asymptotic expansion $\sum (a_m+b_m) m$;
		\item $fg$ has asymptotic expansion $\left(\sum a_m m\right) \left(\sum b_m m\right)$.
	\end{enumerate}
\end{lemma}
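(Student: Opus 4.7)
The plan is to treat the three parts in order, with (1) and (2) reducing to bookkeeping and (3) carrying the real content. For (1), I will observe that $\sum a_m(mn)$ has support $n\cdot\supp(F)$, which remains $M$-natural since $M$ is a multiplicative $\RR$-vector space; the expansion inequality follows by factoring out $n$ and applying the expansion of $f$ at $p/n \in M$. For (2), the support of $\sum(a_m+b_m)m$ sits inside $\supp(F)\cup\supp(G)$, hence is $M$-natural, and the expansion inequality is additive.

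Part (3) is the main content. Let $m_0 := \max\supp(F)$, $k_0 := \max\supp(G)$, which exist because $M$-natural subsets of $M$ are anti-well-ordered. I first verify that $FG = \sum c_p p$ with $c_p = \sum_{mk=p} a_m b_k$ has $M$-natural support: any $p > q$ in $\supp(FG)$ factors as $p = mk$ with $m > q/k_0$ and $k > q/m_0$, restricting $(m,k)$ to a product of two finite sets. For the expansion property, I fix $q \in M$ and choose truncation cutoffs $q_1, q_2 \in M$ with $q_1 \le m_0$, $q_2 \le k_0$: in the main case $q \preceq m_0 k_0$, I set $q_1 := q/k_0$, $q_2 := q/m_0$; in the degenerate case $q \succ m_0 k_0$, I set $q_1 := m_0$, $q_2 := k_0$. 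Truncating $f = \tilde F + R_1$ and $g = \tilde G + R_2$ at these cutoffs (so $\tilde F, \tilde G$ are finite sums by $M$-naturality and $R_i \prec q_i$), I expand $fg = \tilde F \tilde G + \tilde F R_2 + R_1 \tilde G + R_1 R_2$. The key combinatorial step is that $\tilde F \tilde G - \sum_{p \ge q} c_p p$ is a finite tail $T$ consisting of terms $a_m b_k mk$ with $mk \prec q$: in the main case, this is because any $mk \ge q$ with $m \le m_0$, $k \le k_0$ forces $m \ge q_1$ and $k \ge q_2$; in the degenerate case, $\sum_{p \ge q} c_p p = 0$ so the entire $\tilde F \tilde G$ contributes to $T$.

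The final and hardest step is to show that $T$, $\tilde F R_2$, $R_1 \tilde G$, and $R_1 R_2$ are each $\prec q$. For this I will establish an absorption principle for $\C_M$-coefficients: if $a \in \C_M$ and $\phi \in \C$ with $|\phi| \preceq n$ for some $n \in M$, then $a\phi \prec n'$ for any $n' \in M$ with $n' \succ n$, because $(n'/n)^{1/2}$ is a large $M$-monomial dominating $|a|$, yielding $|a\phi/n'| \le (n/n')^{1/2} \to 0$. Using this, the tail $T$ is a finite sum of terms $a_m b_k mk$ with $mk \in M$, $mk \prec q$, each $\prec q$; and the quadratic remainder satisfies $R_1 R_2 \prec q_1 q_2 \preceq q$ strictly, since $(R_i/q_i) \to 0$. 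The mixed terms $\tilde F R_2$ and $R_1 \tilde G$ are more delicate: I need to bound each $R_i$ by a specific $M$-monomial strictly smaller than $q_i$, obtained by further truncating the asymptotic expansion of $f$ or $g$ at a finer cutoff such as $q_i^{1+\eta} \in M$, so that the absorption principle applies term-by-term to each product $a_m m R_i$. The main obstacle is exactly this last point: the condition $R_i \prec q_i$ alone, without a rate, does not suffice to control products with a potentially unbounded $\C_M$-coefficient $a_m$, so the finer $M$-monomial control on $R_i$ extracted from the asymptotic expansion via a second-level truncation is what makes the absorption principle bite and the product formula go through.
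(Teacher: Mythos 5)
Your absorption principle for $\C_M$-coefficients (if $a \in \C_M$ and $|\phi| \preceq n$ with $n, n' \in M$ and $n' \succ n$, then $a\phi \prec n'$) is correct and is exactly the mechanism the paper's argument uses implicitly for the finite tail; your check that $\supp(FG)$ is $M$-natural is also fine. But the treatment of the mixed terms has a genuine gap: you propose to ``bound each $R_i$ by a specific $M$-monomial strictly smaller than $q_i$,'' and this cannot be done. The relation $R_i \prec q_i$ coming from the definition of asymptotic expansion carries no rate, so there may be no $p \in M$ with $p \prec q_i$ and $R_i \preceq p$ --- take, say, $R_i$ comparable to $q_i/\log$. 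The finer truncation at $q_i^{1+\eta}$ does not bound $R_i$; it \emph{splits} it into $f - \S_M\bigl(F_{q_i^{1+\eta}}\bigr)$, which is $\prec q_i^{1+\eta}$, plus a finite intermediate sum of $\C_M$-coefficients times $M$-monomials lying in $[q_i^{1+\eta}, q_i)$. The absorption principle then has to be applied to the first piece and, separately and monomial by monomial, to the finite sum. That does close the argument, but it is not what your text describes, and carrying it out for both mixed terms adds a full extra layer of bookkeeping.

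The paper avoids all of this by exploiting part (1) first: multiply both $f$ and $g$ by a common small $n' \in M$ so that every monomial in $\supp(F)$ and $\supp(G)$ is strictly small. Then $f$, $g$, and every truncation $F_n := \sum_{m \ge n} a_m m$ are bounded germs, since each $a_m m \prec 1$ when $m$ is small and $a_m \in \C_M$. With this normalization the telescoping identity $fg - (FG)_n = (f - F_n)g + F_n(g - G_n) + \bigl(F_nG_n - (FG)_n\bigr)$ has no mixed-term difficulty: the first two summands pair a $\prec n$ factor with a bounded factor, and the third is exactly your finite tail $T$, each term $\prec n$ by absorption. Your asymmetric cutoffs $q_1, q_2$ are what force a finite truncation, whose $\C_M$-coefficients may be unbounded germs, to be multiplied against a remainder with only $\prec$-level control. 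I'd suggest either adopting the normalize-then-telescope route, or else writing out the second-level splitting of $R_1$ and $R_2$ in full.
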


\begin{proof}
	(1) and (2) are straightforward from the definition.  For (3) we may assume, by (1), that $\sum a_m m$ and $\sum b_m m$ belong to $\Ps{\C_M}{M}$.  We fix $n \in M$, and we write $$\sum c_m m = \left(\sum a_m m\right) \left(\sum b_m m\right),$$ so that $c_m = \sum_{m_1m_2 = m} a_{m_1}b_{m_2}.$  Since
	\begin{align*}
	fg- \sum_{m \ge n} c_m m &= \left(f - \sum_{m \ge n} a_m m\right)g + \\&\quad+ \left(\sum_{m \ge n} a_m m\right)\left(g - \sum_{m \ge n} b_m m\right)+ \\&\quad+\left(\sum_{m \ge n} a_m m\right)\left(\sum_{m \ge n} b_m m\right) - \sum_{m \ge n} c_m m,
	\end{align*}
	and since $f$, $g$ and each $a_m m$ are bounded, it follows that the first and second of these four summands are $\prec n$.  As to the third and fourth summands, 
	\begin{align*}
	\left(\sum_{m \ge n} a_m m\right)&\left(\sum_{m \ge n} b_m m\right) - \sum_{m \ge n} c_m m \\&= \left(\sum_{m \ge n} a_m m\right)\left(\sum_{m \ge n} b_m m\right) - \sum_{m_1m_2 \ge n} a_{m_1}b_{m_2} m_1m_2 \\ &= \sum_{m_1,m_2 \ge n \atop m_1m_2\prec n} a_{m_1}b_{m_2} m_1m_2,
	\end{align*}
	which is $\prec n$, because the latter sum is finite.
\end{proof}

\begin{lemma}
	\label{unique_as_exp}
	Every $f \in \C$ has at most one asymptotic expansion in $\Gs{\C_M}{M}$. 
\end{lemma}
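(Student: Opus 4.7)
The plan is to reduce to showing that the zero germ admits only the zero asymptotic expansion, and then to exploit the incomparability that is built into the definition of $\C_M$. Suppose $f$ has two asymptotic expansions $F, G \in \Gs{\C_M}{M}$. Observing that $-f$ has asymptotic expansion $-G$ (since the dominance relation $\prec$ is insensitive to sign), Lemma \ref{asymptotic_ring_lemma}(2) implies that the zero germ has asymptotic expansion $H := F - G$. Its support satisfies $\supp(H) \subseteq \supp(F) \cup \supp(G)$ and hence remains $M$-natural, so it suffices to show $H = 0$.

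If $H \neq 0$, then by $M$-naturality the set $\supp(H)$ has a maximum $n_0$, with coefficient $c_{n_0} \in \C_M\setminus\{0\}$. I would then split into two cases. In case (i) $\supp(H) = \{n_0\}$: for any small $\mu \in M$ the element $n_0\mu$ still lies in $M$ (by the $\RR$-vector space structure), and the asymptotic condition at $n = n_0\mu$ yields $c_{n_0} \prec \mu$; on the other hand, the defining inequality of $\C_M$ applied with the large monomial $1/\mu \in M$ gives $|c_{n_0}| > \mu$ ultimately. These two statements are incompatible, which provides the contradiction.

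Case (ii), in which $\supp(H)$ has a second-largest element $n_1 < n_0$, is the main obstacle: a priori the leading term $c_{n_0} n_0$ could cancel with $c_{n_1} n_1$ in the asymptotic sum. I would treat it by writing the asymptotic condition at $n = n_1$ and dividing by $n_1$, which gives, setting $m := n_0/n_1 \in M$ (large), that $c_{n_0}\,m + c_{n_1} \to 0$. Using the $\C_M$ inequalities with the large monomial $m^{1/2} \in M$ to sandwich $|c_{n_0}|$ strictly between $m^{-1/2}$ and $m^{1/2}$ ultimately, one sees that $|c_{n_0}|\,m \to +\infty$, so the cancellation forces $c_{n_1} \sim -c_{n_0}\,m$. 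Straightforward bookkeeping of the resulting bounds then exhibits $|c_{n_1}|$ as trapped between fixed positive powers of $m$ ultimately (for instance, $m^{1/3} < |c_{n_1}| < m^2$), meaning that $c_{n_1}$ is comparable to $m \in M\setminus\{1\}$ in the sense defined in the paper. This contradicts the observation recorded in the paragraph introducing $\C_M$ that every nonzero element of $\C_M$ is incomparable to every $m \in M\setminus\{1\}$. The conceptual content is that the rigidity of $\C_M$---its members being simultaneously too large to be dominated by any small monomial of $M$ and too small to dominate any large one---forbids precisely the kind of cancellation that would be needed to sustain a nontrivial asymptotic expansion of the zero germ.
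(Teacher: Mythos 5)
Your reduction to the zero germ and your case (i) match the paper's approach, but your case (ii) takes a longer route. The paper handles both cases at once by evaluating the defining relation $(\ast)$ at the geometric mean $m' := (m_0 m_1)^{1/2}$ of the two leading monomials (taking $m_1 := m_0^3$ if the support is a singleton): since $M$ is an asymptotic scale, $m_0 \succ m' \succ m_1$, so the truncation $\sum_{m \ge m'} a_m m$ consists of the single term $a_{m_0} m_0$, and $(\ast)$ yields $a_{m_0} \prec m'/m_0$, a small element of $M$, which already contradicts $a_{m_0} \in \C_M\setminus\{0\}$ --- no cancellation can occur. Your case (ii) instead evaluates $(\ast)$ at $n_1$, so two terms survive and cancellation must be ruled out: you divide by $n_1$, sandwich $|c_{n_0}|$ between $m^{-1/2}$ and $m^{1/2}$ with $m := n_0/n_1$, conclude $c_{n_1} \sim -c_{n_0}m$, and deduce that $c_{n_1}$ is comparable to $m$, contradicting the remark following the definition of $\C_M$. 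The bookkeeping you sketch does check out, so your argument is correct; but the economy of the paper's proof is entirely in the choice of evaluation point, which defuses the cancellation question before it arises. In fact the same move you already make in case (i) --- evaluating at $n_0\mu$ for a suitable small $\mu \in M$ --- dispatches case (ii) as well, with $\mu := (n_1/n_0)^{1/2}$, so the case split and the comparability-class analysis of $c_{n_1}$ are both avoidable.
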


\begin{proof}
	Let $F = \sum a_m m \in \Gs{\C_M}{M}$ be an asymptotic expansion of $0$; by the previous lemma, it suffices to show that $F = 0$.  Assume for a contradiction that $F \ne 0$, and let $m_0 \in \M$ be the leading monomial of $F$, and let $m_1 \in M$ be the leading monomial of $F-m_0$, if it exists, or equal to $m_0^3$ otherwise, and set $m':= (m_0m_1)^{1/2} \in M$.  Since $M$ is an asymptotic scale, we have $m_0 \succ m' \succ m_1$ while, by definition of asymptotic expansion, we have $0 - a_{m_0} m_0 \prec m'$.  Therefore, $a_{m_0} \prec \frac{m'}{m_0}$, which contradicts $a_{m_0} \in \C_M$.
\end{proof}

\subsection*{Strong asymptotic expansions}

As mentioned in Section \ref{outline_section}, for the construction of our qaa algebras for simple tuples, we need to introduce asymptotic expansions on standard power domains.  
Thus, we fix a multiplicative $\RR$-vector subspace $M$ of $\H^{>0}$, and we assume that $M$ is a strong asymptotic scale.  As usual, we denote by $\mm$ the complex analytic continuation of $m \in M$ to any standard power domain.  Recall from Definition \ref{strong_full_asymptotics} that, for $m,n \in M$ and any standard power domain $U$, we have $m \prec n$ if and only if $\mm \prec_U \nn$. 

\begin{df}
	\label{asymptotic_def}
	Let $f\in \C$ and $F = \sum a_m m \in \Gs{\C_M}{M}$.  The germ $f$ has \textbf{strong asymptotic expansion $F$ (at $\infty$)} if there is a standard power domain $U$ such that
	\begin{renumerate}
		\item $F$ has $M$-natural support;
		\item $f$ has a complex analytic continuation $\ff$ on $U$;
		\item for each $n \in M$, there is a standard power domain $V \subseteq U$ such that $a_m \in \C_M^V$ for each $m \ge n$, and we have
		\begin{equation}
		\label{strong_asymptotics}\tag{$\ast_{f,n}$}
		\ff - \sum_{m \geq n} \aa_m \mm \prec_V \nn.
		\end{equation}
	\end{renumerate}
	In this situation, $U$ is called a \textbf{strong asymptotic expansion domain} of $f$.  
\end{df}	

\begin{expls}
	\label{A0-rmk}
	\begin{enumerate}
		\item Let $U$ be a standard power domain and $F = \sum a_m m \in \Gs{\C_M^U}{M}^{\text{conv}}_U$ have natural support.  Then $\S_M(F)$ has strong asymptotic expansion $F$ on $U$.
		\item Let $(\F_k,L,T_k)$ be the qaa field constructed in \cite[Corollary 26]{MR3744892}, where $k \in \NN$ and $M_k = M\left(\frac1{\log_{-1}}, \dots, \frac1{\log_{k-1}}\right)$.  Then every $f \in \F_k$ has strong asymptotic expansion $\tau_kf$ on some standard quadratic domain.
	\end{enumerate}
\end{expls}

Since strong asymptotic expansions are in particular asymptotic expansions, they are unique by Lemma \ref{unique_as_exp}.  Lemma \ref{asymptotic_ring_lemma} generalizes directly to strong asymptotic expansions:

\begin{lemma}
	\label{strong_asymptotic_ring_lemma}
	Let $f,g \in \C$ have strong asymptotic expansions $\sum a_m m$ and $\sum b_m m$ in $\Gs{\C_M}{M}$ in a standard power domain $U$, respectively.  Then
	\begin{enumerate}
		\item for $n \in M$, the germ $fn$ has strong asymptotic expansion $\sum a_m (mn)$ in $U$;
		\item $f+g$ has strong asymptotic expansion $\sum (a_m+b_m) m$ in $U$;
		\item $fg$ has strong asymptotic expansion $\left(\sum a_m m\right) \left(\sum b_m m\right)$ in $U$.
	\end{enumerate}
\end{lemma}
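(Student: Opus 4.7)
The strategy is to repeat the proof of Lemma \ref{asymptotic_ring_lemma} verbatim, replacing the dominance $\prec$ with the strong dominance $\prec_V$ on a standard power domain $V \subseteq U$.  The only new ingredient is Lemma \ref{sqd_facts}(2): whenever finitely many strong asymptotic relations are each established on their own standard subdomain of $U$, they can be transferred onto a common standard subdomain by intersecting, and $\C_M^V \subseteq \C_M^{V'}$ whenever $V' \subseteq V$, so that shrinking preserves all coefficient-membership statements.

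For part (1), fix $n' \in M$ and apply Definition \ref{asymptotic_def}(iii) to $f$ at the monomial $n'/n$, obtaining a standard power domain $V \subseteq U$ on which $\ff - \sum_{m \geq n'/n}\aa_m\mm \prec_V \nn'/\nn$.  Multiplying by $\nn$ and reindexing $m' = mn$ yields the required strong asymptotic relation at $n'$ for $fn$.  For part (2), apply (iii) to $f$ and $g$ separately at $n$, pass to a common standard subdomain via Lemma \ref{sqd_facts}(2), and add the two relations.

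For part (3), reproduce the algebraic identity of Lemma \ref{asymptotic_ring_lemma}: with $c_m := \sum_{m_1 m_2 = m} a_{m_1}b_{m_2}$, $\alpha_n := \sum_{m \geq n}a_m m$ and $\beta_n := \sum_{m \geq n}b_m m$, one has
\begin{equation*}
fg - \sum_{m \geq n} c_m m \;=\; (f-\alpha_n)g \;+\; \alpha_n(g-\beta_n) \;+\; \Big(\alpha_n\beta_n - \sum_{m \geq n} c_m m\Big).
\end{equation*}
The third summand is a finite sum of terms $\aa_{m_1}\bb_{m_2}\mm_1\mm_2$ with $m_1m_2 \prec n$, hence $\prec_V \nn$ by the strong asymptotic scale property of $M$ on any $V$ on which the finitely many coefficients involved are defined.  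The first two summands are handled symmetrically: setting $m_g := \lm(G)$ where $G = \sum b_m m$, one applies (iii) to $f$ not at $n$ but at $n/m_g$, so that after multiplication by $\gg \preceq_V \mm_g$---a bound on a suitable $V$ extracted from the leading term of the strong asymptotic expansion of $g$, in the spirit of Proposition \ref{strong_convergent_lemma}(1)---the result is $\prec_V \nn$; the finitely many extra terms $\aa_m\mm\cdot\gg$ with $n/m_g \leq m < n$ are absorbed into the other two summands.  A final intersection via Lemma \ref{sqd_facts}(2) places all inequalities on a single standard power domain.  The main obstacle---and only novelty over Lemma \ref{asymptotic_ring_lemma}---is this bookkeeping of truncation levels and standard power domains; the algebraic manipulation itself is unchanged.
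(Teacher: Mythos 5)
Your overall strategy---rerun the proof of Lemma \ref{asymptotic_ring_lemma} with $\prec$ replaced by $\prec_V$, using Lemma \ref{sqd_facts}(1,2) to arrange the finitely many asymptotic relations at each stage on a common standard power subdomain---is exactly what the paper intends by ``adapting \ldots\ is straightforward and left to the reader,'' and your parts (1) and (2) are correct.

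There is a gap in part (3), though: the bound $\gg \preceq_V \mm_g$ (with $m_g := \lm(G)$) is unjustified and false in general. Condition (iii) of Definition \ref{asymptotic_def} at $n = m_g$, or the analogue of Proposition \ref{strong_convergent_lemma}(1), yields only $\gg \asymp_V \bb\,\mm_g$ for some nonzero $\bb \in \C_M^V$, and such a $\bb$ need not be bounded: in the intended application the coefficients live in $\K'_i \subseteq \C_{M_0}$ (Remark \ref{rel_growth_rmk}), which contains unbounded germs. Applying (iii) to $f$ at $n/m_g$ then gives only $\bigl(\ff - \sum_{m \ge n/m_g}\aa_m\mm\bigr)\gg \prec_V \bb\,\nn$, which is weaker than $\prec_V \nn$ when $\bb$ is unbounded. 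What actually closes the estimate is a comparability-class argument---the same one the proof of Lemma \ref{asymptotic_ring_lemma} implicitly relies on, despite its phrase ``$f$, $g$ and each $a_m m$ are bounded'': normalize via part (1) so that $\lm(F) = \lm(G) = 1$; then $\ff - \sum_{m \ge n}\aa_m\mm \asymp_V \aa_{m_0}\mm_0$, where $m_0$ is the largest element of $\supp(F)$ strictly below $n$, and $\gg \asymp_V \bb_1$. The product is $\asymp_V (\aa_{m_0}\bb_1)\mm_0 \prec_V \nn$, because $m_0 < n$ makes $n/m_0$ a large element of $M$ and $\aa_{m_0}\bb_1 \in \C_M^V$ is therefore $\prec_V \nn/\mm_0$. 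The same point applies to your third summand: the strong-asymptotic-scale property of Definition \ref{asymptotic_scale_df}(3) only compares the monomials $\mm_1\mm_2$ with $\nn$; bounding the full terms $\aa_{m_1}\bb_{m_2}\mm_1\mm_2$ again requires the $\C_M^V$ comparability-class estimate, not the strong asymptotic scale property alone.
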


\begin{proof}
	Adapting the proof of Lemma \ref{asymptotic_ring_lemma} is straightforward and left to the reader.  
\end{proof}

The next criterion is useful for obtaining strong asymptotic expansions from infinite sums.

\begin{lemma}
	\label{inf_sum_of_strong}
	Let $f \in \C$ and $f_l \in \C$, for $l \in \NN$.  Assume that each $f_l$ has strong asymptotic expansion $F_l \in \Gs{\C_M}{M}$ in some standard power domain $U_l$ and that the sequence $(\lm(F_l):\ l \in \NN)$ is decreasing and coinitial in $M$, so that $\sum_l F_l$ is a series in $\Gs{\C_M}{M}$.  Assume also that $f$ has a complex analytic continuation $\ff$ on some standard power domain $U$ and there are standard power domains $V_l$ such  that $$\ff - \sum_{i=0}^l \ff_i \prec_{V_l} \ff_l \quad \text{ for each } l.$$  Then the series $\sum_l F_l$ is a strong asymptotic expansion of $f$ in $U$.
\end{lemma}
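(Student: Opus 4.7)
The plan is to verify the three clauses of Definition \ref{asymptotic_def} for $F := \sum_l F_l$ as a strong asymptotic expansion of $f$ on $U$. Clause (i), that $F$ has $M$-natural support, follows immediately from Lemma \ref{sum_of_natural_support}, since each $F_l$ has $M$-natural support and $(\lm(F_l))$ is decreasing and coinitial in $M$; clause (ii) is exactly the hypothesis on $\ff$. The substance is clause (iii).

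To verify clause (iii), fix $n \in M$. By coinitiality of $(\lm(F_l))$, pick $L \in \NN$ with $\lm(F_l) \prec n$ for all $l > L$. Since $\lm(F_l)$ is the maximum of $\supp(F_l)$, every coefficient of $F_l$ at a monomial $m \ge n$ then vanishes for $l > L$; writing $F = \sum_m a_m m$ and $F_l = \sum_m a_m^l m$, this gives $a_m = \sum_{l=0}^L a_m^l$ for each $m \ge n$. By $M$-naturality of $\supp(F_l)$, the truncation $h_l := \sum_{m \ge n} a_m^l m$ is a finite sum; let $\widehat{h}_l := \sum_{m \ge n} \aa_m^l \mm$ be its evident analytic continuation. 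Note in particular that $\widehat{h}_L = 0$ by the choice of $L$. The key decomposition is then
\[
\ff - \sum_{m \ge n} \aa_m \mm \;=\; \Bigl(\ff - \sum_{l=0}^L \ff_l\Bigr) + \sum_{l=0}^L \bigl(\ff_l - \widehat{h}_l\bigr).
\]

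Each summand $\ff_l - \widehat{h}_l$ in the second bracket satisfies $\ff_l - \widehat{h}_l \prec_{W_l} \nn$ on some standard power domain $W_l$, by applying clause (iii) of Definition \ref{asymptotic_def} to the strong asymptotic expansion $F_l$ at the threshold $n$. For the first bracket, the $l = L$ case of the same estimate reduces to $\ff_L \prec_{W_L} \nn$ (because $\widehat{h}_L = 0$), which combined with the hypothesis $\ff - \sum_{l=0}^L \ff_l \prec_{V_L} \ff_L$ yields $\ff - \sum_{l=0}^L \ff_l \prec \nn$ on any standard power domain contained in $V_L \cap W_L$. Since only finitely many standard power domains $V_L, W_0, \dots, W_L$ enter the argument, Lemma \ref{sqd_facts} produces a common standard power domain $V \subseteq U$ contained in all of them, on which both bracketed quantities are $\prec_V \nn$ simultaneously.

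The main obstacle is entirely bookkeeping: arranging these finitely many standard power domains into one common $V \subseteq U$, and checking that each $a_m \in \C_M^V$ for $m \ge n$. The latter follows because $a_m$ is a finite sum of elements of the various $\C_M^{W_l}$'s, and such a sum lies in $\C_M^V$ as soon as $V$ is contained in each $W_l$.
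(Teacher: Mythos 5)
Your proof follows essentially the same route as the paper's: decompose $\ff - \sum_{m \ge n} \aa_m \mm$ as $\bigl(\ff - \sum_{l=0}^L \ff_l\bigr) + \sum_{l=0}^L \bigl(\ff_l - \widehat{h}_l\bigr)$, control the first bracket via the hypothesis and the second termwise via the strong asymptotic expansions of the individual $f_l$, then intersect the finitely many standard power domains using Lemma \ref{sqd_facts}.

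The one slip is an off-by-one in the cutoff. You choose $L$ so that $\lm(F_l) \prec n$ for all $l > L$, and then assert ``$\widehat{h}_L = 0$ by the choice of $L$'' --- but that conclusion requires $\lm(F_L) \prec n$, which your choice does not provide. If instead $\lm(F_L) \succeq n$, then $\widehat{h}_L \neq 0$, and the hypothesis only gives $\ff - \sum_{l \le L} \ff_l \prec_{V_L} \ff_L$ with $\ff_L$ possibly \emph{not} $\prec \nn$; the first bracket is then not controlled. The fix is immediate: choose $L$ so that $\lm(F_l) \prec n$ for all $l \ge L$ (which coinitiality gives just as easily, and is exactly what the paper writes with its $N$); then $\widehat{h}_L = 0$ does hold and the rest of your argument goes through. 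As a minor aside, the remark that a finite sum of elements of $\C_M^V$ again lies in $\C_M^V$ is stated as if obvious, yet the lower bound $1/\mm \prec_V \aa$ for the sum is not automatic in the face of cancellation; the paper's own proof does not address this point either, so I do not count it against you, but the justification you give for it is not quite a proof.
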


\begin{proof}
	From Lemma \ref{sum_of_natural_support} we know that $\sum_l F_l$ has $M$-natural support.  Let $n \in M$, and choose $N \in \NN$ such that $\lm(F_l) \prec n$ for all $l \ge N$.  Let $V$ be a standard power domain contained in $U \cap U_0 \cap \cdots \cap U_N \cap V_0 \cap \cdots \cap V_N$.  Then $\ff_N \prec_V \nn$ and $\ff - \sum_{i=0}^N \ff_i \prec_V \ff_N$ by hypothesis, so $$\ff - \sum_{i=0}^N \ff_i \prec_V \nn.$$  Increasing $N$ and shrinking $V$ if necessary, we may assume that $$\left(\sum_{i=0}^\infty F_i\right)_n = \sum_{i=0}^N (F_i)_{n}.$$  Therefore, with $\hh_n$ the complex analytic continuation of $\left(\sum F_i\right)_{n}$ on $V$ and $\hh_{i,n}$ the complex analytic continuation of $(F_i)_{n}$ on $V$, we get
	\begin{align*}
	\ff - \hh_n &= \ff - \sum_{i=0}^N \hh_{i,n} \\
	&= \left( \ff - \sum_{i=0}^N \ff_i \right) + \sum_{i=0}^N \left(\ff_i - \hh_{i,n}\right) \prec_V \nn,
	\end{align*}
	as required.
\end{proof}

To extend the notion of strong asymptotic expansion to series in $\Gs{\RR}{M}$, we proceed as in Definition \ref{qaa_df}:

\begin{df}
	\label{strong_qaa_algebra}
	Let $\K \subseteq \C$ be an $\RR$-algebra and $T:\K \into \Gs{\RR}{M}$ be an $\RR$-algebra homomorphism.  We say that the triple $(\K,M,T)$ is a \textbf{strong qaa algebra} if 
	\begin{renumerate}
		\item $T$ is injective;
		\item the image $T(\K)$ is truncation closed;
		\item for every $f \in \K$ and $n \in M$, there exists a standard power domain $U$ such that $f$ and $g_n:= T^{-1}(F_n)$ have complex analytic continuations $\ff$ and $\gg_n$ on $U$, respectively, that satisfy
		\begin{equation}\label{strong_asymptotic_relation}
		\ff - \gg_n \prec_U \nn.
		\end{equation}
	\end{renumerate}
	In this situation, we call $T(f)$ the \textbf{strong $\K$-asymptotic expansion} of $f$.
\end{df}

\begin{expl}
	\label{strong_convergent_is_strong_asymptotic}
	We set $\C(M)^{\text{conv}}_U := \S_M\left(\Gs{\C_M^U}{M}^{\text{conv}}_U\right)$ and denote by $S_M^U$ the restriction of $S_M$ to $\C(M)^{\text{conv}}_U$.  Then the triple $$\left(\C(M)^{\text{conv}}_U, M, S_M^U\right)$$ is a strong qaa field.  This is proved along the lines of the proof of Corollary \ref{convergent_prop}(1); we leave the details to the reader.  It follows from Example \ref{real_strong_convergent} that $\left(\C(\RR,M)^{\text{conv}}, M, S_{\RR,M}\right)$ is a strong qaa field.
\end{expl}

\begin{lemma}
	\label{strong_full_asymptotics}
	Let $M'$ be multiplicative $\RR$-vector subspace of $M$, and let $(\K,M',T)$ be a strong qaa algebra.  Then $(\K,M,T)$ is a strong qaa algebra. 
\end{lemma}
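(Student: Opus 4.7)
The plan is to verify the three conditions of Definition \ref{strong_qaa_algebra} for the triple $(\K, M, T)$, invoking the corresponding properties of $(\K, M', T)$ together with the fact that $M$ is a strong asymptotic scale containing $M'$ as a subspace. Condition (i) is immediate since $T:\K \into \Gs{\RR}{M}$ differs from $T:\K \into \Gs{\RR}{M'}$ only in the ambient codomain. The core of the argument is a dichotomy that I would establish at the outset: for any $F \in T(\K)$ (whose support lies in $M'$ and is $M'$-natural) and any $n \in M$, the set $A := \set{m \in \supp(F):\ m \ge n}$ is either all of $\supp(F)$ or finite. Indeed, if $A$ were infinite while some $m^* \in \supp(F)$ satisfied $m^* < n$, then $m^* \in M'$ and $A \subseteq \supp(F) \cap (m^*, +\infty)$ would violate the $M'$-naturality of $\supp(F)$.

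Using this dichotomy, I would handle condition (ii) by case analysis: if $A = \supp(F)$ then $F_n = F \in T(\K)$; if $A = \emptyset$ then $F_n = 0 \in T(\K)$; and if $A$ is finite and non-empty, then $n' := \min A$ belongs to $M'$, and the truncation $F_n$ in $\Gs{\RR}{M}$ coincides with the truncation $F_{n'}$ in $\Gs{\RR}{M'}$, which lies in $T(\K)$ by the $M'$-truncation-closedness.

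For condition (iii), fix $f \in \K$ with $F := T(f)$ and $n \in M$. If $F_n = F$, then $g_n = f$ and $\ff - \gg_n = 0$ on any standard power domain where $\ff$ is defined. Otherwise $\set{m \in \supp(F):\ m < n}$ is non-empty and, by reverse-well-ordering, has a maximum $m^- \in M'$. The key step is to upgrade $m^- < n$ to the strict dominance $m^- \prec n$: since $m^-$ and $n$ both lie in the asymptotic scale $M$ and $m^- \ne n$, they lie in distinct archimedean classes, so $m^- < n$ forces $m^- \prec n$. By Definition \ref{asymptotic_scale_df}(3), this lifts to $\mathbf{m^-} \prec_U \nn$ on every standard power domain $U$. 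Since no element of $\supp(F)$ lies in the open interval $(m^-, n)$ by maximality of $m^-$, we have $F_{m^-} = a_{m^-} m^- + F_n$, so $\gg_n = \gg_{m^-} - a_{m^-} \mathbf{m^-}$ on a common standard power domain. Applying the strong qaa property of $(\K, M', T)$ to $m^- \in M'$ produces $U$ with $\ff - \gg_{m^-} \prec_U \mathbf{m^-}$, whence
\[ \ff - \gg_n \;=\; (\ff - \gg_{m^-}) + a_{m^-} \mathbf{m^-} \;\prec_U\; \nn, \]
since both summands are $\prec_U \nn$ via $\mathbf{m^-} \prec_U \nn$.

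The hardest step will be the upgrade $m^- \prec n$: the germ-level inequality $m^- < n$ by itself yields only $m^- \preceq n$, which is too weak to imply $\mathbf{m^-} \prec_U \nn$. This upgrade is the single place where the asymptotic scale hypothesis on $M$ (each archimedean class having at most one representative) is used essentially, together with Definition \ref{asymptotic_scale_df}(3) to transfer strict dominance from germs to their complex analytic continuations on standard power domains.
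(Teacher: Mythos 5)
Your proof is correct and follows essentially the same route as the paper's: both identify the maximal element $p=m^-$ of $\supp(Tf)$ below $n$, apply the $M'$-qaa property at $p$, and conclude from $\pp \prec_U \nn$, which rests on the strong asymptotic scale property of $M$. You spell out more explicitly the dichotomy underlying condition (ii) and the upgrade from $m^- < n$ to $m^- \prec n$, both of which the paper leaves implicit, but the underlying argument is the same.
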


\begin{proof}
	Let $f \in \K$ and $n \in M$.  If $n \in M'$, then the asymptotic relation \eqref{strong_asymptotic_relation} holds by assumption on some standard power domain $U$, so assume $n \notin M'$.  If $n \le \supp(Tf)$, then $T^{-1}((Tf)_n) = f$, so the asymptotic relation \eqref{strong_asymptotic_relation} holds trivially on some standard power domain $U$.  So assume also that $n \not\le \supp(Tf)$ and choose the maximal $p \in \supp(Tf)$ such that $p<n$ (which exists because $\supp(Tf)$ is reverse well-ordered).  By assumption, writing $\gg_p$ and $\gg_n$ for the complex analytic continuations of $T^{-1}((Tf)_p)$ and $T^{-1}((Tf)_n)$, respectively, there is a standard power domain $U$ such that
	\begin{equation*}
	\pp \succ_U \ff - \gg_p = \ff - \gg_n - a\pp,
	\end{equation*}
	for some nonzero $a \in \RR$.  Since $\pp \prec_U \nn$ because $M$ is a strong asymptotic scale, the asymptotic relation \eqref{strong_asymptotic_relation} follows.
\end{proof}

\section{Construction: the admissible case}
\label{construction_section}

Let $f = (f_0, \dots, f_k) \in \I^{k+1}$ be admissible, and recall that $$M_f = \left\langle\exp \circ (-f_0), \dots, \exp \circ (-f_k)\right\rangle^\times.$$  To describe the construction of our qaa algebra $\left(\K_f, M_f, T_f\right)$, we introduce the following notation: for $i \in \{0, \dots, k\}$, we set 
\begin{equation*}
	f^{\langle i \rangle} := \left(x, f_{k-i+1} \circ f_{k-i}^{-1}, \dots, f_k \circ f_{k-i}^{-1}\right).
\end{equation*}
It is straightforward to see that, for $0 \le i,j \le k$ such that $i+j \le k$, we have 
\begin{equation}
\label{sequence_combinatorics}
	\left(f^{\langle i \rangle}\right)^{\langle j \rangle} = f^{\langle i+j \rangle}.
\end{equation}
Moreover, we recall that since $f$ is admissible, so is each $f^{\langle i \rangle}$; in particular, each $M_{f^{\langle i \rangle}}$ is a strong asymptotic scale.

As outlined in the introduction, we first construct the qaa fields $\left(\K_{f^{\langle i \rangle}}, M_{f^{\langle i \rangle}}, T_{f^{\langle i \rangle}}\right)$ by induction on $i = 0, \dots, k$.

To simplify notation in our description of this construction below, and since $f$ is fixed in this construction, we temporarily replace the subscript ``${f^{\langle i \rangle}}$'' by the subscript ``$i$''; thus, we write $M_i = M_{f,i}$ in place of $M_{f^{\langle i \rangle}}$.

\begin{rmk}
	We have $M_0 = M_x = \langle \exp\circ(-x) \rangle^\times$ is the space denoted by $E$ in \cite{MR3744892}.
\end{rmk}

\subsection*{The initial Ilyashenko field}
Since 
$M_0$ is a strong asymptotic scale with strong basis $\exp^{-x}$ we define $\A_0 = \A_{f,0}$ to be, as in \cite{MR3744892}, the set of all germs $g \in \C$ that have an $M_0$-generalized power series $T_0(g) = T_{f,0}(g) \in \As{\RR}{M_0}^{\text{ps}}$ as strong asymptotic expansion.  

\begin{lemma}
	\label{algebra_cor}
	$\A_0$ is an $\RR$-algebra, and the map $T_0:\A_0 \into \As{\RR}{M_0}^{\text{ps}}$ is an injective $\RR$-algebra homomorphism.  
\end{lemma}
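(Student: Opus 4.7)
The plan is to verify the algebraic closure of $\A_0$ by combining Lemma \ref{strong_asymptotic_ring_lemma} (which tells us how strong asymptotic expansions add and multiply) with the observation that $\As{\RR}{M_0}^{\text{ps}}$ is itself closed under these operations. Given $f, g \in \A_0$ and $\lambda \in \RR$, Lemma \ref{strong_asymptotic_ring_lemma}(1)--(3) immediately furnishes $\lambda f + g$ and $fg$ with strong asymptotic expansions $\lambda T_0(f) + T_0(g)$ and $T_0(f) T_0(g)$. Since the generating monomials of $T_0(f)$ and $T_0(g)$ remain small under these operations, these series still lie in $\As{\RR}{M_0}^{\text{ps}}$, so $\A_0$ is a subalgebra of $\C$ and $T_0$ preserves the operations. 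Well-definedness of $T_0$ as a map follows at once from Lemma \ref{unique_as_exp} applied to the real restriction, since every strong asymptotic expansion is, in particular, an asymptotic expansion in the sense of Definition \ref{asymptotic_expansion_2}.

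The crux is injectivity, and this is where the Uniqueness Principle enters. Suppose $g \in \A_0$ satisfies $T_0(g) = 0$, and let $\gg$ be a complex analytic continuation of $g$ on a standard power domain $U$ witnessing the strong asymptotic expansion. Applying clause (iii) of Definition \ref{asymptotic_def} to the monomial $n = 1 \in M_0$, the truncation of $0$ above $1$ is $0$, so there exists a standard power domain $V \subseteq U$ with $\gg \prec_V \mathbf{1}$; that is, $\gg$ is bounded on $V$. Now apply the same clause to each $n = \exp^{-r} \in M_0$ with $r \in \NN$: there are standard power domains $V_r \subseteq U$ on which $\gg \prec_{V_r} \bexp^{-r}$. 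Since each $V_r$ contains a half-line $(a_r,+\infty) \subseteq \RR$, we obtain $g\rest{\RR} \prec \exp^{-r}$ on the real line for every $r \in \NN$.

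Thus $\gg$ is holomorphic and bounded on the standard power domain $V$, and its restriction to the reals is dominated by every $\exp^{-r}$. The Uniqueness Principle applies and yields $\gg = 0$ on $V$; the holomorphic identity theorem then propagates this to all of $U$, so $g = 0$ as a germ. This shows $T_0$ is injective and completes the argument. The only real subtlety is making sure the strong asymptotic data actually delivers a bounded holomorphic extension on a \emph{single} standard power domain (so that the Uniqueness Principle is applicable); this is precisely what the $n = 1$ step of the asymptotic condition secures, since $1$ is the largest monomial allowed in $\As{\RR}{M_0}^{\text{ps}}$.
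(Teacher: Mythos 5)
Your proof is correct and takes essentially the same route as the paper: closure under the $\RR$-algebra operations via Lemma \ref{strong_asymptotic_ring_lemma} (with well-definedness from Lemma \ref{unique_as_exp}), and injectivity via the Uniqueness Principle. You have merely unpacked the paper's one-line appeal to the Uniqueness Principle into its constituent steps — extracting boundedness from the $n=1$ truncation and the flatness of $g\rest{\RR}$ from the $n=\exp^{-r}$ truncations — which is a faithful expansion of the intended argument.
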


\begin{proof}
	By Lemma \ref{strong_asymptotic_ring_lemma}, the set $\A_0$ is an $\RR$-algebra and the map $T_0:\A_0 \into \As{\RR}{M_0}^{\text{ps}}$ is an $\RR$-algebra homomorphism; its kernel is trivial by the Uniqueness Principle.
\end{proof}

\begin{cor}
	\label{closure_cor}
	The triple $\left(\A_0,M_0,T_0\right)$ is a strong qaa algebra and, for every $F \in \As{\RR}{M_0}^{\text{conv}}$ with $M_0$-natural support, the sum $\S_{M_0}(F)$ belongs to $\A_0$ and satisfies $T_0(\S_{M_0}(F)) = F$.  	
\end{cor}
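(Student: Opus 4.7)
The plan is to verify the three conditions of Definition \ref{strong_qaa_algebra} for $(\A_0, M_0, T_0)$, and then deduce the second assertion from Examples \ref{real_strong_convergent} and \ref{A0-rmk}(1). Injectivity (condition (i)) is already contained in Lemma \ref{algebra_cor}.

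For condition (ii), I would first observe that $M_0 = \langle \exp \circ (-x) \rangle^\times$ consists of pairwise comparable monomials, so by Lemma \ref{pairwise_comparable_lemma} every $F = \sum a_m m \in \As{\RR}{M_0}^{\text{ps}}$ automatically has $M_0$-natural support. Consequently, for any $F \in T_0(\A_0)$ and $n \in M_0$, the truncation $F_n$ is a finite $\RR$-linear combination $p = \sum_{i=1}^N c_i m_i$ of monomials of $M_0$. Such a $p$ is a germ in $\C$ whose complex analytic continuation on any standard power domain $U$ is simply $\pp = \sum c_i \mm_i$, and I claim that its strong asymptotic expansion is the series $\sum c_i m_i$ itself: for any $n' \in M_0$, the tail $\pp - \sum_{m_i \ge n'} c_i \mm_i$ is a finite sum of terms with $m_i \prec n'$, each satisfying $\mm_i \prec_U \nn'$ by the strong asymptotic scale property of $M_0$. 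Hence $p \in \A_0$ with $T_0(p) = F_n$, so $T_0(\A_0)$ is truncation closed. Condition (iii) then follows immediately: for $f \in \A_0$ and $n \in M_0$, setting $g_n := T_0^{-1}((T_0 f)_n)$, the clause $(\ast_{f,n})$ in Definition \ref{asymptotic_def} applied to the strong asymptotic expansion $T_0(f)$ supplies a standard power domain $V$ on which $\ff - \gg_n = \ff - \sum_{m \ge n} \aa_m \mm \prec_V \nn$, which is exactly the required strong qaa condition.

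For the second assertion, let $F \in \As{\RR}{M_0}^{\text{conv}}$ with $M_0$-natural support. By Example \ref{real_strong_convergent}, $F$ belongs to $\Gs{\C_{M_0}^U}{M_0}^{\text{conv}}_U$ for every standard power domain $U$, so Example \ref{A0-rmk}(1) shows that $\S_{M_0}(F)$ has strong asymptotic expansion $F$ on any such $U$. Hence $\S_{M_0}(F) \in \A_0$, and the uniqueness of strong asymptotic expansions (Lemma \ref{unique_as_exp}) then forces $T_0(\S_{M_0}(F)) = F$. The only real piece of work is the verification that finite polynomial combinations of elements of $M_0$ are their own strong asymptotic expansion; this is elementary once one unwinds the strong asymptotic scale property, and I do not anticipate a serious obstacle anywhere in the argument.
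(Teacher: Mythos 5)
Your proof is correct and follows essentially the same route as the paper's: reduce truncation closure to the observation that each truncation of $T_0 g$ is a finite $\RR$-linear combination of bounded monomials of $M_0$ (whose holomorphic extensions satisfy the required $\prec_U$ estimates since $M_0$ is a strong asymptotic scale), and deduce the second assertion from Examples \ref{real_strong_convergent} and \ref{A0-rmk}(1) together with uniqueness of strong asymptotic expansions. The only cosmetic difference is that the paper factors the argument through the intermediate step ``each bounded $m \in M_0$ lies in $\A_0$'' and then invokes that $\A_0$ is an $\RR$-algebra, whereas you verify the finite sum directly; both are equally valid.
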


\begin{proof}
	For bounded $m \in M_0$ the complex analytic continuation $\mm$ to any standard power domain is bounded, so $m$ belongs to $\A_0$ with $T_0 m = m$.  Since the support of $T_0 g$, for $g \in \A_0$, is $M_0$-natural and since $g$ is bounded, every truncation of $T_0 g$ is an $\RR$-linear combination of bounded germs in $M_0$ and therefore belongs to $\A_0$ as well.
	
	Finally, let $F \in \As{\RR}{M_0}^{\text{conv}}$ have $M_0$-natural support.  By Example  \ref{real_strong_convergent}, the series $F$ belongs to $\Gs{\C_{M_0}^U}{M_0}^{\text{conv}}_U$ for every standard power domain $U$.  So $\S_{M_0}(F)$ has strong asymptotic expansion $F$ on every standard power domain $U$ by Example \ref{A0-rmk}(1).
\end{proof}

For  $g \in \A_0$, we set $\lm(g):= \lm\left(T_0 g\right).$

\begin{prop}
	\label{construction_lemma_1}
	Let $g \in \A_0$ be nonzero and have strong asymptotic expansion domain $U$.
	\begin{enumerate}
		\item There exist unique nonzero $a \in \RR$ and small $\epsilon \in \A_0$ such that $$g = a \lm(g) (1 - \epsilon).$$
		\item Assume that $g$ is small, let $P \in \Ps{R}{T}$ be convergent and let $p$ be the real analytic germ at 0 defined by $P$.  Then $p \circ g$ belongs to $\A_0$, has strong asymptotic expansion domain $U$ and satisfies $T_0(p \circ g) = P \circ (T_0 g)$.
		\item We have $g \asymp \lm(g)$, and the germ $\frac g{a \lm(g)}$ is a unit in $\A_0$.
	\end{enumerate}
\end{prop}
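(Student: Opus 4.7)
The plan is to reduce part (1) to Lemma \ref{leading_monomial_lemma}, part (2) to Lemma \ref{inf_sum_of_strong}, and then to extract part (3) as a short corollary of the first two, leveraging quasianalyticity of $(\A_0,M_0,T_0)$ established in Lemma \ref{algebra_cor}.

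For part (1), I would apply Lemma \ref{leading_monomial_lemma} to $F := T_0 g \in \As{\RR}{M_0}^{\text{ps}}$ to obtain a nonzero $a \in \RR$ and $E \in \As{\RR}{M_0}^{\text{ps}}$ with $\lm(E)$ small such that $F = a\lm(F)(1-E)$. Setting $n := \lm(g) = \lm(F)$, the germ $h := g - an$ lies in $\A_0$ with $T_0 h = -anE$ by Lemma \ref{strong_asymptotic_ring_lemma}, and multiplying by $n^{-1} \in M_0$ and the scalar $-1/a$ produces $\epsilon := 1 - g/(an) \in \A_0$ with $T_0\epsilon = E$. Since $\lm(E)$ is small, $\epsilon$ itself is small, and $g = an(1-\epsilon)$ by construction. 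For uniqueness, any other decomposition $g = a'n(1-\epsilon')$ with $\epsilon' \in \A_0$ small yields $a'(1-T_0\epsilon') = a(1-E)$ after applying $T_0$; comparing the coefficient of the monomial $1$ (permitted since $T_0\epsilon'$ has small leading monomial) forces $a' = a$, and injectivity of $T_0$ then gives $\epsilon = \epsilon'$.

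For part (2), I would expand $P(T) = \sum_\nu a_\nu T^\nu$ and set $f_\nu := a_\nu g^\nu$. Each $f_\nu$ lies in $\A_0$ with strong asymptotic expansion $F_\nu := a_\nu (T_0 g)^\nu$ by Lemma \ref{strong_asymptotic_ring_lemma}, and the series $\sum_\nu F_\nu$ agrees with $P \circ T_0 g$, which is an $M_0$-generalized power series by Proposition \ref{gps_comp_with_convergent}; smallness of $g$ (hence of $\lm(T_0 g)$) makes the sequence $\lm(F_\nu) = \lm(T_0 g)^\nu$ decreasing and coinitial in $M_0$. Smallness of $g$ also yields $\gg \prec_{U'} 1$ on some standard power domain $U' \subseteq U$ (take $n = 1$ in the strong asymptotic relation for $g$), so convergence of $P$ provides a standard power domain $U'' \subseteq U'$ on which $p \circ \gg$ is defined and holomorphic. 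Writing the tail of $P$ past the $\nu$-th term as $T^{\nu+1} P_\nu(T)$ with $P_\nu$ convergent yields the estimate $p \circ \gg - \sum_{i=0}^\nu \ff_i = \gg^{\nu+1}(p_\nu \circ \gg) \prec_{U''} \ff_\nu$; this, together with the coinitiality above, is exactly the hypothesis of Lemma \ref{inf_sum_of_strong}, so $T_0(p \circ g) = P \circ T_0 g$ and $p \circ g \in \A_0$.

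Part (3) is then immediate: the equation $g/(an) = 1-\epsilon$ from part (1) with $\epsilon$ small gives $g \asymp n = \lm(g)$, and applying part (2) to the geometric series $P(T) = \sum_\nu T^\nu$ (convergent on the unit disc) and the small germ $\epsilon$ produces $1/(1-\epsilon) \in \A_0$, exhibiting $g/(a\lm(g)) = 1-\epsilon$ as a unit. I expect the main technical obstacle to be in part (2): selecting the standard power domain $U''$ carefully so that $|\gg|$ remains inside the radius of convergence of $P$ uniformly, and then identifying a strong asymptotic expansion domain for $p \circ g$ that matches (or is contained in) the given $U$ as the statement requires.
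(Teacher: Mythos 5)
Your proof is correct and follows essentially the same route as the paper's: Lemma \ref{leading_monomial_lemma} combined with Lemma \ref{strong_asymptotic_ring_lemma} for part (1), the tail estimate $\pp\circ\gg - \sum_{\nu\le n} a_\nu\gg^\nu \prec_U \gg^n$ together with Proposition \ref{gps_comp_with_convergent} and Lemma \ref{inf_sum_of_strong} for part (2), and the geometric series composed with the small germ $\epsilon$ for part (3). The domain concern you flag at the end is exactly what the paper encounters and resolves by replacing $U$ with $H(a)\cap U$ for suitable $a$ (identified with $U$ as a germ at $\infty$), so no additional idea is needed; the explicit uniqueness argument you give in part (1) is correct and fills a gap the paper leaves implicit.
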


\begin{proof}
	(1) By Lemma \ref{leading_monomial_lemma}, there are a nonzero $a \in \RR$ and an $E \in \As{\RR}{M_0}^{\text{ps}}$ such that $\lm(E)$ is small and $T_0(g) = a \lm(g) (1-E)$.  By Lemma \ref{strong_asymptotic_ring_lemma}, the germ $$\epsilon := \frac{g - a\lm(g)}{a\lm(g)}$$ belongs to $\A_0$, has strong asymptotic expansion domain $U$ and satisfies $T_0(\epsilon) = E$.
	
	(2) Let $\gg$ be the complex analytic continuation of $g$ on $U$ and $\pp$ be the holomorphic function defined in a neighbourhood of 0 by $P$.  By Condition $(\ast_{g,1})$ of Definition \ref{asymptotic_def} and after replacing $U$ with $H(a) \cap U$ for some sufficiently large $a>0$, the function $\pp \circ \gg$ is a bounded, complex analytic continuation of $p \circ g$ on $U$.  Moreover, say $P(X) = \sum a_\nu X^\nu \in \Ps{R}{X}$; since $P(z) - \sum_{\nu=0}^n a_\nu z^\nu = o(z^n)$ at 0 in $\CC$ by absolute convergence, it follows that $$\pp \circ \gg - \sum_{\nu=0}^n a_\nu \gg^\nu \prec_U \gg^n.$$  From Lemma \ref{strong_asymptotic_ring_lemma}, it follows that $a_n g^n \in \A_0$ has strong asymptotic expansion domain $U$ and satisfies $$T_0(a_\nu g^\nu) = a_\nu (T_0g)^\nu,$$ for each $\nu$.  Since $\lm(g)$ is small and all germs in $M_0$ are pairwise comparable, and since $\lm(g^\nu) = (\lm g)^\nu$ for each $\nu$, the sequence  $$\left(\lm\left(g^\nu\right):\ \nu \in \NN\right)$$ is coinitial in $M_0$.  Part (2) now follows from Lemma \ref{inf_sum_of_strong} and Proposition \ref{gps_comp_with_convergent}.
	
	(3) Let $a$ and $\epsilon$ be for $g$ as in part (1).  By the binomial theorem, we have $\frac1{1-\epsilon} = p \circ \epsilon$, where $p$ is the germ at 0 defined by the geometric series $P = \sum\nu T^\nu$.  Part (3) now follows from part (2).
\end{proof}

It follows from Proposition \ref{construction_lemma_1}(3) that the set $\set{g \in \A_0:\ g \prec 1}$ is a maximal ideal of $\A_0$.  So we let $\K_0 = \K_{f,0}$ be the fraction field of $\A_0$; since $\Gs{\RR}{M_0}^{\text{ls}}$ is the fraction field of $\As{\RR}{M_0}^{\text{ps}}$ in $\Gs{\RR}{M_0}$, the injective $\RR$-algebra homomorphism $T_0$ extends uniquely to a field homomorphism $T_0:\K_0 \into \Gs{\RR}{M_0}^{\text{ls}}$.

\begin{cor}
	\label{construction_cor_1}
	\begin{enumerate}
		\item Let $g \in \K_0$.  Then $g$ has strong asymptotic expansion $T_0\left(g\right)$, and there exist unique $a \in \RR$ and small $\epsilon \in \A_0$ such that $g = a \lm(T_0g) (1+\epsilon)$.
		\item The triple $\left(\K_0,M_0,T_0\right)$ is a strong qaa field extending the strong qaa field $\left(\C(\RR,M_0)^{\text{conv}}, M_0, S_{\RR,M_0}\right)$.
	\end{enumerate}
\end{cor}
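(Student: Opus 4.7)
The plan for part (1) is to reduce any element of $\K_0$ to a product of an $\A_0$-unit and a single monomial of $M_0$, using the factorization supplied by Proposition \ref{construction_lemma_1}(1) together with the inversion of small $\A_0$-elements from Proposition \ref{construction_lemma_1}(2). Starting with $g = g_1/g_2 \in \K_0$ and factoring $g_2 = a_2 \lm(g_2)(1-\epsilon_2)$ with $a_2 \in \RR$ nonzero and $\epsilon_2 \in \A_0$ small, I would apply Proposition \ref{construction_lemma_1}(2) to the geometric series to get $(1-\epsilon_2)^{-1} \in \A_0$; hence $h := g_1 (1-\epsilon_2)^{-1} \in \A_0$ by Lemma \ref{algebra_cor}, and $g = h/(a_2 \lm(g_2))$. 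Lemma \ref{strong_asymptotic_ring_lemma} then yields a strong asymptotic expansion of $g$ in $\Gs{\RR}{M_0}^{\text{ls}}$ on a common standard power domain, and by the $\RR$-algebra homomorphism property of the extension of $T_0$ to $\K_0$ this expansion coincides with $T_0(g)$. Applying Proposition \ref{construction_lemma_1}(1) to $h$ to write $h = a_h \lm(h)(1-\epsilon_h)$ and using $\lm(h)/\lm(g_2) = \lm(T_0 g)$, I would regroup to obtain $g = a\,\lm(T_0 g)(1+\epsilon)$ with $a = a_h/a_2 \in \RR$ and $\epsilon = -\epsilon_h \in \A_0$ small. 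Uniqueness follows by applying $T_0$ to any candidate representation, reading off $a$ from the leading coefficient of $T_0(g)$, and invoking injectivity of $T_0$ on $\A_0$ to pin down $\epsilon$.

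For part (2), I would verify the three conditions of Definition \ref{strong_qaa_algebra} in turn. Injectivity of $T_0:\K_0 \to \Gs{\RR}{M_0}^{\text{ls}}$ is inherited from Lemma \ref{algebra_cor} by the universal property of the fraction field. Truncation closure rests on the observation that $M_0 = \langle \exp\circ(-x)\rangle^\times$ forms a single comparability class, so by Lemma \ref{pairwise_comparable_lemma} every $F \in \Gs{\RR}{M_0}^{\text{ls}}$ has $M_0$-natural support, making each truncation $F_n$ a finite $\RR$-linear combination of monomials of $M_0$. Since bounded monomials lie in $\A_0$ with $T_0(m)=m$ by Corollary \ref{closure_cor}, and large monomials lie in $\K_0$ as their inverses, such combinations belong to $T_0(\K_0)$. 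Condition (iii) of a strong qaa field is then immediate from part (1): for each $n \in M_0$, the germ $T_0^{-1}((T_0 g)_n)$ is the finite monomial sum determined by the truncation, and the required estimate is precisely condition (iii) of Definition \ref{asymptotic_def} applied to the strong asymptotic expansion of $g$.

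The extension claim is immediate from Corollary \ref{closure_cor}: every convergent $M_0$-generalized Laurent series $F = nG$, with $G$ a convergent $M_0$-generalized power series of natural support and $n \in M_0$, satisfies $\S_{\RR,M_0}(F) = n\,\S_{\RR,M_0}(G) \in \K_0$ and $T_0(\S_{\RR,M_0}(F)) = nG = F$. The main technical point I expect to require care is keeping the analytic continuations of $g_1$ and $(1-\epsilon_2)^{-1}$ on a common standard power domain when performing the fraction-field reduction; once this is in hand, the fact that $M_0$ has only one comparability class removes the usual combinatorial difficulties with generalized Laurent supports, leaving only bookkeeping.
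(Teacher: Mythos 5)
Your argument is correct and follows essentially the same route as the paper: part (1) reduces to the factorization of Proposition \ref{construction_lemma_1}(1) and the unit property \ref{construction_lemma_1}(3) (which you re-derive inline via \ref{construction_lemma_1}(2) applied to the geometric series), and part (2) uses $M_0$-naturality of supports plus Example \ref{strong_convergent_is_strong_asymptotic} / Corollary \ref{closure_cor}. The only cosmetic difference is that you fold $(1-\epsilon_2)^{-1}$ into $g_1$ and re-factor the product $h$, whereas the paper factors numerator and denominator separately and divides; both are equivalent.
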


In view of Corollary \ref{construction_cor_1}(1) we let, for $g \in \K_0$, $$\lm(g):= \lm(T_0g)$$ be the \textbf{leading monomial} of $g$.

\begin{proof}[Proof of Corollary \ref{construction_cor_1}]
	(1) Say $g = h_1/h_2$, for some $h_1,h_2 \in \A_0$ with $h_2 \ne 0$; we may also assume that $h_1 \ne 0$.  By Lemma \ref{construction_lemma_1}(1) there are $a_1, a_2 \in \RR\setminus\{0\}$ and small $\epsilon_1, \epsilon_2 \in \A_0$ such that $h_i = a_i \lm(h_i)(1-\epsilon_i)$.  In particular, $$g = \frac{a_1 \lm(h_1)}{a_2 \lm(h_2)}(1-\epsilon_1)\frac1{1-\epsilon_2}.$$  Part (1) now follows from Lemma \ref{construction_lemma_1}(3).
	
	(2) Note that the series in $T_0\left(\K_0\right)$ have $M_0$-natural support and each monomial in $M_0$ belongs to $\K_0$; it follows that $\left(\K_0,M_0,T_0\right)$ is a strong qaa field.  The rest follows from Example \ref{strong_convergent_is_strong_asymptotic}.
\end{proof}

\subsection*{Iteration}
Assume now that $i>0$ and we have constructed a qaa field $\left(\K_{i-1}, M_{i-1}, T_{i-1}\right)$ extending a qaa algebra $\left(\A_{i-1}, M_{i-1}, T_{i-1}\right)$ such that the following hold:
\begin{renumerate}
	\item $\K_{i-1}$ is the fraction field of $\A_{i-1}$ and $T_{i-1}(\K_{i-1}) \subseteq \Gs{\RR}{M_{i-1}}^{\text{s}}$;
	\item for each $g \in \K_{i-1}$, the quotient $\frac g{\lm(g)}$ belongs to $\A_{i-1}$ and has a bounded, complex analytic continuation to some standard power domain;
	\item $\left(\K_{i-1}, M_{i-1}, T_{i-1}\right)$ extends $\left(\C(\RR,M_{i-1})^{\text{conv}}, M_{i-1}, S_{\RR,M_{i-1}}\right)$.
\end{renumerate}

First, we set
\begin{equation*}
\K'_{i} = \K_{f,i}' := \K_{i-1} \circ \left( f_{k-i+1} \circ f_{k-i}^{-1} \right)
\end{equation*}
and
\begin{equation*}
M'_{i} = M_{f,i}' := M_{i-1} \circ \left( f_{k-i+1} \circ f_{k-i}^{-1} \right),
\end{equation*}
and we define $T'_{i} = T_{f,i}':\K'_{i} \into \Gs{\RR}{M'_i}$ by $$T'_{i}\left(g \circ \left( f_{k-i+1} \circ f_{k-i}^{-1} \right)\right) := \left(T_{i-1} g\right) \circ \left( f_{k-i+1} \circ f_{k-i}^{-1} \right).$$  Note that $M'_i$ is a divisible multiplicative subgroup of $M_i$.

\begin{cor}
	\label{field_closure_cor}
	$\left(\K'_{i},M_i,T'_{i}\right)$ is a strong qaa field extending the strong qaa field $\left(\C(\RR,M'_{i})^{\text{conv}}, M_{i}', S_{\RR,M_{i}'}\right)$ and such that $T'_{i}(\K'_{i}) \subseteq \Gs{\RR}{M'_{i}}^{\text{s}}$.
\end{cor}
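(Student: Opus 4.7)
The plan is to transport the strong qaa field structure from $(\K_{i-1},M_{i-1},T_{i-1})$ through the right shift $\sigma : h \mapsto h \circ r$, where $r := f_{k-i+1}\circ f_{k-i}^{-1}$. Since $r\in\I$ is definable, $\sigma$ is an $\RR$-algebra isomorphism $\K_{i-1}\to\K'_i$ that carries the generators of $M_{i-1}$ bijectively onto those of $M'_i$ and preserves comparability of monomials; it therefore extends to an $\RR$-algebra isomorphism $\Sigma:\Gs{\RR}{M_{i-1}}\to\Gs{\RR}{M'_i}$ restricting to $\Gs{\RR}{M_{i-1}}^{\text{s}}\to\Gs{\RR}{M'_i}^{\text{s}}$ (by induction on the number of comparability classes in Definition \ref{M-series_df}), and one has $T'_i=\Sigma\circ T_{i-1}\circ\sigma^{-1}$. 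The admissibility hypothesis $(\dagger)$, applied to the indices $(k-i, k-i+1)$, supplies for every standard power domain $V$ a standard power domain $U$ and a complex analytic continuation $\rr$ of $r$ on $U$ with $\rr(U)\subseteq V$ and $|\rr(z)|\to\infty$ in $U$; this is the key analytic input.

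I then verify the clauses of Definition \ref{strong_qaa_algebra} for $(\K'_i,M_i,T'_i)$. Field-ness of $\K'_i$, injectivity of $T'_i$, truncation closure of $T'_i(\K'_i)$, and the containment $T'_i(\K'_i)\subseteq\Gs{\RR}{M'_i}^{\text{s}}$ are formal consequences of the corresponding properties of $T_{i-1}$ transported by $\sigma$ and $\Sigma$ (truncation closure uses that $\Sigma$ respects the ordering on monomials). The substantive clause is (iii). Given $g\circ r\in\K'_i$ and $n'=n\circ r\in M'_i$ (for the unique $n\in M_{i-1}$), set $g_n:=T_{i-1}^{-1}((T_{i-1}g)_n)$; the inductive hypothesis provides a standard power domain $V$ and complex analytic continuations $\ff,\gg_n$ on $V$ with $\ff-\gg_n\prec_V\nn$. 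Choose $U$ and $\rr:U\to V$ by admissibility: then $\ff\circ\rr$ and $\gg_n\circ\rr$ are complex analytic continuations on $U$ of $g\circ r$ and of $g_n\circ r=(T'_i)^{-1}\bigl((T'_i(g\circ r))_{n'}\bigr)$, and since $|\rr(z)|\to\infty$ in $U$, the bound pulls back to $\ff\circ\rr-\gg_n\circ\rr\prec_U\nn\circ\rr$, which is exactly (iii) for $n'$. For arbitrary $n\in M_i\supsetneq M'_i$, Lemma \ref{strong_full_asymptotics} upgrades this to the full $M_i$-scale, using that $M_i=M_{f^{\langle i\rangle}}$ is a strong asymptotic scale by admissibility of $f^{\langle i\rangle}$.

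For the extension of $\left(\C(\RR,M'_i)^{\text{conv}},M'_i,S_{\RR,M'_i}\right)$, observe that every convergent $M'_i$-generalized Laurent series is $\Sigma(F)$ for a unique convergent $M_{i-1}$-generalized Laurent series $F$: a representation $F=nG(m_0,\ldots,m_j)$ with convergent $G$ and small $m_l\in M_{i-1}$ transports through $\sigma$ to a representation of $\Sigma(F)$ with the same $G$ and small monomials $m_l\circ r\in M'_i$. The inductive hypothesis then yields $\S_{\RR,M_{i-1}}(F)\in\K_{i-1}$ with $T_{i-1}$-image $F$, and $\sigma$ carries this datum to $\S_{\RR,M'_i}(\Sigma(F))\in\K'_i$ with $T'_i$-image $\Sigma(F)$. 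I expect the main obstacle to be clause (iii): one has to carefully pull a strong asymptotic bound on $V$ back through the composition with $\rr$, which is possible only because $(\dagger)$ locates the continuation of $r$ precisely on a standard power domain mapping into $V$; absent such a continuation the composed bound would not live on a standard power domain at all, and could not be read as a $\prec_U$ relation.
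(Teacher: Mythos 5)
Your proof is correct and follows the same route as the paper's (much terser) argument: transport the strong qaa structure through the right shift by $r = f_{k-i+1}\circ f_{k-i}^{-1}$, using admissibility to obtain an analytic continuation $\rr$ carrying a standard power domain into any prescribed one, verify clause (iii) by pulling the $\prec_V$ bound back through $\rr$, and then upgrade from $M'_i$ to $M_i$ via Lemma \ref{strong_full_asymptotics}. One small attribution to sharpen: the fact that $|\rr(z)|\to\infty$ in $U$ is not literally part of $(\dagger)$ but comes from Fact \ref{ccc}(2a), which applies since $r\in\I$ with $\eh(r)\le 0$ (the latter being what the Admissibility Theorem establishes via Fact \ref{eh_application}).
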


\begin{proof}
	Since $f$ is admissible, the germ $\left( f_{k-i+1} \circ f_{k-i}^{-1} \right)$ has a complex analytic continuation on every standard power domain that maps standard power domains into standard power domains.  Therefore the triple $\left(\K'_{i},M'_i,T'_{i}\right)$ is a strong qaa field.  Since $M'_i$ is a divisible subgroup of $M_i$, and since $\C(\RR,M'_{i})^{\text{conv}} = \C(\RR,M_{i-1})^{\text{conv}} \circ \left( f_{k-i+1} \circ f_{k-i}^{-1} \right)$ and $$S_{\RR,M_{i}'}\left(g \circ \left( f_{k-i+1} \circ f_{k-i}^{-1} \right)\right) = \left(S_{\RR,M_{i-1}} g\right) \circ \left( f_{k-i+1} \circ f_{k-i}^{-1} \right),$$ for $g \in \C(\RR,M_{i-1})^{\text{conv}}$, the corollary follows from Lemma \ref{strong_full_asymptotics}.
\end{proof}

\begin{nrmk}
	\label{rel_growth_rmk}
	Let $g \in \K'_{i}$; by condition (ii) above, we have $g \preceq \lm(g)$.  Since  $f_j \circ f_{i}^{-1}$ has comparability class strictly slower than $x = f_{i} \circ f_{i}^{-1}$, for $j > i$, it follows that $1/m \prec g \prec m$ for every large $m \in M_0$, that is, $\K'_i \subseteq \C_{M_0}$.  Also, since $\K'_i$ is a field, we get that $g \prec m$ for some small $m \in M_0$ if and only if $g = 0$.
	
	Moreover, let $U$ be a strong asyptotic expansion domain of $g$.  Then the argument of the previous paragraph, with $\prec_U$ in place of $\prec$, shows that $g \in \C_{M_0}^U$.
\end{nrmk}

By the previous remark, we let $\A_{i} = \A_{f,i}$ be the set of all $g \in \C$ that have a strong asymptotic expansion $\tau_i(g) = \tau_{f,i}(g) \in \As{\K'_i}{M_0}^{\text{ps}}$ on some standard power domain $U$ (this expansion is then unique by Remark \ref{rel_growth_rmk} and Lemma \ref{unique_as_exp}).  

Arguing as in Lemma \ref{algebra_cor}, we see that  $\A_{i}$ is an $\RR$-algebra and the map $\tau_i:\A_{i} \into \As{\K'_{i}}{M_0}^{\text{ps}}$ is an $\RR$-algebra homomorphism.  Moreover, it follows from the Uniqueness Principle that this map is injective.  
For $g \in \A_{i}$ with $\tau_i g = \sum_{m \in M_0} a_m m$, we now define $$T_{i} g = T_{f,i} g:= \sum_{m \in M_0} (T'_{i} a_m) m \in \Gs{\RR}{M_i}^{\text{s}}.$$ 
For completeness' sake, we also set $\tau_0:= T_0$.

\begin{prop}
	\label{closure_cor_2}
	\begin{enumerate}
		\item The triple $\left(\A_{i},M_i,T_{i}\right)$ is a strong qaa algebra that  extends $\left(\K'_{i},M'_i,T'_{i}\right)$ and such that $T_i(\A_i) \subseteq \Gs{\RR}{M_i}^{\text{s}}$.
		\item Let $U$ be a standard power domain and $F \in \As{\left(\K_i'\cap \C_{M_0}^U\right)}{M_0}^{\text{conv}}_U$ with $M_0$-natural support.  Then the sum $\S_{M_0}(F)$ belongs to $\A_i$ and satisfies $\tau_i(\S_{M_0}(F)) = F$.  
	\end{enumerate}
\end{prop}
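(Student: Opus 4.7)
The structural observation driving everything is that $M_i = M_0 \cdot M'_i$, where every element of $M_0 \setminus \{1\}$ has strictly faster comparability class than every element of $M'_i$. Hence each $n \in M_i$ admits a unique decomposition $n = n_0 n'_0$ with $n_0 \in M_0$ and $n'_0 \in M'_i$, and for $m \in M_0$, $m' \in M'_i$ with $m \neq n_0$, one has $mm' > n_0 n'_0$ iff $m > n_0$. Consequently, for any $F = \sum_{m \in M_0}(T'_i a_m)\, m \in T_i(\A_i)$ and $n = n_0 n'_0 \in M_i$, the $M_i$-truncation splits as
\begin{equation*}
F_n \;=\; \sum_{m \in M_0,\, m > n_0}(T'_i a_m)\, m \;+\; \bigl((T'_i a_{n_0})_{\geq n'_0}\bigr)\, n_0,
\end{equation*}
where the outer sum is finite by $M_0$-naturality of $\supp(\tau_i g)$ and the inner truncation takes place in $\Gs{\RR}{M'_i}^{\text{s}}$.

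For part (1), injectivity of $T_i$ follows from injectivity of $T'_i$ (giving $a_m = 0$ for all $m$) and injectivity of $\tau_i$ (a consequence of the Uniqueness Principle, as in Lemma \ref{algebra_cor}). For truncation closedness, the displayed decomposition together with truncation closedness of $T'_i(\K'_i)$ yields $b \in \K'_i$ with $T'_i b = (T'_i a_{n_0})_{\geq n'_0}$; setting $g_n := b n_0 + \sum_{m > n_0} a_m m$, Lemma \ref{strong_asymptotic_ring_lemma} gives $g_n \in \A_i$ with $T_i g_n = F_n$. The extension property $\K'_i \subseteq \A_i$ with $T_i\rest{\K'_i} = T'_i$ is immediate from viewing $g \in \K'_i$ as having trivial $M_0$-expansion $\tau_i g = g \cdot 1$, and the containment $T_i(\A_i) \subseteq \Gs{\Gs{\RR}{M'_i}^{\text{s}}}{M_0}^{\text{ls}} = \Gs{\RR}{M_i}^{\text{s}}$ is immediate from the definition of $T_i$.

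The heart of part (1) is the strong asymptotic relation: with $h := g - g_n$ we have $\tau_i h = (a_{n_0} - b) n_0 + \sum_{m < n_0} a_m m$, and we need $\hh \prec_U \nn$ on some standard power domain $U$. Choose $p \in M_0$ with $p < n_0$ (to be fixed below); the strong asymptotic expansion of $h \in \A_i$ at $p$ gives, on some $V$,
\begin{equation*}
\hh - (\mathbf{a}_{n_0} - \mathbf{b})\nn_0 - \sum_{p \leq m < n_0} \aa_m \mm \;\prec_V\; \pp.
\end{equation*}
We control each piece separately. Since $T'_i(a_{n_0} - b) = (T'_i a_{n_0})_{< n'_0}$ has trivial $n'_0$-truncation, the strong qaa property of $(\K'_i,M'_i,T'_i)$ yields $\mathbf{a}_{n_0} - \mathbf{b} \prec_W \nn'_0$ on some $W$, whence $(\mathbf{a}_{n_0} - \mathbf{b})\nn_0 \prec \nn$. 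For each $m$ with $p \leq m < n_0$, the inductive structure of $\K'_i$ (Remark \ref{rel_growth_rmk}, Corollary \ref{construction_cor_1}(1) and its analogue) gives $\aa_m \asymp \lm(a_m)$ on a suitable domain, and then $\lm(a_m) m / n = (\lm(a_m)/n'_0)(m/n_0) \in M_i$ is small because $m/n_0$ is a small $M_0$-monomial while $\lm(a_m)/n'_0 \in M'_i$ has strictly slower comparability class; so $\aa_m \mm \prec \nn$ on a suitable domain. Finally, $\pp \prec \nn$ by the same comparability argument. Taking $U$ to be a standard power domain contained in $V \cap W$ and in the finitely many continuation domains of the $\aa_m$, we get $\hh \prec_U \nn$.

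For part (2), write $F = \sum_m a_m m$. The hypotheses guarantee that $\S_{M_0}(F)$ is a well-defined germ in $\C$ with analytic continuation $\S^U_{M_0}(F) = \sum \aa_m \mm$ on $U$. For strong asymptotic expansion at $p \in M_0$, observe that
\begin{equation*}
\S^U_{M_0}(F) - \sum_{m \geq p} \aa_m \mm \;=\; \S^U_{M_0}(F_{< p}),
\end{equation*}
and $F_{<p} \in \Gs{\C^U_{M_0}}{M_0}^{\text{conv}}_U$. Proposition \ref{strong_convergent_lemma}(1) then gives $\S^U_{M_0}(F_{<p}) \asymp_U \aa \cdot \lm(F_{<p})$ for some nonzero $\aa \in \C^U_{M_0}$, and since $\lm(F_{<p}) < p$ in $M_0$ while $\aa$ grows slower than any large $M_0$-monomial, the product is $\prec_U \pp$. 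Hence $\S_{M_0}(F) \in \A_i$ with $\tau_i(\S_{M_0}(F)) = F$ by uniqueness (Lemma \ref{unique_as_exp}).

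The main technical obstacle is the strong asymptotic estimate in part (1) (iii): one must simultaneously combine the $M_0$-level strong asymptotic expansion of $\A_i$ with the $M'_i$-level strong qaa property of $\K'_i$, and the critical step is recognizing that the comparability gap between $M_0$ and $M'_i$ forces the auxiliary terms $\aa_m \mm$ and $\pp$ to be $\prec \nn$ on a common standard power domain.
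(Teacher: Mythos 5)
Your proof is correct and follows essentially the same route as the paper's: decompose $n = n_0 n'_0$ with $n_0 \in M_0$ and $n'_0 \in M'_i$, realize the truncation as a finite $M_0$-sum plus a $\K'_i$-truncation of the coefficient at $n_0$, and close the remainder estimate using the comparability-class gap between $M_0$ and $M'_i$, with part (2) reduced to Example \ref{A0-rmk}(1). The only minor variant is in the remainder estimate, where you leave $p$ generic and bound each intermediate term $\aa_m\mm$ separately via leading monomials, whereas the paper picks $p\in M_0$ strictly between $n_0$ and the next smaller support element (geometric mean) so that the intermediate sum vanishes; both work.
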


\begin{proof}
	(1) The map $\sigma_i:\As{\K'_{i}}{M_0}^{\text{ps}} \into \Gs{\RR}{M_i}$ defined by $$\sigma_i\left( \sum_{m \in M_0} a_m m \right) := \sum_{m \in M_0} (T'_{i} a_m) m$$ is an $\RR$-algebra homomorphism, and it is injective because $T'_{i}$ is injective.  Since $T_{i} = \sigma_i \circ \tau_i$, it follows that $T_{i}$ is an injective $\RR$-algebra homomorphism.
	Let now $g \in \A_i$ be such that $$T_i g = \sum_{m \in M_i} a_m m \quad\text{and}\quad \tau_i g = \sum_{m \in M_0} b_m m,$$ and let $n \in M_i$; we show there exists $h \in \A_i$ such that $T_i h = (T_i g)_n$.  Let $n_0 \in M_0$ and $n' \in M'_i$ be such that $n = n'n_0$ and
	\begin{equation*}
	\left(T_i g\right)_n = \sum_{m \in M_0,\ m > n_0} T_i'(b_{m}) m + \left(T'_i a_{n_0}\right)_{n'} n_0,
	\end{equation*}
	and let $U$ be a strong asymptotic expansion domain of $g$.
	Note that each $b_m m$ has a bounded complex analytic continuation on $U$, since $b_m = 0$ for all large $m \in M_0$ by assumption.  Since $$\sigma_i^{-1}\left(\sum_{m \in M_0,\ m > n_0} T_i'(b_{m}) m\right) = \sum_{m \in M_0,\ m > n_0} b_{m} m$$ has finite support in $\As{\K'_i}{M_0}$, it follows that $$g_1:= \sum_{m \in M_0,\ m > n_0} b_{m} m$$ belongs to $\A_i$ and satisfies $\tau_i g_1 = g_1$ and $T_i g_1 =  \sum_{\pi_0(m) > n_0} a_m m$, where $\pi_0:M_i \into M_0$ is the (multiplicative) linear projection on $M_0$.  On the other hand, by the inductive hypothesis, there exists $h_1 \in \K'_i$ such that $T'_i h_1 = \left(T'_i a_{n_0}\right)_{n'}$.  Hence $h_1 n_0 \in \A_i$ and, by definition of $T_i$, we obtain $T_i(h_1 n_0) = \left(T'_i a_{n_0}\right)_{n'} n_0$.  Therefore, we can take $h:= g_1+h_1 n_0$.
	
	Finally, after shrinking $U$ if necessary, we may assume that $U$ is also a strong asymptotic expansion domain of $h$; we now claim that $\gg-\hh \prec_U \nn$, which then proves the proposition.  By the inductive hypothesis, we have $\aa_{n_0} - \hh_1 \prec_U \nn'$; therefore, 
	\begin{equation} \label{asym_1}
	\aa_{n_0} \nn_0 - \hh_1 \nn_0 \prec_U \nn.
	\end{equation} 
	On the other hand, let $n_1:= \min\set{m \in M_0:\ m > n_0 \text{ and } a_m \ne 0}$.  Then, by hypothesis, we have 
	\begin{equation} \label{asym_2}
	\gg - \gg_1 - \aa_{n_0} \nn_0 \prec_U \sqrt{\nn_0\nn_1}.
	\end{equation}
	Since $\sqrt{\nn_0\nn_1} \prec_U \nn$, part (1) follows.
	
	Part (2) follows from Example \ref{A0-rmk}(1) and the definition of $\A_i$. 
\end{proof}

For  $g \in \A_i$, we set $\lm(g):= \lm\left(T_i g\right) \in M_i$ and $\lm_0(g):= \lm\left(\tau_i g\right) \in M_0$.

\begin{prop}
	\label{construction_lemma}
	Let $g \in \A_i$ be nonzero and have strong asymptotic expansion domain $U$.
	\begin{enumerate}
	\item There exist unique nonzero $h \in \K'_i$ and $\epsilon \in \A_i$ such that $\lm_0(\epsilon)$ is small and $$g = h \lm_0(g) (1 - \epsilon).$$
	\item Assume that $\lm_0(g)$ is small, let $P \in \Ps{R}{T}$ be convergent and let $p$ be the real analytic germ at 0 defined by $P$.  Then $p \circ g$ belongs to $\A_i$, has strong asymptotic expansion domain $U$ and satisfies $T_i(p \circ g) = P \circ (T_i g)$ and $\tau_i(p \circ g) = P \circ (\tau_i g)$.
	\item The germ $\frac g{\lm_0(g)}$ is a unit in $\A_i$.
	\end{enumerate}
\end{prop}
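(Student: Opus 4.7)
My plan is to imitate the proof of Proposition \ref{construction_lemma_1}, with the coefficient field $\K_i'$ playing the role of $\RR$ and the ring $\As{\K_i'}{M_0}^{\text{ps}}$ that of $\As{\RR}{M_0}^{\text{ps}}$. For (1), applying Lemma \ref{leading_monomial_lemma} to $\tau_i(g) \in \As{\K_i'}{M_0}^{\text{ps}}$ yields a nonzero $h \in \K_i'$ and $E \in \As{\K_i'}{M_0}^{\text{ps}}$ with $\lm(E)$ small such that $\tau_i(g) = h\,\lm_0(g)(1-E)$. The substantive step is realizing $E$ as $\tau_i(\epsilon)$ for some $\epsilon \in \A_i$: I would form $g_1 := h\,\lm_0(g) - g \in \A_i$, whose expansion $\tau_i(g_1) = h\,\lm_0(g)E$ has support bounded by $\lm_0(g)\lm(E) \prec \lm_0(g)$; Lemma \ref{strong_asymptotic_ring_lemma}(1) applied with $n = 1/\lm_0(g) \in M_0$ then gives $g_1/\lm_0(g) \in \A_i$ with expansion $hE$, which lies in $\As{\K_i'}{M_0}^{\text{ps}}$ precisely because $\lm(E)$ is small. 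Multiplying by $1/h \in \K_i' \subseteq \A_i$ via Lemma \ref{strong_asymptotic_ring_lemma}(3) produces $\epsilon := g_1/(h\,\lm_0(g))$ with $\tau_i(\epsilon) = E$, and uniqueness of $h$ and $\epsilon$ follows by comparing leading terms of the expansions and invoking the injectivity of $\tau_i$.

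For (2), I would shrink $U$ so that $\gg$ lies inside the disk of convergence of the holomorphic extension $\pp$ of $p$, making $\pp \circ \gg$ a bounded complex analytic continuation of $p \circ g$ on $U$ that satisfies $\pp \circ \gg - \sum_{\nu=0}^n a_\nu \gg^\nu \prec_U \gg^n$ by absolute convergence of $P$. Lemma \ref{strong_asymptotic_ring_lemma}(3) gives each $a_\nu g^\nu \in \A_i$ with $\tau_i(a_\nu g^\nu) = a_\nu \tau_i(g)^\nu$; since $M_0$ has a single comparability class and $\lm_0(g)$ is small, the leading monomials $\lm_0(g)^\nu$ form a strictly decreasing sequence coinitial in $M_0$. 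Lemma \ref{inf_sum_of_strong} together with Proposition \ref{gps_comp_with_convergent} then yield $p \circ g \in \A_i$ with $\tau_i(p \circ g) = P \circ \tau_i(g)$, and applying $\sigma_i$ coefficient-wise gives $T_i(p \circ g) = P \circ T_i(g)$.

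For (3), part (1) writes $g/\lm_0(g) = h(1-\epsilon)$ with $h$ a unit in $\K_i' \subseteq \A_i$ and $\lm_0(\epsilon)$ small. Part (2) applied to $\epsilon$ with the convergent geometric series $P(T) = \sum_\nu T^\nu$ produces $\sum_\nu \epsilon^\nu \in \A_i$ with expansion $1/(1-\tau_i(\epsilon))$; injectivity of $\tau_i$ then forces $(1-\epsilon)\sum_\nu \epsilon^\nu = 1$ as germs, so $1-\epsilon$, and hence $g/\lm_0(g)$, is a unit in $\A_i$. The main obstacle is the delicate bookkeeping in (1), where the formal identity $\epsilon = 1 - g/(h\,\lm_0(g))$ must be realized as an actual element of the power series ring $\As{\K_i'}{M_0}^{\text{ps}}$ rather than merely of the surrounding Laurent series field; this hinges on the smallness of $\lm(E)$ ensuring that the support of $\tau_i(g_1)$ sits strictly below $\lm_0(g)$, so that division by $\lm_0(g)$ keeps us inside the correct asymptotic category.
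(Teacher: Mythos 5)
Your proposal is correct and follows essentially the same path as the paper: part (1) via Lemma \ref{leading_monomial_lemma} plus Lemma \ref{strong_asymptotic_ring_lemma}, part (2) via the absolute-convergence estimate together with Lemmas \ref{strong_asymptotic_ring_lemma}, \ref{inf_sum_of_strong} and Proposition \ref{gps_comp_with_convergent} and a coefficient-wise application of $\sigma_i$, and part (3) by specializing part (2) to the geometric series. The only stylistic difference is in (3), where you route through $\tau_i$-injectivity to deduce $(1-\epsilon)(p\circ\epsilon)=1$, whereas the paper simply identifies $p\circ\epsilon$ with $1/(1-\epsilon)$ directly as germs by the geometric series identity; your extra step is harmless but not needed.
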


\begin{proof}
	(1) By Lemma \ref{leading_monomial_lemma}, there are a nonzero $h \in \K'_i$ and an $E \in \As{\K'_i}{M_0}^{\text{ps}}$ such that $\lm(E)$ is small and $\tau_i(g) = h \lm_0(g) (1-E)$.  By Lemma \ref{strong_asymptotic_ring_lemma}, the germ $$\epsilon := \frac{g - h\lm_0(g)}{h\lm_0(g)}$$ belongs to $\A_i$, has strong asymptotic expansion domain $U$ and satisfies $\tau_i(\epsilon) = E$.  
	
	(2) Let $\gg$ be the complex analytic continuation of $g$ on $U$ and $\pp$ be the holomorphic function defined in a neighbourhood of 0 by $P$.  By Condition $(\ast_{g,1})$ of Definition \ref{asymptotic_def} and after replacing $U$ with $H(a) \cap U$ for some sufficiently large $a>0$, the function $\pp \circ \gg$ is a bounded, complex analytic continuation of $p \circ g$ on $U$.  Moreover, say $P(X) = \sum a_\nu X^\nu \in \Ps{R}{X}$; since $P(z) - \sum_{\nu=0}^n a_\nu z^\nu = o(z^n)$ at 0 in $\CC$ by absolute convergence, it follows that $$\pp \circ \gg - \sum_{\nu=0}^n a_\nu \gg^\nu \prec_U \gg^n.$$  From Lemma \ref{strong_asymptotic_ring_lemma}, it follows that $a_n g^n \in \A_i$ has strong asymptotic expansion domain $U$ and satisfies $$\tau_i(a_\nu g^\nu) = a_\nu (\tau_i g)^\nu,$$ for each $\nu$.  Since $\lm_0(g)$ is small and all germs in $M_0$ are pairwise comparable, and since $\lm_0(g^\nu) = (\lm_0 g)^\nu$ for each $\nu$, the sequence  $$\left(\lm_0\left(g^\nu\right):\ \nu \in \NN\right)$$ is coinitial in $M_0$.  Part (2) now follows from Lemma \ref{inf_sum_of_strong} and Proposition \ref{gps_comp_with_convergent}, except for the statement $T_i(P \circ g) = P \circ T_i(g)$.  However, since for each bounded $m \in M_0$ there exists $N_m \in \NN$ such that  $$(P \circ \tau_i(g))_{m} = \sum_{n=0}^{N_m} a_n \left(\tau_i(g)^n\right)_{m},$$ it follows that $\sigma_i(P \circ \tau_i(g)) = P \circ \sigma_i(\tau_i(g))$.  
	
	(3) Let $h$ and $\epsilon$ be for $g$ as in part (1).  By the binomial theorem, we have $\frac1{1-\epsilon} = p \circ \epsilon$, where $p$ is the germ at 0 defined by the geometric series $P = \sum\nu T^\nu$.  Part (3) now follows from part (2).
\end{proof}

As in the construction of $\K_0$, we now let $\K_{i} = \K_{f,i}$ be the fraction field of $\A_{i}$ and extend $\tau_i$ and $T_{i}$ correspondingly.

\begin{cor}
	\label{construction_cor}
	\begin{enumerate}
		\item Let $g \in \K_i$.  Then $g$ has strong asymptotic expansion $\tau_i\left(g\right)$, and there exist unique $h \in \K'_i$ and $\epsilon \in \A_i$ such that $\lm_0(\epsilon)$ is small and $$g = h \lm_0(\tau_i g)(1+\epsilon).$$  In particular, $g \in \A_i$ if and only if $\,\lm_0(\tau_ig)$ is bounded.
		\item The triple $\left(\K_i,M_i,T_i\right)$ is a strong qaa field  satisfying $T_i(\A_i) \subseteq \Gs{\RR}{M_i}^{\text{s}}$ and extending $\left(\C(\RR,M_i)^{\text{conv}}, M_i, S_{\RR,M_i}\right)$.
	\end{enumerate}
\end{cor}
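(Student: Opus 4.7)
\medskip

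\emph{Plan.} The proof proceeds by mirroring the proof of Corollary \ref{construction_cor_1}, with Proposition \ref{construction_lemma} playing the role that Proposition \ref{construction_lemma_1} played there, and with Proposition \ref{closure_cor_2} replacing Corollary \ref{closure_cor}.

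For part (1), I would take a nonzero $g \in \K_i$, write $g = h_1/h_2$ with $h_1,h_2 \in \A_i \setminus \{0\}$, and apply Proposition \ref{construction_lemma}(1) to each $h_j$ to get nonzero $h_j' \in \K_i'$ and small $\epsilon_j \in \A_i$ with $h_j = h_j' \lm_0(h_j) (1-\epsilon_j)$.  Dividing gives
\[
g = \frac{h_1'}{h_2'} \cdot \frac{\lm_0(h_1)}{\lm_0(h_2)} \cdot \frac{1-\epsilon_1}{1-\epsilon_2}.
\]
Proposition \ref{construction_lemma}(3) shows that $1/(1-\epsilon_2) \in \A_i$ is a unit, so by Lemma \ref{strong_asymptotic_ring_lemma} the final factor has the form $1+\epsilon$ for some $\epsilon \in \A_i$ with $\lm_0(\epsilon)$ small; set $h:= h_1'/h_2' \in \K_i'$.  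Uniqueness of $h$ and $\epsilon$ follows by inspecting leading monomials in $M_0$ and using injectivity of $T_i'$ on $\K_i'$.  The ``in particular'' statement is then immediate: one direction is by definition of $\A_i$, while the converse uses that $h \in \K_i' \subseteq \A_i \cdot \A_i^{-1}$, so when $\lm_0(\tau_i g)$ is bounded the right-hand side of the representation lies in $\A_i$ by Proposition \ref{closure_cor_2}(1) and Lemma \ref{strong_asymptotic_ring_lemma}.  From the representation I also obtain that $g$ inherits a strong asymptotic expansion $\tau_i(g)$ on a suitable standard power domain, and hence $T_i(g)$ is a strong asymptotic expansion in the sense of Definition \ref{strong_qaa_algebra}.

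For part (2), the three conditions of Definition \ref{strong_qaa_algebra} must be checked.  Injectivity of $T_i:\K_i \into \Gs{\RR}{M_i}^{\text{s}}$ is automatic from injectivity of $T_i$ on $\A_i$ (Proposition \ref{closure_cor_2}(1)) combined with the fact, by Lemma \ref{M-series_lemma}, that $\Gs{\RR}{M_i}^{\text{s}}$ is a field.  The strong asymptotic expansion condition follows from part (1) together with Remark \ref{rel_growth_rmk} and the analogous condition already established on $\A_i$ in Proposition \ref{closure_cor_2}(1).  Truncation closure of $T_i(\K_i)$ is the most delicate point: for $g \in \K_i$ and $n = n' n_0 \in M_i$ with $n_0 \in M_0$ and $n' \in M_i'$, I would use the representation from part (1) to reduce to $g \in \A_i$, and then repeat verbatim the splitting argument from the proof of Proposition \ref{closure_cor_2}(1), which breaks $(T_i g)_n$ into the part with $M_0$-component strictly above $n_0$ (yielding an element of $\A_i$ with finite $\tau_i$-support) and the $n_0$-coefficient $\left(T_i' a_{n_0}\right)_{n'}$, which by the inductive truncation closure of $T_{i-1}'(\K_i')$ comes from some element of $\K_i'$.

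Finally, to obtain the extension of $\left(\C(\RR,M_i)^{\text{conv}}, M_i, S_{\RR,M_i}\right)$, any $F \in \Gs{\RR}{M_i}^{\text{conv}}$ can, after factoring out a monomial of $\langle f^{\langle i \rangle} \rangle^\times$, be rewritten as an $M_0$-generalized power series with coefficients that are themselves convergent $M_i'$-generalized Laurent series.  The inductive hypothesis places those coefficients in $\C(\RR,M_i')^{\text{conv}} \subseteq \K_i'$, and Proposition \ref{closure_cor_2}(2) then places the total sum in $\A_i \subseteq \K_i$ with the correct image under $T_i$.  The main obstacle I anticipate is the truncation-closure verification in (ii), precisely because monomials of $M_i$ factor non-uniquely across $M_0$ and $M_i'$; however, this is exactly the manipulation already carried out in the $\A_i$-case, and the representation from part (1) lets it be transported to $\K_i$.
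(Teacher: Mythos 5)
Your proposal is essentially correct and follows the same route as the paper's own proof: Proposition \ref{construction_lemma} to obtain the multiplicative decomposition, Lemma \ref{strong_asymptotic_ring_lemma} for the algebra/domain structure, and a reduction of the $\K_i$ claims to the $\A_i$ claims in Proposition \ref{closure_cor_2}. Two small inefficiencies worth noting: in part (1), you apply Proposition \ref{construction_lemma}(1) to both numerator and denominator, whereas it suffices (and is slightly cleaner) to normalize only the denominator via Proposition \ref{construction_lemma}(3) and then apply Proposition \ref{construction_lemma}(1) once to the resulting element of $\A_i$; and in part (2), once you have the reduction $g = \lm_0(g)\,\tilde h$ with $\tilde h \in \A_i$ (which does follow from part (1) since $\K_i' \subseteq \A_i$ and $1+\epsilon \in \A_i$), truncation closure of $T_i(\K_i)$ is immediate from that of $T_i(\A_i)$ and the identity $(mF)_n = m\,F_{n/m}$ for a monomial $m$ — there is no need to re-run the splitting argument of Proposition \ref{closure_cor_2}(1). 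Finally, for the extension of $\left(\C(\RR,M_i)^{\text{conv}}, M_i, S_{\RR,M_i}\right)$, you should explicitly verify that the coefficients $\S_{M_i'}(F_m)$ land in $\C_{M_0}^U$ (not merely in $\K_i'$), since that is the hypothesis of Proposition \ref{closure_cor_2}(2); this follows from uniform convergence on standard power domains together with Remark \ref{rel_growth_rmk}, and the paper makes this step explicit.
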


In view of Corollary \ref{construction_cor}(1) we set, for $g \in \K_i$, $$\lm_0(g):= \lm_0(\tau_i g).$$

\begin{proof}[Proof of Corollary \ref{construction_cor}]
	(1) Say $g = h'/h$, for some $h',h \in \A_i$ with $h \ne 0$.  By Proposition \ref{construction_lemma}(3), the quotient $h/\lm_0(h)$ is a unit in $\A_i$, so part (1) follows from Lemma \ref{strong_asymptotic_ring_lemma}.
	
	(2) The map $T_i$ is injective, because the restriction of $T_i$ to $\A_i$ is.  Also, by part (1), each $g \in \K_i$ is of the form $g = \lm_0(g) h$ with $h \in \A_i$.  Since $\left(\A_i,M_i,T_i\right)$ is a strong qaa algebra, it follows that $\left(\K_i,M_i,T_i\right)$ is a strong qaa field.  Moreover, let $F \in \As{\RR}{M_i}^{\text{conv}}$; considering $F$ as a series in $\As{\Gs{\RR}{M_i'}}{M_0}$, we write $F = \sum_{m \in M_0} F_m m$.  By Corollary \ref{field_closure_cor}, each $\S_{M_i'}(F_m)$ belongs to $\K_i'$ and satisfies $T_i'(\S_{M_i'}(F_m)) = F_m$.  Since $\S_{M_i'}(F_m)$ converges uniformly on every standard power domain $U$, it also belongs to $\C_{M_i'}^U$ for every $U$.  Hence $$\S_{M_i}(F) = \S_{M_0}\left(\sum_{m \in M_0} \left(\S_{M_i'}(F_m) m\right)\right)$$ belongs to $\K_i$, by Proposition \ref{closure_cor_2}(2), and satisfies $$\tau_i\left(\S_{M_i}(F)\right) = \sum_{m \in M_0} \S_{M_i'}(F_m) m;$$ in particular, we have $T_i\left(\S_{M_i}(F)\right) = F$, as required.
\end{proof}

\subsection*{The final step}
Finally, having constructed the qaa field $\left(\K_k,M_k,T_k\right)$, we set $$\K_f := \K_k \circ f_0$$ and, for $g \in \K_f$, $$T_f(g):= T_k\left(g \circ f_0^{-1}\right) \circ f_0.$$  Arguing as in the proof of Corollary \ref{field_closure_cor}, we obtain:

\begin{cor}
	\label{final_construction_cor}
	$\left(\K_f,M_f,T_f\right)$ is a strong qaa field satisfying $T_f(\K_f) \subseteq \Gs{\RR}{M_f}^{\text{s}}$ and extending $\left(\C(\RR,M_f)^{\text{conv}}, M_f, S_{\RR,M_f}\right)$. \qed
\end{cor}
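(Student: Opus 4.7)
The plan is to transfer the strong qaa field structure from $\left(\K_k, M_k, T_k\right)$ to $\left(\K_f, M_f, T_f\right)$ along the $\RR$-algebra bijection $\Phi: \C \to \C$ given by right-composition with $f_0$ (whose inverse is right-composition with $f_0^{-1}$). First I would check that $\Phi$ restricts to an $\RR$-algebra isomorphism $\K_k \to \K_f$ (essentially by the definition $\K_f = \K_k \circ f_0$), and simultaneously to a multiplicative $\RR$-vector space isomorphism $M_k \to M_f$, since $e^{-x} \circ f_0 = e^{-f_0}$ and $e^{-f_j \circ f_0^{-1}} \circ f_0 = e^{-f_j}$ generate $M_f$. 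Under the induced isomorphism $\Gs{\RR}{M_k}^{\text{s}} \to \Gs{\RR}{M_f}^{\text{s}}$, the definition $T_f(g) := T_k\left(g \circ f_0^{-1}\right) \circ f_0$ makes $T_f$ correspond to $T_k$; hence $T_f$ is an injective $\RR$-algebra homomorphism with image in $\Gs{\RR}{M_f}^{\text{s}}$, and $T_f(\K_f)$ is truncation closed because $T_k(\K_k)$ is and truncation commutes with $\Phi$.

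For the asymptotic condition in Definition \ref{strong_qaa_algebra}(iii), let $g = h \circ f_0 \in \K_f$ and $n = n' \circ f_0 \in M_f$ with $h \in \K_k$ and $n' \in M_k$. By Corollary \ref{construction_cor}(2), there is a standard power domain $V$ on which $h$ and $T_k^{-1}((T_k h)_{n'})$ admit complex analytic continuations $\hh$ and $\hh_{n'}$ with $\hh - \hh_{n'} \prec_V \nn'$. Since $f_0 \in \I \subseteq \H$, Fact \ref{ccc}(1) provides a complex analytic continuation of $f_0$ on some right half-plane; pulling back the relation on $V$ via this continuation---analogously to how the role of $f_{k-i+1} \circ f_{k-i}^{-1}$ was used in Corollary \ref{field_closure_cor}---transfers the required asymptotic relation to $g$ and $T_f^{-1}((T_f g)_n) = T_k^{-1}((T_k h)_{n'}) \circ f_0$ on an appropriate standard power domain $U$. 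The real-variable dominance $\prec$ is preserved under right-composition with an element of $\I$, which ensures the weaker (plain) qaa axiom even when the analytic continuation of $f_0$ fails to map standard power domains into standard power domains.

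Finally, to confirm the extension of $\left(\C(\RR,M_f)^{\text{conv}}, M_f, S_{\RR,M_f}\right)$, I would observe that $\C(\RR,M_f)^{\text{conv}} = \C(\RR,M_k)^{\text{conv}} \circ f_0$, because convergence of an $M_f$-generalized Laurent series $n \cdot G(e^{-f_0}, \dots, e^{-f_k})$ is equivalent to convergence of the corresponding $M_k$-series $(n \circ f_0^{-1}) \cdot G(e^{-x}, e^{-f_1 \circ f_0^{-1}}, \dots, e^{-f_k \circ f_0^{-1}})$, and $S_{\RR,M_f} = \Phi \circ S_{\RR,M_k} \circ \Phi^{-1}$ on this image. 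Combined with the inductive hypothesis that $(\K_k, M_k, T_k)$ extends $\left(\C(\RR,M_k)^{\text{conv}}, M_k, S_{\RR,M_k}\right)$ (Corollary \ref{construction_cor}(2)), this gives the desired extension.

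The principal obstacle is the verification of condition (iii) on a genuine standard power domain: whereas in Corollary \ref{field_closure_cor} admissibility supplied an analytic continuation of $f_{k-i+1} \circ f_{k-i}^{-1}$ mapping standard power domains into standard power domains, here $f_0$ is a general (simple, infinitely increasing) germ whose continuation only lands in some right half-plane. One must therefore check that Fact \ref{ccc} yields enough control on the continuation of $f_0$ to recover the strong asymptotic estimates on \emph{some} standard power domain $U$; this is precisely the step where the flexibility offered by the Uniqueness Principle and the choice of $U$ in the definition of strong qaa algebra must be exploited carefully.
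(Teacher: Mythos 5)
Your setup via the $\RR$-algebra isomorphism $\Phi \colon h \mapsto h \circ f_0$ correctly carries injectivity and truncation-closedness of $T_k$ over to $T_f$, gives $T_f(\K_f) \subseteq \Gs{\RR}{M_f}^{\text{s}}$, and the identification $\C(\RR,M_f)^{\text{conv}} = \C(\RR,M_k)^{\text{conv}} \circ f_0$ with $S_{\RR,M_f} = \Phi \circ S_{\RR,M_k} \circ \Phi^{-1}$ is also sound. This gives the plain qaa axioms, where your closing remark (that $\prec$ is preserved under right-composition with a germ of $\I$) is exactly the right justification.

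However, the step where you invoke Fact~\ref{ccc}(1) to get a complex analytic continuation of $f_0$ on a right half-plane is a genuine gap: that fact requires $\eh(f_0)\le 0$, whereas an admissible $f_0$ (simple, infinitely increasing) typically has $\eh(f_0)=\level(f_0)>0$ (for instance $f_0=e^x$ with $\eh(f_0)=1$, or $f_0=x^2$ with $\eh(f_0)=0$ but $f_0\succ x$, so Corollary~\ref{admissible_prop} fails). In these cases the analytic continuation of $f_0$ does not map any standard power domain into a right half-plane, so you cannot ``pull back'' the relation $\hh - \hh_{n'} \prec_V \nn'$ to a standard power domain $U$ at all; you identify this obstacle but do not overcome it, and in fact it cannot be overcome. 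This is not an artifact of your argument: the paper's outline (Section~\ref{outline_section}) states explicitly that ``for this last step, we do not need any analytic continuation assumptions and, consequently, we do not expect analytic continuation of the germs in $\K_f$ on standard power domains.'' Consequently the word ``strong'' in the corollary should be read as asserting only the (plain) qaa field property together with $T_f(\K_f)\subseteq\Gs{\RR}{M_f}^{\text{s}}$ and the extension; the strong-domain condition~(iii) of Definition~\ref{strong_qaa_algebra} is not claimed (and is false for general admissible $f_0$, e.g.\ $f_0 = e^x$). Your proof would be correct if you drop the attempt to verify the strong condition on a standard power domain and instead prove the plain qaa axiom~(iii) directly by right-composing the $\prec$-relation from Corollary~\ref{construction_cor}(2) with $f_0\in\I$, which is precisely the argument the paper intends by ``arguing as in Corollary~\ref{field_closure_cor}'' minus the analytic-continuation part of that proof.
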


\begin{nrmk}
\label{sub_tuple_rmk}
	Let $f' = (f'_0, \dots, f'_{k-1}) := (f_1, \dots, f_k)$.  Then $$(f')^{\langle k-1 \rangle} = f^{\langle k-1 \rangle};$$ since $\K_{f^{\langle k \rangle}}' = \K_{f^{\langle k-1 \rangle}} \circ \left(f_1 \circ f_0^{-1}\right) \subseteq \K_{f^{\langle k \rangle}}$ by definition, it follows that $$\K_{f'} = \K_{f^{\langle k-1 \rangle}} \circ f_1 \subseteq \K_{f^{\langle k \rangle}} \circ f_0 = \K_f.$$
\end{nrmk}

It remains to show that the previous remark holds with arbitrary subtuples of $f$ in place of $f'$.  Let $l \le k$ and $\iota:\{0, \dots, l\} \into \{0, \dots, k\}$ be strictly increasing, and set $f_\iota:= (f_{\iota(0)}, \dots, f_{\iota(l)})$.  Then $f_\iota$ is admissible and we have:

\begin{prop}
\label{sub_tuple_prop}
	$(\K_f,M_f,T_f)$ extends $(\K_{f_\iota}, M_{f_\iota}, T_{f_\iota})$.
\end{prop}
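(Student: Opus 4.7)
The plan is to prove the proposition in two stages that mirror the two-level structure of the construction of $\K_f$.

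The first stage is an auxiliary lemma: for each $0 \le j \le k$, the field $\K_{f^{\langle k-j \rangle}} \circ f_j$ is contained in $\K_f$, and $T_f$ restricted to it equals $h \circ f_j \mapsto T_{f^{\langle k-j\rangle}}(h) \circ f_j$. I plan to prove this by induction on $j$. For $j = 0$ the claim is simply $\K_f = \K_{f^{\langle k \rangle}} \circ f_0$. For the inductive step, applying the defining inclusion $\K_{f^{\langle i \rangle}}' = \K_{f^{\langle i-1\rangle}} \circ (f_{k-i+1} \circ f_{k-i}^{-1}) \subseteq \K_{f^{\langle i \rangle}}$ with $i = k-j+1$ and then right-composing with the inverse shift $f_{j-1} \circ f_j^{-1}$ gives $\K_{f^{\langle k-j \rangle}} \subseteq \K_{f^{\langle k-j+1 \rangle}} \circ (f_{j-1} \circ f_j^{-1})$; composing this on the right with $f_j$ and invoking the inductive hypothesis yields $\K_{f^{\langle k-j \rangle}} \circ f_j \subseteq \K_{f^{\langle k-j+1 \rangle}} \circ f_{j-1} \subseteq \K_f$. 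The $T$-compatibility follows from the defining right-composition rule for the $T'_i$.

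The second stage, applied with $j = \iota(0)$ and using $\K_{f_\iota} = \K_{f_\iota^{\langle l \rangle}} \circ f_{\iota(0)}$, reduces the proposition to the embedding $\K_{f_\iota^{\langle l \rangle}} \subseteq \K_{f^{\langle k-\iota(0)\rangle}}$. I will prove, by induction on $i \in \{0,1,\dots,l\}$, the stronger statement that $\K_{f_\iota^{\langle i \rangle}} \subseteq \K_{f^{\langle k - \iota(l-i)\rangle}}$ with compatible $T$. For the base case $i = 0$, since $f_\iota^{\langle 0 \rangle} = (x) = f^{\langle 0 \rangle}$, the field $\K_0$ embeds into every $\K_{f^{\langle s \rangle}}$ because any $\sigma \in \K_0$ has its strong asymptotic expansion $T_0\sigma \in \As{\RR}{M_0}^{\text{ps}}$, which also lies in $\As{\K_{f^{\langle s \rangle}}'}{M_0}^{\text{ps}}$ via the trivial inclusion $\RR \subseteq \K_{f^{\langle s \rangle}}'$. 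For the inductive step, the step-$(i+1)$ right-shift $f_{\iota(l-i)} \circ f_{\iota(l-i-1)}^{-1}$ of the $f_\iota$-construction factors, via the telescoping identity
\[
(f_a \circ f_{a-1}^{-1}) \circ (f_{a-1} \circ f_{a-2}^{-1}) \circ \cdots \circ (f_{b+1} \circ f_b^{-1}) = f_a \circ f_b^{-1},
\]
as the ordered composition of the right-shifts used at steps $k-\iota(l-i)+1, \dots, k-\iota(l-i-1)$ of the $f$-construction; iterating $\K_{f^{\langle s \rangle}}' \subseteq \K_{f^{\langle s \rangle}}$ along these intermediate steps then gives $\K_{f^{\langle k-\iota(l-i)\rangle}} \circ (f_{\iota(l-i)} \circ f_{\iota(l-i-1)}^{-1}) \subseteq \K_{f^{\langle k-\iota(l-i-1)\rangle}}'$. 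Combined with the inductive hypothesis this yields $\K_{f_\iota^{\langle i+1 \rangle}}' \subseteq \K_{f^{\langle k-\iota(l-i-1)\rangle}}'$, and passing to fraction fields produces $\K_{f_\iota^{\langle i+1 \rangle}} \subseteq \K_{f^{\langle k-\iota(l-i-1)\rangle}}$.

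Compatibility of the $T$-maps at each step is automatic, because every $T'$ in the construction is defined by right-composition and the telescoping identity above translates directly into the corresponding identity for the $T$-maps; the inclusion $M_{f_\iota} \subseteq M_f$ is immediate from $\{\iota(0), \dots, \iota(l)\} \subseteq \{0, \dots, k\}$. I expect the main obstacle to be the bookkeeping: keeping straight the index correspondence $i \leftrightarrow k-\iota(l-i)$, the repeated passage from the algebras $\A_{f,i}$ to their fraction fields $\K_{f,i}$, and verifying at each stage that admissibility of $f_\iota$, needed for the construction of $\K_{f_\iota}$ to go through, is inherited from admissibility of $f$, which holds because $(\dagger)$ is a pairwise condition on the $f_i$ and $M_{f_\iota \circ (f_\iota)_0^{-1}}$ is a multiplicative subspace of $M_{f \circ f_0^{-1}}$ admitting a basis of pairwise incomparable small germs.
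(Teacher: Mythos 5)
Your argument is correct, and it takes a genuinely different route from the paper's. The paper proves Proposition \ref{sub_tuple_prop} by a single induction on $k$: if $\iota(0) > 0$ it passes directly to the subtuple $f' = (f_1, \dots, f_k)$ via Remark \ref{sub_tuple_rmk}, and if $\iota(0) = 0$ it invokes the inductive hypothesis for $f'_\iota \subseteq f'$, observes that this is precisely an extension statement between $\K_{f^{\langle k \rangle}}'$ and $\K_{f_\iota^{\langle l \rangle}}'$, and concludes in one step because any strong asymptotic expansion over the smaller coefficient field is one over the larger. You instead unwind the construction explicitly: a first induction (your Stage 1, which for $j=1$ is exactly Remark \ref{sub_tuple_rmk}) climbs the tower of right shifts to show $\K_{f^{\langle k-j \rangle}} \circ f_j \subseteq \K_f$, and a second induction (Stage 2) on the level of the $f_\iota$-construction tracks each $\K_{f_\iota^{\langle i \rangle}}$ into the corresponding $\K_{f^{\langle k-\iota(l-i) \rangle}}$, using the telescoping of shift maps and iterated applications of $\K'_s \subseteq \K_s$. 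Both exploit the recursive structure, but the paper's formulation is shorter and avoids the index bookkeeping you flag. Two small imprecisions in your write-up: passing from $\K'$-containment to $\K$-containment is not literally ``passing to fraction fields'' --- it first passes through the algebras $\A$ (germs with strong expansion over the respective $\K'$), though the argument is morally the same; and your stated rationale for why $f_\iota$ inherits admissibility is slightly off, since $M_{f_\iota \circ f_{\iota(0)}^{-1}}$ is not a subspace of $M_{f \circ f_0^{-1}}$ when $\iota(0) > 0$ --- rather, one should right-compose by $f_{\iota(0)} \circ f_0^{-1}$, which by $(\dagger)$ preserves strong asymptotic scales, to transfer the property.
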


\begin{proof}
	The proposition is trivial if $k=0$, so we assume $k>0$ and the proposition holds for lower values of $k$.  If $\iota(0) > 0$, then $f_\iota$ is also a subtuple of $f':= (f_1, \dots, f_k)$, so the proposition follows from the inductive hypothesis and Remark \ref{sub_tuple_rmk} in this case.  So we also assume $\iota(0) = 0$ and set $$f'_\iota:= (f_{\iota(1)}, \dots, f_{\iota(l)}),$$ a subtuple of $f'$.  By the inductive hypothesis, $\left(\K_{f'}, M_{f'}, T_{f'}\right)$ extends $\left(\K_{f_\iota'}, M_{f_\iota'}, T_{f_\iota'}\right)$, that is, $\left(\K_{f^{\langle k \rangle}}', M_{f^{\langle k \rangle}}', T_{f^{\langle k \rangle}}'\right)$ extends $\left(\K_{f_\iota^{\langle l \rangle}}', M_{f_\iota^{\langle l \rangle}}', T_{f_\iota^{\langle l \rangle}}'\right)$.
	Thus, every strong asymptotic expansion in $\Gs{\K_{f_\iota^{\langle l \rangle}}'}{M_x}$ is a strong asymptotic expansion in $\Gs{\K_{f^{\langle k \rangle}}'}{M_x}$, so that $\left(\K_{f^{\langle k \rangle}}, M_{f^{\langle k \rangle}}, T_{f^{\langle k \rangle}}\right)$ extends $\left(\K_{f_\iota^{\langle l \rangle}}, M_{f_\iota^{\langle l \rangle}}, T_{f_\iota^{\langle l \rangle}}\right)$, which proves the proposition.
\end{proof}

\section{Construction: the general case}
\label{general_constr_section}

Let $k \in \NN$ and $f = (f_0, \dots, f_k) \in \U^{k+1}$ be such that each $f_i$ is infinitely increasing and $f_0 > \cdots > f_k$; we aim to construct $(\K_f, M_f, T_f)$ in a canonical way based on the construction for the admissible case.

\begin{lemma}
\label{strong_basis_lemma}
	Let $g$ be an admissible subtuple of $f$.  Then there is an admissible tuple $\tilde g$ of infinitely increasing germs in $\U$ such that $g \subseteq \tilde g$ and $M_{\tilde g} = M_f$.
\end{lemma}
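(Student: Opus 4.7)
My plan is to construct $\tilde g$ by choosing, for each leading monomial occurring in the additive $\RR$-vector space $V := \langle f \rangle$, one representative in $V$ realising that leading monomial, using the components of $g$ whenever possible. The Admissibility Corollary \ref{adm_princ_mon} will then deliver the admissibility of $\tilde g$.

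I would begin with two preliminaries. Since $\U$ is an additive $\RR$-vector space containing the $f_i$, we have $V \subseteq \U$; in particular every nonzero element of $V$ is purely infinite and nonzero, hence either infinitely increasing or infinitely decreasing. The admissibility of $g$ forces $e^{-x} \circ (g \circ g_0^{-1})$ to be an $\RR$-basis of $M_{g \circ g_0^{-1}}$, so $g_0, \dots, g_l$ are additively $\RR$-linearly independent and lie in pairwise distinct archimedean classes; equivalently, the leading monomials $\lm(g_0), \dots, \lm(g_l) \in \la$ are pairwise distinct, since for any nonzero $v_1, v_2 \in \H$ the equivalence $[v_1] = [v_2] \iff \lm(v_1) = \lm(v_2)$ holds.

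The key ingredient is the identity $|L(V)| = \dim V$, where $L(V) := \set{\lm(v) : v \in V \setminus \{0\}}$. The inequality $|L(V)| \le \dim V$ follows because any finite family of nonzero elements of $V$ with pairwise distinct leading monomials is $\RR$-linearly independent (inspect the maximal leading monomial in a hypothetical nontrivial relation). For the reverse, filtering $V$ by the subspaces $V_{\le \mu} := \set{v \in V : v = 0 \text{ or } \lm(v) \le \mu}$ and $V_{<\mu}$ yields quotients $V_{\le \mu}/V_{<\mu}$ of dimension at most one, whence $\dim V \le |L(V)|$. Writing $L(V) = \{\mu_1 > \cdots > \mu_m\}$ with $m := \dim V$, I pick a representative $v_i \in V$ with $\lm(v_i) = \mu_i$ for each $i$, selecting $v_i := g_j$ whenever $\mu_i = \lm(g_j)$. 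This yields an $\RR$-basis $(v_1, \dots, v_m)$ of $V$ containing the components of $g$ and having pairwise distinct leading monomials.

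To finish, each new $v_i$ lies in $V \setminus \{0\} \subseteq \U \setminus \{0\}$, so it is either infinitely increasing or decreasing; replacing new $v_i$'s (never the $g_j$'s) by their negatives where needed, I obtain a tuple $\tilde g$ of infinitely increasing germs in $\U$ with pairwise distinct leading monomials, hence pairwise distinct archimedean classes. Corollary \ref{adm_princ_mon} then asserts that $\tilde g$ is admissible, and because $\tilde g$ is an $\RR$-basis of the additive space $V = \langle f \rangle$, we have $M_{\tilde g} = \exp \circ (-V) = M_f$. The one nontrivial ingredient is the leading-monomial / archimedean-class correspondence and the equality $|L(V)| = \dim V$ derived from it; both are standard Hardy-field facts I would cite from \cite{Kaiser:2017aa} rather than reprove.
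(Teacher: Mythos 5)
Your proof is correct and reaches the same reduction to Corollary~\ref{adm_princ_mon}, but by a genuinely different route. The paper's proof is an induction on $k$: it fixes the top entry $\tilde g_0$ (taking $g_0$ if $g_0 \asymp f_0$, and $f_0$ otherwise), subtracts the appropriate real multiple of $\tilde g_0$ from each $f_i$ sharing its archimedean class --- staying inside $\U$ because $\U$ is an additive $\RR$-subspace --- and recurses on the shortened tuple $f' = (f_1,\dots,f_k)$. Your argument replaces this hands-on elimination with a global dimension count: you establish $|L(V)| = \dim V$ for the finite-dimensional subspace $V = \langle f \rangle$ (distinct leading monomials give linear independence for one inequality, the leading-monomial filtration $V_{<\mu} \subseteq V_{\le \mu}$ the other) and then choose one representative per leading monomial in $L(V)$, defaulting to the entries of $g$ where available. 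The two proofs are essentially the same Gaussian elimination presented in two guises; yours packages it into a clean identity that makes the ``extend $g$ to a basis with distinct archimedean classes'' step transparent, at the cost of invoking the filtration lemma (whose finiteness hinges on the easy inequality $|L(V)| \le \dim V$, so the circle does close). Two small points worth making explicit in a final write-up: the resulting tuple $\tilde g$ should be listed in decreasing order, which is automatic since $v \asymp \lm(v)$ and $L(V) \subseteq \la$ is totally ordered; and the entries of $g$ have pairwise distinct leading monomials because admissibility of $g$ forces $e^{-x}\circ(g\circ g_0^{-1})$ to consist of pairwise \emph{incomparable} small germs, and $\circ\, g_0^{-1}$ preserves archimedean classes --- you gesture at this correctly, and it is the right justification.
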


\begin{proof}
	By the Admissibility Corollary, it suffices to find a basis $\tilde g = (\tilde g_0, \dots, \tilde g_k)$ of the additive $\RR$-vector space $\langle f \rangle \subseteq \U$ such that $\tilde g \supseteq g$ and the archimedean classes of the $\tilde g_i$ are pairwise distinct.  We do this by induction on $k$; if $k=0$, there is nothing to do, so we assume $k>0$ and the claim holds for lower values of $k$.
	
	If $f_i \asymp f_0$, then $\lm(f_i) = \lm(f_0)$, because each archimedean class of $\H$ is represented by exactly one principal monomial in $\la$.  We now set $$\tilde g_0:= \begin{cases}   f_0 &\text{if } g_0 \not\asymp f_0,\\ g_0 &\text{if } g_0 \asymp f_0.\end{cases}$$  Either way, we have $\tilde g_0 \in \U$ and, for each $i$ such that $f_i \asymp \tilde g_0$, there exists a nonzero $c_i \in \RR$ such that $f_i':= f_i - c_i\tilde g_0 \prec \tilde g_0$.  Note that $f_i' \in \U$ as well; thus, after replacing each $f_i \asymp \tilde g_0$ if necessary by a corresponding $f_i' \in \U$, we may assume that $ 0 < f_i \prec \tilde g_0$ for $i>0$.  Note that this process, in particular, does not change any of the $g_j$.  Now set $f':= (f_1, \dots, f_k)$; by the inductive hypothesis, there is a basis $\tilde g' = (\tilde g_1, \dots, \tilde g_k)$ of $\langle f' \rangle$ such that $\tilde g' \supseteq g\setminus\{\tilde g_0\}$ and the archimedean classes of the $\tilde g_i$ are pairwise distinct.  Now take $\tilde g:= (\tilde g_0, \tilde g_1, \dots, \tilde g_k)$.
\end{proof}

The lemma suggests that we set $(\K_f,M_f,T_f):= (\K_{\tilde g}, M_{\tilde g}, T_{\tilde g})$, for some admissible tuple $\tilde g \in \U^{k+1}$ such that $M_{\tilde g} = M_f$.  Before we can do so, we need to verify that this definition does not depend on the particular $\tilde g$ chosen.  So we let $g^1 = (g^1_0, \dots, g^1_k) \in \U^{k+1}$ and $g^2 = (g^2_0, \dots, g^2_{k'}) \in \U^{k'+1}$ be two admissible tuples satisfying $M_{g^1} = M_{g^2} = M_f$.  Since both $g^1$ and $g^2$ are bases of $\langle f \rangle$, we must have $k' = k$.

\begin{lemma}
\label{arch_classes_lemma}
	We have $g^1_i \asymp g^2_i$ for each $i$.
\end{lemma}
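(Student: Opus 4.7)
My plan is to show that the set of archimedean classes appearing among the nonzero elements of $\langle f\rangle$ has exactly $k+1$ elements, and that both bases $g^1$ and $g^2$ enumerate them in the same decreasing order.

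First, since $h\mapsto\exp\circ(-h)$ is an $\RR$-linear isomorphism from $(\H,+)$ onto $(\H^{>0},\cdot)$, the hypothesis $M_{g^1}=M_{g^2}=M_f$ forces $\langle g^1\rangle=\langle g^2\rangle=\langle f\rangle$ as additive $\RR$-subspaces of $\H$. Admissibility of $g^1$ gives that the $g^1_i$ have pairwise distinct archimedean classes---the pairwise incomparability of the small germs $e^{-x}\circ(g^1\circ (g^1_0)^{-1})$ required in Definition \ref{admissible_def} translates, after composing on the right with $g^1_0$, into distinct $\asymp$-classes for the $g^1_i$ themselves, using that $\asymp$ is preserved under right-composition with a germ in $\I$. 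This distinctness makes $(g^1_0,\dots,g^1_k)$ $\RR$-linearly independent, since in any nontrivial relation $\sum c_i g^1_i=0$ the summand of largest archimedean class could not be cancelled; so $g^1$ is a basis, and likewise so is $g^2$. In particular $\dim\langle f\rangle=k+1=k'+1$. Under the convention $g^1_0>g^1_1>\cdots>g^1_k$ (and similarly for $g^2$) inherited from the setup of Section \ref{general_constr_section}, distinct archimedean classes upgrade to a strict chain $[g^1_0]>[g^1_1]>\cdots>[g^1_k]$, because positive infinitely increasing germs with distinct archimedean classes are ordered by size exactly as by dominance.

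Next, for any nonzero $h=\sum c_i g^1_i\in\langle f\rangle$, letting $j=\min\{i:c_i\neq 0\}$, the summand $c_jg^1_j$ strictly dominates every $c_ig^1_i$ with $i>j$ by the chain just established, so $h\asymp g^1_j$ and $[h]=[g^1_j]$. Consequently the set of archimedean classes of nonzero elements of $\langle f\rangle$ equals $\{[g^1_0],\dots,[g^1_k]\}$, a $(k+1)$-element subset of the totally ordered collection of archimedean classes of $\H$; the same argument applied to $g^2$ shows it also equals $\{[g^2_0],\dots,[g^2_k]\}$. Since both sequences $([g^1_i])_i$ and $([g^2_i])_i$ enumerate this common $(k+1)$-element set in strictly decreasing order, they agree entrywise, giving $g^1_i\asymp g^2_i$ for every $i$.

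The only step that I would verify most carefully, and which I view as the main (minor) obstacle, is the translation from the admissibility hypothesis---phrased in terms of pairwise incomparability of the shifted small monomials---to the statement that the $g^j_i$ themselves have pairwise distinct $\asymp$-classes; once this is in hand, the remainder of the argument is linear algebra over the totally ordered set of archimedean classes.
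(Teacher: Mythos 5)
Your argument is correct and is essentially the same as the paper's: both proofs identify the archimedean class of any nonzero element of $\langle f\rangle$ with that of the dominant term in its expansion in the decreasing basis, then conclude that both $g^1$ and $g^2$ enumerate the same $(k+1)$-element set of archimedean classes in strictly decreasing order. The paper packages the final step as an induction on $i$, while you phrase it as matching two decreasing enumerations of a common finite set, but the content is identical.
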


\begin{proof}
	Let $i \in \{0, \dots, k\}$, and let $a_i \in \RR$ be such that $g^2_i = a_{i,0} g^1_0 + \cdots + a_{i,k} g^1_k$.  Since the archimedean classes of the $g^1_i$ are pairwise distinct and $g^1_0 > \cdots > g^1_k$, we have $g^2_i \asymp g^1_{j_i}$, where $j_i := \min\{j: a_{i,j} \ne 0\}$.  We now prove the claim by induction on $i$: since $g^2_0$ represents the maximal archimedean class represented by $M_f$, it follows that $g^2_0 \asymp g^1_0$.
	
	Assume now $i > 0$ and $g^2_j \asymp g^1_j$ for $j=0, \dots, i-1$.  Then $g^2_j \prec g^1_{i-1}$ for $j=i, \dots, k$.  Therefore we have, in particular, that $j_i \ge i$, and since $g^2_i$ represents the $(i+1)$st largest archimedean class represented by $M_f$, it follows that $j_i = i$, that is, $g^2_i \asymp g^1_i$, as claimed.
\end{proof}

For $i=0, \dots, k$, consider the subspace $$H_i:= \set{m \in \langle f \rangle:\ -g^1_{k-i} \preceq m \preceq g^1_{k-i}};$$  Lemma \ref{arch_classes_lemma} implies that $H_i$ only depends on $\langle f \rangle$, not on the particular admissible tuple $g^1$.  Moreover, since $\langle g^1_0, \dots, g^1_{k-i-1} \rangle \cap H_i = \emptyset$ by definition of $H_i$ and $g^1_{k-i}, \dots, g^1_k \in H_i$, we have  
\begin{equation}
	\langle f \rangle = \langle g^1_0, \dots, g^1_{k-i-1} \rangle \oplus H_i,
\end{equation} 
that is, 
\begin{equation}
	H_i = \langle g^1_{k-i}, \dots, g^1_k \rangle.
\end{equation}  
The same argument with $g^2$ in place of $g^1$ shows that $H_i = \langle g^2_{k-i}, \dots, g^2_k \rangle$ as well; so we obtain the following:

\begin{lemma}
\label{sub_arch_classes_lemma}
	We have $\langle g^2_{k-i}, \dots, g^2_k \rangle = \langle g^1_{k-i}, \dots, g^1_k \rangle$ for each $i$. \qed
\end{lemma}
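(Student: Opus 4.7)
The plan is to prove the equality by showing that both sides coincide with the auxiliary subspace $H_i = \{m \in \langle f \rangle : -g^1_{k-i} \preceq m \preceq g^1_{k-i}\}$ introduced in the paragraph just above the lemma. The equality $\langle g^1_{k-i}, \dots, g^1_k \rangle = H_i$ is already established there, so the substantive task is to rerun the same structural argument with $g^2$ in place of $g^1$.

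The first step will be to observe, via Lemma \ref{arch_classes_lemma}, that $g^1_{k-i} \asymp g^2_{k-i}$, so $H_i$ may equally well be described as $\{m \in \langle f \rangle : -g^2_{k-i} \preceq m \preceq g^2_{k-i}\}$. In other words, the space $H_i$ depends only on the common archimedean class of $g^1_{k-i}$ and $g^2_{k-i}$, not on the particular choice of admissible basis. This invariance is the conceptual heart of the argument and the only point at which Lemma \ref{arch_classes_lemma} is invoked.

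Next I would verify that the $g^2$-version of the argument yields $H_i = \langle g^2_{k-i}, \dots, g^2_k \rangle$. Admissibility of $g^2$ together with Lemma \ref{arch_classes_lemma} gives $g^2_0 \succ \cdots \succ g^2_k$ in pairwise distinct archimedean classes; in particular $g^2_j \preceq g^2_{k-i}$ for all $j \ge k-i$, whence $\langle g^2_{k-i}, \dots, g^2_k \rangle \subseteq H_i$. For the reverse inclusion, the basis property of $g^2$ furnishes the direct sum decomposition $\langle f \rangle = \langle g^2_0, \dots, g^2_{k-i-1} \rangle \oplus \langle g^2_{k-i}, \dots, g^2_k \rangle$; combined with the observation that every nonzero element of $\langle g^2_0, \dots, g^2_{k-i-1} \rangle$ has archimedean class strictly exceeding that of $g^2_{k-i}$ (by distinctness and the ordering), this forces every $m \in H_i$ to lie in $\langle g^2_{k-i}, \dots, g^2_k \rangle$.

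The main step is really the invariance observation in the second paragraph; no genuine obstacle is expected beyond carefully invoking Lemma \ref{arch_classes_lemma} to justify that the two descriptions of $H_i$ agree, after which the rest of the argument merely reproduces verbatim the reasoning already given for $g^1$.
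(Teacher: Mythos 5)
Your proposal is correct and follows essentially the same route as the paper: both identify each side with the auxiliary subspace $H_i$, using Lemma \ref{arch_classes_lemma} only to justify that $H_i$ is independent of which admissible basis is used to define it, and then appeal to the direct sum decomposition $\langle f\rangle = \langle g^j_0,\dots,g^j_{k-i-1}\rangle \oplus H_i$ for $j=1,2$. The paper packages this reasoning in the paragraph preceding the lemma (hence the inline $\qed$), but the content matches yours.
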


\begin{cor}
\label{iterate_classes_cor}
	We have $\langle (g^2)^{\langle i \rangle} \rangle = \langle (g^1)^{\langle i \rangle} \rangle \circ \left(g^1_{k-i} \circ (g^2_{k-i})^{-1}\right)$ for each $i$.
\end{cor}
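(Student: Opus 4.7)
The plan is to unfold both sides of the claimed equality directly using the definition of $f^{\langle i\rangle}$, and then reduce to Lemma \ref{sub_arch_classes_lemma}. First I would expand the right-hand side: by definition,
\[
(g^1)^{\langle i\rangle} = \left(x,\ g^1_{k-i+1}\circ (g^1_{k-i})^{-1},\ \dots,\ g^1_k\circ (g^1_{k-i})^{-1}\right),
\]
so right-composing each entry with $g^1_{k-i}\circ (g^2_{k-i})^{-1}$ and using associativity of composition yields the tuple $\bigl(g^1_{k-i}\circ (g^2_{k-i})^{-1},\ g^1_{k-i+1}\circ (g^2_{k-i})^{-1},\ \dots,\ g^1_k\circ (g^2_{k-i})^{-1}\bigr)$. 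Hence
\[
\langle (g^1)^{\langle i\rangle}\rangle \circ \bigl(g^1_{k-i}\circ (g^2_{k-i})^{-1}\bigr) = \langle g^1_{k-i},\dots,g^1_k\rangle \circ (g^2_{k-i})^{-1},
\]
where the latter equality uses that right-composition with a fixed germ is $\RR$-linear on germs (so it carries $\RR$-linear combinations to $\RR$-linear combinations and is bijective on spans).

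Next I would expand the left-hand side similarly: since $x = g^2_{k-i}\circ (g^2_{k-i})^{-1}$, we have
\[
\langle (g^2)^{\langle i\rangle}\rangle = \langle g^2_{k-i},\dots,g^2_k\rangle \circ (g^2_{k-i})^{-1},
\]
again using the $\RR$-linearity of right-composition.

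Comparing the two expressions, the corollary reduces to the identity of $\RR$-vector spaces of germs
\[
\langle g^1_{k-i},\dots,g^1_k\rangle = \langle g^2_{k-i},\dots,g^2_k\rangle,
\]
which is precisely Lemma \ref{sub_arch_classes_lemma}. There is no real obstacle beyond careful bookkeeping; the only subtle point is to justify that right-composition with a single fixed germ commutes with forming $\RR$-linear spans, which is immediate because $(\sum c_j h_j)\circ \varphi = \sum c_j (h_j\circ \varphi)$ for any germs $h_j,\varphi$ and any $c_j\in\RR$.
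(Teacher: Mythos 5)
Your proof is correct and follows the same route as the paper's own argument: both reduce the statement to the identity $\langle (g^j)^{\langle i\rangle}\rangle = \langle g^j_{k-i},\dots,g^j_k\rangle \circ (g^j_{k-i})^{-1}$ (using $x = g^j_{k-i}\circ (g^j_{k-i})^{-1}$ and $\RR$-linearity of right-composition) and then invoke Lemma \ref{sub_arch_classes_lemma}. You have merely spelled out the composition bookkeeping that the paper compresses into the phrase ``by definition.''
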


\begin{proof}
	Since $\langle (g^j)^{\langle i \rangle} \rangle = \langle g^j_{k-i}, \dots, g^j_k \rangle \circ (g^j_{k-i})^{-1}$ by definition, for $j=1,2$, the corollary follows from Lemma \ref{sub_arch_classes_lemma}.
\end{proof}

\begin{nrmk}
\label{the_powers_case}
	Let $c \in \C$ and $r>0$.  Since any strong asymptotic expansion of $c$ in $\Gs{\RR}{\langle \exp^{-x} \rangle^\times}$ is also a strong asymptotic expansion of $c$ in $\Gs{\RR}{\langle \exp^{-rx} \rangle^\times}$, for $r > 0$, we have $(\K_{x}, M_x, T_x) = (\K_{rx}, M_{rx}, T_{rx})$.
\end{nrmk}

\begin{prop}
\label{constr_indep}
	We have $(\K_{g^1}, M_{g^1}, T_{g^1}) = (\K_{g^2}, M_{g^2}, T_{g^2})$.
\end{prop}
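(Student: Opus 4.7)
Since $M_{g^1} = M_{g^2} = M_f$ by hypothesis and a germ can have at most one asymptotic expansion in a given scale by Lemma \ref{unique_as_exp}, the plan is to establish the equality of fields $\K_{g^1} = \K_{g^2}$ as subalgebras of $\C$; equality of the $T$-maps will then follow automatically. To this end, the plan is to prove by induction on $i = 0, 1, \ldots, k$ that the right-shifted fields $\K_{g^1, i} \circ g^1_{k-i}$ and $\K_{g^2, i} \circ g^2_{k-i}$ coincide, denoting this common field $\mathfrak{L}_i$. Setting $i = k$ will then yield $\K_{g^1} = \K_{g^1, k} \circ g^1_0 = \K_{g^2, k} \circ g^2_0 = \K_{g^2}$, as required.

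For the base case $i = 0$, Lemma \ref{sub_arch_classes_lemma} gives $\langle g^2_k \rangle = \langle g^1_k \rangle$, so $g^2_k = c g^1_k$ for some $c > 0$. Since $\K_{g^j, 0} = \K_x$ in either case, it suffices to verify $\K_x \circ (c g^1_k) = \K_x \circ g^1_k$, which follows from the invariance of $\K_x$ under right composition with the scalar multiplication $p_c : x \mapsto c x$---a consequence of Remark \ref{the_powers_case}.

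For the inductive step, assume $\mathfrak{L}_{i-1}$ is well-defined. Directly from the construction,
\[
\K_{g^j, i}' \circ g^j_{k-i} \;=\; \K_{g^j, i-1} \circ \bigl(g^j_{k-i+1} \circ (g^j_{k-i})^{-1}\bigr) \circ g^j_{k-i} \;=\; \K_{g^j, i-1} \circ g^j_{k-i+1} \;=\; \mathfrak{L}_{i-1}
\]
for $j = 1, 2$, so the ``primed'' fields agree after the right shift by $g^j_{k-i}$. By Lemmas \ref{arch_classes_lemma} and \ref{sub_arch_classes_lemma}, one can write $g^2_{k-i} = c g^1_{k-i} + h_{\mathrm{corr}}$ for some nonzero $c \in \RR$ and $h_{\mathrm{corr}} \in \langle g^1_{k-i+1}, \ldots, g^1_k \rangle$; consequently, each monomial $\exp^{-\alpha g^1_{k-i}}$ rewrites as $u_\alpha \cdot \exp^{-(\alpha/c) g^2_{k-i}}$ with $u_\alpha := \exp^{(\alpha/c) h_{\mathrm{corr}}}$ lying in $\mathfrak{L}_{i-1}$. (Here one uses that each $\exp^{\pm g^1_l}$ for $l > k-i$ belongs to $\mathfrak{L}_{i-1}$, since it is obtained by composing a generator of the group $M_{(g^1)^{\langle i-1 \rangle}}$---or its reciprocal---with $g^1_{k-i+1}$, and $\mathfrak{L}_{i-1}$ is a field.) This rewriting provides an order-preserving bijection between formal series in the two scales $M_{g^1_{k-i}}$ and $M_{g^2_{k-i}}$ with coefficients in $\mathfrak{L}_{i-1}$, preserving naturality of support and representing the same real germ.

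The main obstacle will be to verify that this purely formal rewriting also preserves the strong asymptotic expansion condition on standard power domains, so that $\A_{g^1, i} \circ g^1_{k-i} = \A_{g^2, i} \circ g^2_{k-i}$ and hence, after passing to fraction fields, $\K_{g^1, i} \circ g^1_{k-i} = \K_{g^2, i} \circ g^2_{k-i}$. The key input will be that every $u \in \mathfrak{L}_{i-1}$ has comparability class strictly slower than $\exp^{\pm g^1_{k-i}}$ (equivalently, than $\exp^{\pm g^2_{k-i}}$ by Lemma \ref{arch_classes_lemma}); in particular, $|u| \prec \exp^{\epsilon g^j_{k-i}}$ for every $\epsilon > 0$. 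This will ensure that the correction factors $u_\alpha$ do not disturb the asymptotic ordering at scale $M_{g^j_{k-i}}$, and the required uniform estimates on standard power domains will transfer between the two scales via the admissibility of both tuples (Corollary \ref{admissible_prop}) together with the standard power domain calculus of Lemma \ref{sqd_facts}. Setting $i = k$ will then complete the proof.
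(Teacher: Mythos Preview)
Your plan follows essentially the same route as the paper: both argue by induction on the stage of the construction (your $i$ is exactly the paper's induction on the tuple length), both reduce the inductive step to rewriting a strong asymptotic expansion in the scale $M_{g^1_{k-i}}$ as one in the scale $M_{g^2_{k-i}}$ via the decomposition $g^1_{k-i} = s\,g^2_{k-i} + (\text{lower})$, and both must verify that this rewriting is compatible with the analytic-continuation conditions on standard power domains.

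There is one genuine gap in your justification of the analytic step. You write that the transfer ``will transfer between the two scales via the admissibility of both tuples (Corollary~\ref{admissible_prop})''. But admissibility (Definition~\ref{admissible_def}) only controls compositions $g^j_p \circ (g^j_q)^{-1}$ \emph{within a single tuple}; what you actually need is that the cross-tuple map $g^1_{k-i}\circ (g^2_{k-i})^{-1}$ sends standard power domains into standard power domains. Corollary~\ref{admissible_prop} does give this, but only once you know $\eh\bigl(g^1_{k-i}\circ (g^2_{k-i})^{-1}\bigr)\le 0$. The paper obtains this from Fact~\ref{eh_application} (Application~1.3 of \cite{Kaiser:2017aa}): since $g^1_{k-i}\asymp g^2_{k-i}$ by Lemma~\ref{arch_classes_lemma} and both germs lie in $\U$ (hence are simple, so $\eh=\level$), their levels agree and the bound in Fact~\ref{eh_application} collapses to $0$. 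Without this ingredient your ``main obstacle'' paragraph does not go through.

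A smaller point: your opening claim that equality of the $T$-maps follows automatically from Lemma~\ref{unique_as_exp} is too quick. That lemma only gives uniqueness of asymptotic expansions with $M$-natural support, whereas $T_f(h)$ is an $M$-series that need not have natural support (cf.\ Definition~\ref{M-series_df} and Example~\ref{M-series_expl}). The paper instead tracks the $T$-maps through the induction, showing explicitly that $T_{(g^2)^{\langle k \rangle}}\bigl(h\circ (g^2_0)^{-1}\bigr)=T_{(g^1)^{\langle k \rangle}}\bigl(h\circ (g^1_0)^{-1}\bigr)\circ\bigl(g^1_0\circ (g^2_0)^{-1}\bigr)$. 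Your inductive statement should likewise assert agreement of the $\tau$- or $T$-maps, not just equality of the underlying fields.
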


\begin{proof}
	We proceed by induction on $k$. 
	
	\subsection*{Case $k=0$:} then $g^2_0 = r g^1_0$ for some $r>0$, so that $(g^2_0)^{-1} = (g^1_0)^{-1}/r$.  Hence, by Remark \ref{the_powers_case}, 
	\begin{align*}
	(\K_{g^2}, M_{g^2}, T_{g^2}) &= (\K_x,M_x,T_x) \circ g^2_0 \\ &= (\K_x,M_x,T_x) \circ (r g^1_0) \\ &= (\K_{rx},M_{rx},T_{rx}) \circ g^1_0 \\ &= (\K_x,M_x,T_x) \circ g^1_0 \\ &= (\K_{g^1}, M_{g^1}, T_{g^1}) ,
	\end{align*}
	as claimed.
	
	\subsection*{Case $k>0$:} Write $(g^1)':= (g^1_1, \dots, g^1_k)$ and $(g^2)':= (g^2_1, \dots, g^2_k)$, and assume that $(\K_{(g^1)'}, M_{(g^1)'}, T_{(g^1)'}) = (\K_{(g^2)'}, M_{(g^2)'}, T_{(g^2)'})$.  Let $h \in \K_{g^1}$; then $h \circ (g^1_0)^{-1} \in \K_{(g^1)^{\langle k \rangle}}$ by construction and, say,  $$\tau_{(g^1)^{\langle k \rangle}}\left(h \circ (g^1_0)^{-1}\right) = \sum_{r \in \RR} a_r \exp(-rx).$$  Therefore, for each $r$, we have $$a_r \in \K_{(g^1)^{\langle k \rangle}}' = \K_{(g^1)^{\langle k-1 \rangle}} \circ \left(g^1_1 \circ (g^1_0)^{-1}\right) = \K_{(g^1)'} \circ  (g^1_0)^{-1}.$$  It follows from the inductive hypothesis that $a_r \circ g^1_0 \in \K_{(g^2)'} = \K_{(g^2)^{\langle k-1 \rangle}} \circ g^2_1$, so that $$a_r \circ \left(g^1_0 \circ (g^2_0)^{-1}\right) \in \K_{(g^2)^{\langle k \rangle}}'.$$  
	
	Now recall that both $g^1_0$ and $g^2_0$ are simple and, by Lemma \ref{arch_classes_lemma}, we have $\level(g^1_0) = \level(g^2_0)$.  From Application 1.3 of \cite{Kaiser:2017aa}, we get that $\eh\left(g^1_0 \circ (g^2_0)^{-1}\right) \le 0$ so, by Corollary \ref{admissible_prop}, the germ $g^1_0 \circ (g^2_0)^{-1}$ has a complex analytic continuation on a right half-plane $H(a)$, with $a>0$, that maps standard power domains into standard power domains.  Therefore, the fact that $h \circ (g^1_0)^{-1}$ has strong asymptotic expansion $\sum a_r \exp^{-rx}$ implies that $h \circ (g^2_0)^{-1} = \left(h \circ (g^1_0)^{-1}\right) \circ \left(g^1_0 \circ (g^2_0)^{-1}\right)$ has strong asymptotic expansion $$\sum \left(a_r \circ \left(g^1_0 \circ (g^2_0)^{-1}\right)\right) \cdot \exp\left(-r\left(g^1_0 \circ (g^2_0)^{-1}\right)\right).$$
	
	Next, note that $x \in \langle (g^1)^{\langle k \rangle}\rangle$ so, from Corollary \ref{iterate_classes_cor} with $i = k$, we get $g^1_0 \circ (g^2_0)^{-1} \in \langle (g^2)^{\langle k \rangle}\rangle$.  Since $g^1_0 \circ (g^2_0)^{-1} \asymp x$ and $g^2_i \circ (g^2_0)^{-1} \prec x$ for $i > 0$, it follows that there are a nonzero $s \in \RR$ and an $\varphi \in \langle (g^2)' \rangle \circ (g^2_0)^{-1}$ such that $$g^1_0 \circ (g^2_0)^{-1} = sx + \varphi.$$ Note, in particular, that $\exp(-\varphi) \in M'_{(g^2)^{\langle k \rangle}}$ by definition of the latter, so that $\exp(-\varphi) \in \K'_{(g^2)^{\langle k \rangle}}$.  Therefore, setting $$b_r:= \left(a_r \circ \left(g^1_0 \circ (g^2_0)^{-1}\right)\right) \cdot \exp(-r\varphi) \in \K_{(g^2)^{\langle k \rangle}}',$$ we get 
	$$\sum \left(a_r \circ \left(g^1_0 \circ (g^2_0)^{-1}\right)\right) \cdot \exp\left(-r\left(g^1_0 \circ (g^2_0)^{-1}\right)\right) = \sum b_r \exp(-rx).$$  Moreover, since $\varphi \prec x$, it follows that the latter is also a strong asymptotic expansion of $h \circ (g^2_0)^{-1}$, showing that $h \circ (g^2_0)^{-1} \in \K_{(g^2)^{\langle k \rangle}}$ satisfying $$T_{(g^2)^{\langle k \rangle}}\left(h \circ (g^2_0)^{-1}\right) = T_{(g^1)^{\langle k \rangle}}\left(h \circ (g^1_0)^{-1}\right) \circ \left(g^1_0 \circ (g^2_0)^{-1}\right).$$  Therefore, $h \in \K_{g^2}$ and satisfies $T_{g^1}(h) = T_{g^2}(h)$.  Exchanging the roles of $g^1$ and $g^2$ in the argument above, we obtain the proposition.
\end{proof}

We can now give the

\begin{proof}[Proof of the Construction Theorem]
	Let $h$ be a finite tuple of small germs in $\la$.
	For part (1), we define $$(\K_h,\langle h \rangle^\times,T_h):= (\K_f,M_f,T_f)$$ for any admissible tuple $f \in \U^{k+1}$ such that $M_f = \langle h \rangle^\times$.  By Proposition \ref{constr_indep}, this definition only depends on $h$, not on the particular $f$ used for the construction.
	
	For part (2), let $h'$ be a subtuple of $h$.  By Proposition \ref{sub_tuple_prop}, it suffices to show that there exist $0 \le l \le k \in \NN$, an admissible tuple $f \in \U^{k+1}$ and a strictly increasing $\iota:\{0, \dots, l\} \into \{0, \dots, k\}$ such that $\langle h \rangle^\times = M_f$ and $\langle h' \rangle^\times = M_{f_\iota}$, where $f_\iota:= (f_{\iota(0)}, \dots, f_{\iota(l)})$.  To do this, we first obtain, by Lemma \ref{strong_basis_lemma}, an $l \in \NN$ and an admissible tuple $f' \in \U^{l+1}$ such that $\langle h' \rangle^\times = M_{f'}$.  Next, we choose a finite tuple $f''$ of germs in $\U$ such that $M_{f' \cup f''} = \langle h \rangle^\times$; this is possible, because $\langle h \rangle^\times$ is finite dimensional.  Now again by Lemma \ref{strong_basis_lemma}, there is an admissible tuple $f$ of germs in $\U$ extending $f'$ such that $M_f = \langle h \rangle^\times$.
\end{proof}

\section{Proof of the Closure Theorem}
\label{closure_section}

Since $(\K,\la,T)$ is the direct limit of all $(\K_h, \langle h \rangle^\times, T_h)$, for finite tuples $h$ of small germs in $\la$ with respect to the inclusion ordering, part (1) of the Closure Theorem follows.  For part (2), note that $\log_k \in \la$ for $k \in \ZZ$.  For part (3), let $\varphi \in \H$.  By \cite[Section 2]{Kaiser:2017aa}, there is a finite tuple $h$ of small germs in $\la$ such that $\varphi \in \C(\RR,\langle h \rangle^\times)^{\text{conv}}$.
By Lemma \ref{strong_basis_lemma}, there is an admissible tuple $f$ of infinitely increasing germs in $\U$ such that $\langle h \rangle^\times = M_f$, so that $\varphi \in \C(\RR,M_f)^{\text{conv}}$.  Hence $\varphi \in \K_f$ by Corollary \ref{final_construction_cor}, and part (3) now follows from part (1).

It remains to prove part (4) of the Closure Theorem.

\subsection*{Closure under differentiation} \label{closure-differentiation}

For closure under differentiation, it suffices to show the following: let $f = (f_0, \dots, f_k) \in \U^{k+1}$ be admissible with $f_0 > \cdots > f_k$.  Then \textit{there exists an admissible $g = (g_0, \dots, g_l) \in \U^{l+1}$ such that $f \subseteq g$ and, for every $h \in \K_f$, we have $h' \in \K_g$.}  To see how we obtain this, let $i \in \{1, \dots, k\}$ and consider the inductive step of constructing $\A_i$ from $\K_{i-1}$: say $h \in \A_i$ has strong asymptotic expansion $\tau_i(h) = \sum_{r \ge 0} h_r \exp^{-r}$, with $$h_r = g_r \circ \left(f_{k-i+1} \circ f_{k-i}^{-1}\right)$$ and $g_r \in \K_{i-1}$.  Then $h'$ has the expected strong asymptotic expansion, as the next two lemmas show; the first of these is a version of L'H\^opital's rule for holomorphic maps on standard power domains.

\begin{lemma}
	\label{derivative_o}
	Let $1 > \epsilon' \ge \epsilon > 0$ and $C,D>0$, and let $\phi:U^\epsilon_C \into \CC$ be holomorphic.  Assume that either $\epsilon' > \epsilon$, or that $\epsilon' = \epsilon$ and $D > C$.
	\begin{enumerate}
		\item Let $r \in \RR$ be such that $\phi \prec_{U^\epsilon_C} \bexp^{-r}$.  Then $\phi' \prec_{U^{\epsilon'}_D} \bexp^{-r}$.
		\item If $\phi$ is bounded in $U^\epsilon_C$, then $\phi'$ is bounded in $U^{\epsilon'}_D$.
	\end{enumerate}
\end{lemma}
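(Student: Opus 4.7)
The plan is to prove both parts as a uniform Cauchy-estimate argument on disks of fixed radius, leveraging Lemma \ref{sqd_facts}(1)--(2) to get an "inner" and "outer" standard power domain with a positive buffer between them.

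First, I would extract from the hypotheses a fixed $\delta>0$ such that, as germs at $\infty$, $T(U^{\epsilon'}_D,\delta)\subseteq U^\epsilon_C$. If $\epsilon'>\epsilon$ this is Lemma \ref{sqd_facts}(1); if $\epsilon'=\epsilon$ and $D>C$ this is Lemma \ref{sqd_facts}(2). So there is $R>0$ such that for every $z\in U^{\epsilon'}_D$ with $|z|\ge R$, the closed disk $\bar{D}(z,\delta)$ is contained in $U^\epsilon_C$, and in particular $\phi$ is holomorphic on a neighbourhood of $\bar{D}(z,\delta)$.

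For part (2), Cauchy's integral formula for the derivative gives
\[
|\phi'(z)|\ \le\ \frac{1}{\delta}\sup_{|w-z|=\delta}|\phi(w)|
\]
for $z\in U^{\epsilon'}_D$ with $|z|\ge R$; since $\phi$ is assumed bounded on $U^\epsilon_C$, the right-hand side is bounded, which is exactly what is claimed.

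For part (1), the same Cauchy estimate yields $|\phi'(z)|\le \delta^{-1}\sup_{|w-z|=\delta}|\phi(w)|$. By hypothesis $|\phi(w)|=o(|\bexp(-rw)|)=o(e^{-r\,\re w})$ as $|w|\to\infty$ in $U^\epsilon_C$. For $w$ with $|w-z|=\delta$ one has $\re w=\re z+O(\delta)$, so $e^{-r\re w}=e^{-r\re z}\cdot e^{O(r\delta)}=O(e^{-r\re z})=O(|\bexp(-rz)|)$, uniformly in such $w$. Combining, $|\phi'(z)|=o(|\bexp(-rz)|)$ as $|z|\to\infty$ in $U^{\epsilon'}_D$, i.e., $\phi'\prec_{U^{\epsilon'}_D}\bexp^{-r}$, as required.

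The only real subtlety is the first step: ensuring a uniform positive buffer $\delta$ between $U^{\epsilon'}_D$ and the boundary of $U^\epsilon_C$. This is purely a fact about the geometry of standard power domains and is already packaged in Lemma \ref{sqd_facts}(1)--(2), so no new estimate is needed; once $\delta$ is in hand, both parts reduce to the Cauchy estimate together with the elementary observation that $e^{-r\re w}$ and $e^{-r\re z}$ differ by a bounded factor when $|w-z|=\delta$.
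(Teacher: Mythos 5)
Your proof is correct and follows essentially the same approach as the paper's: Lemma \ref{sqd_facts}(1,2) supplies a uniform buffer radius (the paper fixes this radius to $2$), after which both parts reduce to Cauchy's estimate for the derivative on disks of that radius, which is exactly the argument of \cite[Lemma 5.1]{MR3744892} that the paper defers to. Your observation that $e^{-r\,\re w}$ and $e^{-r\,\re z}$ differ by a factor bounded by $e^{|r|\delta}$ when $|w-z|=\delta$ is precisely the step that lets the Cauchy estimate transfer the $\prec_{U^\epsilon_C}\bexp^{-r}$ hypothesis to the conclusion in part (1).
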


\begin{proof}
	By Lemma \ref{sqd_facts}(1,2), there is $R>0$ such that $D(z,2) \subseteq U^\epsilon_C$ for every $z \in U^{\epsilon'}_D$ with $|z| > R$.  The proof now proceeds exactly like the proof of \cite[Lemma 5.1]{MR3744892}.
\end{proof}

We now set $$\C^1:= \set{f \in \C:\ f \text{ is differentiable}},$$  and for $F = \sum f_r \exp^{-r} \in \Gs{\C^1}{M_0}$, we define $$F':= \sum (f'_r-rf_r) \exp^{-r} \in \Gs{\C}{M_0}.$$

\begin{lemma}
	\label{derivative_of_strong_asymptotic_expansion}
	Let $f \in \C$ have strong asymptotic expansion $F \in \Gs{\C^1}{M_0}$.  Then $f$ is differentiable and has strong asymptotic expansion $F'$.
\end{lemma}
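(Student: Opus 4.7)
The plan is to verify the three conditions of Definition \ref{asymptotic_def} for $F'$ as a strong asymptotic expansion of $f'$, relying on the $M_0$-naturality of $\supp(F)$ to make each truncation a finite sum, and on Lemma \ref{derivative_o} to control the remainder after differentiation.

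First, since $f$ has a strong asymptotic expansion, it admits a holomorphic continuation $\ff$ on some standard power domain $U$; restricting to the real line yields differentiability of $f$ with $f' = \ff'\rest{\RR}$, so $\ff'$ provides a complex analytic continuation of $f'$ on $U$. This settles condition (ii), and condition (i) follows from $\supp(F') \subseteq \supp(F)$.

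For condition (iii), fix $n = \exp^{-s} \in M_0$. By $M_0$-naturality of $\supp(F)$, we may choose $s' > s$ with $\supp(F) \cap (s, s'] = \emptyset$. Applying the hypothesis to $n' := \exp^{-s'}$ produces a standard power domain $V \subseteq U$ such that $f_r \in \C_{M_0}^V$ for each relevant $r$, and
\[
\phi := \ff - \sum_{r \le s'} \ff_r \bexp^{-r} \;\prec_V\; \bexp^{-s'}.
\]
By the choice of $s'$, the sum equals $\sum_{r \le s} \ff_r \bexp^{-r}$, so termwise differentiation gives
\[
\phi' = \ff' - \sum_{r \le s}\bigl(\ff'_r - r \ff_r\bigr)\bexp^{-r}.
\]
Lemma \ref{derivative_o}(1) then yields a standard power domain $W \subseteq V$ with $\phi' \prec_W \bexp^{-s'} \prec_W \bexp^{-s}$, which is precisely the asymptotic relation $(\ast_{f',n})$.

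The main technical obstacle I expect is the coefficient condition that $f'_r - r f_r \in \C_{M_0}^W$ for each $r \le s$. The term $r f_r$ is straightforward, since $f_r \in \C_{M_0}^V \subseteq \C_{M_0}^W$. For $f'_r$, Lemma \ref{derivative_o}(1,2) gives holomorphicity on $W$ together with the upper bound $\ff'_r \prec_W \bexp^{\rho}$ for every $\rho > 0$; the matching lower bound, needed when $\ff'_r - r \ff_r$ is not identically zero, should be extractable by an integration argument along horizontal rays within $W$, exploiting the pre-existing $\C_{M_0}^V$ bounds on $\ff_r$ to rule out an overly fast decay of its derivative (an overly fast decay of $\ff'_r - r\ff_r$ would force $\ff_r$ to behave like a constant multiple of $\bexp^{r}$, conflicting with those bounds).
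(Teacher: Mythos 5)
Your main argument is exactly the paper's: differentiate the truncated asymptotic relation and apply Lemma \ref{derivative_o}(1) to control the remainder on a slightly narrower standard power domain. (Your variant of passing from $s$ to a strictly larger $s'$ with empty support gap is a harmless small deviation --- the paper applies Lemma \ref{derivative_o}(1) directly to the truncation at $\exp^{-r}$ and gets the same remainder estimate.)

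The concern you raise about the coefficient condition $f'_r - rf_r \in \C_{M_0}^W$ is a real one, and the paper's own proof also does not address it: that proof only verifies the asymptotic relation $(\ast_{f',n})$ and declares victory. Your proposed fix --- an integration argument forcing $\ff_r$ to look like $c\bexp^{r}$ whenever $\ff'_r - r\ff_r$ decays exponentially --- works when $r \neq 0$, but it breaks exactly at $r = 0$: take $f_0 = 1 + e^{-x}$, which lies in $\C_{M_0}^V$ (it tends to the nonzero constant $1$), yet $f_0' = -e^{-x}$ decays like the monomial $\exp^{-1}$ itself and therefore fails the lower-bound requirement $1/\mm \prec_W \aa \prec_W \mm$ for membership in $\C_{M_0}^W$. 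So the coefficient claim is not recoverable in the generality the lemma is stated, and the lemma should really be read as establishing only the differentiated asymptotic relation. In the paper this is harmless because where the lemma is invoked (Proposition \ref{derivative_prop}), the coefficient memberships are supplied independently by the induction, which places the derived coefficients in $\K'_{g,\eta(i)}$ --- a field that already sits inside $\C_{M_0}^U$ by construction (Remark \ref{rel_growth_rmk}), so the troublesome coefficients like $1 + e^{-x}$ simply never arise.
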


\begin{proof}
	Write $F = \sum_{r \in \RR} f_r \exp^{-r}$; note that $F$ has $M_0$-natural support by hypothesis.
	Let $1 > \epsilon > 0$ and $C>0$ be such that $U_C^\epsilon$ is a domain of strong asymptotic expansion of $f$ with corresponding analytic continuation $\ff$, and let $\epsilon' \in [\epsilon,1)$ and $D>C$.  Then $\ff':U_D^\epsilon \into \CC$ is a complex analytic continuation of $f'$.  Moreover, since $F$ has natural support we have, for $r  \in \RR$, that
	\begin{equation*}
	\ff' - \sum_{s \le r} (\ff_s'-s\ff_s)\bexp^{-s} = \left(\ff - \sum_{s \le r} \ff_s\bexp^{-s}\right)' \prec_{U^{\epsilon'}_D} \bexp^{-r},
	\end{equation*}
	by Lemma \ref{derivative_o}(1) and Condition $(\ast)_{f,\exp^{-r}}$ of Definition \ref{asymptotic_def}.  This shows that $f'$ has strong asymptotic expansion $F'$ in $U_D^{\epsilon'}$, as claimed.
\end{proof}

Returning to our discussion of closure under differentiation, it follows from the previous lemma that $h'$ has strong asymptotic expansion $$\tau_i(h)' = \sum_{r \ge 0} (h'_r-rh_r) \exp^{-r},$$  and we have 
\begin{equation*}
	h_r' = \left(g_r' \circ \left(f_{k-i+1} \circ f_{k-i}^{-1}\right)\right) \cdot \left(f_{k-i+1} \circ f_{k-i}^{-1}\right)'
\end{equation*}
and
\begin{align*}
	\left(f_{k-i+1} \circ f_{k-i}^{-1}\right)' &= \left(f_{k-i+1}' \circ f_{k-i}^{-1}\right) \cdot \left(f_{k-i}^{-1}\right)' \\ 
	&= \left(f_{k-i+1}' \circ f_{k-i}^{-1}\right) \cdot \frac1{f_{k-i}' \circ f_{k-i}^{-1}} \\
	&= \frac{f_{k-i+1}'}{f_{k-i}'} \circ f_{k-i}^{-1}.
\end{align*}
We do not know whether $f_{k-i}$ belongs to $\K_f$,  because the corresponding monomial in $\K_f$ is $\exp \circ \left(-f_{k-i}\right)$, not $f_{k-i}$ itself.  We also do not know if $f_{k-i}'$ belongs to $\K_f$.  However, each $f_{k-i}$ and each $f_{k-i}'$ does belong to $\H$, so they are given by convergent LE-series.  The next proposition shows that there is a good such representation for our purposes:

\begin{prop}
	\label{opt_rep_prop}
	There are $l \ge k$, $g_0 > \cdots > g_l$ in $\U$ and a strictly increasing $\iota:\{0, \dots, k\} \into \{0, \dots, l\}$ such that $\iota(k) = l$ and $g:= (g_0, \dots, g_l)$ is admissible and, for each $i = 0, \dots, k$, we have $g_{\iota(i)} = f_i$ and, if $i \in \{0, \dots, k-1\}$ and $j \in \{i+1, \dots, k\}$, the germ $f_{j}'/f_{i}'$ is given by a convergent LE-series in $\Gs{\RR}{M_{(g_{\iota(i)}, \dots, g_l)}}$.
\end{prop}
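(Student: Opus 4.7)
The plan is to enlarge the original tuple $f$ by inserting additional infinitely increasing germs from $\U$ that supply precisely the scales needed to express each ratio $f_j'/f_i'$ as a convergent LE-series, while preserving admissibility via the Admissibility Theorem.

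I would begin by observing that, for each pair $i<j$ in $\{0,\dots,k\}$, the ratio $r_{ij}:= f_j'/f_i'$ lies in the Hardy field $\H$. By Example~\ref{H-expl}, $r_{ij}$ is the sum of a convergent LE-series $R_{ij}=G_{ij}(\exp(-h_{ij,1}),\dots,\exp(-h_{ij,n_{ij}}))$, where $G_{ij}$ is a convergent Laurent series with support in $\ZZ^{n_{ij}}$ and each $h_{ij,s}\in\U$. Collect the finite set $H:=\{h_{ij,s}\}\cup\{f_0,\dots,f_k\}\subseteq\U$, and let $V:=\langle H\rangle\subseteq\U$ be the additive $\RR$-span. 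By Lemma~\ref{strong_basis_lemma} (more precisely, by the basis-completion argument used in its proof), there is an admissible tuple $\tilde g=(\tilde g_0,\dots,\tilde g_l)\in\U^{l+1}$ of infinitely increasing germs with pairwise distinct archimedean classes such that $\langle\tilde g\rangle=V$ and $f\subseteq\tilde g$. Reindex so $\tilde g_0>\cdots>\tilde g_l$; the Admissibility Theorem~\ref{adm_lemma} (each $\tilde g_i$ is simple, being in $\U$, by Example~8.7 of \cite{Kaiser:2017aa}) then certifies that $g:=\tilde g$ is admissible, and setting $g_{\iota(i)}:=f_i$ defines the required strictly increasing $\iota$.

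The key technical step is to ensure that the augmentation can be chosen so that (a) each $h_{ij,s}$ can be taken with $h_{ij,s}\preceq f_i$ (archimedean class), guaranteeing that the generating monomials of $R_{ij}$ land in $M_{(g_{\iota(i)},\dots,g_l)}$; and (b) $f_k$ remains the least element of the tuple in archimedean class, so that $\iota(k)=l$. For (a) we use the bound on exponential height of compositional inverses (Fact~\ref{eh_application}) combined with Fact~\ref{0-eh_cor} to control $\eh(f_j'/f_i')$ in terms of $\eh(f_i)$ and $\level(f_i)$; since $f_i\in\U$ is simple, this translates into an upper archimedean bound on the $h_{ij,s}$. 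For (b), one discards from $R_{ij}$ all contributions supported on monomials smaller than every element of $M_{(g_l)}$: such a tail is absorbed into a single small correction expressible through $\exp(-rg_l)$ for large $r$, and the remaining (dominant) part of $R_{ij}$ has only finitely many generating monomials, which can be made to sit in the closed archimedean interval $[f_k,f_i]$ by choosing $\tilde g$ to include suitable intermediate germs drawn from $V$.

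With these two bounds in hand, the finite set of generating exponents $h_{ij,s}$ used across all pairs $(i,j)$ lies in $\langle\tilde g_{\iota(i)},\dots,\tilde g_l\rangle$ for the appropriate $i$; equivalently, each generating monomial $\exp(-h_{ij,s})$ for the series $R_{ij}$ belongs to $M_{(g_{\iota(i)},\dots,g_l)}$. Consequently $R_{ij}\in\Gs{\RR}{M_{(g_{\iota(i)},\dots,g_l)}}^{\text{conv}}$, which is the claimed representation.

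I expect the main obstacle to be the careful archimedean control required in step (a), i.e., showing that $f_j'/f_i'$ does not require generating monomials whose exponents $h$ exceed the archimedean class of $f_i$. The purely formal construction of $g$ and the verification of admissibility are straightforward given Lemma~\ref{strong_basis_lemma} and the Admissibility Theorem, but the proof that derivatives do not inflate the set of required LE-monomials beyond $f_i$ relies on the detailed height/level bookkeeping of \cite{Kaiser:2017aa}, particularly Application~1.3 and its consequences for $f\circ f_i^{-1}$, applied to the quotient $f_j'/f_i'$ rewritten via $f_i$-coordinates. A secondary (but softer) obstacle is the lower-bound truncation argument ensuring $\iota(k)=l$, which one may alternatively circumvent by allowing $l$ to grow and taking $g_l=f_k$ after subtracting off the small tail as a separately summable convergent LE-series in $M_{(g_l)}$.
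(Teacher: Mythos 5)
Your high-level strategy is the right one: write each ratio $f_{i+1}'/f_i'$ as a convergent LE-series in small $\la$-monomials, enlarge the tuple $f$ to include the corresponding exponents, and invoke Lemma~\ref{strong_basis_lemma} together with the Admissibility Corollary to produce an admissible $g$ containing $f$. That part matches the paper.

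However, the technical heart of the argument---the fact that the generating monomials of $f_{j}'/f_{i}'$ have slower comparability class than $f_i$---is where your proposal has a genuine gap, and the tools you invoke do not close it. You propose to control $\eh(f_j'/f_i')$ via Fact~\ref{eh_application} (a bound on $\eh(g\circ f^{-1})$) together with Fact~\ref{0-eh_cor}, and to infer from simplicity of $f_i$ an upper archimedean bound on the exponents $h_{ij,s}$. This chain of implications does not hold: a bound on the exponential height of a germ $h\in\H$ does not, by itself, bound the comparability classes of the principal monomials appearing in $S_\la(h)$, which is what you actually need. Moreover, Fact~\ref{eh_application} is about compositional inverses, and while $(f_j\circ f_i^{-1})'=(f_j'/f_i')\circ f_i^{-1}$ does relate the two, passing through this identity requires controlling $\eh$ of a \emph{derivative}, which the cited facts do not address. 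The paper's proof instead uses Lemmas~\ref{H-lemma_1} and~\ref{H-lemma_2} from Section~\ref{closure_section}: Lemma~\ref{H-lemma_1} shows by induction on exponential height that the derivative of a germ in $\P\E_n^\infty$ stays in $\P\E_n^\infty\oplus\RR$ with comparability class no larger than the original, and Lemma~\ref{H-lemma_2} then deduces the needed bound for the quotient $h'/g'$. This structural result on derivatives of purely infinite $\Lanexp$-germs, rather than any height/level bookkeeping applied to compositional inverses, is what supplies the archimedean control. You correctly flag this as ``the main obstacle,'' but the route you sketch for it would not succeed. (Your step~(b), discarding a tail and reabsorbing it, does not appear in the paper and is not needed once Lemma~\ref{H-lemma_3} is available; and you may restrict to consecutive ratios $f_{i+1}'/f_i'$ as the paper does, since the general ratio is a finite product of these inside the ring $\C(\RR,M_{(g_{\iota(i)},\dots,g_l)})^{\text{conv}}$.)
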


The proof of Proposition \ref{opt_rep_prop} relies on the next three lemmas, which build on the description of $\H$ in  \cite[Section 2]{Kaiser:2017aa}.  Recall that $$\U = \bigcup_{k \in \NN} \E^\infty \circ \log_k,$$ where $$\E^\infty = \oplus_{n \in \NN} \left(\P\E_n^\infty \cup \{0\}\right)$$ and each $\P\E_n^\infty$ is the set of all purely infinite $\Lanexp$-germs of exponential height $n$.  Note that, for $f \in \H$, we have $$\M(f) = \supp S_\la(f)$$ is the set of principal monomials of $f$ as defined in \cite[Section 2]{Kaiser:2017aa}.

For $h \in \H$ we denote by $\compclass(h)$ the \textbf{comparability class} of $h$, that is, the set of all $g \in \H$ that are comparable to $h$ (see Example \ref{natural_expls}(3)).  The set of all comparability classes of germs in $\H$ is linearly ordered by the obvious ordering induced from $<$, and we also denote it by $<$.  Corresponding to Definition \ref{str_slower_cc}, for $f,g \in \H$ we say that $f$ \textbf{has faster comparability class than $g$} (or that $g$ \textbf{has slower comparability class than $f$}) if either 
$$f \in \I \quad\text{and}\quad \compclass(1/f) \le \compclass(g) \le \compclass(f)$$ or $$1/f \in \I \quad\text{and}\quad \compclass(f) \le \compclass(g) \le \compclass(1/f).$$

\begin{rmk}
	Proposition \ref{opt_rep_prop} implies that the germ $f_{k-i+1}'/f_{k-i}'$ can be written as a convergent LE-series with monomials whose comparability class is slower than that of $\exp \circ \left(-f_{k-i}\right)$.  This is not true in general for the germ $f_{k-i}$ itself: take, for instance, $f_{k-i} = \log \circ \log$.  Then $f_{k-i}' = \frac1{x\log}$, which is a germ in $\la$ comparable to $1/x$.  However, $\exp \circ (-\log \circ \log) = 1/\log$ has slower comparability class than $1/x$.  (The proof of Proposition \ref{opt_rep_prop} below does show, however, that it holds with $f_{k-i}'$ in place of $f_{k-i+1}'/f_{k-i}'$ if $f_{k-i}$ belongs to $\M$, that is, if $f_{k-i}'$ is an $\Lanexp$-monomial.)
\end{rmk}

\begin{lemma}
	\label{H-lemma_1}
	Let $n \in \NN$ and $f \in \P\E_n^\infty$.  Then $f' \in \P\E_n^\infty \oplus \RR$ and $\compclass(f') \le \compclass(f)$.
\end{lemma}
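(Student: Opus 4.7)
The plan is to prove the lemma by induction on the exponential height $n$, relying on the recursive structural description of the sets $\P\E_n^\infty$ developed in \cite[Section 2]{Kaiser:2017aa}. Recall from that description that each element of $\P\E_n^\infty$ is a finite $\RR$-linear combination of $\Lanexp$-monomials in $\la$, where each monomial of height $n$ factors as $p\cdot\exp(g)$ for an $\Lanexp$-monomial $p$ of height $<n$ and a germ $g\in\P\E_{n-1}^\infty$.

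For the base case $n=0$, the germs in $\P\E_0^\infty$ are purely infinite $\Lan$-germs, whose normal form as convergent series in monomials comparable to $x$ reduces the problem to a direct computation. One verifies term by term that differentiating a finite $\RR$-linear combination of such monomials produces a germ in $\P\E_0^\infty\oplus\RR$, the constant summand arising only from the component with leading behaviour $cx$, and that the comparability class does not increase.

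For the inductive step with $n\ge 1$, it suffices by linearity to treat a single monomial $m=p\exp(g)$ of height $n$. Differentiation gives
\begin{equation*}
m' \,=\, p'\exp(g)\,+\,p\,g'\,\exp(g).
\end{equation*}
The inductive hypothesis applied to $g\in\P\E_{n-1}^\infty$ yields $g'\in\P\E_{n-1}^\infty\oplus\RR$ with $\compclass(g')\le\compclass(g)$, while $p'$ is handled by an earlier case of the induction on the lower height of $p$. Combining these and expanding, one writes $m'$ as an $\RR$-linear combination of $\Lanexp$-monomials of height at most $n$ (plus possibly a constant contribution from $p'\exp(g)$ when $p\exp(g)$ itself reduces to $x$), and verifies that the resulting germ lies in $\P\E_n^\infty\oplus\RR$. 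The comparability class bound follows from the observation that $\compclass(g)<\compclass(\exp g)=\compclass(m)$ whenever $g$ is purely infinite of positive height, so the dominant summand $p\,g'\,\exp(g)$ is comparable to $p\exp(g)=m$, and the other summand $p'\exp(g)$ has comparability class no faster than that of $m$ as well.

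The main obstacle lies in the inductive step: verifying that, after expanding each factor $p\,g'\,\exp(g)$ using the recursive description of $\P\E_{n-1}^\infty$ and the normal form for products of $\Lanexp$-monomials, the result genuinely lies in the restricted class $\P\E_n^\infty\oplus\RR$ rather than some broader subset of $\E$. This requires a careful bookkeeping of how $\RR$-linear combinations of monomials interact under multiplication and of the rule tracking exponential height through such products, both of which are exactly the normal-form calculations enabled by \cite[Section 2]{Kaiser:2017aa}.
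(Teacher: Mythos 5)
Your overall strategy matches the paper's: induct on the exponential height $n$, differentiate monomial by monomial using the recursive structure of $\P\E_n^\infty$, and defer the final bookkeeping to the structural results in \cite[Section 2]{Kaiser:2017aa}. Two points, however, separate your argument from a correct one.

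First, the monomial decomposition. You write a height-$n$ principal monomial as $m = p\,\exp(g)$ with $p$ of lower height and $g \in \P\E_{n-1}^\infty$, and then must control the two terms $p'\exp(g)$ and $p\,g'\exp(g)$. The paper simply writes $m = \exp \circ g$ with $g \in \P\E_{n-1}^\infty$, giving at once $m' = g'\cdot m$; the inductive hypothesis applied to $g$ is the only input. The extra factor $p$ is redundant --- any lower-height contribution to $\log m$ can be absorbed into $g$ --- and it forces you to track an additional term, including a spurious worry about where a constant might come from. With the paper's decomposition the constant in $\P\E_n^\infty\oplus\RR$ only ever arises in the base case $n=0$.

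Second, and more substantively, you assert that $f \in \P\E_n^\infty$ is a \emph{finite} $\RR$-linear combination of $\Lanexp$-monomials, and reduce to a single monomial ``by linearity.'' Germs in $\P\E_n^\infty$ are purely infinite $\Lanexp$-germs defined by convergent $\Lanexp$-series, so $\M(f)$ is in general infinite. The step that recombines the per-monomial derivatives $m' = g'\cdot m$, over the whole (possibly infinite, reverse-well-ordered) support, into the conclusion $\M(f') \subseteq \P\E_n^\infty$ with $\compclass(f') \le \compclass(f)$ is precisely what the paper obtains by citing \cite[Remark 2.2(2)]{Kaiser:2017aa}. You flag exactly this verification as ``the main obstacle'' and leave it open; since the finiteness assumption that would make ``by linearity'' work is false, that obstacle must be met by the cited structural result rather than by elementary linearity, so your proof as written has a genuine gap there.
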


\begin{proof}
	By induction on $n$; the case $n=0$ follows from the observation that $\M(f) \subseteq \{x^k:\ k \in \NN\setminus\{0\}\}$, so that $\M(f') \subseteq \{x^k:\ k \in \NN\}$.  So we assume $n>0$ and the lemma holds for lower values of $n$.  Then every $m \in \M(f)$ is of the form $\exp \circ g$ for some $g \in \P\E^\infty_{n-1}$, so by the inductive hypothesis we have $$m' = m \cdot g'$$ with $g' \in \P\E_{n-1}^\infty \oplus \RR$.  It follows from \cite[Remark 2.2(2)]{Kaiser:2017aa} that every $m \in \M(f')$ belongs to $\P\E_n^\infty$ and satisfies $\compclass(m) \le \compclass(\lm(f)) = \compclass(f)$.
\end{proof}

\begin{lemma}
	\label{H-lemma_2}
	Let $g,h \in \U$ be such that $h < g$, and let $m \in \M\left(h'/g'\right)$.  Then $m$ has slower comparability class than $g$.
\end{lemma}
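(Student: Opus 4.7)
The plan is to reduce to the case $g,h \in \E^\infty$, then induct on exponential height, using the identity $g' = g \cdot (\log g)'$ to factor $h'/g'$ into pieces whose monomials can be individually controlled.

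Since $\U = \bigcup_{k \in \NN} \E^\infty \circ \log_k$, I first pick a common $k$ so that $g = \tilde g \circ \log_k$ and $h = \tilde h \circ \log_k$ for some $\tilde g, \tilde h \in \E^\infty$ with $\tilde h < \tilde g$. The two $(\log_k)'$ factors cancel in the ratio, giving $h'/g' = (\tilde h'/\tilde g') \circ \log_k$ and $\M(h'/g') = \M(\tilde h'/\tilde g') \circ \log_k$. Since right-composition with $\log_k$ is an order-preserving bijection on $\la$ that respects the comparability class ordering and sends $\compclass(\tilde g)$ to $\compclass(g)$, and likewise sends the strict sandwich $1/\tilde g < \,\cdot\, < \tilde g$ to $1/g < \,\cdot\, < g$, it suffices to prove the statement assuming $g, h \in \E^\infty$.

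I then induct on $n := \max\{\eh(g), \eh(h)\}$. In the base case $n = 0$, both $g$ and $h$ are positive-leading polynomials in $x$ with no constant term, so $h'/g'$ is a rational function whose LE-series is a Laurent series in $x$; every monomial is a power of $x$, strictly sandwiched between $x^{-N}$ and $x^N$ for some large $N$, hence incomparable to $g$ and strictly slower. For the inductive step with $\eh(g) = n \ge 1$, write $\lm(g) = \exp(u_g)$ with $u_g \in \P\E_{n-1}^\infty \subseteq \E^\infty$ (by Remark 2.2(2) of \cite{Kaiser:2017aa}) and $\log g = u_g + v_g$ for some $v_g \in \H_{\le 0}$; differentiating gives $g' = g \cdot u_g' \cdot (1 + v_g'/u_g')$ with $v_g'/u_g'$ small. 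Doing the same for $h$ (when $\eh(h) \ge 1$; when $\eh(h) = 0$ the factor $h'$ is simply a polynomial and the analysis simplifies) yields
\[
\frac{h'}{g'} \;=\; \frac{h}{g} \cdot \frac{u_h'}{u_g'} \cdot \Theta, \qquad \Theta = 1 + (\text{small}).
\]
Applying the inductive hypothesis to the pair $(u_h, u_g) \in \E^\infty$---valid because $h \le g$ forces $\lm(h) \le \lm(g)$ and hence $u_h \le u_g$, with the equality case trivially giving $u_h'/u_g' = 1$---shows that each monomial of $u_h'/u_g'$ is strictly slower than $\compclass(u_g)$, itself strictly slower than $\compclass(g) = \compclass(\exp u_g)$ since $\exp$ strictly increases the comparability class of an infinitely increasing germ. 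The factor $h/g$ is bounded in $\H$, so its convergent LE-series monomials are either constant or satisfy $1/g \prec m' \prec g$; similarly for $\Theta$. A product of such monomials remains strictly sandwiched between $1/g$ and $g$ and incomparable to $g$, yielding the claim.

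The main obstacle I foresee is the edge case $\lm(h) = \lm(g) = \mu$ with identical leading coefficients, where $u_h = u_g$ and $h/g = 1 + (\text{small})$ so the above factorization degenerates. I would handle it by writing $h = g - \eta$ with $\eta := g - h > 0$ satisfying $\lm(\eta) \prec \mu$, so that $h'/g' = 1 - \eta'/g'$. If $\lm(\eta) = \exp(w)$ with $w < u_g$ in $\U$, then Lemma \ref{H-lemma_1} bounds $\compclass(\eta') \le \compclass(\eta)$, and combining this with the leading-term analysis of $g'$ already established reduces the problem to the previous case; iterating the construction if $\eta$ itself presents a new degenerate leading term will terminate because the successive leading monomials form a strictly decreasing sequence in $\la$ and the support of $S_\la(g-h)$ is reverse well-ordered.
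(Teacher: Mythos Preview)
Your reduction to $g, h \in \E^\infty$ via right composition with $\log_k$ matches the paper exactly. After that, however, the paper avoids your induction entirely: once $G, H \in \E^\infty$, Lemma~\ref{H-lemma_1} applied separately to each gives $G', H' \in \E^\infty \oplus \RR$, so every monomial $n \in \M(H') \cup \M(G')$ is either a constant or a large germ dominated by $\lm(G')$, hence satisfies $\compclass(1) \le \compclass(n) \le \compclass(G)$. Every monomial of $H'/G'$ lies in the multiplicative group generated by $\M(H') \cup \M(G')$, and the comparability class of a finite product is bounded by the maximum over the factors, so the conclusion follows in one line. No factorization $h'/g' = (h/g)(u_h'/u_g')\Theta$, no induction on exponential height, no edge cases.

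Your argument, by contrast, contains a genuine gap. The assertion that every monomial $m$ of $h/g$ satisfies $1/g \prec m \prec g$ is false: take $h = e^x$ and $g = e^{2x} + e^x$ in $\E^\infty$. Then
\[
\frac{h}{g} = \frac{1}{e^x + 1} = e^{-x} - e^{-2x} + e^{-3x} - \cdots,
\]
whose monomial $e^{-3x}$ satisfies $e^{-3x} \prec e^{-2x} \asymp 1/g$. Individual monomials of $h/g$ can thus be arbitrarily small relative to $1/g$; only their \emph{comparability class} stays bounded, and that coarser invariant is exactly what the paper's argument tracks directly. Since your product step relies on sandwiching each factor's monomials between $1/g$ and $g$ in the $\prec$ ordering, it breaks here. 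The remedy is not to repair the $h/g$ estimate but to drop the factorization and apply Lemma~\ref{H-lemma_1} once to $H$ and once to $G$, as the paper does; this also renders your edge-case iteration unnecessary.
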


\begin{proof}
	Choose $k \in \NN$ such that $H:= h \circ \exp_k \in \E$ and $G:= g \circ \exp_k \in \E$.  Then $h,g \in \E^\infty$, so by Lemma \ref{H-lemma_1} we have $H',G' \in \E^\infty \oplus \RR$.  In particular, for every $n \in \M(H') \cup \M(G')$, we have $$\compclass(1) \le \compclass(n) \le \compclass(G).$$  Since every $M \in \M\left(H'/G'\right)$ belongs to the multiplicative group generated by $\M(H') \cup \M(G')$, it follows that every such $M$ has slower comparability class than $G$.  Finally, since $h' = (H' \circ \log_k) \cdot (\log_k)'$ and $g' = (G' \circ \log_k) \cdot (\log_k)'$, we have $\frac{h'}{g'} = \frac{H'}{G'} \circ \log_k$, and the lemma follows.
\end{proof}

\begin{lemma}
	\label{H-lemma_3}
	Let $h \in \H$ and $g \in \I$ be such that every $m \in \M(h)$ has slower comparability class than $g$.  Then there exist $k \in \NN$, small $m_0, \dots, m_k \in \la$, a convergent Laurent series $G(X_0, \dots, X_k)$ over $\RR$ such that $h = G(m_0, \dots, m_k)$ and each $m_i$ has slower comparability class than $g$.
\end{lemma}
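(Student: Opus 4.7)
The plan is to start from a convergent LE-series representation of $h$ given by Example \ref{H-expl} and then modify the generating monomials to all have slower comparability class than $g$, while preserving the convergent Laurent series structure.

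By Example \ref{H-expl}, write $h = G_0(m_0^{(0)}, \ldots, m_k^{(0)})$ for some convergent Laurent series $G_0$ with support in $\ZZ^{k+1}$ and small $m_0^{(0)}, \ldots, m_k^{(0)} \in \la$. Setting $u_i^{(0)} := \log m_i^{(0)} \in \U$, the hypothesis that every $m \in \M(h)$ has slower comparability class than $g$ translates as follows: for every $\alpha \in \supp(G_0)$, the germ $\alpha \cdot u^{(0)} := \sum_i \alpha_i u_i^{(0)}$ lies in the $\RR$-vector subspace $L := \{v \in \U : |v| \prec \log g\}$ of $\U$, since $(m^{(0)})^\alpha = \exp(\alpha \cdot u^{(0)}) \in \M(h)$.

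Let $W \subseteq L$ be the $\ZZ$-module generated by $\{\alpha \cdot u^{(0)} : \alpha \in \supp(G_0)\}$. As a subgroup of the finitely generated $\ZZ$-module $\langle u_0^{(0)}, \ldots, u_k^{(0)} \rangle_\ZZ$, $W$ is finitely generated; pick a $\ZZ$-basis $w_0, \ldots, w_l$ of $W$, all lying in $L$. After possibly negating each $w_j$ (which does not change $|w_j|$), we may assume $m_j' := \exp(w_j) \in \la$ is small, and each $m_j'$ then has slower comparability class than $g$. For every $\alpha \in \supp(G_0)$, let $\gamma(\alpha) \in \ZZ^{l+1}$ be the unique coordinate vector with $\alpha \cdot u^{(0)} = \sum_j \gamma_j(\alpha) w_j$; then $(m^{(0)})^\alpha = (m')^{\gamma(\alpha)}$ as germs, and the Laurent series
\[
G'(Y_0, \ldots, Y_l) := \sum_{\gamma \in \ZZ^{l+1}} \bigg(\sum_{\alpha \in \supp(G_0),\; \gamma(\alpha) = \gamma} c_\alpha \bigg) Y^\gamma
\]
satisfies $h = G'(m_0', \ldots, m_l')$ as germs. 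Formally, $G'(Y) = G_0(X(Y))$ under the integer Laurent monomial substitution $X_i \mapsto \prod_j Y_j^{M_{ji}}$, where the integer matrix $M$ encodes the $w$-coordinates on the image of $\cdot u^{(0)}$; this substitution transfers convergence of $G_0$ to convergence of $G'$ at the specific germ $Y = m'(x)$.

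The main obstacle will be to confirm that $G'$ is a convergent Laurent series in the strict sense of Definition \ref{convergent_LE-series}, namely that $\supp(G')$ is bounded below in each coordinate so that $G' = (m')^{N'} H$ with $H$ a convergent generalized power series at the origin. Since $\supp(G_0) \subseteq N_0 + \NN^{k+1}$ for some $N_0$, but the image under the integer linear map $\alpha \mapsto \gamma(\alpha)$ of a shifted positive orthant need not itself be bounded below when $M$ has mixed signs, the basis $w_0, \ldots, w_l$ must be chosen with care and not arbitrarily. The natural way around this is to adapt the basis to the combinatorial structure of $\supp(G_0)$, for instance via a filtration of $W$ by the archimedean classes within $\U$ up to that of $\log g$, or by enlarging the generating set with specific extreme monomials drawn from $\M(h)$ so that the resulting exponent map on $\supp(G_0)$ lands in a translate of the positive orthant.
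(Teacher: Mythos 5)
Your approach differs from the paper's, and it stops at what is in fact the crux of the matter. The paper's argument does not work at the level of the $\ZZ$-module of exponents. Instead, it invokes Lemma~\ref{strong_basis_lemma} to replace the tuple from Example~\ref{H-expl} with an \emph{admissible} tuple $f = (f_0,\dots,f_k) \in \U^{k+1}$, writes $h = G(m_0,\dots,m_k)$ with $m_i = \exp\circ(-f_i)$, and then exploits the resulting order $\compclass(m_0)<\cdots<\compclass(m_k)<\compclass(1)$: taking $k$ minimal forces some $\tilde m \in \M(h)$ to ``attain'' the comparability class of $m_0$, and since $\tilde m$ has slower comparability class than $g$ by hypothesis, so does $m_0$ (and hence every $m_i$). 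No further manipulation of the Laurent series is needed once this ordering and minimality are in place.

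Your proposal, by contrast, tries to transport the Laurent series along an arbitrary $\ZZ$-basis change of the exponent module $W$. You are right that this is where the difficulty lies: the image of the shifted positive orthant $\supp(G_0)$ under the integer change-of-basis map need not be bounded below, so $G'$ need not be a Laurent series at all. But you do not resolve this. The final paragraph only gestures at a fix (``adapt the basis to the combinatorial structure of $\supp(G_0)$,'' ``enlarge by extreme monomials''), and neither of these is carried out or shown to produce a valid basis. As it stands this is a genuine gap; the conclusion that $h = G'(m'_0,\dots,m'_l)$ with $G'$ a convergent Laurent series is not established. The correct resolution is exactly the one the paper uses: sort the generating monomials by comparability class via Lemma~\ref{strong_basis_lemma}, take the tuple of minimal length, and deduce the bound on $\compclass(m_0)$ from the hypothesis on $\M(h)$ rather than by changing the exponent lattice.

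There is also a secondary gap earlier on: you assert that for each $\alpha \in \supp(G_0)$ the monomial $(m^{(0)})^\alpha$ lies in $\M(h)$, so that $\alpha\cdot u^{(0)} \in L$. This requires the $m_i^{(0)}$ to be multiplicatively $\RR$-linearly independent (equivalently, the $u_i^{(0)}$ to be $\RR$-linearly independent), so that distinct $\alpha$ produce distinct monomials and no cancellation occurs when passing from the $G_0$-representation to $S_\la(h)$. Example~\ref{H-expl} does not hand you this for free; it needs to be arranged explicitly (e.g.\ by discarding redundant generating monomials), otherwise you only get $\M(h) \subseteq \{(m^{(0)})^\alpha : \alpha \in \supp(G_0)\}$ and cannot conclude that \emph{every} $\alpha \cdot u^{(0)}$ lies in $L$.
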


\begin{proof}
	By Lemma \ref{strong_basis_lemma}, there exist an admissible $f = (f_0, \dots, f_k) \in \U^{k+1}$ and a convergent Laurent series $G(X_0, \dots, X_k)$ over $\RR$ such that $h = G(m_0, \dots, m_k),$  where $m_i:= \exp \circ (-f_i)$ for each $i$.  It follows from the admissibility of $f$ that $\compclass(m_0) < \cdots < \compclass(m_k) < \compclass(1),$ so that every $m \in \M(h)$ has slower comparability class than $m_0$.  Moreover, we may assume that $k$ in minimal, that is, there exists $\tilde m \in \M(h)$ such that $m_0$ has slower comparability class than $\tilde m$.  Since $\tilde m$ has slower comparability class than $g$ by assumption, it follows that $m_0$ has slower comparability class than $g$, which proves the lemma.
\end{proof}

\begin{proof}[Proof of Proposition \ref{opt_rep_prop}]
	Let $i \in \{0, \dots, k-1\}$.  By Lemma \ref{H-lemma_2}, with $g = f_i > f_{i+1} = h$, we have that every principal monomial of $f_{i+1}'/f_i'$ has slower comparability class than $f_i$.  So by Lemma \ref{H-lemma_3}, with $g = f_i$ and $h = f_{i+1}'/f_i'$, there exist $k_i \in \NN$, small $m_{i,0}, \dots, m_{i,k_i} \in \la$ and a convergent Laurent series $G(X_{i,0}, \dots, X_{i,k_i})$ over $\RR$ such that $$\frac{f_{i+1}'}{f_i'} = G(m_{i,0}, \dots, m_{i,k_i})$$ and each $m_{i,j}$ has slower comparability class than $f_i$.  For $j = 0, \dots, k_i$ set $$f_{i,j}:= -\log \circ m_{i,j},$$ which belongs to $\U$ by definition of $\la$.  Then $f_{i,j} \preceq \log \circ f_i \prec f_i$ for each $j$; in particular, we have $$f_i > f_{i,0} > \cdots > f_{i,k_i}.$$  Now apply Lemma \ref{strong_basis_lemma} to obtain an admissible subtuple $g$ of the tuple consisting of all $f_i$ and all $f_{i,j}$, in such a way that $f \subseteq g$.
\end{proof}

It remains to show how Proposition \ref{opt_rep_prop} implies closure under differentiation for $\K$.  The tricky part here is that, while formal differentiation makes sense for LE-series in general, the construction of $\K$ uses ``shifted'' LE-series (based on the monomials $M_{f,i}$, which are not in $\la$ in general), for which no such formal differentiation exists.  To make the connection between analytic and formal differentiation, we will need to introduce some auxiliary notions of differentiation for these ``shifted'' series.

Note that in the situation of Proposition \ref{opt_rep_prop}, by the Construction Theorem \ref{Construction_thm}(2) and the definition of $\iota$, we have $\K_{f,i} \subseteq \K_{g,\eta(i)}$, where we define $\eta:\{0, \dots, k\} \into \{0, \dots, l\}$ by $$\eta(i):= l - \iota(k-i);$$  in particular, we have $\eta(0) = l - \iota(k) = 0$ and $\eta(k) = l$.

\begin{prop}
	\label{derivative_prop}
	Let $l \ge k$, $g = (g_0, \dots, g_l)$ and $\iota:\{0, \dots, k\} \into \{0, \dots, l\}$ be for $f$ as in Proposition \ref{opt_rep_prop}.  Then, for $i \in \{0, \dots, k\}$ and $h \in \K_{f,i}$, we have $h' \in \K_{g,\eta(i)}$ and  $\tau_{g,\eta(i)}(h') = (\tau_{f,i} h)'.$
\end{prop}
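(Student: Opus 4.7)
The plan is to prove the proposition by induction on $i$, in parallel with the auxiliary embedding statement
\[
(\text{A})_i:\quad \K_{f,i}\subseteq \K_{g,\eta(i)} \text{ with } \tau_{g,\eta(i)}(h)=\tau_{f,i}(h) \text{ for } h\in\K_{f,i},
\]
where the two sides are compared under the natural inclusion $\K_{f,i}'\subseteq \K_{g,\eta(i)}'$ of coefficient fields. The base case $i=0$ is immediate: both $\K_{f,0}$ and $\K_{g,0}$ equal $\K_x$ with the same $\tau$, and Lemma \ref{derivative_of_strong_asymptotic_expansion} directly delivers the conclusion of the proposition at level $0$.

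In the inductive step I set $p:=\iota(k-i)$ and $q:=\iota(k-i+1)$, so that $f_{k-i}=g_p$, $f_{k-i+1}=g_q$, and $\eta(i)=l-p$. Reducing to $h\in\A_{f,i}$ via the fraction field structure, I write $\tau_{f,i}(h)=\sum h_r\exp^{-rx}$ with $h_r=\tilde h_r\circ(g_q\circ g_p^{-1})$ and $\tilde h_r\in\K_{f,i-1}$. For $(\text{A})_i$ the inductive hypothesis gives $\tilde h_r\in\K_{g,l-q}$, and the crux is the shift inclusion
\[
\K_{g,l-q}\circ(g_q\circ g_{p+1}^{-1})\subseteq\K_{g,l-p-1},
\]
which I obtain by applying Proposition \ref{sub_tuple_prop} to the subtuple $(g_q,\dots,g_l)$ of $(g_{p+1},\dots,g_l)$ at the final-field level and undoing the outer compositions with $g_q$ and $g_{p+1}$. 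This places each $h_r$ in $\K_{g,\eta(i)}'=\K_{g,l-p-1}\circ(g_{p+1}\circ g_p^{-1})$; quasianalyticity of $(\K_{g,\eta(i)},M_{g,\eta(i)},T_{g,\eta(i)})$ then completes $(\text{A})_i$.

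For the conclusion of the proposition at level $i$, I apply Lemma \ref{derivative_of_strong_asymptotic_expansion} to obtain the formal strong asymptotic expansion $\sum (h_r'-rh_r)\exp^{-rx}$ of $h'$ on a suitable standard power domain, and verify term by term that each coefficient lies in $\K_{g,\eta(i)}'$. The product rule splits
\[
h_r'=\bigl(\tilde h_r'\circ (g_q\circ g_p^{-1})\bigr)\cdot\bigl((g_q'/g_p')\circ g_p^{-1}\bigr);
\]
the inductive hypothesis combined with the $(\text{A})_i$-argument (applied to $\tilde h_r'$ in place of $\tilde h_r$) handles the first factor, while Proposition \ref{opt_rep_prop}, sharpened via its proof---the $m_{i,j}$ decomposing $g_q'/g_p'$ are of the form $\exp^{-f_{i,j}}$ with $f_{i,j}\prec g_p$ in $\U$, so they appear at $g$-indices strictly past $p$---places the second factor in $\C(\RR,M_{g,\eta(i)}')^{\text{conv}}\subseteq\K_{g,\eta(i)}'$. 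Consequently $h_r'-rh_r\in\K_{g,\eta(i)}'$ for every $r$, the derivative series is the $\tau_{g,\eta(i)}$-expansion of $h'$, and it equals $(\tau_{f,i}h)'$ by its very construction.

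The hard part will be the case $q>p+1$ in $(\text{A})_i$: the shift inclusion displayed above bridges intermediate stages of qaa constructions built on genuinely different subtuples of $g$, and becomes available only by routing through the completed fraction fields $\K_{(g_q,\dots,g_l)}\subseteq\K_{(g_{p+1},\dots,g_l)}$ delivered by Proposition \ref{sub_tuple_prop}. A secondary delicate point is that $(\text{B})_i$ requires each derivative coefficient to land in the narrower field $\K_{g,\eta(i)}'$ rather than in $\K_{g,\eta(i)}$ (otherwise the equality $\tau_{g,\eta(i)}(h')=(\tau_{f,i}h)'$ would require an unwanted rearrangement), which forces the use of the refined, proof-internal form of Proposition \ref{opt_rep_prop} rather than its bare statement.
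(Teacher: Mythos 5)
Your proposal is correct and follows essentially the same inductive route as the paper. Two small points of comparison and one nit.

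Where you differ: you derive the shift inclusion $\K_{g,l-q}\circ(g_q\circ g_{p+1}^{-1})\subseteq\K_{g,l-p-1}$ by applying Proposition~\ref{sub_tuple_prop} to the subtuple $(g_q,\dots,g_l)$ of $(g_{p+1},\dots,g_l)$ and then undoing the outer compositions, whereas the paper obtains the same inclusion by writing $g_q\circ g_p^{-1}$ as the telescoping composition of the one-step shifts $g_{m+1}\circ g_m^{-1}$ for $m=p,\dots,q-1$ and iterating the single-step inclusion $\K_{g,m-1}\circ(g_{l-m+1}\circ g_{l-m}^{-1})=\K'_{g,m}\subseteq\K_{g,m}$. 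Both are valid; the telescoping version has the advantage of directly landing the coefficient in $\K'_{g,\eta(i)}$ rather than only in $\K_{g,\eta(i)}$, which is the point you flag as a ``secondary delicate issue.'' The paper's statement is slightly imprecise here (it cites only $\K_{g,\eta(i)}$), but the telescoping argument it uses in fact gives $\K'_{g,\eta(i)}$, so there is no gap. Your observation that the second factor $\bigl(f_{k-i+1}'/f_{k-i}'\bigr)\circ f_{k-i}^{-1}$ must land in $\K'_{g,\eta(i)}$, and that this needs the proof-internal form of Proposition~\ref{opt_rep_prop} (via Lemma~\ref{H-lemma_2}: the generating monomials $m_{i,j}$ have strictly slower comparability class than $f_{k-i}$, hence live in $M_{(g_{p+1},\dots,g_l)}$), matches the remark following Proposition~\ref{opt_rep_prop}, which the paper's proof invokes tacitly through ``the observations before Proposition~\ref{opt_rep_prop}.'' Your auxiliary statement $(\text{A})_i$ is also stated in the paper, just before Proposition~\ref{derivative_prop}, as a standing consequence of the Construction Theorem; re-proving it inside the induction is harmless.

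Nit: near the end you refer to ``$(\text{B})_i$'' without having introduced that label; presumably you mean the conclusion of the proposition at level $i$.
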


\begin{proof}
	By induction on $i$; say $\tau_{f,i}(h) = \sum h_r \exp^{-r}$ with $h_r \in \K_{f,i}'$.  If $i=0$, then $(\tau_{f,i} h)' \in \Gs{\RR}{M_{0}}$ because the coefficients of $\tau_{f,i} h$ are real numbers.  If $i>0$, then $h_r = g_r \circ \left(f_{k-i+1} \circ f_{k-i}^{-1}\right)$ for each $r$, with $g_r \in \K_{f,i-1} \subseteq \K_{g,\eta(i-1)}$.  By the inductive hypothesis, we have $g_r' \in \K_{g,\eta(i-1)}$, so that $$g_r' \circ \left(f_{k-i+1} \circ f_{k-i}^{-1}\right) = g_r' \circ \left(g_{\iota(k-i+1)} \circ g_{\iota(k-i+1)-1}^{-1}\right) \circ \cdots \circ \left(g_{\iota(k-i)+1} \circ g_{\iota(k-i)}^{-1}\right)$$ belongs to $\K_{g,\eta(i)}$.  Also, by Corollary \ref{construction_cor}(2) and Proposition \ref{opt_rep_prop}, we have $f_{k-i+1}'/f_{k-i}' \in \K_{g,\eta(i)}$.  Since $$g^{\langle \eta(i) \rangle} = (g_{\iota(k-i)}, \dots, g_l) \circ f_{k-i}^{-1},$$  it follows from the observations before Proposition \ref{opt_rep_prop} that $(\tau_{f,i} h)' \in \Gs{\K'_{g,\eta(i)}}{M_0}$.  Finally, by Lemma \ref{derivative_of_strong_asymptotic_expansion} the series $(\tau_{f,i} h)'$ is a strong asymptotic expansion of $h'$.
\end{proof}

%

Next, by Corollary \ref{convergent_prop}(2), every $h \in \H$ is given by the unique convergent LE-series $S_\la(h)$.  Moreover, if $m_0, \dots, m_k \in \H$, $r_0, \dots, r_k \in \RR$ and $m = m_0^{r_0} \cdots m_k^{r_k}$, then by the rules of differentiation we have $$m' = \sum_{i=0}^k r_i \cdot m \cdot \frac {m_i'}{m_i} = \sum_{i=0}^k r_i \cdot m \cdot (\log m_i)'.$$ 
Assuming in addition that $\{m_0, \dots, m_k\}$ is multiplicatively $\RR$-linearly independent, the exponents $r_0, \dots, r_k$ are uniquely determined by $m$.  In this situation, we set $\partial_i m:= r_i \cdot m$; so the formula for the derivative becomes $$m' = \sum_{i=0}^k \partial_i m \cdot (\log m_i)'.$$
Thus, for $F = \sum_m a_m m \in \Gs{\C^1}{\langle m_0, \dots, m_k\rangle^\times}$ we set $$\delta F:= \sum_m a_m' m$$ and $$\partial_i F:= \sum_m a_m \partial_i m \quad\text{for } i = 0, \dots, k.$$ These are generalized series in $\Gs{\C}{\langle m_0, \dots, m_k\rangle^\times}$, and we define a derivative $$F^\dagger := \delta F + \sum_{i=0}^k \partial_i F \cdot (\log m_i)',$$ an element of the additive vector space $$\Gs{\C}{\langle m_0, \dots, m_k\rangle^\times}\left\langle(\log m_0)', \dots, (\log m_k)'\right\rangle$$ over $\Gs{\C}{\langle m_0, \dots, m_k\rangle^\times}$.

\begin{expls}\label{dagger_derivative_expls}
	\begin{enumerate}
		\item For $F \in \Gs{\C^1}{M_0}$, we have $F' = F^\dagger$.
		\item For $X = (X_0, \dots, X_k)$, a generalized power series $G(X) = \sum_{r \in [0,\infty)^{k+1}} a_r X^r$ over $\RR$ and $i=0, \dots, k$ we write, as in \cite{Dries:1998xr}, $$\partial_i G(X):= \sum_{r \in [0,\infty)^{k+1}} r_ia_r X^r.$$  Then, if each $m_i$ above is small and $\{m_0, \dots, m_k\}$ is multiplicatively $\RR$-linearly independent, we have $$\partial_i(G(m_0, \dots, m_k)) = (\partial_iG)(m_0, \dots, m_k),$$ so that $G(m_0, \dots, m_k)^\dagger = \sum_{i=0}^k \partial_i(G(m_0, \dots, m_k)) \cdot (\log m_i)',$ as one would expect from the chain rule for differentiation.
	\end{enumerate}
\end{expls}

\begin{lemma}
	\label{inductive_dagger_lemma}
	Let $m_0, \dots, m_k \in \H$ be such that $\{m_0, \dots, m_k\}$ is multiplicatively $\RR$-linearly independent.  Let $F \in \Gs{\RR}{\langle m_0, \dots, m_k\rangle^\times}$, and write $F = \sum_{r \in \RR} F_r m_k^r$, where each $F_r$ belongs to $\Gs{\RR}{\langle m_0, \dots, m_{k-1}\rangle^\times}$.  Then $$F^\dagger = \sum_{r \in \RR} \left(r(\log m_k)' F_r + F_r^\dagger\right) m_k^r.$$
\end{lemma}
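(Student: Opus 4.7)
The proof is a direct bookkeeping computation from the definitions; no deep ingredient is needed beyond the fact that multiplicative $\RR$-linear independence of $\{m_0, \dots, m_k\}$ makes the operators $\partial_i$ well-defined on $\Gs{\RR}{\langle m_0, \dots, m_k\rangle^\times}$.

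First I will observe that since $F \in \Gs{\RR}{\langle m_0, \dots, m_k\rangle^\times}$ has real (hence constant) coefficients, we have $\delta F = 0$, and likewise $\delta F_r = 0$ for each $r$.  Consequently
\[
F^\dagger \;=\; \sum_{i=0}^{k} \partial_i F \cdot (\log m_i)', \qquad F_r^\dagger \;=\; \sum_{i=0}^{k-1} \partial_i F_r \cdot (\log m_i)',
\]
where $F_r^\dagger$ is computed in $\Gs{\RR}{\langle m_0, \dots, m_{k-1}\rangle^\times}$.  So it suffices to compute $\partial_i F$ in terms of the $\partial_i F_r$ and compare.

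Next I would expand $F_r = \sum_{\beta \in \RR^k} a_{\beta,r}\, m_0^{\beta_0} \cdots m_{k-1}^{\beta_{k-1}}$ so that each monomial appearing in $F$ has the unique form $m = m_0^{\beta_0} \cdots m_{k-1}^{\beta_{k-1}} m_k^r$, the uniqueness being guaranteed by the multiplicative $\RR$-linear independence of $\{m_0, \dots, m_k\}$.  From the definition $\partial_i m = \beta_i m$ for $i < k$ and $\partial_k m = r m$, I immediately obtain
\[
\partial_k F \;=\; \sum_r r F_r\, m_k^r \qquad\text{and}\qquad \partial_i F \;=\; \sum_r (\partial_i F_r)\, m_k^r \quad (i<k).
\]

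Substituting these into the expression for $F^\dagger$ gives
\begin{align*}
F^\dagger &= \sum_{i=0}^{k-1} \sum_r (\partial_i F_r)\, m_k^r \cdot (\log m_i)' \;+\; \sum_r r F_r (\log m_k)' m_k^r \\
&= \sum_r \left(\sum_{i=0}^{k-1} (\partial_i F_r)(\log m_i)'\right) m_k^r \;+\; \sum_r r (\log m_k)' F_r\, m_k^r \\
&= \sum_r \left( r (\log m_k)' F_r + F_r^\dagger \right) m_k^r,
\end{align*}
which is the claimed identity.

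There is really no main obstacle; the only point requiring slight care is justifying the interchange of summation (turning $\sum_{i,m}$ into $\sum_r \sum_i$), which is legitimate because each $\partial_i$ is defined monomial-by-monomial and the double sum is just a reindexing over $\supp(F) \times \{0,\dots,k\}$.
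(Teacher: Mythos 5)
Your proof is correct and follows essentially the same route as the paper's: both observe that $\delta F = 0 = \delta F_r$, compute $\partial_k F = \sum_r r F_r m_k^r$ and $\partial_i F = \sum_r (\partial_i F_r) m_k^r$ for $i<k$, and then substitute into the definition of $F^\dagger$ and regroup by $m_k^r$. Your explicit remarks on uniqueness of the monomial decomposition and the reindexing of the double sum are a mild elaboration of steps the paper takes for granted.
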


\begin{proof}
	Note that $\delta F = 0 = \delta F_r$, for each $r \in \RR$.  Also, $$\partial_k F = \sum_{r \in \RR} rF_r m_k^r$$ while, for $i = 0, \dots, k-1$, $$\partial_i F = \sum_{r \in \RR} (\partial_i F_r) m_k^r.$$  Therefore, by definition of $F^\dagger$, we have
	\begin{align*}
		F^\dagger &= \sum_{r \in \RR} r(\log m_k)'F_rm_k^r + \sum_{i=0}^{k-1} \left(\sum_{r \in \RR} (\partial_i F_r) m_k^r\right) (\log m_i)' \\
		&= \sum_{r \in \RR} \left(r(\log m_k)' F_r + \sum_{k=0}^{k-1} (\partial_i F_r) (\log m_i)'\right) m_k^r \\
		&= \sum_{r \in \RR} \left(r(\log m_k)' F_r + F_r^\dagger\right) m_k^r,
	\end{align*}
	as claimed.
\end{proof}

\begin{lemma}
	\label{dagger_derivative_lemma}
	Let $m_0, \dots, m_k \in \H$ be such that $\{m_0, \dots, m_k\}$ is multiplicatively $\RR$-linearly independent.  Let $h \in \I$ and $F \in \Gs{\C^1}{M}$, where $M:= \langle m_0, \dots, m_k\rangle^\times$.  Then the series $(F \circ h)^\dagger$ is an element of the vector space $\Gs{\C}{M \circ h}\left\langle(\log (m_0\circ h))', \dots, (\log (m_k\circ h))'\right\rangle$,  and we have  $$(F \circ h)^\dagger = \left(F^\dagger \circ h\right) \cdot h'.$$
\end{lemma}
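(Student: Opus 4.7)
The plan is to unwind the definition of the dagger derivative on both sides and match coefficients, using the ordinary chain rule on each coefficient and on the monomial-logarithms. The only preliminary observation is that, since $\{m_0,\dots,m_k\}$ is multiplicatively $\RR$-linearly independent and $h \in \I$, the set $\{m_0\circ h,\dots,m_k\circ h\}$ is again multiplicatively $\RR$-linearly independent; hence $M\circ h = \langle m_0\circ h,\dots,m_k\circ h\rangle^\times$ and the operators $\delta$ and $\partial_0,\dots,\partial_k$ are well-defined on $\Gs{\C^1}{M\circ h}$ with respect to this basis.

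Writing $F = \sum_{r\in\RR^{k+1}} a_r\,\mu_r$ with $\mu_r := m_0^{r_0}\cdots m_k^{r_k}$, composition with $h$ gives
\[
F\circ h \;=\; \sum_r (a_r\circ h)\,(\mu_r\circ h), \qquad \mu_r\circ h = (m_0\circ h)^{r_0}\cdots(m_k\circ h)^{r_k}.
\]
I would then directly compute, from the definition,
\[
\delta(F\circ h) = \sum_r (a_r\circ h)'\,(\mu_r\circ h) = \sum_r (a'_r\circ h)\cdot h'\cdot(\mu_r\circ h),
\]
and $\partial_i(F\circ h) = \sum_r r_i\,(a_r\circ h)\,(\mu_r\circ h)$ for each $i$. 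Adding, one obtains
\[
(F\circ h)^\dagger \;=\; \sum_r (a'_r\circ h)\,h'\,(\mu_r\circ h) \;+\; \sum_r (a_r\circ h)(\mu_r\circ h)\sum_{i=0}^k r_i\,(\log(m_i\circ h))'.
\]

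On the other side, $F^\dagger = \sum_r a'_r\mu_r + \sum_r a_r\mu_r\sum_i r_i(\log m_i)'$, so
\[
(F^\dagger\circ h)\cdot h' \;=\; \sum_r (a'_r\circ h)\,h'\,(\mu_r\circ h) \;+\; \sum_r (a_r\circ h)(\mu_r\circ h)\sum_{i=0}^k r_i\,((\log m_i)'\circ h)\cdot h'.
\]
The chain rule gives $((\log m_i)'\circ h)\cdot h' = (\log m_i\circ h)' = (\log(m_i\circ h))'$, so the two expressions coincide term by term. This simultaneously establishes the asserted formula and shows that $(F\circ h)^\dagger$ lies in the claimed vector space, since the output is a $\Gs{\C}{M\circ h}$-linear combination of $1,(\log(m_0\circ h))',\dots,(\log(m_k\circ h))'$.

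There is no real obstacle here: the only point requiring a sentence of care is the bookkeeping that the formal operators $\delta,\partial_i$ on $\Gs{\C^1}{M\circ h}$ are computed with respect to the basis $\{m_i\circ h\}$, which is why the linear independence remark at the start is needed. Everything else is the chain rule applied once to the scalar coefficients and once to each basic logarithm, then reassembled.
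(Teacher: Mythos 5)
Your proof is correct and follows essentially the same route as the paper: both establish $\partial_i(F\circ h)=(\partial_i F)\circ h$ and $\delta(F\circ h)=(\delta F\circ h)\cdot h'$ via the chain rule and then assemble the $\dagger$-derivative; you simply carry out the final "direct computation" in explicit coordinate form, which the paper leaves implicit. The preliminary remark that right composition by $h\in\I$ preserves multiplicative $\RR$-linear independence of $\{m_0,\dots,m_k\}$ is the right bookkeeping point to make, and the rest matches.
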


\begin{proof}
	For $m = \langle m_0, \dots, m_k \rangle^\times$, we have $(m \circ h)' = (m' \circ h) \cdot h'$ by the chain rule; hence $\partial_i (m \circ h) = (\partial_i m) \circ h.$  It follows that $$\partial_i (F \circ h) = (\partial_i F) \circ h.$$  On the other hand, we have $F \circ h \in \Gs{\C^1}{M \circ h}$ and $$\delta (F \circ h) = (\delta F \circ h) \cdot h'.$$  The lemma now follows by direct computation.
\end{proof}

The reason for introducing the derivative $(\cdot)^\dagger$ is that the series $T_{f,i}h$, for $h \in \K_{f,i}$, are not in $\Gs{\RR}{\la}$ in general.  However, for a generalized series $F \in \Gs{\RR}{\langle m_0, \dots, m_k\rangle^\times}$ with $m_0, \dots, m_k \in \la$ small and $\{m_0, \dots, m_k\}$ multiplicatively $\RR$-linearly independent., we have $\delta F = 0$, and we define $$F' := \sum_{i=0}^k \partial_i F \cdot S_\la((\log m_i)'),$$ which is a series in $\Gs{\RR}{\la}$.  In this situation, it is straightforward to verify that if $F$ is an $\la$-series (respectively, an $\la$-generalized Laurent series), then $F'$ is an $\la$-series (respectively, an $\la$-generalized Laurent series); we leave the details to the reader.  Moreover, $F'$ is equal to the derivative of $F$ as defined in \cite{MR1848569} (this follows from the  usual properties established in \cite{MR1848569} for the latter).

The crucial link between the two derivatives, in the situation of the admissible tuple $f = (f_0, \dots, f_k)$, is given as follows:  writing $m_i:= \exp \circ (-f_i)$, we have $(\log m_i)' = -f_i'$, for each $i$.  Thus, for $F \in \Gs{\RR}{M_f}$, we have $F^\dagger \in \Gs{\RR}{M_f}\langle f_0', \dots, f_k'\rangle$.  Therefore, we let $\delta_{f}: \Gs{\RR}{M_{f}}\left\langle f_0', \dots, f_k'\right\rangle \into \Gs{\RR}{\la}$ be the unique $\Gs{\RR}{M_f}$-vector space homomorphism that is the identity on $\Gs{\RR}{M_f}$ and satisfies $$\delta_{f}\left(f_j'\right) := S_\la\left(f_j'\right), \quad\text{ for } j=0, \dots, k;$$ then for $F \in \Gs{\RR}{M_f}$, we have $$F' = \delta_f\left(F^\dagger\right).$$
Note that, given $i \in \{0, \dots, k\}$ and $F \in \Gs{\RR}{M_{(f_{k-i}, \dots, f_k)}}$, we have $F'/S_\la(f_{k-i}') \in \Gs{\RR}{M_{(g_{\iota(k-i)}, \dots, g_l)}}$.  Therefore, we lift $\delta_f$ to the unique $\Gs{\RR}{M_{f^{\langle i \rangle}}}$-vector space homomorphism $$\delta_{f,i}:\Gs{\RR}{M_{f^{\langle i \rangle}}}\langle1, \dots, (f_k \circ f_{k-i}^{-1})'\rangle \into \Gs{\RR}{M_{g^{\langle \eta(i) \rangle}}}$$ that is the identity on $\Gs{\RR}{M_{f^{\langle i \rangle }}}$ and satisfies, for $j \ge k-i$, $$\delta_{f,i}\left(\left(f_j \circ f_{k-i}^{-1}\right)'\right) = S_\la\left(\frac{f_j'}{f_{k-i}'}\right) \circ f_{k-i}^{-1};$$  in particular, the map $\delta_{f,0}$ is the identity map on $\Gs{\RR}{M_0}$.  Note that, by Proposition \ref{opt_rep_prop}, we have 
\begin{equation}
	\label{delta_f_i_label}
	\delta_{f,i}\left(\left(f_j \circ f_{k-i}^{-1}\right)'\right) = T_{g,\eta(i)}\left(\left(f_j \circ f_{k-i}^{-1}\right)'\right).
\end{equation}
The next lemma exhibits the relationships between $\delta_f$ and the various $\delta_{f,i}$.  To simplify notations, we set $n_i:= f_{k-i+1} \circ f_{k-i}^{-1}$.

\begin{lemma}
	\label{delta_f_i_lemma}
	\begin{enumerate}
		\item Let $i \in \{1, \dots, k\}$ and $$G = \sum_{j=k-i+1}^k G_j \cdot \left(f_j \circ f_{k-i+1}^{-1}\right)'$$ belong to $\Gs{\RR}{M_{f,i-1}}\left\langle 1, \dots, \left(f_k \circ f_{k-i+1}^{-1}\right)' \right\rangle$.  Then $\left(G \circ n_i\right) \cdot n_i'$ belongs to $\Gs{\RR}{M_{f,i}}\left\langle 1, \dots, \left(f_k \circ f_{k-i}^{-1}\right)' \right\rangle$ and satisfies $$\delta_{f,i}\left((G \circ n_i) \cdot n_i'\right) = (\delta_{f,i-1}(G) \circ n_i) \cdot \delta_{f,i}(n_i').$$
		\item Let $$G = \sum_{j=0}^k G_j \cdot f_j'$$ belong to $\Gs{\RR}{M_{f}}\left\langle f_0', \dots, f_k' \right\rangle$.  Then the series $\frac{G}{f_0'} \circ f_0^{-1}$ belongs to $\Gs{\RR}{M_{f,k}}\left\langle 1, \dots, \left(f_k \circ f_{0}^{-1}\right)' \right\rangle$ and satisfies $$\delta_{f,k}\left(\frac{G}{f_0'} \circ f_0^{-1}\right) = \frac{\delta_f(G)}{S_\la(f_0')} \circ f_0^{-1}.$$
	\end{enumerate}
\end{lemma}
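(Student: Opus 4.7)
Both statements are identities between explicit finite sums of generalized series, and both are proven in the same way: expand the left-hand side on each generator using the chain rule, and then compare with the expansion of the right-hand side using the fact that $S_\la$ is a field homomorphism.

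For part (1), I would first check the membership claim by observing the inclusion $M_{f^{\langle i-1\rangle}} \circ n_i \subseteq M_{f^{\langle i\rangle}}$, which is immediate from the definitions of these multiplicative spaces. Then, by $\Gs{\RR}{M_{f^{\langle i\rangle}}}$-linearity of $\delta_{f,i}$, it suffices to verify the identity term by term, so I fix $j \in \{k-i+1,\dots,k\}$ and consider $G_j \cdot (f_j \circ f_{k-i+1}^{-1})'$. The chain rule, applied to $(f_j \circ f_{k-i+1}^{-1}) \circ n_i = f_j \circ f_{k-i}^{-1}$, gives the key identity
\[
\left((f_j \circ f_{k-i+1}^{-1})' \circ n_i\right) \cdot n_i' = (f_j \circ f_{k-i}^{-1})',
\]
so the $j$-th term of $(G\circ n_i) \cdot n_i'$ equals $(G_j \circ n_i) \cdot (f_j \circ f_{k-i}^{-1})'$. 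Applying $\delta_{f,i}$ produces $(G_j \circ n_i) \cdot (S_\la(f_j'/f_{k-i}') \circ f_{k-i}^{-1})$.

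For the right-hand side of (1), I expand $\delta_{f,i-1}(G) = \sum_j G_j \cdot (S_\la(f_j'/f_{k-i+1}') \circ f_{k-i+1}^{-1})$ and $\delta_{f,i}(n_i') = S_\la(f_{k-i+1}'/f_{k-i}') \circ f_{k-i}^{-1}$; the identity $f_{k-i+1}^{-1} \circ n_i = f_{k-i}^{-1}$ shows that the $j$-th term of $(\delta_{f,i-1}(G) \circ n_i)\cdot \delta_{f,i}(n_i')$ equals
\[
(G_j \circ n_i) \cdot \left(S_\la(f_j'/f_{k-i+1}') \cdot S_\la(f_{k-i+1}'/f_{k-i}')\right) \circ f_{k-i}^{-1}.
\]
Since $S_\la$ is a field homomorphism, the product of the two $S_\la$ factors equals $S_\la(f_j'/f_{k-i}')$, and the two term-wise expressions agree.

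Part (2) is essentially the same computation with $f_0$ playing the role of the shift. I would reduce again to individual terms $G_j \cdot f_j'$, use the identity $(f_j \circ f_0^{-1})' = (f_j'/f_0') \circ f_0^{-1}$ to rewrite $(G/f_0') \circ f_0^{-1}$ as $\sum_j (G_j \circ f_0^{-1}) \cdot (f_j \circ f_0^{-1})'$, then apply $\delta_{f,k}$ and use the multiplicativity of $S_\la$ together with its compatibility with right composition by $f_0^{-1}$. The only obstacle is purely notational index-tracking, namely keeping the various compositions $f_{k-i+1}^{-1} \circ n_i$, $f_0^{-1}$, and the arguments of $S_\la$ straight; no new analytic or formal-series input beyond the chain rule and the homomorphism property of $S_\la$ is required.
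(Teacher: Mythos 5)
Your proof is correct, and it is essentially the same argument as the paper's: both compute $(G\circ n_i)\cdot n_i'$ via the chain rule applied to $f_j\circ f_{k-i+1}^{-1}\circ n_i = f_j\circ f_{k-i}^{-1}$, establish membership from the inclusion $M_{f^{\langle i-1\rangle}}\circ n_i \subseteq M_{f^{\langle i\rangle}}$, and then verify the $\delta$-identity term by term using the multiplicativity of $S_\la$ (the paper factors $1/n_i'$ out of the whole sum where you work coefficient-wise, but this is the same manipulation). Your treatment of part (2) likewise matches the paper's intended computation.
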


\begin{proof}
	(1) We have
	\begin{align*}
		G \circ n_i &= \sum_{j=k-i+1}^k (G_j \circ n_i) \cdot \left(\frac{f_j'}{f_{k-i+1}'} \circ f_{k-i}^{-1}\right) \\
		&= \left[\sum_{j=k-i+1}^k (G_j \circ n_i) \cdot \left(\frac{f_j'}{f_{k-i}'} \circ f_{k-i}^{-1}\right)\right] \cdot \left(\frac{f_{k-i}'}{f_{k-i+1}'} \circ f_{k-i}^{-1}\right) \\
		&= \frac1{n_i'} \cdot \left[\sum_{j=k-i+1}^k (G_j \circ n_i) \cdot \left(f_j \circ f_{k-i}^{-1}\right)'\right].
	\end{align*}
	Since each $G_j \circ n_i$ belongs to $\Gs{\RR}{M_{f,i}}$, it follows that $(G \circ n_i) \cdot n_i'$ belongs to $\Gs{\RR}{M_{f,i}}\left\langle 1, \dots, \left(f_k \circ f_{k-i}^{-1}\right)' \right\rangle$.  By definition of $\delta_{f,i}$ and $\delta_{f,i-1}$, we have
	\begin{align*}
		&\delta_{f,i}((G \circ n_i) \cdot n_i') \\
		&= \sum_{j=k-i+1}^k (G_j \circ n_i) \cdot  \left( S_\la\left(\frac{f_j'}{f_{k-i}'}\right)  \circ f_{k-i}^{-1} \right) \\
		&= \sum_{j=k-i+1}^k (G_j \circ n_i) \cdot  \left( S_\la\left(\frac{f_j'}{f_{k-i+1}'}\right)  \circ f_{k-i+1} \circ n_i \right) \cdot \left(S_\la\left(\frac{f_{k-i+1}'}{f_{k-i}'}\right) \circ f_{k-i}^{-1}\right) \\
		&= (\delta_{f,i-1}(G) \circ n_i) \cdot \delta_{f,i}(n_i'),
	\end{align*}
	as claimed.  The proof of (2) is similar, if easier, and it is left to the reader.
\end{proof}

\begin{lemma}
	\label{series_dagger_derivation}
	Let $i \in \{0, \dots, k\}$ and $h \in \K_{f,i}$.  Then $h' \in \K_{g,\eta(i)}$ and $$T_{g,\eta(i)}(h') = \delta_{f,i} \left(\left(T_{f,i} h\right)^\dagger\right).$$
\end{lemma}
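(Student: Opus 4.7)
The plan is to prove this by induction on $i$.

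\textbf{Base case $i=0$:} Here $\K_{g,0} = \K_0 = \K_{f,0}$, $T_{g,0} = T_{f,0}$, and $\delta_{f,0}$ is the identity. Since the coefficients of $T_{f,0}h \in \Gs{\RR}{M_0}$ are real, $\delta(T_{f,0}h) = 0$, and since $(\log \exp^{-x})' = -1$ we have $(T_{f,0}h)^\dagger = -\partial_0(T_{f,0}h)$. Writing $T_{f,0}h = \sum_r a_r\exp^{-r}$ with $a_r\in\RR$, this equals $\sum_r(-ra_r)\exp^{-r}$, which by Lemma \ref{derivative_of_strong_asymptotic_expansion} is exactly $T_{f,0}(h')$.

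\textbf{Inductive step:} Assume the lemma at $i-1$, fix $h\in\K_{f,i}$, write $\tau_{f,i}h = \sum_r h_r\exp^{-r}$ with $h_r = g_r\circ n_i$ and $g_r\in\K_{f,i-1}$, and set $F_r := T_{f,i-1}(g_r)\circ n_i = T_{f,i}'(h_r)$. By Lemma \ref{inductive_dagger_lemma} applied with last monomial $\exp^{-x}$ (whose logarithmic derivative is $-1$),
\begin{equation*}
(T_{f,i}h)^\dagger \;=\; \sum_r\bigl(-rF_r + F_r^\dagger\bigr)\exp^{-r},
\end{equation*}
and by Lemma \ref{dagger_derivative_lemma} applied to $F_r = T_{f,i-1}(g_r)\circ n_i$, we get $F_r^\dagger = ((T_{f,i-1}g_r)^\dagger\circ n_i)\cdot n_i'$. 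Applying $\delta_{f,i}$, which is $\Gs{\RR}{M_{f^{\langle i\rangle}}}$-linear and thus fixes $-rF_r$, then using Lemma \ref{delta_f_i_lemma}(1) to factor out the composition with $n_i$, the inductive hypothesis to rewrite $\delta_{f,i-1}((T_{f,i-1}g_r)^\dagger) = T_{g,\eta(i-1)}(g_r')$, and equation (\ref{delta_f_i_label}) to identify $\delta_{f,i}(n_i') = T_{g,\eta(i)}(n_i')$, yields
\begin{equation*}
\delta_{f,i}\bigl((T_{f,i}h)^\dagger\bigr) \;=\; \sum_r\Bigl(-rF_r + \bigl(T_{g,\eta(i-1)}(g_r')\circ n_i\bigr)\cdot T_{g,\eta(i)}(n_i')\Bigr)\exp^{-r}.
\end{equation*}

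On the other hand, Proposition \ref{derivative_prop} gives $\tau_{g,\eta(i)}(h') = \sum_r(h_r'-rh_r)\exp^{-r}$ with $h_r' = (g_r'\circ n_i)\cdot n_i'$, so $T_{g,\eta(i)}(h') = \sum_r T_{g,\eta(i)}'(h_r'-rh_r)\exp^{-r}$. Matching coefficients, the proof reduces to the three identities
\begin{equation*}
T_{g,\eta(i)}'(g_r\circ n_i) = F_r,\quad T_{g,\eta(i)}'(g_r'\circ n_i) = T_{g,\eta(i-1)}(g_r')\circ n_i,\quad T_{g,\eta(i)}'(n_i') = T_{g,\eta(i)}(n_i'),
\end{equation*}
which I will obtain by factoring $n_i = g_{\iota(k-i+1)}\circ g_{\iota(k-i)}^{-1}$ as the composition of the elementary right-shifts $g_{s+1}\circ g_s^{-1}$ for $\iota(k-i) \le s < \iota(k-i+1)$ and iterating the defining formula $T_{g,j}'(\phi\circ(g_{l-j+1}\circ g_{l-j}^{-1})) = T_{g,j-1}(\phi)\circ(g_{l-j+1}\circ g_{l-j}^{-1})$ through the levels $j = \eta(i-1)+1,\dots,\eta(i)$, starting from $g_r, g_r'\in \K_{g,\eta(i-1)}$ via the inductive hypothesis and Proposition \ref{sub_tuple_prop} applied to the relevant subtuples of $g$.

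\textbf{The main obstacle} will be rigorously justifying these iterated right-shift identities: the definition of $T_{g,j}'$ only directly applies to germs presented in the form $\phi\circ(g_{l-j+1}\circ g_{l-j}^{-1})$ with $\phi\in\K_{g,j-1}$, but our germs $g_r\circ n_i$, $g_r'\circ n_i$, and $n_i'$ do not a priori come in this form at every intermediate level. The technical task is to show that the partial compositions produced by successively stripping off elementary shifts from $n_i$ land in the correct intermediate $\K_{g,j-1}$ — which follows from iterating Proposition \ref{sub_tuple_prop} on the relevant subtuples of $g$ together with the fact that $T'$ on a "trivial" strong asymptotic expansion agrees with $T$.
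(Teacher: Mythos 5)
Your proof follows essentially the same route as the paper's: induction on $i$, with the base case handled via the trivial coefficient derivative and Lemma~\ref{derivative_of_strong_asymptotic_expansion}, and the inductive step built from Lemmas~\ref{inductive_dagger_lemma}, \ref{dagger_derivative_lemma}, \ref{delta_f_i_lemma}(1), equation~\eqref{delta_f_i_label}, and Proposition~\ref{derivative_prop}. The one place where your write-up is genuinely more careful than the paper's is exactly the point you flag as ``the main obstacle'': the paper's chain of equalities for $T_{g,\eta(i)}\bigl((h_r\circ n_i)'\bigr)$ quietly uses $T_{g,\eta(i)}(h_r'\circ n_i) = T_{g,\eta(i-1)}(h_r')\circ n_i$ while citing only \eqref{delta_f_i_label}, which accounts for the factor $\delta_{f,i}(n_i') = T_{g,\eta(i)}(n_i')$ but says nothing about shifting $h_r'$ by $n_i$. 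You correctly isolate this as a separate claim requiring justification. Your proposed fix --- decompose $n_i = g_{\iota(k-i+1)}\circ g_{\iota(k-i)}^{-1}$ into the $\eta(i)-\eta(i-1)$ elementary shifts $g_{s+1}\circ g_s^{-1}$ and iterate the defining relation $T_{g,j}'\bigl(\phi\circ(g_{l-j+1}\circ g_{l-j}^{-1})\bigr) = T_{g,j-1}(\phi)\circ(g_{l-j+1}\circ g_{l-j}^{-1})$ through the intermediate levels --- is correct and is the argument the paper implicitly relies on. One small refinement: the tool that makes the iteration go is not really Proposition~\ref{sub_tuple_prop} (which compares $\K_f$ to $\K_{f_\iota}$, not the internal levels $\K_{g,j}$) but rather the inclusions $\K_{g,j-1}\circ(g_{l-j+1}\circ g_{l-j}^{-1}) = \K_{g,j}' \subseteq \K_{g,j}$ and the fact that $T_{g,j}$ restricts to $T_{g,j}'$ on $\K_{g,j}'$, both of which come straight from the construction in Section~\ref{construction_section}; chaining these through $j=\eta(i-1)+1,\dots,\eta(i)$ gives $T_{g,\eta(i)}(\phi\circ n_i) = T_{g,\eta(i-1)}(\phi)\circ n_i$ for all $\phi\in\K_{g,\eta(i-1)}$, which covers all three of your coefficient identities at once.
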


\begin{proof}
	By induction on $i$.  Assume first that $i=0$; since $\eta(0) = 0$ and $\delta_{f,0}$ is the identity map, we have
	\begin{align*}
		\left(T_{f,0} h\right)^\dagger &= \left(\tau_{f,0} h\right)^\dagger & \\
		&= \left(\tau_{f,0} h\right)' &\text{by Example \ref{dagger_derivative_expls}(1)} \\
		&= \tau_{g,\eta(0)}(h') &\text{by Proposition \ref{derivative_prop}} \\
		&= T_{g,0}(h') &\text{because } T_{g,0} = \tau_{g,0}.
	\end{align*}	
	
	Therefore, we assume $i>0$ and the lemma holds for lower values of $i$.  Let $$\tau_{f,i}(h) = \sum (h_r \circ n_i) \exp^{-r}$$ with $h_r \in \K_{f,i-1}$.  Then
	\begin{align*}
		\left(T_{f,i} h\right)^\dagger
		&= \left( \sum \left(T_{f,i-1}(h_r) \circ n_i\right) \exp^{-r}\right)^\dagger & \\
		&= \sum \left(-r T_{f,i-1}(h_r) \circ n_i + \left(T_{f,i-1}(h_r) \circ n_i\right)^\dagger\right) \exp^{-r}  \\ 
		&= \sum \left(-r T_{f,i}(h_r \circ n_i) + \left((T_{f,i-1} h_r)^\dagger \circ n_i\right) \cdot n_i'\right) \exp^{-r},
	\end{align*}
	where the second equality follows from Lemma \ref{inductive_dagger_lemma} and the third from Lemma \ref{dagger_derivative_lemma}.  Now note that
	\begin{align*}
		T_{g,\eta(i)}((h_r \circ n_i)') &= T_{g,\eta(i)}((h_r'\circ n_i) \cdot n_i') &\\
		&= \left(T_{g,\eta(i-1)}(h_r') \circ n_i\right) \cdot \delta_{f,i}(n_i') &\text{by } \eqref{delta_f_i_label} \\
		&= \left(\delta_{f,i-1}\left(T_{f,i-1}(h_r)^\dagger\right) \circ n_i\right) \cdot \delta_{f,i}(n_i') &\text{by ind. hyp.}\\
		&= \delta_{f,i} \left(\left(T_{f,i-1}(h_r)^\dagger \circ n_i\right) \cdot n_i' \right) &\text{by Lemma \ref{delta_f_i_lemma}(1)}.
	\end{align*}
	
	Since $h_r \circ n_i \in \K_{f,i} \subseteq \K_{g,\eta(i)}$ and $T_{g,\eta(i)}$ agrees with $T_{f,i}$ on $\K_{f,i}$, and since $\delta_{f,i}$ is the identity on $\K_{f,i}$, it follows that
	\begin{align*}
		\delta_{f,i}\left(T_{f,i} h)^\dagger\right)
		&= \sum \left(-r T_{f,i}(h_r \circ n_i) + \delta_{f,i}\left((T_{f,i-1} h_r)^\dagger \circ n_i\right) \cdot n_i'\right) \exp^{-r} \\
		&= \sum \left(-r T_{g,\eta(i)}(h_r \circ n_i) + T_{g,\eta(i)}((h_r \circ n_i)')\right) \exp^{-r} \\
		&= \sum T_{g,\eta(i)} \left(-r h_r \circ n_i + (h_r \circ n_i)'\right) \exp^{-r} \\
		&= T_{g,\eta(i)}(h'),
	\end{align*}
	where the last equality follows from Proposition \ref{derivative_prop} and the definition of $T_{g,\eta(i)}$.
\end{proof}

\begin{cor}
	\label{series_derivation}
	Let $h \in \K_{f}$.  Then $\frac{h'}{f_0'} \in \K_g$ and $$T_{g}\left(\frac{h'}{f_0'}\right) = \frac{\left(T_{f}h\right)'}{S_\la\left(f_0'\right)}.$$
\end{cor}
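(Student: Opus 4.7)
The plan is to reduce this corollary to Lemma \ref{series_dagger_derivation} applied at level $i=k$, and then transport the resulting formal identity back through the outer right-shift by $f_0$ that defines $(\K_f, M_f, T_f)$ from $(\K_{f,k}, M_{f,k}, T_{f,k})$. Setting $\tilde h := h\circ f_0^{-1}\in\K_{f,k}$, the chain rule gives $h'/f_0' = \tilde h'\circ f_0$; Lemma \ref{series_dagger_derivation} with $i=k$ (so that $\eta(k)=l$) then yields $\tilde h'\in\K_{g,l}$ together with $T_{g,l}(\tilde h') = \delta_{f,k}\bigl((T_{f,k}\tilde h)^\dagger\bigr)$.

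To promote this to an assertion about $\K_g$ and $T_g$ I need the outer composition in the construction of $\K_g$ to be by $f_0$, i.e.\ $g_0=f_0$. This I would verify by re-inspecting the proof of Proposition \ref{opt_rep_prop}: every auxiliary germ $f_{i,j}$ added there satisfies $f_{i,j}\prec f_i\le f_0$, so when Lemma \ref{strong_basis_lemma} extracts the admissible basis $g$ of the combined tuple, the largest archimedean class represented is that of $f_0$, forcing $g_0=f_0$ and hence $\iota(0)=0$. With this in hand, the definitions unfold to $\K_g=\K_{g,l}\circ f_0$ and $T_g(\,\cdot\,)=T_{g,l}(\,\cdot\circ f_0^{-1})\circ f_0$, whence $h'/f_0'\in\K_g$ with $T_g(h'/f_0') = \delta_{f,k}\bigl((T_{f,k}\tilde h)^\dagger\bigr)\circ f_0$.

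The last step is to identify this right-hand side with $(T_fh)'/S_\la(f_0')$ via two formal manipulations. Since the generating monomials of $M_f$ are $\exp\circ(-f_i)$ so that $(\log m_i)'=-f_i'$, one has $(T_fh)^\dagger\in\Gs{\RR}{M_f}\langle f_0',\dots,f_k'\rangle$, and Lemma \ref{dagger_derivative_lemma} applied to $T_{f,k}\tilde h$ with outer substitution by $f_0$ gives $(T_fh)^\dagger = \bigl((T_{f,k}\tilde h)^\dagger\circ f_0\bigr)\cdot f_0'$, so that $(T_fh)^\dagger/f_0'\circ f_0^{-1} = (T_{f,k}\tilde h)^\dagger$. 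Then Lemma \ref{delta_f_i_lemma}(2) applied to $G:=(T_fh)^\dagger$ yields
$$\delta_{f,k}\bigl((T_{f,k}\tilde h)^\dagger\bigr) \;=\; \delta_f\bigl((T_fh)^\dagger\bigr)/S_\la(f_0')\circ f_0^{-1} \;=\; (T_fh)'/S_\la(f_0')\circ f_0^{-1},$$
and composing with $f_0$ on the right finishes the identification. The only non-routine point is the preliminary verification that $g_0=f_0$; once that is secured, everything else is bookkeeping between the three formal derivations $(\,\cdot\,)^\dagger$, $\delta_{f,i}$ and the LE-derivative $(\,\cdot\,)'$, with Lemmas \ref{dagger_derivative_lemma} and \ref{delta_f_i_lemma} doing the essential computations.
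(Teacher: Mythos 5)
Your proof is correct and takes essentially the same route as the paper's: both reduce to Lemma \ref{series_dagger_derivation} at $i=k$, both use $g_0=f_0$ to transport back through the final right-shift, and both combine Lemmas \ref{dagger_derivative_lemma} and \ref{delta_f_i_lemma}(2) to identify $\delta_{f,k}\bigl((T_{f,k}(h\circ f_0^{-1}))^\dagger\bigr)$ with $\bigl((T_fh)'/S_\la(f_0')\bigr)\circ f_0^{-1}$. The only difference is cosmetic: you spell out why $g_0=f_0$ follows from the construction of Proposition \ref{opt_rep_prop}, whereas the paper asserts it without comment.
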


\begin{proof}
	Since $h \in \K_f$, we have $h \circ f_0^{-1} \in \K_{f,k}$.  So by Lemma \ref{series_dagger_derivation}, the germ $\left(h \circ f_0^{-1}\right)'$ belongs to $\K_{g,\eta(k)} = \K_{g,l}$ and satisfies 
	\begin{equation}\label{dagger_eq}
		T_{g,l}\left(\left(h \circ f_0^{-1}\right)'\right) = \delta_{f,k}\left(\left(T_{f,k}\left(h \circ f_0^{-1}\right)\right)^\dagger\right).
	\end{equation} 
	But $\left(h \circ f_0^{-1}\right)' = \frac{h'}{f_0'} \circ f_0^{-1}$ and $g_0 = f_0$, so that $\frac{h'}{f_0'} \in \K_g$.  Moreover, by definition of $\delta_f$, we have
	\begin{align*}
		\frac{\left(T_f h\right)'}{S_\la(f_0')} \circ f_0^{-1}
		&= \frac{\delta_f\left(\left(T_f h)\right)^\dagger\right)}{S_\la(f_0')} \circ f_0^{-1} \\
		&= \delta_{f,k}\left(\frac{\left(T_fh\right)^\dagger}{f_0'} \circ f_0^{-1}\right) &\text{by Lemma \ref{delta_f_i_lemma}(2)} \\
		&= \delta_{f,k}\left(\frac{\left(T_fh \circ f_0^{-1} \circ f_0\right)^\dagger}{f_0'} \circ f_0^{-1}\right) \\
		&= \delta_{f,k} \left(T_{f,k}\left(\left(h \circ f_0^{-1}\right)\right)^\dagger\right) &\text{by Lemma \ref{dagger_derivative_lemma}},
	\end{align*}
	where we recall that $T_f h = T_{f,k}\left(h \circ f_0^{-1}\right) \circ f_0$ by definition of $T_f$.  On the other hand, by definition of $T_g$ we have, since $g_0 = f_0$, that
	\begin{align*}
		T_g\left(\frac{h'}{f_0'}\right) \circ f_0^{-1} 
		&= T_{g,l}\left(\frac{h'}{f_0'} \circ f_0^{-1}\right) \\
		&= T_{g,l}\left(\left(h \circ f_0^{-1}\right)'\right) \\
		&= \delta_{f,k} \left(T_{f,k}\left(\left(h \circ f_0^{-1}\right)\right)^\dagger\right) &\text{by Lemma \ref{series_dagger_derivation}},
	\end{align*}
	which proves the corollary.
\end{proof}

\begin{cor}
	\label{derivative_cor}
	$\K$ is closed under differentiation and for $h \in \K$, we have $T(h') = (Th)'$.
\end{cor}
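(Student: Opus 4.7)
The plan is to deduce Corollary \ref{derivative_cor} directly from Corollary \ref{series_derivation}, using part (3) of the Closure Theorem (which has already been established, giving $\H \subseteq \K$ with $T\rest{\H} = S_\la$). The only work is to package the local statement from Corollary \ref{series_derivation} about a fixed admissible tuple into a statement about the direct limit $\K$, and to reassemble $h'$ from $h'/f_0'$ and $f_0'$.

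First I would fix $h \in \K$. By construction of $\K$ as the direct limit of the $\K_{h_0}$, there is a finite tuple $h_0$ of small germs in $\la$ with $h \in \K_{h_0}$. Using Lemma \ref{strong_basis_lemma} (via the argument in the proof of the Construction Theorem), I obtain an admissible tuple $f = (f_0,\dots,f_k) \in \U^{k+1}$, with $f_0 > \cdots > f_k$, such that $M_f = \langle h_0\rangle^\times$; then $h \in \K_f$ and $T(h) = T_f(h)$. Next I would apply Proposition \ref{opt_rep_prop} to produce an admissible extension $g$ of $f$ and, finally, invoke Corollary \ref{series_derivation} to conclude that $h'/f_0' \in \K_g$ and
\[ T_g\!\left(\frac{h'}{f_0'}\right) = \frac{(T_f h)'}{S_\la(f_0')}. \]
Both sides make sense in $\Gs{\RR}{\la}$ because $T_f h \in \Gs{\RR}{M_f}$, the generators of $M_f$ are small germs in $\la$, and by admissibility of $f$ the tuple $\exp\circ(-f)$ is multiplicatively $\RR$-linearly independent, so the formal derivative $(T_f h)'$ from the discussion following Lemma \ref{dagger_derivative_lemma} is well defined.

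Now I would reassemble. Since $f_0 \in \U \subseteq \H$, closure of $\H$ under differentiation in the Hardy field $\H$ gives $f_0' \in \H$. By part (3) of the Closure Theorem, which has already been proved, $\H \subseteq \K$ and $T\rest{\H} = S_\la$; hence $f_0' \in \K$ and $T(f_0') = S_\la(f_0')$. Since $h'/f_0' \in \K_g \subseteq \K$, passing to a common $\K_{h_1}$ containing both $f_0'$ and $h'/f_0'$ (which exists by part (1) of the Closure Theorem), the product
\[ h' \;=\; f_0' \cdot \frac{h'}{f_0'} \]
lies in $\K$. Since $T$ is an $\RR$-algebra homomorphism on $\K$, multiplicativity yields
\[ T(h') \;=\; T(f_0')\cdot T\!\left(\frac{h'}{f_0'}\right) \;=\; S_\la(f_0') \cdot \frac{(T_f h)'}{S_\la(f_0')} \;=\; (T_f h)' \;=\; (T h)', \]
which gives both assertions of the corollary. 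Combined with the fact that $\K$ is a field (a direct limit of fields) containing $\RR$, this shows $\K$ is a Hardy field, completing part (4) of the Closure Theorem.

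There is no genuine obstacle left at this stage: all the heavy lifting was done in Proposition \ref{opt_rep_prop}, Proposition \ref{derivative_prop}, Lemma \ref{series_dagger_derivation}, and Corollary \ref{series_derivation}, which together transport differentiation from the analytic side to the formal side through the auxiliary derivations $\delta_{f,i}$ and handle the enlargement of the tuple needed to absorb the germs $f_j'/f_i'$. The only point that warrants a brief verification in the write-up is that the formal derivative $(T_f h)'$ used here coincides with the intrinsic derivative on $\Gs{\RR}{\la}$, so that the identity $T(h') = (Th)'$ is unambiguous; this is immediate from the definition of $(\cdot)'$ on $\Gs{\RR}{\langle m_0,\dots,m_k\rangle^\times}$ given just before Lemma \ref{dagger_derivative_lemma} and the observation that, for $m_i \in \la$, the coefficients $S_\la((\log m_i)') = S_\la(-f_i')$ are exactly the ones supplied by $\delta_f$.
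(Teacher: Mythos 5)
Your proof is correct and takes essentially the same approach as the paper's (very terse) proof: it spells out explicitly how Corollary~\ref{series_derivation} combines with the facts that $(\K,\la,T)$ extends both $(\H,\la,S_\la)$ and each $(\K_f,M_f,T_f)$, and how one reassembles $h' = f_0' \cdot (h'/f_0')$ inside the direct limit $\K$. The additional verification that the formal derivative $(T_f h)'$ agrees with the intrinsic derivative on $\Gs{\RR}{\la}$ is a sensible point to make explicit, though the paper treats it as immediate.
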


\begin{proof}
	This corollary follows from the previous corollary, because the Ilyashenko field $(\K,\la,T)$ extends $(\H,\la,S_\la)$ and each $(\K_f,M_f,T_f)$, with $f$ any admissible tuple of germs in $\la$.
\end{proof}

\section{Concluding remarks} \label{final}

In the spirit of \'Ecalle's field of ``fonctions analysables'' \cite{MR1094378} and the concluding remark of van den Dries et al. \cite{Dries:1997jl}, one might ask whether the Hardy field $\K$ constructed here is closed under composition and integration.  More generally, we wonder how far $\K$ is from being a model of the recently described theory of the field $\mathbb T$ of transseries, see Aschenbrenner et al. \cite{MR3585498}.

At this point, we do not know if $\K$ is closed under composition, not even under composition on the right by infinitely increasing germs in $\H$.  
%
%
The latter appears to be related to whether or not one can canonically assign a subalgebra $\K_h$ of $\K$ to every tuple $h$ of infinitely increasing germs in $\H$.  (This is done here, by construction, for every tuple $f$ of infinitely increasing germs in $\la$ only.)

Full closure under composition seems unlikely, as $\K$ is unlikely to be closed even under left composition by $\exp$: the general asymptotic expansion of a germ $h \in \K$ is a \textit{divergent} LE-series with \textit{convergent} LE-monomials.  Under left composition with $\exp$, the infinite part of such an asymptotic expansion becomes an LE-monomial that is \textit{not} a \textit{convergent} LE-monomial.  Thus, $\exp \circ h$ would have an asymptotic LE-series expansion some of whose LE-monomials are not convergent; in particular, $\exp \circ h$ cannot belong to $\K$.

Finally, we do not know if $\K$ is closed under integration.  However, based on Camacho's Ph.D. thesis \cite{camacho2016}, we suspect that the Liouville closure of a Hardy qaa field is again a Hardy qaa field; this would provide a canonical way of closing $\K$ under exponentiation and integration.  A more elaborate way to extend $\K$ is that of showing that $\K$ generates an o-minimal expansion $\RR_\K$ of the real field, as we hope to do in a future paper.  The Hardy field of all germs at $+\infty$ of unary functions definable in its pfaffian closure $\P(\RR_\K)$ as in \cite{Speissegger:1999nt} would then give a Hardy field extension of $\K$ closed under composition (and hence exponentiation) and integration.

\end{document}